
\documentclass[letterpaper,12pt,reqno]{amsart}

\usepackage[margin=1in]{geometry}

\usepackage{graphicx,amsmath,amssymb,amsthm,color,tikz-cd}
\usepackage{bm}
\usetikzlibrary{decorations.pathreplacing}

\usepackage{hyperref}
\hypersetup{
    colorlinks   = true,
     citecolor    = black,
    linkcolor    = blue
}

\usepackage{float}

\newtheorem{thmintro}{Theorem}

\newtheorem*{remark*}{Remark}
\newtheorem*{sublemma}{Sublemma}

\numberwithin{equation}{section}
\newtheorem{theorem}{Theorem}[section]
\newtheorem{lemma}[theorem]{Lemma}
\newtheorem{proposition}[theorem]{Proposition}
\newtheorem{definition}[theorem]{Definition}
\newtheorem{corollary}[theorem]{Corollary}

%--------Collaboration Commands

%---------Commands---------
\newcommand{\NE}{\operatorname{NE}}
\newcommand{\Sing}{\mathrm{Sing}}
\newcommand{\Reg}{\mathrm{Reg}}
\newcommand{\GGG}{\mathcal{G}}
\newcommand{\PPP}{\mathcal{P}}
\newcommand{\SSS}{S}
\newcommand{\proportion}{\eta}

\newcommand{\GS}{G\SSS}
\newcommand{\GtS}{G\tilde\SSS}
\newcommand{\CAT}{\mathrm{CAT}}

\newcommand{\eps}{\varepsilon}

\newcommand{\Addresses}{{% additional braces for segregating \footnotesize
  \bigskip
  \footnotesize
	
  \textsc{Department of Mathematics, Statistics, and Computer Science, University of Illinois at Chicago, Chicago, IL}\par\nopagebreak
  \textit{E-mail address:} \texttt{bcall@uic.edu}
	
	\smallskip
	
	\textsc{Mathematics and Computer Science Department, Wesleyan University, Middletown, CT}\par\nopagebreak
  \textit{E-mail address:} \texttt{dconstantine@wesleyan.edu}
	
	\smallskip
	
	\textsc{Department of Mathematics, The University of Chicago, Chicago, IL}
	\par\nopagebreak
%	\textsc{Simons Center for Geometry and Physics, Stony Brook, NY}
	%\par\nopagebreak
  \textit{E-mail address:} \texttt{aerchenko@stonybrook.edu}	
	\smallskip
	
	\textsc{Mathematics and Computer Science Department,}
	\par\nopagebreak
	\textsc{Southwestern University, Georgetown, TX}\par\nopagebreak
  \textit{E-mail address:} \texttt{sawyern@southwestern.edu}
	
	\smallskip
	
	\textsc{Department of Mathematics, University of Wisconsin-Madison, Madison, WI}\par\nopagebreak
  \textit{E-mail address:} \texttt{work2@wisc.edu}
}}

\title[Unique equilibrium states for flat surfaces with singularities]{Unique equilibrium states for geodesic flows on flat surfaces with singularities}
\author{Benjamin Call, David Constantine, Alena Erchenko,\\ Noelle Sawyer, Grace Work}
\date{}

\begin{document}

\maketitle

\vspace{0.5cm}

\begin{abstract}
Consider a compact surface of genus $\geq 2$ equipped with a metric that is flat everywhere except at finitely many cone points with angles greater than $2\pi$. Following the technique in the work of Burns, Climenhaga, Fisher, and Thompson, we prove that sufficiently regular potential functions have unique equilibrium states if the singular set does not support the full pressure. Moreover, we show that the pressure gap holds for any potential which is locally constant on a neighborhood of the singular set. Finally, we establish that the corresponding equilibrium states have the $K$-property, and closed regular geodesics equidistribute. 
\end{abstract}

%
%%
%%%
%%%%
%%%%%
%%%%%%%%%%%%%%%%%%%%%%%%%%%%%%%%%%%%%%%%%%%%%%%

\section{Introduction}

We examine the uniqueness of equilibrium states for geodesic flows on a specific class of CAT(0) surfaces, those where the negative curvature is concentrated at a finite set of points. Translation surfaces are examples of such surfaces. A translation surface $X$ is a pair $(X, \omega)$ where $X$ is a Riemann surface of genus $g$, and $\omega$ is a holomorphic one-form on $X$. The zeroes of this holomorphic one-form occur at a finite set of points. The one-form $\omega$ defines a metric which is flat everywhere except at its zeroes. At the zeroes the metric has a conical singularity with angle $2(n+1)\pi$, where $n$ is the order of the zero. For a more in-depth overview of translation surfaces see \cite{Wright, Zorich}.

In \cite{BCFT}, the authors prove that under certain conditions, a unique equilibrium state exists for potentials associated to the geodesic flow on a closed, rank-one manifold with nonpositive sectional curvature (an example of a CAT(0) space \emph{without} singularities). The conditions are a H\"older continuous potential and a pressure gap, that is, topological pressure of the flow restricted to the singular set is strictly less than pressure of the flow overall. The singular set they consider is all the vectors in the unit tangent bundle with rank larger than one.

When the singular set is empty -- for example in strictly negative curvature -- every H\"older potential has a unique equilibrium state. When the singular set is non-empty, an additional condition is necessary as the geodesic flow is nonuniformly hyperbolic. Restricting the pressure of the flow on the singular set is a way of describing the flow of the singular set as having a small enough impact on the system as a whole that uniqueness is still guaranteed.

The natural way to define a geodesic flow on CAT(0) surfaces is to look at the flow on the set of all geodesics (see Section~\ref{sec:defns}). Denote by $\GS$ the set of all geodesics on the surface $\SSS$ (see \eqref{def: GS}). 

%Then, a function $\phi\colon \GS\rightarrow\mathbb R$ is called a \textit{potential function} and an invariant Borel probability measure $\mu$ that maximizes the quantity $h_{\mu}(g_t)+\int_{\GS}\phi d\mu$ (if it exists) is an \textit{equilibrium state} for $\phi$, where $h_{\mu}(g_t)$ is the measure-theoretic entropy with respect to the geodesic flow. The \textit{pressure} for $\phi$, denoted $P(\phi)$, is the supremum of the quantity above when $\mu$ varies among invariant Borel probability measures for $g_t$. 

In this paper, we study the uniqueness of equilibrium states for the geodesic flow described above (see Definition~\ref{defn: pressure}), as we are guaranteed existence for continuous potentials by entropy-expansivity of the flow (see Lemma~\ref{lem:entropy-expansive}). %\cite[Lemma 20]{R19}. 
In particular, we use the technique of \cite{BCFT} in our setting and define the singular set to be the set of geodesics which never encounter any cone points or, when they do, turn by angle exactly $\pm \pi$. 

\begin{remark*}
Some other settings where the uniqueness of equilibrium states was studied are described in more detail below in the outline of the argument.
\end{remark*}

%We denote the singular set by $\Sing$. Since $\Sing$ is defined in terms of properties of full geodesics, it is $g_t$-invariant. Geodesics not in $\Sing$ turn by some angle $\neq \pi$ at a cone point. This is an open condition, so $\Sing$ is closed and hence compact. We define the topological pressure of the potential $\phi|_{\Sing}$ on this compact and flow-invariant set by $P(\Sing, \phi)$, and prove the following: 

We prove the following:

\begin{thmintro}\label{thm: existence}
Let $g_t$ be the geodesic flow on  $\SSS$, a compact, connected surface of genus $\geq 2$ equipped with a metric that is flat everywhere except at finitely many cone points which have angle greater than $2\pi$. Let $\Sing$ be the singular set as defined in Definition~\ref{defn:Sing}. Consider  $\phi\colon\GS \to \mathbb{R}$ a H\"older continuous potential. If the pressure of the singular set is strictly less than the full topological pressure, i.e., $P(\Sing,\phi) < P(\phi)$ (see Definitions~\ref{defn: pressure} and ~\ref{defn: Sing pressure}), then $\phi$ has a unique equilibrium state $\mu$ that has the $K$-property (see Definition~\ref{def: K}).
\end{thmintro}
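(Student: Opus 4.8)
The plan is to run the Climenhaga--Thompson machinery for uniqueness of equilibrium states in the form used for geodesic flows in \cite{BCFT}. Existence of an equilibrium state for the continuous potential $\phi$ is already provided by entropy-expansivity (Lemma~\ref{lem:entropy-expansive}), which makes the metric entropy an upper semicontinuous function of the measure, so all that remains is uniqueness and the $K$-property. The criterion requires a decomposition of the space of finite orbit segments $\{(v,t) : v\in\GS,\ t\ge 0\}$ into a prefix collection $\PPP$, a good collection $\GGG$, and a suffix collection $\mathcal{S}$ so that: (i) $\GGG$ has the specification property at every scale; (ii) $\phi$ has the Bowen property on $\GGG$; (iii) the obstruction collections are pressure-deficient, $P(\PPP,\phi)<P(\phi)$ and $P(\mathcal{S},\phi)<P(\phi)$ (and the set of non-expansive geodesics, which is contained in $\Sing$, likewise has pressure $<P(\phi)$ by the hypothesis). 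Given these, the abstract theorem yields the unique equilibrium state $\mu$.

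To build the decomposition I would calibrate ``regularity'' of an orbit segment. Say a passage of a geodesic through a cone point is \emph{regular} if the turning angle lies in a fixed subinterval of $(0,\pi)\cup(-\pi,0)$ bounded away from $0$ and $\pm\pi$; on a segment $(v,t)$ record the total length of the maximal sub-intervals containing no regular passage -- the stretches on which the geodesic imitates a member of $\Sing$. Fixing a small $\proportion>0$, let $\GGG=\GGG(\proportion)$ be the segments whose first and last cone-point passages are regular and whose non-regular stretches occupy at most a $\proportion$-fraction of $[0,t]$, and let $\PPP$ and $\mathcal{S}$ be the maximal initial and terminal pieces that must be removed to land in $\GGG$. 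The Bowen property on $\GGG$ should follow from H\"older continuity of $\phi$: each regular passage contracts nearby geodesics by a definite factor, so along a $\GGG$-segment the two-sided Bowen ball has exponentially small diameter away from its ends and the $\phi$-integrals over nearby orbits differ by a uniformly bounded amount. Specification for $\GGG$ is where the flat-cone geometry must be used: since every cone angle exceeds $2\pi$, a geodesic reaching a cone point can be redirected within a sector of definite angular width, and the regular passages at the endpoints of $\GGG$-segments supply precisely this freedom; together with a closing/Morse lemma for $\CAT(0)$ surfaces with conical singularities, this lets one shadow any finite list of $\GGG$-segments by a single orbit with uniformly bounded transition times.

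The third ingredient is the one I expect to be the main obstacle. Segments in $\PPP$ or $\mathcal{S}$ are, by construction, essentially entirely non-regular, so one must show their $\phi$-weighted exponential growth rate does not exceed $P(\Sing,\phi)$; the hypothesis $P(\Sing,\phi)<P(\phi)$ then gives (iii), with $\proportion$ chosen small. The route is to pass from finite orbit segments to invariant measures: normalized empirical measures along a family of $\PPP$- or $\mathcal{S}$-segments accumulate on $g_t$-invariant measures carried by $\Sing$, and upper semicontinuity of entropy plus continuity of $\phi$ bound the growth rate by $P(\Sing,\phi)+\eps$. Making this limiting argument rigorous in the flat-singular setting -- quantifying how long a geodesic can linger near the cone points without a regular turn, and controlling the behaviour of empirical measures near (but not on) $\Sing$ -- is the technical heart of the proof; the other steps are closer to adaptations of \cite{BCFT}, whereas this one, together with the specification argument, is where the particular geometry enters.

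For the $K$-property I would use the product device from \cite{BCFT}: the product flow $g_t\times g_t$ on $\GS\times\GS$ with potential $(v,w)\mapsto\phi(v)+\phi(w)$ is again entropy-expansive, carries the product decomposition built from $(\PPP,\GGG,\mathcal{S})$ with specification and the Bowen property, and has its obstruction carried by $(\Sing\times\GS)\cup(\GS\times\Sing)$, whose pressure is $P(\Sing,\phi)+P(\phi)<2P(\phi)$. Hence the product flow has a unique equilibrium state, which must be $\mu\times\mu$; as in \cite{BCFT}, a unique equilibrium state for the self-product (and, by the same argument, for all higher self-products) forces $(g_t,\mu)$ to have trivial Pinsker $\sigma$-algebra, i.e.\ the $K$-property.
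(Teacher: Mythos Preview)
Your overall plan---Climenhaga--Thompson decomposition, specification and Bowen on $\GGG$, pressure deficiency on the bad pieces, then the product trick for $K$---matches the paper's architecture. The differences are in how the decomposition is organized and how condition (iii) is verified, and these differences are not cosmetic.

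First, a convention slip: in the paper's normalization the turning angle satisfies $|\theta(\gamma,t)|\ge\pi$ for every geodesic, with equality exactly on $\Sing$; ``regular'' turning means $|\theta|>\pi$, not $\theta\in(0,\pi)\cup(-\pi,0)$. This is minor but worth fixing.

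More substantively, the paper does \emph{not} build an ad hoc decomposition and then attack $P(\PPP,\phi)$ by the empirical-measure route you identify as ``the technical heart''. Instead it uses a $\lambda$-decomposition in the sense of \cite{CT19,Ca20}: a lower semicontinuous $\lambda\colon\GS\to[0,\infty)$ is defined from turning angles so that $\bigcap_{t\in\mathbb{R}} g_t\lambda^{-1}(0)=\Sing$, and $\GGG(\eta)$ consists of segments whose every initial and terminal subsegment has $\lambda$-average $\ge\eta$. For such decompositions the abstract machinery of \cite{CT19,Ca20} replaces the orbit-segment pressure estimate $P([\PPP]\cup[\mathcal{S}],\phi)<P(\phi)$ by the invariant-set estimate $P(\Sing,\phi)<P(\phi)$, which is exactly the hypothesis. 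So the step you flag as hardest is absorbed into the choice of decomposition; what remains is to check that $\lambda$ is lower semicontinuous and that $\lambda(\gamma)=0$ forces $g_t\gamma\to\Sing$ in forward or backward time (a short non-recurrence argument). Your proposed $\GGG$ (``first and last passage regular, small non-regular fraction'') is close in spirit but does not obviously yield a decomposition of \emph{every} segment, nor does it give you access to these reductions.

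For the $K$-property the paper invokes \cite[Theorem~4.6]{Ca20}, which needs \emph{strong} specification on $\GGG(\eta)$ (uniform transition time, not merely bounded), and the product pressure gap $P\big(\bigcap_t(g_t\times g_t)\tilde\lambda^{-1}(0),\Phi\big)<2P(\phi)$. You assert the product obstruction is ``carried by $(\Sing\times\GS)\cup(\GS\times\Sing)$'', but that is a lemma, not a tautology: the set $\tilde\lambda^{-1}(0)$ is $\{\lambda(x)=0\}\times\GS\cup\GS\times\{\lambda(y)=0\}$, strictly larger than $\Sing\times\GS\cup\GS\times\Sing$, and one must show any invariant measure on the flow-invariant core is supported on the latter. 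The paper does this via the non-recurrence property of $\lambda$ mentioned above. Upgrading weak specification to strong uses non-arithmeticity of the length spectrum (Ricks) to find closed geodesics of nearly equal length and thread them into the connecting segments; this is a genuine extra step you do not mention.
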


It is natural to ask for which potentials we have the \textit{pressure gap} (i.e., the condition $P(\Sing,\phi)<P(\phi)$) in Theorem~\ref{thm: existence}. The following theorem establishes the pressure gap for a large class of H\"older continuous potentials, and thus uniqueness of equilibrium states.

\begin{thmintro}[Theorem~\ref{thm: locally constant} and Corollary~\ref{cor: nearly constant}]\label{thm: pressure gap} 
Let $\SSS$, $\GS$, and $g_t$ be as in Theorem~\ref{thm: existence}. Let $\phi\colon \GS\rightarrow\mathbb R$ be a H\"older continuous function which is locally constant on a neighborhood of $\Sing$, or which is sufficiently close to a constant in the uniform norm (see Corollary~\ref{cor: nearly constant} for a precise statement of `sufficiently close'). Then $P(\Sing,\phi)<P(\phi)$.
\end{thmintro}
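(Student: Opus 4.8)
The plan is to reduce the statement to a comparison of two numbers: the topological pressure $P(\phi)$ of the full geodesic flow, and the pressure $P(\Sing,\phi)$ of the flow restricted to the singular set. Since $\Sing$ consists of geodesics that either avoid cone points entirely or turn by exactly $\pm\pi$ at each cone point they meet, the dynamics on $\Sing$ is essentially the geodesic flow on a flat surface (no genuine branching is allowed). In particular, the topological entropy $h(\Sing)$ of the flow restricted to $\Sing$ should be zero — flat tori and flat geodesics give linear orbit growth, not exponential — whereas the full flow has positive topological entropy $h>0$ because the surface has genus $\geq 2$ and carries cone points of angle $>2\pi$ (one gets exponential growth of closed geodesics from the branching at cone angles). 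This entropy gap is the engine of the argument.

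First I would make the entropy statement precise: show $h(\Sing)=0$, using either the fact that geodesics in $\Sing$ are confined to the flat locus and hence the restricted system has subexponential complexity, or a direct covering/separated-set count at the level of the singular set. Next, invoke the variational principle for pressure, $P(\Sing,\phi)=\sup_{\mu}\{h_\mu(g_1)+\int\phi\,d\mu\}$ where the supremum is over flow-invariant measures carried by $\Sing$; since $h_\mu \leq h(\Sing)=0$ for all such $\mu$, we get
\[
P(\Sing,\phi)\;=\;\sup_{\mu(\Sing)=1}\int\phi\,d\mu\;\leq\;\sup_{\GS}\phi.
\]
If $\phi$ is locally constant equal to $c$ on a neighborhood of $\Sing$, then every measure carried by $\Sing$ integrates $\phi$ to exactly $c$, so $P(\Sing,\phi)=c$. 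On the other hand, again by the variational principle, $P(\phi)\geq h_\nu(g_1)+\int\phi\,d\nu$ for any invariant $\nu$; choosing $\nu$ to be, say, the measure of maximal entropy (or any ergodic measure with positive entropy supported near, but not on, the flat locus so that $\int\phi\,d\nu$ is still close to $c$), one obtains $P(\phi)\geq h_\nu + \int\phi\,d\nu > c = P(\Sing,\phi)$ once $h_\nu>0$ dominates the (controllable) deficit $|\int\phi\,d\nu - c|$. For the "locally constant near $\Sing$" case this deficit can be made as small as we like by concentrating $\nu$ near $\Sing$; alternatively, since $\phi$ is genuinely constant on a whole neighborhood, one can perturb within that neighborhood at no cost to $\int\phi\,d\nu$ while gaining entropy. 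For the "uniformly close to constant" corollary, write $\|\phi-c\|_\infty=\eps$; then $P(\Sing,\phi)\leq c+\eps$ while $P(\phi)\geq h_{\mathrm{top}}+c-\eps$ by testing against the measure of maximal entropy, so the gap holds as soon as $\eps<h_{\mathrm{top}}/2$, which is the precise meaning of "sufficiently close."

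The main obstacle I anticipate is establishing $h(\Sing)=0$ — or more precisely, pinning down the right structural description of the singular set and its dynamics so that the entropy (and, for the locally-constant case, the value of $\int\phi\,d\mu$) is genuinely constrained. One has to handle geodesics that meet several cone points turning by $\pm\pi$ at each: a priori there could be combinatorial choices (left vs. right) that reintroduce exponential growth. The key point to verify is that turning by exactly $\pm\pi$ does not create a genuine branch in the geodesic — the two continuations through such a point are related by the isometry that "unfolds" the cone locally, so they do not contribute to topological complexity in the relevant sense; making this rigorous, probably by relating $\Sing$ to a flat (branched) cover or to a one-dimensional lamination, is where the real work lies. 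A secondary technical point is ensuring the supremum in the variational principle for $P(\Sing,\phi)$ is attained (or well-approximated) by measures for which the entropy bound applies; entropy-expansivity of the full flow (Lemma~\ref{lem:entropy-expansive}) and upper semicontinuity of entropy should take care of this. Once $h(\Sing)=0$ is in hand, the remaining inequalities are soft consequences of the variational principle as sketched above.
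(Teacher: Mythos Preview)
Your approach rests on the claim $h(\Sing)=0$, which you correctly flag as the main obstacle but do not resolve. The paper neither proves nor uses this claim; in fact it derives only the strict inequality $h_{top}(g_t|_{\Sing})<h_{top}(g_t)$, and that as a \emph{consequence} of Theorem~\ref{thm: locally constant} (Corollary~\ref{cor:top entropy gap}), by specializing to $\phi=0$. Your concern about the $\pm\pi$ branching is legitimate and is not dispelled by the unfolding remark: at a cone point the two singular continuations are genuinely distinct elements of $\GS$, and along saddle-connection paths both branches may again be saddle connections, so exponential growth inside $\Sing$ is not ruled out by any soft argument. Even granting $h(\Sing)=0$, there is a second gap in the locally-constant case: ``locally constant on a neighborhood of $\Sing$'' allows different constants on different connected components, and to beat $P(\Sing,\phi)$ you would need an invariant $\nu$ with $h_\nu>0$ and $\int\phi\,d\nu$ close to the \emph{largest} such constant; but concentrating $\nu$ near that component drives $h_\nu$ toward the entropy carried by that component, so the two requirements pull against each other.

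The paper's route is genuinely different and avoids any entropy estimate on $\Sing$. For every singular orbit segment it constructs a regular ``shadow'' (Proposition~\ref{prop:regular shadow}) that stays inside the neighborhood where $\phi$ is constant for most of its length --- so the weight $e^{\int\phi}$ is essentially preserved --- but has endpoints in $\Reg(\eta)$; a multiplicity bound (Lemma~\ref{lem:mult bound}) controls how many singular segments can map near the same shadow. Strong specification on $\GGG^M(\eta)$ (Corollary~\ref{cor:expanded weak specification}) then concatenates these shadows in many orders, yielding a weighted separated set whose growth strictly exceeds $e^{tP(\Sing,\phi)}$ (Lemma~\ref{lem:partition sum} and the final step of \S\ref{sec:pressure gap}). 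This construction is exactly what makes rigorous your phrase ``perturb within that neighborhood at no cost to $\int\phi\,d\nu$ while gaining entropy'' --- it is the substance of the proof, not a soft consequence of the variational principle. Your treatment of the ``close to a constant'' case is correct in form, but the threshold is $\lVert\psi\rVert<\tfrac12\bigl(h_{top}(g_t)-h_{top}(g_t|_{\Sing})\bigr)$ rather than $\tfrac12 h_{top}$, and the positivity of that gap is precisely what the locally-constant case supplies.
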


As a nice corollary (Corollary~\ref{cor:top entropy gap} below) we have $h_{top}(g_t|_{\Sing})<h_{top}(g_t)$ for our flows.

We slightly improve the case $\phi=0$ from Ricks's result \cite[Theorem B]{R19} by showing that the unique measure of maximal entropy for the geodesic flow on $\SSS$ has the K-property which is stronger than mixing. Using the Patterson-Sullivan construction, Ricks builds a measure of maximal entropy $\mu$ \cite{R17} and shows it is unique by asymptotic geometry arguments \cite{R19}. We note that Ricks's result holds for any compact, geodesically complete, locally CAT(0) space such that the universal cover admits a rank-one axis.

A natural question is whether the techniques in this paper can be extended to the more general CAT(0), rank-one setting in which Ricks works. The present paper can be viewed as a first step in that direction, but working in the general CAT(0) setting presents real difficulties right from the outset of the argument. In particular, without the Riemannian structure present in \cite{BCFT} or the flat surface structure we exploit, it is not clear to us what the right candidate for the singular set for would be, or how to find a function like $\lambda$ (see Section~\ref{sec:lambda}) to aid in producing an orbit decomposition.

We call a geodesic that is not in $\Sing$ \textit{regular}. Using strong specification for a certain collection of `good' orbit segments, we show that weighted regular closed geodesics equidistribute to these equilibrium states (see Section~\ref{sec: weighted} for details).

\begin{thmintro}[Theorem~\ref{thm: equidistribute}]\label{thm: weighted}
Let $\phi$ be as in Theorem~\ref{thm: pressure gap} and $\mu_\phi$ is the corresponding equilibrium state. Then, $\mu_\phi$ is the weak* limit of weighted regular closed geodesics.
\end{thmintro}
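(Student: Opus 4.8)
The plan is to follow the now-standard orbit-decomposition-plus-specification strategy for equidistribution of weighted closed orbits, as carried out in \cite{BCFT} and its antecedents (Bowen's argument in the uniformly hyperbolic case, and the Climenhaga--Thompson framework more generally). The starting point is that Theorem~\ref{thm: existence} and Theorem~\ref{thm: pressure gap} together give a unique equilibrium state $\mu_\phi$, and the proof of uniqueness proceeds by exhibiting a decomposition of the space of orbit segments $(\GGG, \SSS, \PPP)$ — in our notation, a "good" collection $\GGG$ of orbit segments on which the flow has strong specification and the potential has the Bowen property, with the prefix/suffix pieces $\SSS, \PPP$ carrying subexponentially small pressure. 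The first step is therefore to recall this decomposition and, crucially, to note that the relevant closed geodesics are \emph{regular} (they are the closed orbits that stay in the good part), so that the weighted sums over regular closed geodesics are the natural dynamical partition-function analogues of $P(\phi)$.

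Next I would set up the two families of weighted measures: for each $T>0$ (and a window $\Delta$), let $\mathcal{C}_T$ (resp. $\mathcal{C}_T^{\Reg}$) be the set of all closed geodesics (resp. regular closed geodesics) of period in $(T-\Delta, T]$, and define the weighted empirical measures
\[
\mu_{T} = \frac{1}{\Lambda(T)} \sum_{\gamma \in \mathcal{C}_T^{\Reg}} e^{\int_\gamma \phi}\, \delta_\gamma,
\qquad
\Lambda(T) = \sum_{\gamma \in \mathcal{C}_T^{\Reg}} e^{\int_\gamma \phi},
\]
where $\delta_\gamma$ is the normalized Lebesgue-along-$\gamma$ probability measure and $\int_\gamma \phi$ denotes the integral of $\phi$ over the closed orbit. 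The core of the argument is a two-sided pressure estimate: (i) a lower bound $\Lambda(T) \gtrsim e^{TP(\phi)}$, obtained by using strong specification on $\GGG$ to glue together orbit segments and shadow them by genuine closed regular geodesics, together with the variational principle / the construction of $\mu_\phi$ as a limit of $(C, \xi)$-separated sets living in $\GGG$; and (ii) a matching upper bound $\Lambda(T) \lesssim e^{TP(\phi)}$, together with the complementary estimate that the weighted count of \emph{singular} or otherwise "bad" closed geodesics grows at rate at most $e^{T \, P(\Sing,\phi)} = e^{T(P(\phi) - \text{gap})}$, so that these contribute negligibly. The pressure gap from Theorem~\ref{thm: pressure gap} is exactly what makes the bad contribution exponentially smaller; entropy-expansivity (Lemma~\ref{lem:entropy-expansive}) and the Bowen property of $\phi$ on $\GGG$ control the comparison between separated-set sums and the closed-orbit sums.

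With the pressure asymptotics in hand, the equidistribution statement follows by a soft weak* compactness argument. Take any weak* limit point $\nu$ of $\mu_{T_n}$; it is a $g_t$-invariant probability measure, and using the lower bound on $\Lambda(T)$, the upper pressure bound, and upper semicontinuity of the entropy (again via entropy-expansivity) one shows $h_\nu(g_1) + \int \phi \, d\nu \geq P(\phi)$, forcing $\nu$ to be an equilibrium state; by the uniqueness from Theorem~\ref{thm: existence}, $\nu = \mu_\phi$, and since every subsequential limit is $\mu_\phi$ the full limit exists. The technical heart — and the step I expect to be the main obstacle — is the lower bound together with the shadowing-to-closed-geodesics step: one must verify that the strong specification property established for the good collection $\GGG$ actually produces \emph{closed regular} geodesics (not just closed orbits that might dip into $\Sing$), and that the estimate on $\int_\gamma \phi$ under the shadowing closure only changes by a bounded amount (the Bowen property), all while in a CAT(0) setting with cone points where closing-lemma-type arguments are more delicate than in the Riemannian negatively curved case. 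Controlling the count of bad closed geodesics by $P(\Sing,\phi)$ — i.e., relating closed geodesics that spend a long time near the cone-point behavior to the topological pressure of $\Sing$ — is the other place where the flat-surface geometry (the function $\lambda$ from Section~\ref{sec:lambda} and the angle-$\pm\pi$ description of $\Sing$) must be used carefully.
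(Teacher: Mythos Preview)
Your proposal is essentially correct and follows the same route as the paper: set up the weighted measures $\mu_{Q,\delta}$ over regular closed geodesics, prove matching upper and lower bounds $\frac{1}{Q}\log\Lambda_R(Q,\delta,\phi)\to P(\phi)$, and then use the Variational-Principle-style argument (upper semicontinuity of entropy plus uniqueness of the equilibrium state) to force every weak* limit to be $\mu_\phi$. The lower bound is obtained exactly as you say, by closing up $(t,\eps)$-separated subsets of $\GGG^M$ into regular closed geodesics via strong specification (the paper's Proposition~\ref{periodic approximation}) and using the Bowen property to control the change in $\int\phi$.

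One simplification relative to your outline: you propose a ``complementary estimate'' bounding the weighted count of singular or bad closed geodesics by $e^{T\,P(\Sing,\phi)}$, but this step is not needed. Since $\Lambda_R$ is a sum over \emph{regular} closed geodesics only, the upper bound $\Lambda_R(Q,\delta,\phi)\lesssim e^{QP(\phi)}$ follows directly once you know that distinct regular closed geodesics with period in $[Q-\delta,Q]$ are automatically $(Q,\eps)$-separated for small $\eps$ (an easy Flat Strip Theorem argument: two parallel closed geodesics bound a flat cylinder, hence are singular), and then apply the general Climenhaga--Thompson upper bound on partition sums over separated sets. The pressure gap $P(\Sing,\phi)<P(\phi)$ enters only through the uniqueness of $\mu_\phi$, not a second time in the counting.
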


%
%%
%%%
%%%%
%%%%%
%%%%%%%%%%%%%%%%%%%%%%%%%%%%%%%%%%%%%%%%%%%%%%%

\subsection{Outline of the argument}\label{sec:outline}

A general scheme for proving that unique equilibrium states exist was developed by Climenhaga and Thompson in \cite{Climenhaga-Thompson}, building on ideas of Bowen in \cite{bowen} which were extended to flows in \cite{franco}. To prove that there are unique equilibrium states for a flow $\{f_t\}$ and a potential $\phi$ on a compact metric space $X$, Climenhaga and Thompson ask for the following (see \cite[Theorems A \& C]{Climenhaga-Thompson}):
\begin{itemize}
	\item The pressure of obstructions to expansivity, $P^\perp_{exp}(\phi)$ (see Definition~\ref{defn:non-expansive}), is smaller than $P(\phi)$, and
	\item There are three collections of orbit segments $\mathcal{P},\mathcal{G}, \mathcal{S}$, that we call collections of prefixes, good orbit segments, and suffixes, respectively, such that each orbit segment can be decomposed into a prefix, a good part, and a suffix (see \cite[Definition 2.3]{BCFT}), satisfying
	%\item There is a decomposition of geodesics on $X$ into prefix, good, and suffix orbit segments, denoted $(\mathcal{P},\mathcal{G}, \mathcal{S})$, satisfying
	\begin{itemize}
		\item[(I)] $\mathcal{G}$ has the weak specification property at any scale (Definition~\ref{defn:specification}),
		\item[(II)] $\phi$ has the Bowen property on $\mathcal{G}$ (Definition~\ref{def: Bowen property}), and
		\item[(III)] $P([\mathcal{P}] \cup [\mathcal{S}], \phi)<P(\phi)$.
	\end{itemize}
\end{itemize}

This scheme was implemented for the geodesic flow on a closed rank-one manifold with nonpositive sectional curvature in \cite{BCFT} and, more generally, without focal points in \cite{CKP20,CKP}. Also, it was used to obtain the uniqueness of the measure of maximal entropy on certain manifolds without conjugate points in \cite{CKW} and on CAT(-1) spaces in \cite{CLT}.  

Our proof follows a specific approach to satisfying the conditions in the above scheme which was applied in \cite{BCFT}, and which allows us to reduce condition (III) to checking the pressure of an invariant subset of $\GS$. Although the decomposition $(\mathcal{P,G,S})$ is in general very abstract, we choose the decomposition using a function $\lambda$ on the space of geodesics. This choice of decomposition also allows us to avoid having to deal with the sets $[\PPP]$ and $[\SSS]$, which are discretized versions of $\PPP$ and $\SSS$ necessary for technical counting arguments to be applied to some decompositions. We define the function $\lambda$, prove that it is lower semicontinuous, and describe how it gives rise to a decomposition in Section~\ref{sec:lambda}. For such a `$\lambda$-decomposition', $\mathcal{P}=\mathcal{S}$ and, roughly speaking, orbit segments in $\mathcal{P}$ and $\mathcal{S}$ have small average values of $\lambda$ wheareas any initial or terminal segment of an element of $\mathcal{G}$ has average value of $\lambda$ which is not small. Furthermore, by utilizing a $\lambda$-decomposition, we are able to appeal to the following result:

\begin{theorem}[\cite{Ca20}, Theorem 4.6]\label{thm: to get weakly mixing}

Let $\mathcal{F}$ be a continuous flow on a compact metric space $X$, and let $\phi : X\to\mathbb{R}$ be continuous. Suppose the flow is asymptotically entropy-expansive, that $P^\perp_{\exp}(\phi) < P(\phi)$, and that $\lambda : X\to [0,\infty)$ is lower semicontinuous and bounded. If the $\lambda$-decomposition $(\mathcal{P},\mathcal{G},\mathcal{S})$ satisfies the following:
\begin{itemize}
	\item $\mathcal{G}(\eta)$ has strong specification at all scales, for all $\eta > 0$,
	\item $\phi$ has the Bowen property on $\mathcal{G}(\eta)$,
	\item $P(\bigcap_{t\in\mathbb{R}}(f_t\times f_t)\tilde{\lambda}^{-1}(0), \Phi) < 2P(\phi)$,
\end{itemize}
where $\Phi(x,y) = \phi(x) + \phi(y)$ and $\tilde{\lambda}(x,y) = \lambda(x)\lambda(y)$, then $(X,\mathcal{F},\phi)$ has a unique equilibrium state which has the $K$-property.
\end{theorem}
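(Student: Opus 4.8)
The plan is to run the Climenhaga--Thompson scheme of Section~\ref{sec:outline} twice --- once for $(X,\mathcal{F},\phi)$ and once for the self-product $(X\times X,\ \mathcal{F}\times\mathcal{F},\ \Phi)$ --- and then to upgrade uniqueness on the product to the $K$-property of $\mu$ by a Pinsker-factor argument of Ledrappier type. First I would feed the $\lambda$-decomposition $(\mathcal{P},\mathcal{G},\mathcal{S})$ (with $\mathcal{P}=\mathcal{S}$) into the Climenhaga--Thompson criterion: $P^\perp_{\exp}(\phi)<P(\phi)$ is assumed, strong specification of $\mathcal{G}(\eta)$ and the Bowen property of $\phi$ on $\mathcal{G}(\eta)$ are assumed, and the remaining hypothesis $P([\mathcal{P}]\cup[\mathcal{S}],\phi)<P(\phi)$ is where the $\lambda$-formalism does its work: as $\eta\downarrow 0$ the discretized prefix/suffix collections concentrate on orbits along which the time-averages of $\lambda$ vanish, and lower semicontinuity of $\lambda$ bounds their pressure by $P(A,\phi)$, where $A=\bigcap_{t}f_t\lambda^{-1}(0)$. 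Since $A\times X\subseteq\bigcap_{t}(f_t\times f_t)\tilde\lambda^{-1}(0)$ and $P(A\times X,\Phi)=P(A,\phi)+P(\phi)$, the third hypothesis of the theorem yields $P(A,\phi)<P(\phi)$, so (choosing $\eta$ small) $\phi$ has a unique equilibrium state $\mu$.

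Next I would check that $(X\times X,\ \mathcal{F}\times\mathcal{F},\ \Phi,\ \tilde\lambda)$ satisfies the same list, so the scheme applies again. Here $\tilde\lambda(x,y)=\lambda(x)\lambda(y)$ is nonnegative, bounded and lower semicontinuous, so it generates a $\tilde\lambda$-decomposition; since a lower bound on $\tilde\lambda$-averages forces a lower bound on the $\lambda$-average in each coordinate, the good collection $\widetilde{\mathcal{G}}(\eta)$ sits inside $\mathcal{G}(\eta')\times\mathcal{G}(\eta')$ for suitable $\eta'=\eta'(\eta)$, hence inherits strong specification at all scales (shadow coordinatewise) and the Bowen property for $\Phi$ (the Bowen constants add). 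The product flow is asymptotically entropy-expansive since tail entropy is subadditive under products; the obstructions to expansivity split along the two factors, so $P^\perp_{\exp}(\Phi)\le P^\perp_{\exp}(\phi)+P(\phi)<2P(\phi)=P(\Phi)$ (the equality $P(\Phi)=2P(\phi)$ coming from subadditivity of the entropy of a joining); and the analogue of condition (III) for the product $\tilde\lambda$-decomposition is exactly the third hypothesis $P(\bigcap_{t}(f_t\times f_t)\tilde\lambda^{-1}(0),\Phi)<2P(\phi)$, via the same $\eta\downarrow0$ reduction. Thus $\Phi$ has a unique equilibrium state, and since $h_{\mu\times\mu}(\mathcal{F}\times\mathcal{F})+\int\Phi\,d(\mu\times\mu)=2\big(h_\mu(\mathcal{F})+\int\phi\,d\mu\big)=2P(\phi)=P(\Phi)$, that state is $\mu\times\mu$.

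Finally I would pass to the $K$-property. Let $\pi\colon(X,\mathcal{F},\mu)\to(Y,\mathcal{F}_Y,\nu)$ be the Pinsker factor and let $\mu\times_Y\mu$ be the relatively independent self-joining over $Y$; it is $\mathcal{F}\times\mathcal{F}$-invariant with both marginals equal to $\mu$, so $\int\Phi\,d(\mu\times_Y\mu)=2\int\phi\,d\mu$. Because the extension to the Pinsker factor is relatively $K$, relative entropies add over this joining, giving $h_{\mu\times_Y\mu}(\mathcal{F}\times\mathcal{F})=h_\nu(\mathcal{F}_Y)+2\big(h_\mu(\mathcal{F})-h_\nu(\mathcal{F}_Y)\big)=2h_\mu(\mathcal{F})$, using $h_\nu(\mathcal{F}_Y)=0$. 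Hence $\mu\times_Y\mu$ is an equilibrium state for $\Phi$, so by the previous step $\mu\times_Y\mu=\mu\times\mu$; this forces $Y$ to be a one-point system, i.e. $\mu$ has trivial Pinsker factor, which is the $K$-property.

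The hard part is the reduction of condition (III) in both applications: converting the invariant-set estimates $P(A,\phi)<P(\phi)$ and $P(\bigcap_{t}(f_t\times f_t)\tilde\lambda^{-1}(0),\Phi)<2P(\phi)$ into bounds on the pressures of the discretized collections $[\mathcal{P}(\eta)],[\mathcal{S}(\eta)]$ and their product analogues, and then letting $\eta\downarrow0$ --- this is exactly what the $\lambda$-decomposition apparatus of Section~\ref{sec:lambda} is built to do, and it leans essentially on the lower semicontinuity and boundedness of $\lambda$. The supporting facts --- subadditivity of tail entropy and of joining entropy under products, the splitting of the non-expansive set of $\mathcal{F}\times\mathcal{F}$ along the two factors, and relative-entropy additivity for relatively $K$ extensions (Ledrappier's lemma) --- are standard and can be imported from the literature.
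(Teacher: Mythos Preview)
This theorem is not proved in the paper; it is quoted verbatim from \cite{Ca20} and used as a black box to establish Theorem~\ref{thm: existence}. There is therefore no ``paper's own proof'' to compare against. That said, your outline does capture the high-level strategy of \cite{Ca20}: obtain a unique equilibrium state $\mu$ for $(X,\mathcal{F},\phi)$ via the Climenhaga--Thompson machinery, establish that $\mu\times\mu$ is the unique equilibrium state for the product system $(X\times X,\mathcal{F}\times\mathcal{F},\Phi)$, and then run the Ledrappier Pinsker-factor argument to conclude the $K$-property.

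There is, however, a genuine gap in your second step, where you write that ``the obstructions to expansivity split along the two factors, so $P^\perp_{\exp}(\Phi)\le P^\perp_{\exp}(\phi)+P(\phi)$.'' For a product \emph{flow} this is false as stated. With the max metric one has $\Gamma_\eps((x,y))=\Gamma_\eps(x)\times\Gamma_\eps(y)$, and for any continuous flow on a compact space $\Gamma_\eps(x)$ always contains a short orbit segment $g_{[-s,s]}x$. Hence $\Gamma_\eps((x,y))$ contains points of the form $(x,g_\delta y)$ with $\delta\neq 0$, which are not on the diagonal orbit $(g_r x,g_r y)$. It follows that $\NE(\eps)=X\times X$ for the product flow at every scale, so $P^\perp_{\exp}(\Phi)=P(\Phi)$ and the Climenhaga--Thompson criterion cannot be invoked on the product in the naive way you propose. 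The actual argument in \cite{Ca20} does not run the full \cite{Climenhaga-Thompson} scheme on the product; rather it leverages the Gibbs-type estimates and specification already obtained for $\mu$ in the base system to prove uniqueness of the product equilibrium state directly, via counting arguments adapted to the $\tilde\lambda$-decomposition. Your first and third steps are fine, but the middle step needs to be rebuilt along those lines rather than by a second appeal to the expansivity hypothesis.
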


Theorem~\ref{thm: existence} will follow from Theorem~\ref{thm: to get weakly mixing} after we show that we can satisfy all conditions required. See Section~\ref{sec: organization} for the sections where each property is checked. 

Our choice of $\lambda$ gives a connection between orbit segments in $\mathcal{P}$ and $\mathcal{S}$ and the singular set $\Sing$ (see Definition~\ref{defn:Sing}). The singular set is also the source of the obstructions to expansivity (see Lemma~\ref{lemma:NE in Sing}). These connections are useful for proving the two `pressure gap' properties Theorem~\ref{thm: to get weakly mixing} calls for: $P^\perp_{exp}(\phi)<P(\phi)$ and $P(\bigcap_{t\in\mathbb{R}}(f_t\times f_t)\tilde{\lambda}^{-1}(0), \Phi) < 2P(\phi)$. In particular, in our case $\bigcap_{t\in\mathbb{R}}f_t\lambda^{-1}(0)=\Sing$.

\begin{remark*}
The strong specification property on $\mathcal G$ in Theorem~\ref{thm: to get weakly mixing} is used to obtain that the equilibrium state has the $K$-property. The weak specification property on $\mathcal G$ is enough to guarantee the existence of a unique equilibrium state.
\end{remark*}

\begin{remark*}
The K-property implies strong mixing of all orders.
\end{remark*}

%\begin{remark*}
%We know that the geodesic flow is asymptotically entropy expansive in our setting, because the geodesic flow in CAT(0) spaces is entropy-expansive by \cite[Lemma 20]{R19}.
%\end{remark*}

\subsection{Organization of the paper}\label{sec: organization}
The paper is organized as follows. 
In Section~\ref{sec:background} we provide definitions of and background on the main objects and tools of this paper and we record some basic geometric results which will be used throughout the paper. The main steps for the proof of Theorem~\ref{thm: existence} according to Theorem~\ref{thm: to get weakly mixing} are in Sections~\ref{sec:lambda} (the $\lambda$-decomposition), \ref{sec:specification} and \ref{sec:strong} (the specification property for $\mathcal{G}$), and \ref{sec:bowen} (the Bowen property for $\mathcal{G}$). 

We obtain Theorem~\ref{thm: pressure gap} in Section~\ref{sec:pressure gap}, first proving the pressure gap condition for potentials which are locally constant on a neighborhood of $\Sing$, and then using this result to note that the same gap holds for potentials with sufficiently small total variation. 
Theorem~\ref{thm: weighted} (the equidistribution result) is proved in Section~\ref{sec: weighted}.

%
%%
%%%
%%%%
%%%%%
%%%%%%%%%%%%%%%%%%%%%%%%%%%%%%%%%%%%%%%%%%%%%%%

\section{Background}\label{sec:background}

\subsection{Setting and Definitions}\label{sec:defns}

Throughout, $\SSS$ denotes a compact, connected surface of genus $\geq 2$ equipped with a metric which is flat everywhere except at finitely many conical points which have angles larger than $2\pi$ (See Figure~\ref{fig:cone point}). We assume $\SSS$ is oriented by passing to the oriented double cover if necessary. $Con$ denotes the set of conical points on $\SSS$ and denote by $\mathcal L(p)$ the total angle at a point $p\in\SSS$.  In particular, $\mathcal{L}(p)=2\pi$ if $p\notin Con$ and $\mathcal{L}(p) >2\pi$ if $p\in Con$. Note that in the special case of a translation surface, $\mathcal{L}(p)$ is always an integer multiple of $2\pi$, but we make no such restriction here. Denote by $\tilde {\SSS}$ the universal cover of $\SSS$, and note that $\tilde\SSS$ is a complete $\CAT(0)$ space (see, e.g. \cite{bh} for definitions and basic results on $\CAT(0)$ spaces). Throughout, tildes denote the obvious lifts to the universal cover.

Since $\tilde \SSS$ is $\CAT(0)$, any $\tilde p, \tilde q$ are connected by a unique geodesic segment. Throughout, we will denote this segment by $[\tilde p, \tilde q].$

\begin{figure}
    \centering
    \includegraphics{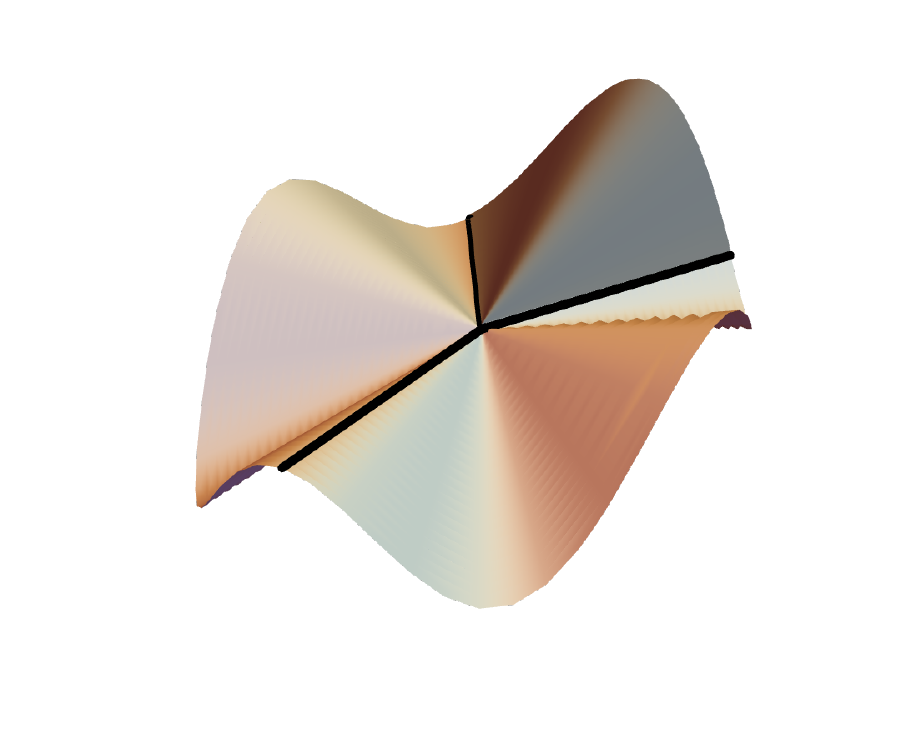}
    \caption{A large-angle cone point, embedded in $\mathbb{R}^3$. Away from the cone point, the surface is flat under the intrinsic metric -- it is the union of lines in $\mathbb{R}^3$ and so has Gaussian curvature zero. The dark lines show a geodesic segment hitting the cone point and its two continuations with turning angles $\pm\pi$; these geodesics are in $\Sing$. All continuations of the geodesic with line segments passing through the dark shaded region are geodesics. The spread of the geodesic continuations in this region is exactly the source of `hyperbolicity' for the geodesic flow in these spaces.}
    \label{fig:cone point}
\end{figure}

Let $\GS$ be the set of all (parametrized) geodesics in $\SSS$. That is, 
\begin{equation}\label{def: GS}
 \GS = \{\gamma\colon \mathbb R\rightarrow \SSS \mid \gamma \text{ is a local isometry}\}.
\end{equation} 
We endow $\GS$ with the following metric:
\begin{equation}\label{metric in GS}
d_{\GS}(\gamma_1,\gamma_2) = \inf\limits_{\tilde\gamma_1,\tilde\gamma_2}\int_{-\infty}^{\infty}d_{\tilde\SSS}(\tilde\gamma_1(t),\tilde\gamma_2(t))e^{-2|t|}\,dt,
\end{equation}
where the infimum is taken over all lifts $\tilde\gamma_i$ of $\gamma_i$ to $\GtS$ for $i=1,2$. $\GS$ serves as an analogue of the unit tangent bundle in our setting. (Indeed, for a Riemannian surface, $\GS$ is homeomorphic to $T^1S$.) It is necessary to examine this more complicated space as geodesics in $\SSS$ are not determined by a tangent vector -- they may branch apart from each other at points in $Con$. In this setting, the metric $d_{\GS}$ records the idea that two geodesics in $\GS$ are close if their images in $\SSS$ are nearby for all $t$ in some large interval $[-T,T]$.

Geodesic flow on $\GS$ comes from shifting the parametrization of a geodesic:
\[ (g_t\gamma)(s) = \gamma(s+t).\]
The normalizing factor 2 in our definition of $d_{\GS}$ ensures that $g_t$ is a unit-speed flow with respect to $d_{\GS}$. (Showing this is a completely straight-forward computation, using the fact that $d_{\tilde\SSS}(\tilde \gamma(t),\tilde\gamma(s+t))=s$).

We recall two definitions of the $K$-property of an invariant measure. See Section 10.8 in \cite{CFSS} for a proof of the equivalence of these definitions (known as completely positive entropy and $K$-mixing, respectively) with the original definition of the $K$-property, as well as more details about other equivalent definitions.

\begin{definition}\label{def: completely positive entropy}
A flow-invariant measure $\mu$ has the $K$-property if $(X,(g_t),\mu)$ has no non-trivial zero entropy factors (i.e., the Pinsker factor is trivial).
\end{definition}

This definition can be reformulated as a statement about mixing in the following manner.

\begin{definition}\label{def: K}
A flow-invariant measure $\mu$ has the $K$-property if for all $t\neq 0$, for all $k\geq 1$, and all measurable sets $A_0,A_1,\ldots, A_k$ we have 
\begin{equation*}
    \lim\limits_{n\rightarrow\infty}\sup\limits_{B\in\mathcal C_n(A_1,\ldots,A_k)}\left|\mu(A_0\cap B)-\mu(A_0)\mu(B)\right|=0,
\end{equation*}
where $\mathcal C_n(A_1,\ldots,A_k)$ is the minimal $\sigma$-algebra generated by $g_{tr}(A_j)$ for $1\leq j\leq k$ and natural $r\geq n$.
\end{definition}

\begin{remark*}
The K-property implies strong mixing of all orders. We recall that an invariant measure $\mu$ is strongly mixing of all orders if for all $k\geq 1$ and all measurable sets $A_0,A_1,\ldots, A_k$ we have 
\begin{equation*}
    \lim\limits_{t_1\rightarrow\infty,\, t_{j+1}-t_j\rightarrow\infty}\mu(A_0\cap g_{t_1}(A_1)\cap\ldots\cap g_{t_k}(A_k))=\prod_{j=0}^{k}\mu(A_j).
\end{equation*}
\end{remark*}

A key tool in our analysis of the geodesic flow on $S$ will be the turning angle of a geodesic at a cone point.  We note that although $\SSS$ is not smooth at $p\in Con$, there is a well-defined space of directions at $p$, $S_p\SSS$, and a well-defined notion of angle (see, e.g. \cite[Ch. II.3]{bh}). In the angular metric, $S_p\SSS$ is a circle of total circumference $\mathcal{L}(p)$.

\begin{definition}\label{defn:turning angle}
	Let $\gamma\in \GS$. The turning angle of $\gamma$ at time $t$ is $\theta(\gamma,t)\in (-\frac{1}{2}\mathcal L(\gamma(t)),\frac{1}{2}\mathcal L(\gamma(t))]$ and is the signed angle between the segments $[\gamma(t-\delta),\gamma(t)]$ and $[\gamma(t),\gamma(t+\delta)]$ (for sufficiently small $\delta>0$). A positive (resp. negative) sign for $\theta$ corresponds to a  counterclockwise (resp. clockwise) rotation with respect to the orientation of  $[\gamma(t-\delta),\gamma(t)]$.
\end{definition}

Since $\gamma$ is a geodesic, $|\theta(\gamma, t)|-\pi\geq 0$ for any $t\in\mathbb R$. If $\gamma(t)\not\in Con$, then $\theta(\gamma, t)=\pi$.

\begin{definition}\label{defn:Sing}
We define the singular geodesics in $\SSS$ as
\begin{equation*}
\Sing =\{\gamma \in \GS : |\theta(\tilde\gamma,t)|=\pi \quad \forall t\in\mathbb R\}.
\end{equation*}
 Since $\Sing$ is defined in terms of properties of full geodesics, it is $g_t$-invariant. Geodesics not in $\Sing$ turn by some angle $\neq \pi$ at a cone point. This is an open condition, so $\Sing$ is closed and hence compact.
\end{definition}

The geodesics in $\Sing$ either never encounter any cone points or, when they do, turn by angle exactly $\pm \pi$. They serve as an analogue of the singular set in the Riemannian setting of \cite{BCFT}, i.e., geodesics which remain entirely in zero-curvature regions of the surface. In both cases the idea is that a singular geodesic never takes advantage of the geometric features of the surface (either its negative curvature regions or its large-angle cone points) to produce hyperbolic dynamical behavior.  We note here a potentially confusing aspect of this terminology: a singular geodesic in this paper avoids the `singular,' i.e. non-smooth, points of $Con$, or treats them as if they are not `singular.'

We introduce some classical notions of thermodynamical formalism.

\begin{definition}\label{defn: pressure}
Consider a function $\phi\colon \GS\rightarrow\mathbb R$ that we refer as a \textit{potential function}. The \textit{pressure} for $\phi$ is
\begin{equation*}
P(\phi) = \sup\limits_{\mu}\left(h_{\mu}(g_t)+\int_{\GS}\phi d\mu\right),
\end{equation*}
where $\mu$ varies over all invariant Borel probability measures for $g_t$ and $h_{\mu}(g_t)$ is the measure-theoretic entropy with respect to the geodesic flow.

An invariant Borel probability measure $\mu_\phi$ (if it exists) such that
\begin{equation*}
    P(\phi)=h_{\mu_\phi}(g_t)+\int_{\GS}\phi d\mu_\phi
\end{equation*}
is an \textit{equilibrium state} for $\phi$.
\end{definition}

\begin{definition}\label{defn: Sing pressure}
$P(\Sing, \phi)$ is the pressure of the potential $\phi|_{\Sing}$ on the compact and flow-invariant set $\Sing$ (see Definition~\ref{defn:Sing}).
\end{definition}

Below, we discuss some of the necessary definitions to apply the Climenhaga-Thompson machinery.

\begin{definition}\label{defn:non-expansive}
	Let $\eps>0$. The non-expansive set at scale $\eps$ for the flow $g_t$ is 
	\[ \NE(\eps) = \{\gamma \in \GS \mid \Gamma_{\eps}(\gamma)\not\subset g_{[-s,s]}\gamma \text{ for all } s > 0 \}, \]	
	where
	\[ \Gamma_{\eps}(\gamma) = \{\xi \in \GS \mid d_{\GS}(g_t\gamma,g_t\xi)\leq\eps \quad \forall t\in\mathbb R\}. \]
	The pressure of obstructions to expansivity for a potential $\phi$ is
	\begin{equation*}
	P^\perp_{\exp}(\phi) = \lim\limits_{\eps\downarrow 0}\sup\left\{h_{\mu}(g_1)+\int_{\GS}\phi\,d\mu \bigm| \mu (\NE(\eps)) = 1\right\},
	\end{equation*}
	where the supremum is taken over all $g_t$-invariant ergodic probability measures $\mu$ on $\GS$ such that $\mu(\NE(\eps))=1$.
\end{definition}

In other words, a geodesic is in the complement of $\NE(\eps)$ if the only geodesics which stay $\eps$ close to it for all time are contained in its own orbit. A flow is expansive if $\NE(\eps)$ is empty for all sufficiently small $\eps$. The presence of flat strips in our setting means our flow will not be expansive, but for small $\eps$, the complement of $\NE(\eps)$ will turn out to be a sufficiently rich set to use in our arguments.

In the interest of concision, we omit the formal definition of an orbit decomposition, referring instead to \cite{Climenhaga-Thompson}. We will use a specific type of decomposition which has been studied in \cite{CT19,Ca20}, and we will primarily use results from those two papers. We note however that results from \cite{Climenhaga-Thompson} hold for our decompositions as well, as it is written for a more general class of decomposition. We discuss this more in Section~\ref{sec: weighted}, where we will need to appeal to a few results directly from \cite{Climenhaga-Thompson}. Identify a pair $(\gamma,t)\in \GS\times [0,\infty)$ with the \emph{orbit segment} $\{g_s\gamma \mid s\in [0,t]\}$. An orbit decomposition is a method of decomposing any orbit segment into three subsegments, a prefix, a central good segment, and a suffix. We denote the collections of these segments by $\PPP,\mathcal{G}$, and $\mathcal{S}$ respectively. The $\lambda$-decompositions that we use in this paper are orbit decompositions which decompose orbit segments based on a lower semicontinuous function $\lambda$. Our choices for the function $\lambda$ and the associated parameter $\eta>0$ will be discussed in detail in Section \ref{sec:lambda}, but the idea is this. The function $\lambda$ measures the amount of `hyperbolic' behavior seen by the geodesic; in accord with our intuition that cone points are the source of this behavior, $\lambda$ will be based on turning angles at these points. A segment is `good' for our purposes (i.e., in $\GGG(\eta)$) if it experiences a lot of hyperbolicity; otherwise, it is in $\mathcal{P}=\mathcal{S}$: 

\begin{itemize}
    \item $\GGG=\GGG(\proportion)$ consists of all $(\gamma,t)$ such that the average value of $\lambda$ over every initial and terminal segment of $(\gamma,t)$ is at least $\proportion$, and
    \item $\mathcal{P}=\mathcal{S}=\mathcal{B}(\proportion)$ consists of all $(\gamma,t)$ over which the average value of $\lambda$ is less than $\proportion.$
\end{itemize}
%\[ \GGG=\GGG(\proportion) = \left\{(\gamma,t) \bigm| \int_0^\rho\lambda(g_u(\gamma))du\geq\proportion\rho \quad\text{and} \quad \int_0^\rho\lambda(g_{-u}g_t(\gamma))du\geq\proportion\rho \quad\text{for all}\quad \rho\in[0,t]\right\} \]
%and 
%\[ \mathcal{P}=\mathcal{S}=\mathcal{B}(\proportion) = \left\{(\gamma,t) \bigm| \int_0^t\lambda(g_u(\gamma))du < \proportion t\right\}. \]

We can define both specification and the Bowen property for an arbitrary collection of orbit segments $\GGG \subset \GS\times [0,\infty)$. In both cases, by taking $\GGG = \GS\times [0,\infty)$, one retrieves the definitions for the full dynamical system.
\begin{definition}\label{defn:specification}
	We say that $\GGG$ has \textit{weak specification} if for all $\eps > 0$, there exists $\tau > 0$ such that for any finite collection $\{(x_i,t_i)\}_{i=1}^n\subset \GGG$, there exists $y\in \GS$ that $\eps$-shadows the collection with transition times $\{\tau_i\}_{i=1}^n$ at most $\tau$ between orbit segments. In other words, for $1\leq i \leq n$, there exists $\tau_i \in [0,\tau]$ and $y\in \GS$ such that 
	$$d_{\GS}(g_{t + s_i} y, g_t x_i) \leq \eps \text{ for } 0\leq t \leq t_i$$
	where $s_k = \sum_{j=1}^{k-1} t_j + \tau_j$. We will refer to such $\tau$ as a \textit{specification constant}.

	We say that $\GGG$ has \textit{strong specification} when we can always take each $\tau_j = \tau$ in the above definition.
	
\end{definition}

\begin{definition}\label{def: Bowen property}
	Given a potential $\phi : \GS\to\mathbb{R}$, we say that $\phi$ has the Bowen property on $\GGG$ if there is some $\eps > 0$ for which there exists a constant $K > 0$ such that
	\begin{equation*}
	\sup\left\{\left|\int_0^t \phi(g_rx) - \phi(g_ry)\,dr\right| \bigm| (x,t)\in \GGG \text{ and } d_{\GS}(g_ry,g_rx) \leq \eps \text{ for } 0\leq r \leq t\right\} \leq K.
	\end{equation*}
\end{definition}

\begin{remark*}
If $\phi$ has the Bowen property on a collection of orbit segments $\GGG$ at some scale $\eps > 0$, it in turn has the Bowen property on $\GGG$ at all smaller scales $\eps' < \eps$.
\end{remark*}

There is also a definition of topological pressure for collections of orbit segments. However, by using Theorem~\ref{thm: to get weakly mixing}, we sidestep this complication.

Finally, we adapt a piece of terminology from flat surfaces to our somewhat more general setting.

\begin{definition}\label{defn:saddle connection}
A geodesic segment with both endpoints in $Con$ and no cone points in its interior is called a saddle connection. A saddle connection path is composed of saddle connections joined so that the turning angle at each cone point is at least $\pi$. Note that with this definition all saddle connection paths are geodesic segments.
\end{definition}

%
%%
%%%
%%%%
%%%%%
%%%%%%%%%%%%%%%%%%%%%%%%%%%%%%%%%%%%%%%%%%%%%%%

\subsection{Basic geometric results}\label{sec:basic geom}

In this section we collect a few basic results on the geometry of $\SSS$, $\tilde\SSS$, $\GS$, and $\GtS$ which will be used in our subsequent arguments.

The following two lemmas relate the metric $d_{\GS}$ to the metric $d_\SSS$ on the surface itself, and will be useful for a number of our calculations below. First, we note that if two geodesics are close in $\GS$, then they are close in $\SSS$ at time zero.

\begin{lemma}[\cite{CLT}, Lemma 2.8]\label{lem:closeness in S and GS}
For all $\gamma_1, \gamma_2\in \GS$, 
\[ d_\SSS(\gamma_1(0), \gamma_2(0)) \leq 2d_{\GS}(\gamma_1,\gamma_2).\]
Furthermore, for $s, t \in \mathbb R$, $d_\SSS(\gamma_1(s),\gamma_2(t)) \leq 2 d_{\GS}(g_s\gamma_1,g_t \gamma_2).$
\end{lemma}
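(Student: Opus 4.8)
The plan is to unwind the definition of $d_{\GS}$ in \eqref{metric in GS} and combine two standard facts about the $\CAT(0)$ space $\tilde\SSS$: the covering projection $\tilde\SSS\to\SSS$ is $1$-Lipschitz, and the distance between two geodesics of $\tilde\SSS$ is a convex function of the parameter. In fact this will yield the bound with constant $1$ in place of $2$, which a fortiori gives the stated inequality.

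First I would fix lifts $\tilde\gamma_1,\tilde\gamma_2$ of $\gamma_1,\gamma_2$ to $\GtS$ and set $f(t)=d_{\tilde\SSS}(\tilde\gamma_1(t),\tilde\gamma_2(t))$. Since $\tilde\SSS$ is $\CAT(0)$ and the $\tilde\gamma_i$ are (globally minimizing, unit-speed) geodesics, $f$ is convex on every compact interval and hence on all of $\mathbb R$ (convexity of the metric along pairs of geodesics; see e.g.\ \cite[Prop.\ II.2.2]{bh}). Writing $0=\tfrac12 t+\tfrac12(-t)$, convexity gives $f(t)+f(-t)\geq 2f(0)$ for every $t\geq 0$. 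Now I would integrate against the weight $e^{-2|t|}$, whose total mass is $\int_{-\infty}^{\infty}e^{-2|t|}\,dt=1$, folding the integral at $0$:
\[
\int_{-\infty}^{\infty} f(t)e^{-2|t|}\,dt=\int_0^{\infty}\big(f(t)+f(-t)\big)e^{-2t}\,dt\geq 2f(0)\int_0^{\infty}e^{-2t}\,dt=f(0).
\]
Because the projection $\tilde\SSS\to\SSS$ does not increase distances, $f(0)=d_{\tilde\SSS}(\tilde\gamma_1(0),\tilde\gamma_2(0))\geq d_\SSS(\gamma_1(0),\gamma_2(0))$, so each integrand in the infimum defining $d_{\GS}(\gamma_1,\gamma_2)$ is at least $d_\SSS(\gamma_1(0),\gamma_2(0))$. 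Taking the infimum over all lifts gives $d_\SSS(\gamma_1(0),\gamma_2(0))\leq d_{\GS}(\gamma_1,\gamma_2)$.

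For the second assertion I would apply the first part to the geodesics $g_s\gamma_1$ and $g_t\gamma_2$, using $(g_s\gamma_1)(0)=\gamma_1(s)$ and $(g_t\gamma_2)(0)=\gamma_2(t)$, which immediately yields $d_\SSS(\gamma_1(s),\gamma_2(t))\leq d_{\GS}(g_s\gamma_1,g_t\gamma_2)$. There is no substantial obstacle here: the only point needing care is the justification of the convexity of $f$, where the $\CAT(0)$ hypothesis on $\tilde\SSS$ is precisely what is used, and the observation that the estimate is carried out for a fixed pair of lifts before passing to the infimum, so no measurable selection of lifts is required.
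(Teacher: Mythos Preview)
Your argument is correct; in fact you obtain the sharper inequality $d_\SSS(\gamma_1(0),\gamma_2(0))\leq d_{\GS}(\gamma_1,\gamma_2)$. The paper does not supply its own proof of this lemma but simply cites \cite[Lemma~2.8]{CLT}, so there is no in-paper argument to compare against. Your use of the convexity of $t\mapsto d_{\tilde\SSS}(\tilde\gamma_1(t),\tilde\gamma_2(t))$ in the $\CAT(0)$ universal cover is a clean way to see the estimate and explains why the factor $2$ in the stated lemma is not sharp; the cited source presumably uses a coarser bound (e.g.\ one valid in a general metric setting without the $\CAT(0)$ hypothesis). Either constant suffices for every application in the present paper.
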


Conversely, if two geodesics are close in $\SSS$ for a significant interval of time surrounding zero, then they are close in $\GS$:

\begin{lemma}[\cite{CLT}, Lemma 2.11]\label{lem:shadow in S}
Let $\eps$ be given and $a<b$ arbitrary. There exists $T=T(\eps)>0$ such that if $d_\SSS(\gamma_1(t),\gamma_2(t))<\eps/2$ for all $t\in[a-T,b+T]$, then $d_{\GS}(g_t\gamma_1,g_t\gamma_2)<\eps$ for all $t\in[a,b]$. For small $\eps$, we can take $T(\eps)= -  \log(\eps)$.
\end{lemma}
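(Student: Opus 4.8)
\textbf{Proof proposal for Lemma~\ref{lem:shadow in S}.}

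The plan is to exploit the exponential weight $e^{-2|t|}$ in the definition of $d_{\GS}$ to split the defining integral into a ``bulk'' part, where the two geodesics are close in $\tilde\SSS$ by hypothesis, and two ``tail'' parts, where we have no control over the distance but the weight is tiny. First I would fix lifts: given $\gamma_1,\gamma_2$ with $d_\SSS(\gamma_1(t),\gamma_2(t))<\eps/2$ on $[a-T,b+T]$, choose a point where they are close (say near a common value of $t$) and lift to $\tilde\gamma_1,\tilde\gamma_2$ in $\GtS$ so that $d_{\tilde\SSS}(\tilde\gamma_1(t),\tilde\gamma_2(t))$ equals $d_\SSS(\gamma_1(t),\gamma_2(t))$ on the whole interval $[a-T,b+T]$ — this is possible because the covering map is a local isometry and the interval is connected, so a single choice of lift tracks the downstairs distance as long as it stays below the injectivity radius; since $\eps/2$ is small this is fine. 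For the tails, I would use that geodesics are unit speed in $\tilde\SSS$: for $t$ outside $[a-T,b+T]$, $d_{\tilde\SSS}(\tilde\gamma_1(t),\tilde\gamma_2(t)) \le d_{\tilde\SSS}(\tilde\gamma_1(a-T),\tilde\gamma_2(a-T)) + 2|t-(a-T)|$ on the left tail (and symmetrically on the right), which is at most a linear function $C + 2|t|$ for an absolute constant $C$ depending on $a,b$.

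The key estimate is then, for any fixed $t_0\in[a,b]$,
\begin{equation*}
d_{\GS}(g_{t_0}\gamma_1,g_{t_0}\gamma_2) \le \int_{-\infty}^{\infty} d_{\tilde\SSS}(\tilde\gamma_1(t_0+s),\tilde\gamma_2(t_0+s))\, e^{-2|s|}\, ds.
\end{equation*}
Splitting the $s$-integral according to whether $t_0+s$ lies in $[a-T,b+T]$ or not: on the central part the integrand is bounded by $(\eps/2)\int_{\mathbb R} e^{-2|s|}\,ds = \eps/2$; on each tail the integrand grows at most linearly in $|s|$ but $|s|$ is at least $T$ there (since $t_0\in[a,b]$ and the complement of $[a-T,b+T]$ is at distance $\ge T$ from $[a,b]$), so the tail contributes something like $\int_T^\infty (C'+2s)e^{-2s}\,ds = O(T e^{-2T})$. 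Choosing $T$ large enough — and one checks $T(\eps) = -\log\eps$ works for small $\eps$, since then $Te^{-2T} = -\log(\eps)\cdot \eps^2$ which is $o(\eps)$ — makes the two tail contributions together less than $\eps/2$, giving $d_{\GS}(g_{t_0}\gamma_1,g_{t_0}\gamma_2) < \eps$ as required. Since the infimum defining $d_{\GS}$ is over all lifts, our particular choice only gives an upper bound, which is all we need.

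The main obstacle, and the only genuinely delicate point, is justifying that a \emph{single} choice of lifts $\tilde\gamma_1,\tilde\gamma_2$ realizes $d_{\tilde\SSS}(\tilde\gamma_1(t),\tilde\gamma_2(t)) = d_\SSS(\gamma_1(t),\gamma_2(t))$ simultaneously for all $t$ in the long interval $[a-T,b+T]$ — a priori the downstairs geodesic between $\gamma_1(t)$ and $\gamma_2(t)$ could fail to lift to the segment $[\tilde\gamma_1(t),\tilde\gamma_2(t)]$. This is handled by a continuity/connectedness argument: the set of $t$ for which the lifted segment projects isometrically to the downstairs minimizing segment is open and closed in $[a-T,b+T]$ (closedness uses that the distance stays strictly below, say, twice the injectivity radius of $\SSS$, which holds once $\eps$ is small relative to that radius — this is where the restriction to small $\eps$ in the statement is really used), and it is nonempty by the choice of basepoint for the lift. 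Everything else is the routine integral splitting sketched above; I would also remark that the constant in $T(\eps)=-\log\eps$ is not optimized and any $T$ with $Te^{-2T}\le \eps/C$ suffices.
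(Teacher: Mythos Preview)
The paper does not give its own proof of this lemma; it simply quotes the statement from \cite{CLT} and moves on. So there is nothing in the paper to compare against directly. Your argument is the standard one and is correct: split the defining integral for $d_{\GS}$ into the central interval (contributing at most $(\eps/2)\int_{\mathbb R}e^{-2|s|}\,ds=\eps/2$) and two tails at $|s|\ge T$, use unit speed of geodesics to bound the tail integrand linearly, and choose $T$ so the tails together contribute less than $\eps/2$.

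Two small remarks. First, your tail estimate is slightly loose: if you anchor the linear bound at the edge of the controlled interval (where the distance is still $<\eps/2$) rather than at $s=0$, the left tail is bounded by
\[
\int_0^\infty\bigl(\tfrac{\eps}{2}+2u\bigr)e^{-2(T+u)}\,du=e^{-2T}\bigl(\tfrac{\eps}{4}+\tfrac12\bigr),
\]
so the true order is $O(e^{-2T})$ rather than $O(Te^{-2T})$. This changes nothing, since with $T=-\log\eps$ both are $o(\eps)$, but it explains why the sufficient condition is really $e^{-2T}\lesssim\eps$, not $Te^{-2T}\lesssim\eps$. Second, your handling of the lifting issue is fine and is indeed the only point requiring care: once $\eps/2$ is below the injectivity radius, lift $\gamma_1$ arbitrarily and let $\tilde\gamma_2(t)$ be the unique preimage of $\gamma_2(t)$ in the $\eps/2$-ball about $\tilde\gamma_1(t)$; continuity of this assignment and the local isometry property force it to be a lift of $\gamma_2$, and then $d_{\tilde\SSS}(\tilde\gamma_1(t),\tilde\gamma_2(t))=d_\SSS(\gamma_1(t),\gamma_2(t))$ on the whole interval. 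For $\eps$ not small relative to the injectivity radius the statement still holds (one can take $T(\eps)$ constant once $\eps$ exceeds a fixed threshold), but the paper only ever invokes the lemma at small scales, so your restriction is harmless.
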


A similar, and more specialized result which we will need later in the paper (see the proof of Proposition~\ref{prop: Bowen property}) is the following

\begin{lemma}\label{lem:exponentially close}
Suppose that $d_\SSS(\gamma_1(t),\gamma_2(t))=0$ for all $t\in[a,b]$. Then, for all $t\in[a,b]$, $d_{\GS}(g_t\gamma_1, g_t\gamma_2)\leq e^{-2\min\{ |t-a|,|t-b| \}}$.
\end{lemma}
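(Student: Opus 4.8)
\textbf{Proof plan for Lemma~\ref{lem:exponentially close}.}

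The plan is to use the definition of $d_{\GS}$ directly as an infimum over lifts, and to produce a good pair of lifts of $\gamma_1$ and $\gamma_2$ that makes the integral small. First I would fix $t \in [a,b]$ and recall that
\[
d_{\GS}(g_t\gamma_1, g_t\gamma_2) = \inf_{\tilde\gamma_1,\tilde\gamma_2} \int_{-\infty}^\infty d_{\tilde\SSS}\big(\tilde\gamma_1(t+s), \tilde\gamma_2(t+s)\big)\, e^{-2|s|}\, ds,
\]
where the infimum runs over all lifts. Since $d_\SSS(\gamma_1(r),\gamma_2(r)) = 0$ for all $r \in [a,b]$, the two geodesics agree pointwise on $[a,b]$ in $\SSS$; hence there is a common lift, i.e. lifts $\tilde\gamma_1, \tilde\gamma_2$ with $\tilde\gamma_1(r) = \tilde\gamma_2(r)$ for all $r \in [a,b]$. (One picks a point $p = \gamma_1(r_0) = \gamma_2(r_0)$ for some $r_0 \in [a,b]$, lifts it to a single point $\tilde p \in \tilde\SSS$, and takes the lifts of $\gamma_1, \gamma_2$ through $\tilde p$; on $[a,b]$ these lifts trace the same path because a geodesic in $\tilde\SSS$ is determined by its restriction to any subinterval and the projection is a local isometry.) With this choice, the integrand $d_{\tilde\SSS}(\tilde\gamma_1(t+s), \tilde\gamma_2(t+s))$ vanishes whenever $t+s \in [a,b]$, i.e. whenever $s \in [a-t, b-t]$.

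Next I would bound the integrand for $s$ outside $[a-t,b-t]$. Both $\tilde\gamma_1$ and $\tilde\gamma_2$ are unit-speed geodesics in $\tilde\SSS$ that coincide on $[a,b]$; in particular $\tilde\gamma_1(a) = \tilde\gamma_2(a)$ and $\tilde\gamma_1(b) = \tilde\gamma_2(b)$. By the triangle inequality, for $r \geq b$ we have $d_{\tilde\SSS}(\tilde\gamma_1(r), \tilde\gamma_2(r)) \leq d_{\tilde\SSS}(\tilde\gamma_1(r), \tilde\gamma_1(b)) + d_{\tilde\SSS}(\tilde\gamma_1(b), \tilde\gamma_2(r)) = (r-b) + (r-b) = 2(r-b)$, using that each $\tilde\gamma_i$ is unit-speed so $d_{\tilde\SSS}(\tilde\gamma_i(r), \tilde\gamma_i(b)) = r-b$. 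Similarly for $r \leq a$, $d_{\tilde\SSS}(\tilde\gamma_1(r), \tilde\gamma_2(r)) \leq 2(a-r)$. Writing $r = t+s$ and noting that $r \geq b$ corresponds to $s \geq b-t$ and $r \leq a$ to $s \leq a-t$, the integral becomes
\[
d_{\GS}(g_t\gamma_1, g_t\gamma_2) \leq \int_{-\infty}^{a-t} 2(a-t-s)\, e^{-2|s|}\, ds + \int_{b-t}^{\infty} 2(s-(b-t))\, e^{-2|s|}\, ds.
\]
Since $t \in [a,b]$, we have $a - t \leq 0$ and $b-t \geq 0$, so on the first integral $|s| = -s$ for $s \leq a-t \leq 0$, and on the second $|s| = s$ for $s \geq b-t \geq 0$. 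Substituting $u = s - (a-t)$ (resp. $u = s - (b-t)$) and writing $\alpha = t - a \geq 0$, $\beta = b - t \geq 0$, each integral reduces to a multiple of $\int_0^\infty 2u\, e^{-2(u+\alpha)}\, du = e^{-2\alpha}/2$ (resp. $e^{-2\beta}/2$), giving the bound $\tfrac12 e^{-2(t-a)} + \tfrac12 e^{-2(b-t)} \leq e^{-2\min\{t-a,\, b-t\}}$, which is exactly the claimed estimate (noting $|t-a| = t-a$ and $|t-b| = b-t$ on $[a,b]$).

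The argument is essentially routine; the only point requiring a little care is the construction of the \emph{common} lift — one must be sure that agreeing pointwise in $\SSS$ on a nondegenerate interval really does lift to agreeing pointwise in $\tilde\SSS$, which follows from the fact that the covering projection $\tilde\SSS \to \SSS$ is a local isometry together with the unique-geodesic-extension property in the $\CAT(0)$ space $\tilde\SSS$. After that, the only mild subtlety is keeping track of signs in $|s|$ when splitting the integral, which is clean precisely because $t$ lies between $a$ and $b$. I do not expect any genuine obstacle here; the lemma is a sharpened, zero-distance version of Lemma~\ref{lem:shadow in S}.
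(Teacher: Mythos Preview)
Your proof is correct and takes essentially the same approach as the paper: both use the definition of $d_{\GS}$, note the integrand vanishes on $[a,b]$, bound it outside by twice the distance to the nearest endpoint via unit speed, and compute the resulting integrals to get $\tfrac12 e^{-2|t-a|}+\tfrac12 e^{-2|t-b|}\le e^{-2\min\{|t-a|,|t-b|\}}$. The paper is terser and does not spell out the common-lift step, but the argument is identical.
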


\begin{proof}
For any $x\geq 0$, $\int_x^\infty (s-x)e^{-2s}ds = \frac{1}{4}e^{-2x}$. In the setting of the Lemma, since the distance between the geodesics is zero on $[a,b]$ and since geodesics move at unit speed,
\[ d_{\GS}(g_t\gamma_1,g_t\gamma_2) \leq \int_{-\infty}^a 2(a-s)e^{-2|t-s|}ds + \int_b^\infty 2(s-b)e^{-2|t-s|}ds.\] 
Quick changes of variables show that this is equal to $\int_{|t-a|}^\infty2(s-|t-a|)e^{-2s}ds+\int_{|t-b|}^\infty2(s-|t-b|)e^{-2s}ds=\frac{1}{2}(e^{-2|t-a|}+e^{-2|t-b|})$, and the Lemma follows.
\end{proof}

The geodesic flow has the following Lipschitz property:

\begin{lemma}[\cite{CLT2}, Lemma 2.5]\label{lem:Lipschitz}
Fix a $T>0$. Then, for any $t\in[0,T]$, and any pair of geodesics $\gamma,\xi \in \GS$,
\[ d_{\GS}(g_t \gamma, g_t \xi) < e^{2T} d_{\GS}(\gamma,\xi). \]
\end{lemma}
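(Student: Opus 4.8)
The statement to prove is Lemma~\ref{lem:Lipschitz}: for fixed $T>0$, any $t\in[0,T]$, and any pair $\gamma,\xi\in\GS$, we have $d_{\GS}(g_t\gamma,g_t\xi)<e^{2T}d_{\GS}(\gamma,\xi)$.

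\textbf{Plan of proof.} The plan is to work directly from the definition \eqref{metric in GS} of $d_{\GS}$ as an infimum over lifts of weighted integrals. First I would fix lifts $\tilde\gamma,\tilde\xi$ of $\gamma,\xi$ to $\GtS$ and observe that lifts of $g_t\gamma$ and $g_t\xi$ are exactly $g_t\tilde\gamma$ and $g_t\tilde\xi$, i.e. the reparametrized curves $s\mapsto\tilde\gamma(s+t)$ and $s\mapsto\tilde\xi(s+t)$. Then I would estimate
\[
\int_{-\infty}^{\infty}d_{\tilde\SSS}(\tilde\gamma(s+t),\tilde\xi(s+t))e^{-2|s|}\,ds
\]
by the change of variables $u=s+t$, turning it into $\int_{-\infty}^\infty d_{\tilde\SSS}(\tilde\gamma(u),\tilde\xi(u))e^{-2|u-t|}\,du$. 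The key elementary inequality is that for $t\in[0,T]$ and all $u\in\mathbb{R}$, one has $e^{-2|u-t|}\leq e^{2T}e^{-2|u|}$; this follows from the triangle inequality $|u|\le|u-t|+|t|\le|u-t|+T$, so $-2|u-t|\le -2|u|+2T$. Plugging this bound into the integrand gives
\[
\int_{-\infty}^\infty d_{\tilde\SSS}(\tilde\gamma(u),\tilde\xi(u))e^{-2|u-t|}\,du \leq e^{2T}\int_{-\infty}^\infty d_{\tilde\SSS}(\tilde\gamma(u),\tilde\xi(u))e^{-2|u|}\,du.
\]
Taking the infimum over all lifts $\tilde\gamma,\tilde\xi$ on the right-hand side yields $e^{2T}d_{\GS}(\gamma,\xi)$, while the left-hand side, after taking the infimum over the same lifts, dominates $d_{\GS}(g_t\gamma,g_t\xi)$ (since every pair of lifts of $\gamma,\xi$ gives rise to a pair of lifts of $g_t\gamma,g_t\xi$, so the infimum defining $d_{\GS}(g_t\gamma,g_t\xi)$ is over a set at least as large). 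This gives $d_{\GS}(g_t\gamma,g_t\xi)\le e^{2T}d_{\GS}(\gamma,\xi)$.

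\textbf{Main obstacle.} The only subtle point — and the step I would be most careful about — is getting the \emph{strict} inequality and the direction of the infimum estimate exactly right. For the infimum comparison, one should note that the map sending a pair of lifts $(\tilde\gamma,\tilde\xi)$ of $(\gamma,\xi)$ to the pair $(g_t\tilde\gamma,g_t\tilde\xi)$ of lifts of $(g_t\gamma,g_t\xi)$ is a bijection onto all such pairs (its inverse is $g_{-t}$), so in fact the infimum defining $d_{\GS}(g_t\gamma,g_t\xi)$ equals $\inf_{\tilde\gamma,\tilde\xi}\int d_{\tilde\SSS}(\tilde\gamma(u),\tilde\xi(u))e^{-2|u-t|}\,du$ exactly, not merely $\le$. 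As for strictness: if $t>0$ then the inequality $e^{-2|u-t|}\le e^{2T}e^{-2|u|}$ is strict for, say, all $u\le 0$ (where $|u-t|=|u|+t>|u|+2T$ fails but $|u-t|=t-u>-u=|u|$ so $e^{-2|u-t|}<e^{-2|u|}\le e^{2T}e^{-2|u|}$ when $t<2T$; and when $t=T$ with strict... ) — I'd simply handle it by noting that when $\gamma\ne\xi$ the integrand $d_{\tilde\SSS}(\tilde\gamma(u),\tilde\xi(u))$ is positive on a set of positive measure and the pointwise inequality $e^{-2|u-t|}\le e^{2T-2|u|}$ is strict there (since $t\le T$ forces $|u-t|\geq |u|-T>-|u|... $ — more cleanly, $2|u-t|\ge 2|u|-2t\ge 2|u|-2T$ with equality only if both $|u-t|=|u|-t$ and $t=T$, a measure-zero set of $u$), yielding strict inequality after integration; and when $\gamma=\xi$ both sides are $0$ and one should instead just state the bound is $\le$, or exclude this trivial case. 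In practice the cleanest writeup is to prove $\le$ in general and observe strictness whenever $d_{\GS}(\gamma,\xi)>0$, which is all that is ever used. No deep geometry of $\tilde\SSS$ is needed — just that geodesics lift to geodesics and the weight function's behavior under translation.
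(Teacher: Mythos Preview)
Your argument is correct and is exactly the standard proof; the paper does not give its own argument here but simply cites \cite{CLT2}, where the same change of variables $u=s+t$ together with the elementary bound $e^{-2|u-t|}\le e^{2T}e^{-2|u|}$ (from $|u|\le|u-t|+T$) is used. Your observation that the map $(\tilde\gamma,\tilde\xi)\mapsto(g_t\tilde\gamma,g_t\tilde\xi)$ is a bijection on pairs of lifts is the right way to pass between the two infima.

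Your caution about strictness is well founded. The argument only yields $\le$ in general: when $t=T$ the pointwise bound $e^{-2|u-t|}\le e^{2T-2|u|}$ is an equality for all $u\ge T$, so if two lifts happen to agree on $(-\infty,T]$ and diverge only afterward (which can occur in this setting, since geodesics may branch at cone points), the integrated inequality is an equality for that pair of lifts. And of course $\gamma=\xi$ gives $0<0$, which is false. In every application in the paper only the non-strict bound is used, so the cleanest statement is $d_{\GS}(g_t\gamma,g_t\xi)\le e^{2T}d_{\GS}(\gamma,\xi)$; your instinct to prove $\le$ and not agonize over $<$ is the right one.
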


We need the following four geometric facts.

\begin{lemma}\label{lem:no big flat}
\begin{itemize}
	\item[(a)] There exists some $d_0>0$ such that $\tilde \SSS$ contains no flat $d_0 \times d_0$ square.
	\item[(b)] There exists some $\eta_0>0$ such that the excess angle at every cone point in $\SSS$ is at least $\eta_0$.
	\item[(c)] There exists some $\ell_0>0$ such that the length of every saddle connection is at least $\ell_0$.
	\item[(d)] There exists some $\theta_0>0$ such that the excess angle at every cone point in $\SSS$ is at most $\theta_0$.
\end{itemize}
\end{lemma}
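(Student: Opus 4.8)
All four statements are finiteness/compactness arguments exploiting that $\SSS$ is compact with only finitely many cone points. I will treat them one at a time, but the common engine is: pass to a local picture, use that each relevant geometric quantity (excess angle, saddle connection length, flat-square size) is controlled by the finite combinatorial data of the cone points, and conclude a uniform bound.

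\textbf{Parts (b) and (d).} These are immediate. Let $Con=\{p_1,\dots,p_k\}$ be the (finite) set of cone points, and set $\eta_i := \mathcal L(p_i)-2\pi > 0$ for each $i$. Since the set $\{\eta_1,\dots,\eta_k\}$ is finite and each term is strictly positive, $\eta_0 := \min_i \eta_i > 0$ works for (b) and $\theta_0 := \max_i \eta_i$ works for (d). (If $Con=\emptyset$ both statements are vacuous, but then $\SSS$ is a compact flat surface, contradicting genus $\geq 2$ by Gauss--Bonnet, so in fact $Con\neq\emptyset$.)

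\textbf{Part (c).} I would argue that the set of lengths of saddle connections has a positive infimum. First, around each cone point $p_i$ there is an embedded metric ball $B(p_i,r_i)$ of some radius $r_i>0$ containing no other cone point (possible by finiteness and the fact that distinct cone points are at positive distance); let $r := \min_i r_i > 0$. Any saddle connection joining $p_i$ to $p_j$ with $p_i\neq p_j$ must leave $B(p_i,r)$, hence has length $\geq r$. The remaining case is a saddle connection from a cone point $p_i$ to itself. Here I would use a compactness/limiting argument: if there were saddle connections $\gamma_n$ from cone points to themselves with lengths $\to 0$, then (up to subsequence and using compactness of $\SSS$ and finiteness of $Con$) they concentrate near a single cone point $p$; rescaling, or simply working inside the embedded ball $B(p,r)$ where the metric is flat away from $p$, a geodesic segment leaving $p$ and returning to $p$ while staying in a punctured flat disc of radius $r$ forces the total length to be bounded below — in a flat cone of angle $\mathcal L(p)$, a geodesic that departs $p$ cannot return to $p$ at all unless it wraps, and wrapping forces it to exit any small ball. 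So no such short saddle connection exists, giving $\ell_0>0$. The main subtlety here is the self-saddle-connection case; I expect it is handled cleanly by noting that a geodesic emanating from the apex of a flat cone, developed into the plane, is a straight ray, which never returns to the apex.

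\textbf{Part (a).} This is the step I expect to be the real obstacle, since it is a genuinely global statement about $\tilde\SSS$ rather than a local count. The plan: suppose toward a contradiction that for every $n$ there is a flat $\tfrac1n\times\tfrac1n$... no — rather, suppose $\tilde\SSS$ contains arbitrarily large flat squares, say a flat $n\times n$ square $Q_n$ for all $n$. A flat square is a totally geodesic isometrically embedded copy of $[0,n]^2\subset\mathbb R^2$; in particular its interior contains no cone points (a cone point of excess angle would obstruct flatness), so $Q_n$ projects to a region of $\SSS$ meeting $Con$ only possibly along its boundary. Now I would invoke a flat-strip / area argument: a flat square of side $n$ has area $n^2$, but $\SSS$ is compact of finite area $A$, so the projection $Q_n\to\SSS$ is far from injective, and one extracts either a flat torus or a flat cylinder immersed in $\SSS$ of unbounded area, forcing (by Gauss--Bonnet, since $\SSS$ is flat away from the $p_i$ and has $\chi(\SSS) = -\sum_i \eta_i/(2\pi) < 0$) a contradiction: a flat subsurface carries zero curvature but $\SSS$'s total curvature is concentrated at finitely many points and is negative. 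Alternatively, and perhaps more robustly, I would use that $\tilde\SSS$ is $\CAT(0)$ with finitely many cone points of uniformly large excess angle $\geq\eta_0$ per the just-proven (b): a large flat square, being flat, must avoid all cone-point preimages, and a convexity/divergence-of-geodesics argument shows that two geodesic rays bounding a flat half-strip in $\tilde\SSS$ must stay at bounded distance forever, which (again by compactness of $\SSS$ and the $\eta_0$-bound) is impossible past a definite scale $d_0$. I would pick whichever of these two routes the paper's available machinery supports most directly — given that Lemma~\ref{lem:no big flat} is stated without the authors yet having flat-strip structure theory at hand, I lean toward the Gauss--Bonnet area argument as the cleanest. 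The crux is ruling out large flat pieces using that all the negative curvature is concentrated and strictly negative in total.
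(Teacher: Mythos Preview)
Your arguments for (b), (c), and (d) are correct and essentially what the paper has in mind; the paper's proof is a single sentence invoking compactness of $\SSS$ and the fact that genus $\geq 2$ forces $Con\neq\emptyset$ (via Gauss--Bonnet), so you have simply unpacked the details. One small cleanup for (c): the self-saddle-connection case is handled most directly by lifting. Any saddle connection lifts to a geodesic segment in $\tilde\SSS$ between two \emph{distinct} points of $\widetilde{Con}$; since $\widetilde{Con}$ is the preimage of a finite set under a proper covering map of a compact space, it is uniformly discrete, so there is a positive minimum distance between any two of its points.

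For (a), however, both of your proposed routes are more complicated than necessary and, as written, neither is complete. In the first, the step ``one extracts either a flat torus or a flat cylinder immersed in $\SSS$ of unbounded area'' is not justified (and unbounded area in a compact surface is a contradiction you would need to set up carefully). In the second, you assert without proof that parallel rays in $\tilde\SSS$ cannot stay boundedly close past a definite scale; this is essentially what you are trying to prove. The direct argument the paper has in mind is this: a flat square in $\tilde\SSS$ contains no cone points in its interior. If $\tilde\SSS$ contained flat $n\times n$ squares for all $n$, then taking $n$ larger than twice the diameter of $\SSS$, such a square contains a ball of radius exceeding $\operatorname{diam}(\SSS)$ and hence contains a fundamental domain for $\SSS$. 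But every fundamental domain meets $\widetilde{Con}$ (since $Con\neq\emptyset$), a contradiction. This is why the paper can dispose of (a) with the phrase ``compactness of $\SSS$ and $Con\neq\emptyset$.''
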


\begin{proof}
These follow immediately from the compactness of $\SSS$ and the fact that $\SSS$ having genus at least two implies $Con \neq \emptyset.$
\end{proof}

We note here that $\Sing$ is the source of the non-expansivity for our geodesic flow:

\begin{lemma}\label{lemma:NE in Sing}
For all $\eps>0$ less than half the injectivity radius of $\SSS$, $\NE(\eps)\subset \Sing$.
\end{lemma}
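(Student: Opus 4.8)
The goal is to show that if $\gamma \notin \Sing$, then $\gamma \notin \NE(\eps)$ for $\eps$ small, i.e., the only geodesics staying within $\eps$ of $\gamma$ for all time lie on the orbit of $\gamma$. The plan is to use the hypothesis $\gamma \notin \Sing$: by Definition~\ref{defn:Sing}, there is some time $t_0$ at which $\gamma$ turns by an angle $\theta(\tilde\gamma, t_0) \neq \pi$ at a cone point $p = \gamma(t_0) \in Con$. Replacing $\gamma$ by $g_{t_0}\gamma$ (which is harmless since $\NE(\eps)$ and $\Sing$ are both $g_t$-invariant, so it suffices to treat $t_0 = 0$), we may assume $\gamma$ makes a genuine turn at the cone point $\gamma(0)$.

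The first step is to pass to the universal cover and set up the picture there. Lift $\gamma$ to $\tilde\gamma$ and suppose $\xi \in \Gamma_\eps(\gamma)$, so $d_{\GS}(g_t\gamma, g_t\xi) \leq \eps$ for all $t$; choose a lift $\tilde\xi$ realizing closeness, so that by Lemma~\ref{lem:closeness in S and GS} we get $d_{\tilde\SSS}(\tilde\gamma(t), \tilde\xi(t)) \leq 2\eps$ for all $t \in \mathbb{R}$. Thus $\tilde\xi$ is a bi-infinite geodesic in $\tilde\SSS$ that stays within $2\eps$ of $\tilde\gamma$ for all time. The key geometric input is that in a $\CAT(0)$ space, two geodesics that remain a bounded distance apart for all time bound a flat strip (the flat strip theorem, see \cite{bh} Ch. II.2); alternatively and more concretely, one can argue directly that since $\tilde\gamma$ makes a turn of angle $\neq \pi$ at the cone point $\tilde p = \tilde\gamma(0)$ with excess angle at least $\eta_0$ (Lemma~\ref{lem:no big flat}(b)), a parallel geodesic $\tilde\xi$ on the ``inside'' of the turn would be forced to pass through $\tilde p$ as well, or else, on the ``outside'' of the turn, the distance $d_{\tilde\SSS}(\tilde\gamma(t),\tilde\xi(t))$ would grow linearly (with rate controlled by the turning deficit $|\pi - \theta|$, hence bounded below by the uniform $\eta_0$), eventually exceeding $2\eps$ and giving a contradiction once $\eps$ is small.

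The second step pins down $\tilde\xi$ exactly. Having ruled out the outside, $\tilde\xi$ must pass through the cone point $\tilde p$; since $\eps$ is less than half the injectivity radius, $\tilde\xi$ cannot ``cheat'' by going around another lift of a cone point within distance $2\eps$, as there is none. So near time $0$, $\tilde\xi$ agrees with a geodesic through $\tilde p$ that stays $2\eps$-close to $\tilde\gamma$; examining the space of directions $S_{\tilde p}\tilde\SSS$ and the requirement that the geodesic through $\tilde p$ stay close to $\tilde\gamma$ on both sides for all time, one concludes $\tilde\xi$ must actually coincide with $\tilde\gamma$ as an unparametrized geodesic. Since $\tilde\xi(t)$ is within $2\eps$ of $\tilde\gamma(t)$ at each $t$ and they move at unit speed along the same geodesic line, $\tilde\xi = g_s\tilde\gamma$ for some $|s|$ small, hence $\xi = g_s\gamma \in g_{[-s,s]}\gamma$. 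Therefore $\gamma \notin \NE(\eps)$, proving $\NE(\eps) \subset \Sing$.

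The main obstacle I expect is the geometric heart of Step~1: carefully quantifying the statement that ``a geodesic parallel to $\tilde\gamma$ on the outside of a cone-point turn of deficit $\geq \eta_0$ must drift apart at a linear rate.'' This requires working with the $\CAT(0)$ comparison geometry at the cone point and using the uniform lower bound $\eta_0$ on the excess angle (Lemma~\ref{lem:no big flat}(b)) together with the absence of flat $d_0 \times d_0$ squares (Lemma~\ref{lem:no big flat}(a)) to ensure the drift genuinely accumulates rather than being cancelled out by a compensating turn elsewhere; one must also use that $\eps <$ half the injectivity radius so that the local picture in $\tilde\SSS$ near $\tilde p$ faithfully reflects the situation on $\SSS$ and no other cone points interfere. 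Once this quantitative ``drifting'' estimate is in hand, choosing $\eps$ small enough (depending only on $\eta_0$, $d_0$, and the injectivity radius, all of which are uniform by compactness) to force the contradiction is routine.
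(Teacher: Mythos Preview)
Your proposal mentions the Flat Strip Theorem as one option, and that is exactly the paper's argument in its entirety: if $\gamma\in\NE(\eps)$, take $\xi\in\Gamma_\eps(\gamma)$ not in the orbit of $\gamma$; Lemma~\ref{lem:closeness in S and GS} gives $d_{\tilde\SSS}(\tilde\gamma(t),\tilde\xi(t))\le 2\eps$ for all $t$ (the injectivity-radius hypothesis is used only to pick consistent lifts), so the Flat Strip Theorem produces a nondegenerate strip bounded by $\tilde\gamma$, forcing $|\theta(\tilde\gamma,t)|=\pi$ for all $t$ and hence $\gamma\in\Sing$. The paper does not take the contrapositive and does not do any local cone-point analysis.

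Your hands-on alternative is essentially a special-case rederivation of the Flat Strip Theorem and carries some extra baggage. Two points deserve flagging. First, your final paragraph speaks of ``choosing $\eps$ small enough (depending only on $\eta_0$, $d_0$, and the injectivity radius).'' But the lemma asserts $\NE(\eps)\subset\Sing$ for \emph{every} $\eps$ below half the injectivity radius; you are not free to shrink $\eps$ further. Fortunately your linear-drift claim does not actually require small $\eps$ (if the distance grows linearly it eventually exceeds any fixed $2\eps$), so this is a confusion in the framing rather than a fatal error. Second, the inside/outside dichotomy is not quite the right picture: when $|\theta(\tilde\gamma,0)|>\pi$, the angles on \emph{both} sides of $\tilde\gamma$ at $\tilde p$ may exceed $\pi$, so there is no ``inside'' where a parallel line can sit. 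Once you accept that $\tilde\xi$ is forced through $\tilde p$, showing it coincides with $\tilde\gamma$ globally still needs the convexity argument that two geodesic rays from the same point at bounded distance are equal---again a piece of the Flat Strip Theorem. The paper simply cites the theorem and avoids all of this.
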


\begin{proof}
Suppose $\gamma \in \NE(\eps)$ and that $\eps$ is smaller than half the injectivity radius of $\SSS$. Then, there exists $\xi \in \GS$ which is not in the orbit of $\gamma$ such that $d_{\GS}(g_t\gamma,g_t\xi)\leq \eps$ for all $t\in\mathbb R$. By Lemma~\ref{lem:closeness in S and GS}, $d_{\SSS}(\gamma(t),\xi(t))\leq 2\eps$ for all $t\in\mathbb R$. In particular, using our assumption on $\eps$, there exist lifts $\tilde\gamma$ and $\tilde\xi$ such that $d_{\tilde\SSS}(\tilde\gamma(t),\tilde\xi(t))\leq 2\eps$ for all $t\in\mathbb R$. By the Flat Strip Theorem (\cite[Corollary 5.8 (ii)]{Ballmann}), there is an isometric embedding $\mathbb{R}\times [a,b] \to \tilde\SSS$ sending $\mathbb{R}\times \{a\}$ to the image of $\tilde \gamma$ and $\mathbb{R}\times \{b\}$ to the image of $\tilde \xi$. 

Since $\tilde \xi$ is not in the orbit of $\tilde \gamma$, we must have $a\neq b$ and the isometrically embedded strip is non-degenerate. But this immediately implies that for all $t$, $|\theta(\tilde \gamma, t)|=\pi$ as $\tilde\gamma$ always turns at angle $\pi$ on the side to which the embedded flat strip lies. Therefore, $\gamma \in \Sing$.
\end{proof}

Recall that a flow is called entropy-expansive if for sufficiently small $\eps$, $\sup \{ h_{top}(g_t|_{\Gamma_\eps(\gamma)}) \mid \gamma\in \GS \} =0$.

\begin{lemma}[\cite{R19}, Lemma 20]\label{lem:entropy-expansive}
The geodesic flow in our setting is entropy-expansive.
\end{lemma}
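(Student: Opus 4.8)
The final statement to prove is Lemma~\ref{lem:entropy-expansive}: the geodesic flow on $\GS$ is entropy-expansive, i.e., for sufficiently small $\eps>0$,
\[
\sup_{\gamma\in\GS} h_{top}\big(g_t|_{\Gamma_\eps(\gamma)}\big)=0.
\]
Although the excerpt cites Ricks \cite[Lemma 20]{R19}, let me sketch how I would prove it directly in this setting.

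\textbf{Approach.} The plan is to show that for $\eps$ smaller than, say, a quarter of the injectivity radius, each dynamical ball $\Gamma_\eps(\gamma)$ is contained in a flat strip (possibly degenerate) through $\gamma$, and that the geodesic flow restricted to a flat strip carries zero topological entropy because it is essentially a one-parameter family of parallel translates — a one-dimensional, equicontinuous system. First I would take $\xi\in\Gamma_\eps(\gamma)$; by Lemma~\ref{lem:closeness in S and GS}, $d_{\tilde\SSS}(\tilde\gamma(t),\tilde\xi(t))\le 2\eps$ for all $t$ for suitable lifts, so by the Flat Strip Theorem (as used already in the proof of Lemma~\ref{lemma:NE in Sing}) the images of $\tilde\gamma$ and $\tilde\xi$ bound an isometrically embedded flat strip $\mathbb{R}\times[a_\xi,b_\xi]\hookrightarrow\tilde\SSS$ with $|b_\xi-a_\xi|\le 2\eps$ (or $\tilde\xi$ lies in the orbit of $\tilde\gamma$). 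Taking the union over all $\xi\in\Gamma_\eps(\gamma)$, one gets a maximal flat strip $\mathbb{R}\times[\alpha,\beta]$ containing all of them, with width at most $4\eps$; every $\xi\in\Gamma_\eps(\gamma)$ is (a reparametrization-free lift of) a horizontal line $\mathbb{R}\times\{c\}$ in this strip, up to the deck-group identification. Here I'd need to be a little careful that distinct $\xi$'s give compatible strips — but two flat strips sharing the geodesic $\tilde\gamma$ and lying within bounded distance of it must lie in a common flat strip, since $\tilde\SSS$ is $\CAT(0)$ and flats through a common line amalgamate.

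\textbf{Key steps.} (1) Reduce $\Gamma_\eps(\gamma)$ to a subset of the set of geodesics lying in a single flat strip $R_\gamma=\mathbb{R}\times[\alpha,\beta]$ of width $O(\eps)$ through $\gamma$, using Lemma~\ref{lem:closeness in S and GS} and the Flat Strip Theorem. (2) Observe that the geodesics contained in $R_\gamma$, together with the geodesic flow, form a system that is a factor of (or topologically conjugate to) translation flow on a flat cylinder or torus piece: the flow is $g_t(\text{line }\mathbb{R}\times\{c\})=\text{line}$ shifted by $t$ in the first coordinate, and two such lines at heights $c_1,c_2$ stay exactly at $d_{\tilde\SSS}$-distance $|c_1-c_2|$ for all time, hence by Lemma~\ref{lem:exponentially close}-type estimates they stay within $O(|c_1-c_2|)$ in $d_{\GS}$ for all time. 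So $g_t$ restricted to $\Gamma_\eps(\gamma)$ is \emph{equicontinuous} (an isometry-like flow after reparametrization), and equicontinuous flows have zero topological entropy. (3) Since the bound is uniform in $\gamma$ (the width is $\le 4\eps$ regardless of $\gamma$, and the equicontinuity modulus depends only on $\eps$), conclude $\sup_\gamma h_{top}(g_t|_{\Gamma_\eps(\gamma)})=0$.

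\textbf{Main obstacle.} The subtle point — and the step I expect to be the real work — is verifying that $\Gamma_\eps(\gamma)$ genuinely sits inside a \emph{single} flat strip, rather than a union of strips through $\gamma$ at different ``angles.'' A priori, $\xi$ could branch away from $\gamma$ at a cone point. The resolution is that if $\tilde\xi$ stays within $2\eps$ of $\tilde\gamma$ for \emph{all} time (both forward and backward), then it cannot cross a cone point transversally relative to $\gamma$ — the branching geodesics through a cone point spread linearly and would escape the $2\eps$-neighborhood — so $\tilde\xi$ must be a parallel of $\tilde\gamma$, forcing the flat-strip structure, and all such parallels through $\tilde\gamma$ lie in one maximal strip by convexity of the set of parallel geodesics in a $\CAT(0)$ space (Flat Strip / \cite{Ballmann}). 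Once that containment is pinned down, zero entropy is immediate because a $C^0$-equicontinuous (indeed isometric after time-change) flow on a compact metric space has no exponential orbit-separation. I would cite \cite{Ballmann} for the flat-strip facts and either Bowen's entropy-expansivity criterion or the direct span-count argument for the zero-entropy conclusion; alternatively, and most economically, one simply cites \cite[Lemma 20]{R19} and notes that Ricks's argument applies verbatim since $\tilde\SSS$ is a compact-quotient $\CAT(0)$ space here.
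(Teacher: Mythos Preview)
Your proposal is correct and follows essentially the same approach as the paper: lift to $\tilde\SSS$, use the Flat Strip Theorem to see that every $\xi\in\Gamma_\eps(\gamma)$ is either a small time-shift of $\tilde\gamma$ or a parallel of $\tilde\gamma$ in a flat strip, and conclude that the flow on $\Gamma_\eps(\gamma)$ is isometric and hence has zero topological entropy. The paper's proof is in fact terser than yours---it simply asserts the flat-strip containment without discussing the amalgamation of strips you worry about---and it adds only the remark that Ricks's metric on $\GS$ differs slightly from the one used here, which is why the argument is re-outlined rather than cited verbatim.
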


\begin{proof}
This is proven by Ricks in \cite{R19} for geodesic flow on a CAT(0) space. This covers our setting, but Ricks uses a slightly different definition of the metric on $\GS$ than we do, so we outline the argument here.

Fix $\eps$ less than half the injectivity radius of $\SSS$. Lift $\gamma$ to $\tilde\gamma\in G\tilde \SSS$.  Any geodesics $\xi\in \Gamma_\eps(\gamma)$ lift to $\tilde \xi \in \Gamma_\eps(\tilde \gamma)$. They are either of the form $g_t\tilde \gamma$ for $|t|<\eps$, or are parallel to $\tilde\gamma$ in a flat strip containing $\tilde\gamma$. The flow on $\Gamma_\eps(\tilde \gamma)$ is thus isometric, and so $h_{top}(g_t|_{\Gamma_\eps(\gamma)})=0$.
\end{proof}

\begin{lemma}\label{lem:closed saddle}
Given any closed geodesic $\gamma \subset \SSS$, there is a closed saddle connection path which is homotopic to $\gamma$ and has the same length as $\gamma$.
\end{lemma}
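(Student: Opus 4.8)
The plan is to start with a closed geodesic $\gamma$ in $\SSS$ and deform it through closed geodesics of the same length into a closed saddle connection path. First I would observe that if $\gamma$ already misses $Con$ entirely, then its image lies in the flat part of $\SSS$; a closed geodesic in a flat region is a closed Euclidean geodesic, and in a genus $\geq 2$ surface there is no flat torus, so such a $\gamma$ must be non-contractible and its free homotopy class must contain a representative hitting a cone point (otherwise the whole class would be carried by a flat subsurface, contradicting Lemma~\ref{lem:no big flat}(a)). More precisely, I would argue that the length-minimizer in the free homotopy class of $\gamma$ is a geodesic passing through at least one cone point: among all closed curves homotopic to $\gamma$ there is one of minimal length (by compactness and the Arzel\`a--Ascoli argument on $\tilde\SSS$), it is a closed geodesic, and it must touch $Con$ since a locally flat closed geodesic sweeps out a flat cylinder, which cannot embed isometrically in $\SSS$ for the same genus reason. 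Since $\gamma$ is already a closed geodesic, it has the same length as this minimizer.

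Next, the key step: I would translate $\gamma$ across the flat region it lies in so that it becomes a concatenation of saddle connections. Lift $\gamma$ to $\tilde\gamma$, which is an axis of the corresponding deck transformation $T$. Consider the flat strip (if any) or the maximal flat region that $\tilde\gamma$ can be pushed through while remaining a geodesic axis for $T$ of the same length — this is exactly the picture in Figure~\ref{fig:cone point}: at each cone point a geodesic has a one-parameter family of continuations, and length is preserved as long as the turning angle stays $\geq \pi$ on both sides. Concretely, whenever a geodesic segment of $\gamma$ passes near but not through a cone point, I slide it toward that cone point; this is an isometry on the flat region and does not change length. Iterating and using Lemma~\ref{lem:no big flat}(a) to guarantee that no infinite flat region obstructs this (so the sliding terminates), I arrive at a closed geodesic, still in the same free homotopy class and of the same length, every one of whose segments between consecutive cone-point visits is a saddle connection. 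By Definition~\ref{defn:saddle connection} this is precisely a closed saddle connection path, provided the turning angles remain $\geq \pi$ — which they must, since the result is still a geodesic.

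The main obstacle I anticipate is making the "slide the geodesic onto cone points" step precise and showing it terminates: a priori one could imagine pushing a geodesic across larger and larger flat pieces without ever hitting a cone point. The resolution should be Lemma~\ref{lem:no big flat}(a): there is no flat $d_0 \times d_0$ square in $\tilde\SSS$, so any sufficiently long geodesic segment is within distance $d_0$ of a cone point on either side, which bounds how far one can slide and forces the deformation to stabilize after finitely many steps at a configuration where every segment terminates at a cone point. A cleaner way to organize this, which I would prefer, is to work directly in the universal cover with the axis $\tilde\gamma$ of $T$: the set of $T$-axes of minimal translation length is a closed convex ($T$-invariant) subset of $\tilde\SSS$, hence (being a convex subset of a flat region, by the Flat Strip Theorem as used in Lemma~\ref{lemma:NE in Sing}) a flat strip $\mathbb{R}\times[a,b]$ or a single line; if it is a strip, its boundary lines are axes that cannot be pushed further, and by Lemma~\ref{lem:no big flat}(a) plus $T$-periodicity a boundary axis must hit a cone point and in fact decompose into saddle connections, yielding the desired closed saddle connection path after projecting back to $\SSS$. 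I would also double-check the degenerate sub-case where $\tilde\gamma$ itself already passes through cone points but some segment between them still lies in the interior of a flat region — there the same convexity/sliding argument applies segment by segment.
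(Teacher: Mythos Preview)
Your third-paragraph approach via $\mathrm{Min}(T)$ is essentially the paper's argument, phrased in more structural language: the paper simply slides $\gamma$ perpendicularly to one side until it first meets a cone point, noting that the slide cannot continue forever since that would produce a flat half-plane in $\tilde\SSS$ (hence a fundamental domain in a flat region, contradicting $Con\neq\emptyset$). So that core idea is correct and matches the paper.

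Two things to fix. First, the argument in your opening paragraph is wrong: flat cylinders \emph{do} embed isometrically in $\SSS$---they are ubiquitous in translation surfaces---so a length-minimizer in a free homotopy class need not touch $Con$. In a CAT(0) space every closed geodesic already minimizes length in its class, so $\gamma$ itself is a minimizer, and by hypothesis it may miss $Con$ entirely. The correct statement is that the \emph{set} of minimizers is a flat strip (possibly degenerate), and it is the boundary geodesics of that strip that must hit $Con$; this is precisely your third-paragraph picture, so just drop the first approach.

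Second, the ``degenerate sub-case'' at the end is a non-issue, and seeing why collapses the whole proof: any closed geodesic passing through \emph{at least one} cone point is automatically a closed saddle connection path. The consecutive visits to $Con$ cut it into arcs, each of which is by definition a saddle connection (geodesic segment, endpoints in $Con$, no interior cone points), and the turning angles at the cone points are $\geq\pi$ because the curve is a geodesic. So once the slid geodesic touches a single cone point you are done---there is no segment-by-segment sliding left to do. The paper's proof of that case is literally one sentence.
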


\begin{proof}
Assume $\gamma$ contains a point $p\in Con$. Then the desired closed saddle connection path is the geodesic that starts at $p$ and traces $\gamma$.

Suppose $\gamma \subset \SSS \setminus Con$, and so $\tilde \gamma \subset \tilde\SSS \setminus \widetilde{Con}$. Fix an orientation of $\tilde \gamma$ and consider the variation $\tilde \gamma_r$ of curves given by sliding $\tilde \gamma$ to its left (so the variational field is perpendicular to $\tilde \gamma$ and to its left with respect to $\tilde\gamma$'s orientation). Since $\tilde \gamma \subset \tilde\SSS \setminus \widetilde{Con}$ and $\gamma$ is closed, there is a nonzero lower bound on the distance from $\tilde \gamma$ to $\widetilde{Con}$. Therefore, for all sufficiently small $r$, $\tilde \gamma_r$ is defined. The projections to $\SSS$, $\gamma_r$ and $\gamma$, form the boundary of a flat cylinder in $\SSS$. Thus, $\gamma_r$ is a geodesic with length equal to that of $\gamma$.

Let $r^*$ be the supremum of all $r>0$ for which $\tilde \gamma_\rho$ is defined for all $\rho\in[0,r]$. Note that if no supremum exists, $\tilde \gamma$ bounds a flat half-space in $\tilde \SSS$, which contains a fundamental domain for $\SSS$ since $\SSS$ is compact. This would imply $\SSS$ is flat (with \emph{no} cone points), a contradiction. Therefore letting $r\to r^*$ from below, $\tilde\gamma_r$ limits uniformly on a path, and therefore necessarily a geodesic, containing at least one point in $\widetilde{Con}$ with the same length as $\tilde\gamma$. The image of this curve in $\SSS$ (with appropriate parametrization) is the saddle connection path we want.
\end{proof}

In the proof of Lemma~\ref{existence of a turning closed geodesic} and in some later proofs we will use the following construction.

\begin{definition}\label{defn:cone}
Let $\tilde \gamma$ be a geodesic segment in $\tilde \SSS$ with endpoint $p$. The cone around $\tilde \gamma$ with vertex $p$ and angle $\psi$ is the set of all points $q$ in $\tilde \SSS$ such that the unique geodesic segment joining $p$ and $q$ makes angle $\leq \psi$ with $\tilde \gamma$ at $p$. (In Section~\ref{sec:lambda}, Figure~\ref{fig:case3} shows such cones in the context of the proof of Lemma~\ref{lem:lower semicts}.)
\end{definition}

\begin{lemma}\label{existence of a turning closed geodesic}
For any $\zeta \in Con$ there exists a closed geodesic $\alpha$ passing through $\zeta$ with turning angle greater than $\pi$ at $\zeta$.
\end{lemma}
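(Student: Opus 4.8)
The plan is to construct a closed geodesic loop at $\zeta$ by a shortest-loop argument within a suitable homotopy class, and then to argue that it turns by more than $\pi$ at $\zeta$ because of the excess angle there.

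First I would set up the homotopy class. Since $\SSS$ has genus $\geq 2$, $\pi_1(\SSS,\zeta)$ is nontrivial and in fact contains a free nonabelian subgroup, so pick any nontrivial $[\alpha_0] \in \pi_1(\SSS,\zeta)$. Lift to $\tilde\SSS$: choosing a deck transformation $T$ corresponding to $[\alpha_0]$, consider a lift $\tilde\zeta$ and the point $T\tilde\zeta$. The segment $[\tilde\zeta, T\tilde\zeta]$ projects to a geodesic loop at $\zeta$ in the class of $[\alpha_0]$; more robustly, among all loops at $\zeta$ freely homotopic (rel basepoint) to $\alpha_0$, take one of minimal length — this exists and is realized by the unique geodesic segment $[\tilde\zeta, T\tilde\zeta]$ projected down (or a concatenation if one prefers to minimize over the whole conjugacy class; either works). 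Call the resulting closed geodesic $\alpha$. It passes through $\zeta$, and since it is a genuine geodesic loop, its turning angle at $\zeta$ is $\geq \pi$ measured on each side, i.e. both complementary angles at $\zeta$ between the incoming and outgoing directions are $\geq \pi$. The total angle at $\zeta$ is $\mathcal L(\zeta) = 2\pi + (\text{excess}) \geq 2\pi + \eta_0$ by Lemma~\ref{lem:no big flat}(b).

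The key point is then a pigeonhole on angles: if $\alpha$ turned by exactly $\pi$ at $\zeta$ in the sense of Definition~\ref{defn:turning angle} (i.e. $\theta(\alpha, t_0) = \pi$ where $\alpha(t_0) = \zeta$), then the two complementary angles at $\zeta$ would have to sum to $\mathcal L(\zeta)$, with one of them equal to $\pi$ — but this forces the other to equal $\mathcal L(\zeta) - \pi \geq \pi + \eta_0 > \pi$, which is fine for geodesy but then I should instead choose which "side" to call the turning angle, or perturb. The cleaner approach: a loop that passes straight through ($\theta = \pi$) would have to be locally a straight line through $\zeta$, which at a cone point of angle $> 2\pi$ is impossible to do while being the \emph{shortest} loop in its class if an alternative shorter route exists through $\zeta$. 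Concretely, if $\theta(\alpha,t_0)=\pi$ then both angles at $\zeta$ are $\geq \pi$ with one $=\pi$; I can then modify $\alpha$ by replacing the two geodesic arcs of $\alpha$ near $\zeta$ with a shortcut that bends at $\zeta$ by an angle in $(\pi, \mathcal L(\zeta)-\pi)$ on the larger side — this is still a geodesic (turning angle $\geq \pi$ on the remaining side) and is homotopic to $\alpha$ but strictly shorter, contradicting minimality. Hence $\theta(\alpha, t_0) \neq \pi$, and since $|\theta| \geq \pi$ always for geodesics, we get $|\theta(\alpha,t_0)| > \pi$. Replacing $\alpha$ by its reverse if needed, $\theta(\alpha,t_0) > \pi$, as desired. (I would use the cone construction of Definition~\ref{defn:cone} to make the "shortcut is still a geodesic" step precise, exactly as it is used in Lemma~\ref{lem:lower semicts}.)

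The main obstacle I anticipate is the shortcut/minimality argument: making rigorous that a loop passing straight through a large-angle cone point cannot be length-minimizing in its homotopy class, and that the bent replacement is genuinely shorter and genuinely homotopic. The subtlety is that the "shortcut" must stay in $\tilde\SSS$ and remain a geodesic after projecting — one must check the turning angles at $\zeta$ on \emph{both} sides stay $\geq \pi$ and that no new cone points are introduced in the modified arcs (guaranteed by taking the modification in a small neighborhood of $\zeta$, using Lemma~\ref{lem:no big flat}(c) to ensure $\zeta$'s neighborhood contains no other cone points, or just the discreteness of $Con$). A possible alternative that sidesteps this is an intermediate-value argument: consider the family of geodesic loops at $\zeta$ obtained by varying the outgoing direction over $S_\zeta\SSS$ and using the first-return structure; as the outgoing direction sweeps through the antipode of the incoming one, the turning angle passes continuously (or with a controlled jump) through values exceeding $\pi$, and compactness of $\SSS$ guarantees closure of a geodesic for at least one such direction. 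I would try the minimality argument first since it is shorter, falling back to the sweeping argument if the homotopy bookkeeping becomes awkward.
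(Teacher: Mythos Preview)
Your argument has a genuine gap at two places. First, the projection of $[\tilde\zeta, T\tilde\zeta]$ is a geodesic \emph{loop} at $\zeta$, not automatically a closed geodesic: the two complementary angles at $\zeta$ sum to $\mathcal L(\zeta)>2\pi$, but nothing prevents one of them from being less than $\pi$. Your sentence ``since it is a genuine geodesic loop, its turning angle at $\zeta$ is $\geq\pi$ measured on each side'' is simply false for an arbitrary choice of $[\alpha_0]\in\pi_1(\SSS,\zeta)$. Second, the shortcut step does not work: if one angle is exactly $\pi$, then near $\zeta$ the loop is already locally length-minimizing between $\alpha(t_0-\epsilon)$ and $\alpha(t_0+\epsilon)$; the unique geodesic segment joining these points \emph{is} $\alpha|_{[t_0-\epsilon,t_0+\epsilon]}$, so there is no shorter replacement through $\zeta$. ``Bending at $\zeta$ by a different angle'' means following different geodesic rays out of $\zeta$, which produces a curve in a different based homotopy class, not a shorter competitor in the same class. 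The minimality argument therefore cannot rule out $|\theta|=\pi$, nor does it even guarantee $|\theta|\geq\pi$.

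The paper avoids this by choosing the homotopy class carefully rather than arbitrarily. It fixes a geodesic $\tilde c$ through $\tilde\zeta$ with turning angle $\pi+\beta/2$ (where $\beta$ is the excess angle), builds narrow cones $C_1,C_2$ around its two rays, and then uses compactness of $\SSS$ to find a fundamental domain $\mathcal F\subset C_1$ and a deck transformation $g$ with $g\mathcal F\subset C_2$. Any geodesic from a point of $C_1\setminus\{\tilde\zeta\}$ to a point of $C_2\setminus\{\tilde\zeta\}$ is forced through $\tilde\zeta$ with turning angle $\geq\pi+\beta/4$; in particular the axis of $g$ has this property, and its projection is the desired closed geodesic. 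The point is that the group element is chosen \emph{after} the geometry at $\zeta$ is set up, precisely so that the closed geodesic in that free homotopy class is trapped through the cone point with the required excess angle. Your approach picks the class first and then hopes the angle works out; it does not.
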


\begin{proof}
Let $\zeta$ be a cone point with $\mathcal{L}(\zeta)=2\pi+\beta$ for $\beta>0$.  Lift $\zeta$ to $\tilde \zeta$ in $\tilde \SSS$ and let $\tilde c$ be a geodesic with $\tilde c(0)=\tilde\zeta$ and turning angle $\theta(\tilde c,0)=\pi+\frac{\beta}{2}$. Let $C_1$ be the cone around $\tilde c(-\infty,0)$ with vertex $\tilde \zeta=\tilde c(0)$ and angle $\frac{\beta}{8}$; let $C_2$ be the cone around $\tilde c(0,\infty)$ with vertex $\tilde \zeta=\tilde c(0)$ and angle $\frac{\beta}{8}$. By construction, any geodesic connecting a point in $C_1\setminus\{ \tilde \zeta\}$ to a point in $C_2\setminus\{ \tilde \zeta\}$ must pass through $\tilde \zeta$ with turning angle $\geq \pi+\frac{\beta}{4}$. 

Let $\mathcal{F}$ be a fundamental domain contained in $C_1$. Let $g\in \pi_1(\SSS)$ be such that $g\mathcal{F} \subset C_2$. ($\mathcal{F}$ and $g$ exist as both $C_1$ and $C_2$ contain arbitrarily large balls and $\SSS$ is compact.) Let $\alpha$ be the closed geodesic representative of $g$ in $\SSS$. (It will become clear in a moment why $\alpha$ is unique up to parametrization.) Lift $\alpha$ to $\tilde \alpha$ with $\tilde \alpha(0)\in \mathcal{F}$. Then $\tilde \alpha(\ell(\alpha)) \in g\mathcal{F}$. As noted above, this forces $\tilde\alpha$ to pass through $\tilde \zeta$ and turn with angle $>\pi$. Therefore, $\alpha$ is the desired geodesic (and it is unique up to parametrization since it cannot belong to a flat cylinder).
\end{proof}

%
%%
%%%
%%%%
%%%%%
%%%%%%%%%%%%%%%%%%%%%%%%%%%%%%%%%%%%%%%%%%%%%%%%%%%%%%%

\section{The $\lambda$-decomposition}\label{sec:lambda}

We now turn to the main arguments of the paper. First, following the ideas in \cite{BCFT}, we establish the decomposition $(\mathcal{P,G,S})$ as a `$\lambda$-decomposition' using the function $\lambda$ in Definition~\ref{defn:lambda} which is defined through two auxiliary functions that view the stable and unstable parts of any given geodesic. 
Throughout this section, fix $s>0$ such that $2s$ is less than the shortest saddle connection of $\SSS$. Below we omit in the notation the dependence of functions on $s$.

\begin{definition}\label{defn: lambda unstable}
    We define $\lambda^{uu}\colon \GS\rightarrow [0,\infty)$ by
    \begin{equation*}
        \lambda^{uu}(\gamma) = \frac{|\theta(\gamma,c)| - \pi}{\max\{s,c\}},
    \end{equation*}
    where $c \geq 0$ is the first time that $\gamma(c)$ hits a cone point and turns with angle strictly greater than $\pi$ (naturally, we set  $\lambda^{uu}(\gamma)=0$ in case $c=\infty$).
\end{definition}

\begin{definition}\label{defn: lambda stable}
	We define $\lambda^{ss}\colon \GS\rightarrow [0,\infty)$ by
    \begin{equation*}
        \lambda^{ss}(\gamma) = \frac{|\theta(\gamma,c)| - \pi}{\max\{s,|c|\}},
    \end{equation*}
    where $c \leq 0$ is the most recent time that $\gamma(c)$ has hit a cone point and turned with angle strictly greater than $\pi$ (naturally, we set  $\lambda^{ss}(\gamma)=0$ in case $c=-\infty$).
\end{definition}

We now define our function $\lambda$ so that near cone points at which geodesics turn with angle greater than $\pi$, it measures the turning angle at that cone point (multiplied by a constant), and far from a cone point, it measures both distance and turning angle from both the previous and next cone point.

\begin{definition}\label{defn:lambda}
Let $\lambda^{uu}$ and $\lambda^{ss}$ be functions defined in Definitions~\ref{defn: lambda unstable} and ~\ref{defn: lambda stable}, respectively. We define $\lambda\colon \GS\rightarrow [0,\infty)$ by
\begin{equation*}
\lambda(\gamma) = \begin{cases}
\lambda^{ss}(\gamma) &\text{if there exists }c\in (-s,0] \text{ such that } |\theta(\gamma,c)| - \pi > 0,\\
\lambda^{uu}(\gamma) &\text{if there exists }c\in [0,s) \text{ such that } |\theta(\gamma,c)| - \pi > 0,\\
\min\{\lambda^{ss}(\gamma),\lambda^{uu}(\gamma)\} &\text{otherwise}.
\end{cases}
\end{equation*}
Observe that it is well-defined when $\gamma(0)$ is a cone point, as in that case, $\lambda^{uu}(\gamma) =\lambda^{ss}(\gamma)$.
\end{definition}

We prove several properties of $\lambda$.

\begin{proposition}\label{nonrecurrent}
	If $\lambda(\gamma) = 0$, then $\lambda(g_t\gamma) = 0$ either for all $t \geq 0$ or for all $t \leq 0$.
\end{proposition}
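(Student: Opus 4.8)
The plan is to turn the statement into a case check against Definition~\ref{defn:lambda} followed by a one-sided propagation of the vanishing of $\lambda$ along the flow.

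First I would show that $\lambda(\gamma)=0$ forces $\gamma$ into the third (``otherwise'') case of Definition~\ref{defn:lambda}. If instead there were $c\in(-s,0]$ with $|\theta(\gamma,c)|-\pi>0$, then the most recent time $c^{*}\le 0$ at which $\gamma$ turns by angle strictly greater than $\pi$ would also lie in $(-s,0]$, so $\max\{s,|c^{*}|\}=s$ and $\lambda(\gamma)=\lambda^{ss}(\gamma)=(|\theta(\gamma,c^{*})|-\pi)/s>0$, a contradiction; the symmetric argument (using the first forward time of turning by more than $\pi$ together with $\lambda^{uu}$) rules out the second case. Hence $\gamma$ turns by at most $\pi$ at every time in $(-s,s)$, and $\lambda(\gamma)=\min\{\lambda^{ss}(\gamma),\lambda^{uu}(\gamma)\}=0$, so at least one of $\lambda^{ss}(\gamma)$, $\lambda^{uu}(\gamma)$ is zero.

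Next I would unwind these two possibilities. In Definitions~\ref{defn: lambda unstable} and~\ref{defn: lambda stable} the reference time $c$ is chosen precisely where the turning angle already exceeds $\pi$, so the numerator is automatically positive whenever $c$ is finite; therefore $\lambda^{uu}(\gamma)=0$ can only happen when $\gamma$ never turns by more than $\pi$ at any time $t\ge 0$, and likewise $\lambda^{ss}(\gamma)=0$ means $\gamma$ never turns by more than $\pi$ at any time $t\le 0$. I would then treat the two cases symmetrically; suppose $\lambda^{uu}(\gamma)=0$ and fix $t\ge 0$. Using $\theta(g_t\gamma,r)=\theta(\gamma,r+t)$, the first case of Definition~\ref{defn:lambda} for $g_t\gamma$ would require $\gamma$ to turn by more than $\pi$ at some $\tau\in(t-s,t]$; the part $\tau\in[0,t]$ is excluded because $\lambda^{uu}(\gamma)=0$, and the part $\tau\in(t-s,0)$ (only relevant when $t<s$) lies in $(-s,0)\subset(-s,s)$ and is excluded by the previous paragraph. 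The second case would require turning by more than $\pi$ at some $\tau\in[t,t+s)\subset[0,\infty)$, again excluded. So $g_t\gamma$ is in the third case, and $\lambda^{uu}(g_t\gamma)=0$ since a candidate forward time $c\ge 0$ for $g_t\gamma$ is the time $c+t\ge 0$ for $\gamma$, where $\gamma$ does not turn by more than $\pi$. Hence $\lambda(g_t\gamma)=\min\{\lambda^{ss}(g_t\gamma),0\}=0$ for all $t\ge 0$. Reversing time, $\lambda^{ss}(\gamma)=0$ gives $\lambda(g_t\gamma)=0$ for all $t\le 0$.

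The whole argument is this bookkeeping against the definition, so I do not expect a genuine obstacle; the only mildly delicate point is keeping track of the half-open intervals $(-s,0]$, $[0,s)$, $(t-s,t]$ and $[t,t+s)$ and, in particular, checking that for $0\le t<s$ the leftover sub-interval $(t-s,0)$ really is controlled by the ``otherwise'' property of $\gamma$ established in the second paragraph.
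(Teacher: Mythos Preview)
Your proof is correct and follows essentially the same approach as the paper: show that $\lambda(\gamma)=0$ forces the ``otherwise'' case so that $\min\{\lambda^{ss}(\gamma),\lambda^{uu}(\gamma)\}=0$, then propagate the vanishing of $\lambda^{uu}$ (respectively $\lambda^{ss}$) forward (respectively backward) along the flow. Your version is simply more careful about the case-checking---in particular the sub-interval $(t-s,0)$ when $0\le t<s$---whereas the paper compresses this into the single line ``for all $t\ge 0$, $\lambda(g_t\gamma)=\lambda^{uu}(g_t\gamma)=0$.''
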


\begin{proof}
	If $\lambda(\gamma) = 0$, then $\gamma$ does not turn at a cone point in the interval $(-s,s)$, and so, $\lambda^{uu}(\gamma) = 0$ or $\lambda^{ss}(\gamma) = 0$. In the first case, this implies that $\gamma$ never turns at a cone point in the future. Therefore, for all $t \geq 0$, $\lambda(g_t\gamma) = \lambda^{uu}(g_t\gamma) = 0$. A similar argument holds with $t\leq 0$ if $\lambda^{ss}(\gamma) = 0$.
\end{proof}

As corollaries, we have:

\begin{corollary}
$\bigcap_{t\in\mathbb{R}}g_t\lambda^{-1}(0) = \Sing$.
\end{corollary}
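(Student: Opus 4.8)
The plan is to prove the double inclusion $\bigcap_{t\in\mathbb{R}}g_t\lambda^{-1}(0) = \Sing$ directly from the definitions, using Proposition~\ref{nonrecurrent} for the harder containment.

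For the inclusion $\Sing \subseteq \bigcap_{t\in\mathbb{R}}g_t\lambda^{-1}(0)$, suppose $\gamma \in \Sing$. By Definition~\ref{defn:Sing}, $|\theta(\gamma,t)| = \pi$ for all $t \in \mathbb{R}$, and this property is preserved by the flow since $\Sing$ is $g_t$-invariant; so it suffices to show $\lambda(\gamma) = 0$. Because $\gamma$ never turns with angle strictly greater than $\pi$ at any cone point, the defining time $c$ in Definition~\ref{defn: lambda unstable} is $\infty$ (so $\lambda^{uu}(\gamma) = 0$) and the defining time $c$ in Definition~\ref{defn: lambda stable} is $-\infty$ (so $\lambda^{ss}(\gamma) = 0$). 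Moreover the first two cases in Definition~\ref{defn:lambda} cannot occur (there is no $c$ with $|\theta(\gamma,c)| - \pi > 0$), so $\lambda(\gamma) = \min\{\lambda^{ss}(\gamma),\lambda^{uu}(\gamma)\} = 0$. Since $g_t\gamma \in \Sing$ for all $t$, the same reasoning gives $\lambda(g_t\gamma) = 0$ for all $t$, i.e. $\gamma \in g_t\lambda^{-1}(0)$ for every $t$.

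For the reverse inclusion, suppose $\gamma \notin \Sing$. Then there is a time $t_0$ at which $\gamma$ passes through a cone point and turns with angle strictly greater than $\pi$; equivalently $|\theta(g_{t_0}\gamma, 0)| - \pi > 0$. Set $\xi = g_{t_0}\gamma$. I claim $\lambda(\xi) > 0$: since $\xi(0)$ is a cone point with turning angle exceeding $\pi$, Definition~\ref{defn: lambda unstable} gives $c = 0$ and hence $\lambda^{uu}(\xi) = (|\theta(\xi,0)| - \pi)/s > 0$, and likewise $\lambda^{ss}(\xi) = (|\theta(\xi,0)| - \pi)/s > 0$; by the remark at the end of Definition~\ref{defn:lambda} the two agree, so $\lambda(\xi) > 0$ regardless of which case of the definition applies. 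Thus $\xi \notin \lambda^{-1}(0)$, which shows $\gamma = g_{-t_0}\xi \notin g_{-t_0}\lambda^{-1}(0)$, so $\gamma \notin \bigcap_{t}g_t\lambda^{-1}(0)$. (Alternatively, one could invoke Proposition~\ref{nonrecurrent}: if $\gamma \in \bigcap_t g_t\lambda^{-1}(0)$ then in particular $\lambda(\gamma) = 0$, but $\lambda(g_t\gamma) = 0$ for \emph{all} $t$ forces $\gamma$ never to turn at a cone point in either time direction, i.e. $\gamma \in \Sing$.)

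I expect no serious obstacle here; the only point requiring a little care is the bookkeeping around which case of Definition~\ref{defn:lambda} is active and the convention $c = \pm\infty$ giving value $0$. The key facts being used are simply that $\lambda$ vanishes exactly when $\gamma$ sees no super-$\pi$ turning in the window $(-s,s)$ \emph{and} at least one of the one-sided functions $\lambda^{ss},\lambda^{uu}$ vanishes (no such turning ever, in that time direction), together with the $g_t$-invariance of $\Sing$. Combining both inclusions yields the equality.
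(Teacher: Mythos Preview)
Your proof is correct and is essentially the argument the paper has in mind: the corollary is stated without proof, immediately after Proposition~\ref{nonrecurrent}, because both inclusions follow directly from the definitions of $\lambda$ and $\Sing$ exactly as you write them out. Your parenthetical alternative via Proposition~\ref{nonrecurrent} is also fine, though as you observed, the direct computation at the cone point $\xi(0)=\gamma(t_0)$ already suffices.
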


\begin{corollary}\label{cor: converges to Sing}
If $\lambda(\gamma) = 0$, then $d(g_t\gamma,\Sing) \to 0$ either as $t\to\infty$ or as $t\to-\infty$.
\end{corollary}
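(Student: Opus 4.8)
The plan is to leverage Proposition~\ref{nonrecurrent} together with the previous corollary identifying $\bigcap_{t\in\mathbb{R}}g_t\lambda^{-1}(0)=\Sing$. Suppose $\lambda(\gamma)=0$. By Proposition~\ref{nonrecurrent} (and its proof), either $\lambda^{uu}(\gamma)=0$, in which case $\gamma$ never turns at a cone point with angle strictly greater than $\pi$ in forward time and hence $\lambda(g_t\gamma)=0$ for all $t\geq 0$; or $\lambda^{ss}(\gamma)=0$, in which case the analogous statement holds for all $t\leq 0$. I would treat the first case and note the second is symmetric under time reversal. So assume $\lambda(g_t\gamma)=0$ for all $t\geq 0$; I want to show $d_{\GS}(g_t\gamma,\Sing)\to 0$ as $t\to\infty$.

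The key geometric observation is that, in this case, the forward geodesic ray $\gamma|_{[0,\infty)}$ only ever turns by angle $\pm\pi$ at cone points — equivalently, the lifted ray $\tilde\gamma|_{[0,\infty)}$ is a genuine geodesic ray that, while it may pass through lifts of cone points, always does so with turning angle exactly $\pi$. I would then construct, for each $t$, a geodesic $\xi_t\in\Sing$ that closely shadows $g_t\gamma$: extend the forward ray of $g_t\gamma$ backward as a \emph{singular} geodesic, i.e.\ whenever the backward extension reaches a cone point, continue it with turning angle exactly $\pi$ (there is always such a continuation, by the definition of the turning angle and the fact that cone angles exceed $2\pi$). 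Concretely, let $\xi_t$ agree with $g_t\gamma$ on $[0,\infty)$ (so in particular on a long interval $[0,T]$ with $T$ growing as $t\to\infty$, since $g_t\gamma$ and its forward ray coincide) and be defined on $(-\infty,0]$ by this singular backward continuation. Then $\xi_t\in\Sing$ by Definition~\ref{defn:Sing}, since it turns by $\pm\pi$ everywhere. It remains to estimate $d_{\GS}(g_t\gamma,\xi_t)$.

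To control this distance I would use Lemma~\ref{lem:shadow in S} (or a direct estimate as in Lemma~\ref{lem:exponentially close}): since $g_t\gamma$ and $\xi_t$ have identical images on $[0,\infty)$, and $\GS$-distance weights the surface distance by $e^{-2|r|}$, only the disagreement on the negative time axis contributes, and that disagreement occurs at surface distance at most $\mathrm{diam}(\SSS)$. Hence $d_{\GS}(g_t\gamma,\xi_t)\leq C\int_{-\infty}^{0}e^{-2|r|}\,dr$ \emph{after} reparametrizing so that the two geodesics agree on $[-T_t, \infty)$ where $T_t\to\infty$; more precisely the two agree on an interval of length growing with $t$, so the integral of the discrepancy is $O(e^{-2T_t})\to 0$. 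Thus $d_{\GS}(g_t\gamma,\Sing)\leq d_{\GS}(g_t\gamma,\xi_t)\to 0$ as $t\to\infty$, which is the desired conclusion; the case $\lambda^{ss}(\gamma)=0$ gives convergence as $t\to-\infty$ by the same argument applied to the reversed flow.

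The main obstacle I anticipate is the bookkeeping in the shadowing estimate: one must be careful that "$g_t\gamma$ agrees with a singular geodesic on a long interval around time $0$" is set up correctly — the forward ray of $\gamma$ from time $0$ onward is singular, so $g_t\gamma$ restricted to $[-t,\infty)$ agrees with a singular geodesic, and the length of that agreement interval around the basepoint of $g_t\gamma$ is exactly $t$, which is what forces the $e^{-2t}$ decay. One should also double-check the degenerate subcase where $\gamma$ itself lies in $\Sing$ already (then $d(g_t\gamma,\Sing)=0$ trivially), and the case where the forward ray hits no cone point at all (then the backward singular continuation is chosen arbitrarily among angle-$\pi$ continuations, and the same estimate applies). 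None of these presents a genuine difficulty, so the corollary follows quickly from Proposition~\ref{nonrecurrent} and the metric estimates already established.
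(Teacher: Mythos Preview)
Your proposal is correct and follows essentially the same approach as the paper: reduce via Proposition~\ref{nonrecurrent} to the case where the forward ray is singular, build a singular geodesic agreeing with $\gamma$ on that ray, and invoke Lemma~\ref{lem:exponentially close} for the $e^{-2t}$ decay. The only cosmetic difference is that the paper constructs a single $\gamma_{\Sing}$ (agreeing with $\gamma$ on $(r,\infty)$ where $r$ is the last past turning time, and extended backward with angle-$\pi$ turns) and then compares $g_t\gamma$ with $g_t\gamma_{\Sing}$, rather than building a separate $\xi_t$ for each $t$; this avoids the mid-proof correction you make about the agreement interval being $[-t,\infty)$ rather than $[0,\infty)$.
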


\begin{proof}
Without loss of generality, assume $\lambda(g_t\gamma) = 0$ for all $t\geq 0$. Then, $\gamma$ does not turn at a cone point in $[0,\infty)$, and we can define $r := \max\{t : |\theta(\gamma,t)| > \pi\}$ to be the most recent cone point in the past at which $\gamma$ turns. Define a singular geodesic $\gamma_\Sing$ as $\gamma_\Sing(t) = \gamma(t)$ for all $t > r$, and for all cone points $t\leq r$, $\gamma_\Sing$ turns with angle $\pi$. Then, $g_t\gamma$ and $g_t\gamma_\Sing$ agree on increasingly long intervals, and by Lemma~\ref{lem:exponentially close} for $t > r$, 
$$d_{\GS}(g_t\gamma,\Sing) \leq d_{\GS}(g_t\gamma,g_t\gamma_\Sing) \leq e^{-2(t-r)}.$$
The proof if $\lambda(g_t\gamma) = 0$ holds similarly, but sending $t\to -\infty$ instead.
\end{proof}

Furthermore, this allows us to show that the pressure gap for the product flow (condition (3) of Theorem~\ref{thm: to get weakly mixing}) is implied by the pressure gap $P(\Sing,\phi) < P(\phi)$ that we will establish in Section~\ref{sec:pressure gap}.

\begin{proposition}(Following \cite[Proposition 5.1]{CT19})
Setting $\Phi(x,y) = \phi(x) + \phi(y)$ and $\tilde{\lambda}(x,y) = \lambda(x)\lambda(y)$ we have $P(\bigcap_{t\in\mathbb{R}}(g_t\times g_t)(\tilde{\lambda})^{-1}(0),\Phi) \leq P(\phi) + P(\Sing,\phi)$. In particular, if $P(\Sing,\phi) < P(\phi)$, then $P(\bigcap_{t\in\mathbb{R}}(g_t\times g_t)(\tilde{\lambda})^{-1}(0),\Phi) < 2P(\phi)$.
\end{proposition}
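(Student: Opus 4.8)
The plan is to bound the pressure of the invariant set $Z := \bigcap_{t\in\mathbb{R}}(g_t\times g_t)(\tilde\lambda)^{-1}(0)$ in the product system by decomposing $Z$ according to which of the two coordinates carries the vanishing of $\lambda$. Since $\tilde\lambda(x,y) = \lambda(x)\lambda(y)$, a pair $(x,y)$ lies in $Z$ exactly when for every $t$ we have $\lambda(g_t x)\lambda(g_t y) = 0$. First I would show that in fact $Z \subseteq (\Sing \times \GS) \cup (\GS \times \Sing)$. Indeed, suppose $(x,y)\in Z$ but $x\notin\Sing$, so there is some $t_0$ with $\lambda(g_{t_0}x) > 0$; by lower semicontinuity of $\lambda$ (from Section~\ref{sec:lambda}) there is an open interval $I$ around $t_0$ on which $\lambda\circ g_t x > 0$, forcing $\lambda\circ g_t y = 0$ on $I$. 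Then Proposition~\ref{nonrecurrent} applied to some point $g_t y$ with $t\in I$ gives that $\lambda(g_t y) = 0$ for all $t\geq 0$ or all $t\leq 0$; in either case, combined with the constraint that $\lambda(g_t y) = 0$ whenever $\lambda(g_t x) > 0$ and a second appeal to Proposition~\ref{nonrecurrent} / Corollary~\ref{cor: converges to Sing} applied now to $x$, one deduces $\lambda(g_t y) = 0$ for \emph{all} $t\in\mathbb{R}$, i.e. $y\in\Sing$. (The cleanest phrasing: if $y\notin\Sing$ too, pick $s_0$ with $\lambda(g_{s_0}y)>0$ and an open interval $J$ around $s_0$; then $\lambda\circ g_t x$ vanishes on $J$ and $\lambda\circ g_t y$ vanishes on $I$, so by Proposition~\ref{nonrecurrent} each of $x$ and $y$ has $\lambda$ vanishing on a full half-line, and these half-lines cannot be disjoint in $\mathbb{R}$, so there is a $t$ with $\lambda(g_t x)=\lambda(g_t y)=0$ coming from both half-lines overlapping — more simply, $x$ has $\lambda\equiv 0$ on $[T,\infty)$ or $(-\infty,-T]$, likewise $y$, and then one of the four combinations forces a common time where we can re-run Proposition~\ref{nonrecurrent} to extend vanishing to all of $\mathbb{R}$ for at least one of them, contradicting $x,y\notin\Sing$.)

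Granting $Z \subseteq (\Sing\times \GS)\cup(\GS\times\Sing)$, the next step is a standard pressure estimate. Topological pressure is monotone under inclusion and satisfies $P(A\cup B,\Phi) = \max\{P(A,\Phi), P(B,\Phi)\}$ for compact invariant sets (both $\Sing\times\GS$ and $\GS\times\Sing$ are compact and $(g_t\times g_t)$-invariant, being products of compact invariant sets). Hence
\[
P(Z,\Phi) \leq \max\{\,P(\Sing\times\GS,\Phi),\ P(\GS\times\Sing,\Phi)\,\}.
\]
By symmetry of $\Phi$ these two numbers are equal, so it suffices to bound $P(\Sing\times\GS,\Phi)$. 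Since $\Phi(x,y)=\phi(x)+\phi(y)$ splits as a sum over the two factors and the dynamics is the product flow, the variational principle for the product gives $P(\Sing\times\GS,\Phi) = P(\Sing,\phi) + P(\GS,\phi) = P(\Sing,\phi) + P(\phi)$. (One direction is immediate by taking product measures $\mu\times\nu$; the other follows because any $(g_t\times g_t)$-invariant measure on $\Sing\times\GS$ has marginals which are $g_t$-invariant on $\Sing$ and on $\GS$ respectively, entropy is subadditive under taking the join of the two coordinate sub-$\sigma$-algebras, i.e. $h_\rho(g_t\times g_t)\leq h_{\rho_1}(g_t)+h_{\rho_2}(g_t)$, and the integral of $\Phi$ splits as $\int\phi\,d\rho_1+\int\phi\,d\rho_2$.) Combining,
\[
P(Z,\Phi) \leq P(\Sing,\phi) + P(\phi),
\]
which is the first assertion. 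The second assertion is then immediate: if $P(\Sing,\phi) < P(\phi)$ then $P(Z,\Phi) \leq P(\Sing,\phi)+P(\phi) < 2P(\phi)$.

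The main obstacle I anticipate is the set-theoretic containment $Z\subseteq(\Sing\times\GS)\cup(\GS\times\Sing)$, since the hypothesis only gives that the \emph{product} $\lambda(g_t x)\lambda(g_t y)$ vanishes at each time, not that either factor vanishes at all times. The resolution is exactly the rigidity encoded in Proposition~\ref{nonrecurrent}: once $\lambda$ vanishes at a single time for a geodesic, it must vanish on an entire half-line, and two half-lines in $\mathbb{R}$ always overlap; at a common time in the overlap, whichever geodesic is not yet known to be singular has $\lambda$ vanishing at a point from which Proposition~\ref{nonrecurrent} can be re-applied, and iterating (or a direct argument on the structure of the zero set of $t\mapsto\lambda(g_t x)$, which by lower semicontinuity has closed complement of the form "turns at a cone point in a window") pins down that one of $x,y$ lies in $\Sing$. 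This is precisely the analogue of \cite[Proposition 5.1]{CT19}, whose proof structure I would follow. The pressure bookkeeping afterwards is routine, using only monotonicity of pressure, the union formula, and the product variational principle.
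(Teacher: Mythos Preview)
Your argument contains a genuine gap: the set-theoretic containment $Z \subseteq (\Sing\times\GS)\cup(\GS\times\Sing)$ is \emph{false}. Here is a counterexample. Let $x\in\GS$ be a geodesic that turns with excess angle at exactly one cone point, say at time $0$ (extend singularly in both directions using Lemma~\ref{lem:extend sing}). From the definition of $\lambda$ one checks that $\{t:\lambda(g_t x)>0\}=(-s,s)$: inside this window the cone point lies in the $(-s,s)$-neighborhood and $\lambda$ is positive, while for $|t|\geq s$ one of $\lambda^{ss},\lambda^{uu}$ vanishes and the ``otherwise'' clause gives $\lambda(g_t x)=0$. Now let $y$ be a geodesic of the same type but turning at time $2s$, so that $\{t:\lambda(g_t y)>0\}=(s,3s)$. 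These two open intervals are disjoint, hence $\lambda(g_t x)\lambda(g_t y)=0$ for all $t$ and $(x,y)\in Z$, yet both $x$ and $y$ are regular. Your ``half-line'' reasoning breaks exactly here: the half-line for $x$ coming from Proposition~\ref{nonrecurrent} (applied at a point of $J=(s,3s)$) is of the form $[t_1,\infty)$ with $t_1>s$, while the half-line for $y$ (applied at a point of $I=(-s,s)$) is $(-\infty,t_2]$ with $t_2<s$, and these need not overlap.

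The paper does not attempt this containment. Instead it argues measure-theoretically: for any invariant measure $\nu$ supported on $Z$, the set $A=Z\cap(\Reg\times\Reg)$ contains no recurrent points and hence $\nu(A)=0$ by Poincar\'e recurrence. The point is that if $(\gamma_1,\gamma_2)\in A$ then (say) $\lambda(\gamma_1)=0$, and Corollary~\ref{cor: converges to Sing} forces $g_t\gamma_1\to\Sing$ in forward or backward time, which is incompatible with recurrence of $(\gamma_1,\gamma_2)$ since $\gamma_1\notin\Sing$. Thus every invariant $\nu$ on $Z$ is actually supported on $(\Sing\times\GS)\cup(\GS\times\Sing)$, and \emph{then} your pressure bookkeeping (monotonicity, the union formula, and the product splitting of $\Phi$) finishes the proof via the Variational Principle. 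Your final paragraph of estimates is fine; it is the first step that needs to be replaced by the recurrence argument.
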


\begin{proof}
The Variational Principle \cite[Theorem 9.10]{Wa} tells us that 
$$P\left(\bigcap_{t\in\mathbb{R}}(g_t\times g_t)(\tilde{\lambda}^{-1}),\Phi\right) = \sup\left\{P_\nu(\Phi) \bigm| \nu \text{ is flow invariant and }\nu\left(\bigcap_{t\in\mathbb{R}}(g_t\times g_t)(\tilde{\lambda}^{-1})\right) = 1\right\},$$
where $P_\nu(\Phi) := h_\nu(g_1\times g_1) + \int\Phi\,d\nu$ denotes the measure-theoretic pressure of $(\bigcap_{t\in\mathbb{R}}(g_t\times g_t)(\tilde{\lambda}^{-1}),(g_t\times g_t),\Phi,\nu).$ More generally, this relationship holds for any continuous flow, continuous potential, and compact, flow-invariant subset.

Consequently, we let $\nu$ be an invariant measure supported on $\bigcap_{t\in\mathbb{R}} (g_t\times g_t)(\tilde{\lambda})^{-1}(0)$, and let
$$A = \bigcap_{t\in\mathbb{R}} (g_t\times g_t)(\tilde{\lambda})^{-1}(0) \cap (\Reg\times \Reg).$$
We will show that $\nu(A) = 0$ by showing that it contains no recurrent points. Assume for contradiction that $(\gamma_1,\gamma_2)\in A$ is a recurrent point, and then assume without loss of generality that $\lambda(\gamma_1) = 0$. Since $\gamma_1\notin \Sing$, it follows that $d_{G\SSS}(\gamma_1,\Sing) = c > 0$, which from recurrence, implies that there exists a sequence $t_k\to\infty$ such that $d_{G\SSS}(g_{t_k}\gamma_1,\Sing) > \frac{c}{2}$, with a similar claim holding in backwards time. However, we also know that $d_{G\SSS}(g_t\gamma_1,\Sing) \to 0$ as $t\to\infty$, or as $t\to-\infty$ by Corollary~\ref{cor: converges to Sing}. Thus, we have arrived at a contradiction. Hence, $\nu$ is supported on the complement of $\Reg\times\Reg$, which is $(\Sing\times \GS) \cup (\GS\times \Sing)$.

Thus,
$$P\left(\bigcap_{t\in\mathbb{R}}(g_t\times g_t)(\tilde{\lambda}^{-1}(0)),\Phi\right) \leq P\left((\Sing\times \GS)\cup (\GS\times \Sing),\Phi\right)\leq P(\Sing,\phi) + P(\GS,\phi).$$
The first inequality is by the Variational Principle. The second inequality is due to the fact that the pressure of the union of two compact invariant sets is the maximum of the pressure of each individual set \cite[Theorem 11.2(3)]{PesinDimensionTheory}, and in this case, the pressure of each component of the union is at most $P(\Sing,\phi) + P(\GS,\phi)$ by \cite[Theorem 9.8(v)]{Wa}.
\end{proof}

We have also constructed $\lambda$ so that it is lower semicontinuous.

\begin{lemma}\label{lem:lower semicts}
Let $s>0$ be such that $2s$ is less than the shortest saddle connection of $\SSS$. Then, $\lambda$ defined in Definition~\ref{defn:lambda} is lower semicontinuous.
\end{lemma}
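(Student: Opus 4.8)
The plan is to show that for every $\gamma\in\GS$ and every sequence $\gamma_n\to\gamma$ in $\GS$, we have $\liminf_{n\to\infty}\lambda(\gamma_n)\geq\lambda(\gamma)$. Since $\lambda$ is built from $\lambda^{ss}$ and $\lambda^{uu}$, which play symmetric roles under time-reversal, I would first prove lower semicontinuity of $\lambda^{uu}$ and $\lambda^{ss}$ separately, and then combine: a minimum of two lower semicontinuous functions is lower semicontinuous, and the case-split in Definition~\ref{defn:lambda} has to be shown to be compatible with taking $\liminf$. The key geometric input is that hitting a cone point with turning angle $>\pi$ is an \emph{open} condition, both in the space of directions at a single cone point (the excess angle, which by Lemma~\ref{lem:no big flat}(b) is bounded below by $\eta_0$, gives uniform room) and as a perturbation of the geodesic in $\GS$; conversely the turning angle function $\theta(\gamma,t)$ and the ``first/most recent turning time'' $c(\gamma)$ behave well under perturbation because $2s$ is shorter than the shortest saddle connection, so in the relevant window $(-s,s)$ (or a slightly larger one controlled by where $c$ sits) there is at most one cone point a geodesic can turn at.

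The heart of the argument is analyzing $\lambda^{uu}$. Fix $\gamma$ with $\lambda^{uu}(\gamma)>0$ and let $c=c(\gamma)\geq0$ be the first future time $\gamma$ turns with angle $>\pi$, with turning angle $\theta_0:=\theta(\gamma,c)$, so $\lambda^{uu}(\gamma)=(|\theta_0|-\pi)/\max\{s,c\}$. For $\gamma_n\to\gamma$: using Lemma~\ref{lem:closeness in S and GS} (closeness in $\GS$ gives uniform closeness in $\SSS$ on long time intervals), $\gamma_n$ must pass very near the cone point $p=\gamma(c)$ around time $c$, and since the space of directions at $p$ has excess $\geq\eta_0$, the incoming and outgoing directions of $\gamma_n$ near $p$ are $\CAT(0)$-close to those of $\gamma$, forcing $\gamma_n$ to actually hit $p$ with turning angle close to $\theta_0$ — in particular $>\pi$ — at some time $c_n\to c$. (Here I would use that a geodesic passing within a small ball of a cone point either hits the cone point or makes turning angle $\pi$ at no cone point in that ball; this is where the ``no short saddle connections'' hypothesis and basic $\CAT(0)$ angle comparison enter.) Then $c_n(\gamma_n)\leq c_n\to c$, so $\max\{s,c_n(\gamma_n)\}$ could in principle be \emph{smaller} than $\max\{s,c\}$ if $\gamma_n$ turns at an \emph{earlier} cone point — but if it does, that only makes the relevant turning time smaller and the denominator smaller, hence $\lambda^{uu}(\gamma_n)$ \emph{larger}, which is fine for a $\liminf$ bound; and if $c_n(\gamma_n)\to c$ with matching turning angle $\to\theta_0$, then $\lambda^{uu}(\gamma_n)\to\lambda^{uu}(\gamma)$. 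The case $\lambda^{uu}(\gamma)=0$ is trivial since $\lambda^{uu}\geq0$. The argument for $\lambda^{ss}$ is identical after reversing time.

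Finally I would patch the three cases of Definition~\ref{defn:lambda} together. The cleanest route: observe that $\lambda(\gamma)=\lambda^{ss}(\gamma)$ whenever $\gamma$ has a turn with excess angle in $(-s,0]$, $\lambda(\gamma)=\lambda^{uu}(\gamma)$ whenever it has one in $[0,s)$, and $\min\{\lambda^{ss},\lambda^{uu}\}$ otherwise; I would check that in every case $\lambda(\gamma)=\min\{\lambda^{ss}(\gamma),\lambda^{uu}(\gamma)\}$ \emph{except} possibly when exactly one of the two is ``active'' — and in that situation the inactive one is actually $\geq$ the active one (because the inactive-side cone point, if any, is farther than $s$ away, so its denominator $\max\{s,|c|\}=|c|>s$ while... ) hmm — more carefully, I expect that $\lambda=\min\{\lambda^{ss},\lambda^{uu}\}$ \emph{always}, which if true makes lower semicontinuity immediate from the two single-function results. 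I would verify this identity from the definitions (checking the boundary cases $c\in\{-s,0,s\}$ and $\gamma(0)\in Con$ carefully), and if it fails I would instead argue directly that along $\gamma_n\to\gamma$ the ``active case'' of $\gamma$ remains active (or improves) for large $n$ by the openness of turning with excess angle, and conclude. \textbf{The main obstacle} I anticipate is precisely this bookkeeping at the boundary of the cases together with the possibility that the nearest turning cone point jumps to an earlier/later one under perturbation; the geometric estimates (cone-point hitting from near-misses, continuity of turning angles) are routine given Lemmas~\ref{lem:closeness in S and GS}, \ref{lem:no big flat}, and $\CAT(0)$ angle comparison, but making sure the denominators $\max\{s,|c|\}$ are controlled from the correct side in every sub-case is the delicate part.
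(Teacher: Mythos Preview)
Your plan has a genuine gap: $\lambda^{uu}$ is \emph{not} lower semicontinuous in general, so the strategy of proving $\lambda^{uu}$ and $\lambda^{ss}$ lsc separately and then combining cannot work as stated. The specific error is your sentence ``if it does [turn at an earlier cone point], that only makes the relevant turning time smaller and the denominator smaller, hence $\lambda^{uu}(\gamma_n)$ larger.'' You are forgetting that the \emph{numerator} changes too: it becomes the excess angle at the \emph{new} (earlier) cone point, which can be arbitrarily small. Concretely, suppose $\gamma$ passes through a cone point $q'$ at time $t\in(0,c)$ with turning angle exactly $\pi$ and then first genuinely turns at $p=\gamma(c)$ with excess $\psi>0$. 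Build $\gamma_n$ by keeping $\gamma_n=\gamma$ on $[t,\infty)$ and letting $\gamma_n$ enter $q'$ at an angle $\eps_n\to0$ off from $\gamma$'s incoming direction; then $\gamma_n\to\gamma$ in $\GS$, but $\gamma_n$ now turns at $q'$ with excess $\eps_n$, so $\lambda^{uu}(\gamma_n)=\eps_n/\max\{s,t\}\to0<\lambda^{uu}(\gamma)$. (In this example $\lambda^{ss}(\gamma)=0$, so $\lambda(\gamma)=0$ and the failure is invisible to $\lambda$ itself --- but it does kill the standalone statement for $\lambda^{uu}$.) Relatedly, your hoped-for identity $\lambda=\min\{\lambda^{ss},\lambda^{uu}\}$ is false: if $\gamma$ turns at $c\in(-s,0]$ with large excess but its next forward turn at $c_2>s$ has tiny excess, then $\lambda(\gamma)=\lambda^{ss}(\gamma)>\lambda^{uu}(\gamma)=\min$; this is exactly why the paper remarks after the lemma that $\lambda$ fails to be \emph{upper} semicontinuous.

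The paper's fix --- and what your fallback plan would have to become --- is to avoid treating $\lambda^{uu}$ in isolation. In the case $\lambda(\gamma)>0$ with no turn in $(-s,s)$, \emph{both} $c_1\le -s$ and $c_2\ge s$ exist, and the cone argument forces any nearby $\xi$ through \emph{both} $p_1=\gamma(c_1)$ and $p_2=\gamma(c_2)$. By uniqueness of geodesic segments in $\CAT(0)$, $\xi$ then shares the entire segment $[p_1,p_2]$ with $\gamma$ (up to a small time shift), so $\xi$ cannot acquire any spurious earlier turn in $(c_1,c_2)$: its first forward turn really is at $p_2$ and its last backward turn at $p_1$, with nearby excess angles. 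This simultaneous two-sided pinning is the missing idea; once you have it, your estimates on turning angles go through and give $\lambda(\xi)\ge\min\{\lambda^{ss}(\xi),\lambda^{uu}(\xi)\}\ge\lambda(\gamma)-\eps$. Your Case~1 sketch (turn in $(-s,s)$) is essentially correct and matches the paper.
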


\begin{proof}
Let $\gamma\in\GS$. We show that for any $\eps>0$ there exists $\delta>0$ such that $\lambda(\gamma)-\eps<\lambda(\xi)$ for all $\xi\in\GS$ such that $d_{\GS}(\gamma,\xi)<\delta$. To ease the arguments below slightly, we work in $\tilde\SSS$ with lifts $\tilde \gamma, \tilde \xi$ so that $d_{\GtS}(\tilde\gamma, \tilde \xi) = d_{\GS}(\gamma, \xi)$. Recall that by Lemma~\ref{lem:closeness in S and GS}, if $d_{\GtS}(\tilde \gamma,\tilde \xi)<\delta$ then $d_{\tilde\SSS}(\tilde \gamma(0),\tilde \xi(0))<2\delta$. 

If  $\lambda(\gamma)=0$, then we are done as $\lambda$ is a non-negative function. Therefore, for the rest of the argument we assume that $\lambda(\gamma)>0$.

\underline{Case 1}: Suppose there exists $c\in (-s,s)$ such that $\psi:=|\theta(\tilde\gamma, c)|-\pi>0$. Denote $\tilde\gamma(c)=p$. We show that there exists $\delta>0$ such that $p\in\tilde\xi((-s,s))$. 

Let $C_1$ be the cone around $\tilde\gamma((c,s))$ with vertex $p$ and angle $\psi'=\min\{\frac{\psi}{2},\frac{\pi}{4}\}$. Let $C_2$ be the cone around $\tilde\gamma((-s,c))$ with vertex $p$ and angle $\psi'$. (See Figure~\ref{fig:case3}.)
Set
\begin{equation}\label{choice delta}
\delta=\frac{1}{2}\min\left\{\frac{1}{8}e^{-2s}(s-|c|)\sin\psi',\quad \frac{1}{2}(s-d_{\SSS}(\tilde\gamma(0),p)),\quad \min\{1,\eps s/8\}(2+e^{2|c|})^{-1}\int_0^{s-|c|}2te^{-2t}dt\right\}.
\end{equation}
Then, we choose $u_1=\frac{c+s}{2},u_2=\frac{c-s}{2}$ and $\delta_1=\frac{s-c}{4}\sin\psi',\delta_2=\frac{c+s}{4}\sin\psi'>0$ so that $B_1:=B(\tilde\gamma(u_1),\delta_1)\subset C_1\cap B(\tilde\gamma(0),s)$ and $B_2:=B(\tilde\gamma(u_2),\delta_2)\subset C_2\cap B(\tilde\gamma(0),s)$. By Lemmas~\ref{lem:closeness in S and GS} and \ref{lem:Lipschitz}, since $\delta<\frac{1}{8}e^{-2s}(s-|c|)\sin\psi'$, if $d_{\GtS}(\tilde\gamma,\tilde\xi)<\delta$ then $\tilde\xi$ passes through $B_1$ and $B_2$. Since any two points in a CAT(0)-space are connected by a unique geodesic segment and by our construction of $C_1$ and $C_2$, we obtain that if $d_{\GtS}(\tilde\gamma,\tilde\xi)<\delta$ then $\tilde\xi$ passes through $p$. Furthermore, since $2\delta+d_{\SSS}(\tilde\gamma(0),p)<s$, by Lemma~\ref{lem:closeness in S and GS} and the triangle inequality for the triangle with vertices $\tilde\xi(0), \tilde\gamma(0)$, and $p$, we have $p\in\tilde\xi((-s,s))$ if $d_{\GtS}(\tilde\gamma,\tilde\xi)<\delta$. Let $t_0\in(-s,s)$ be such that $\tilde\xi(t_0)=p$. Moreover, $|t_0-c|\leq 2\delta$.

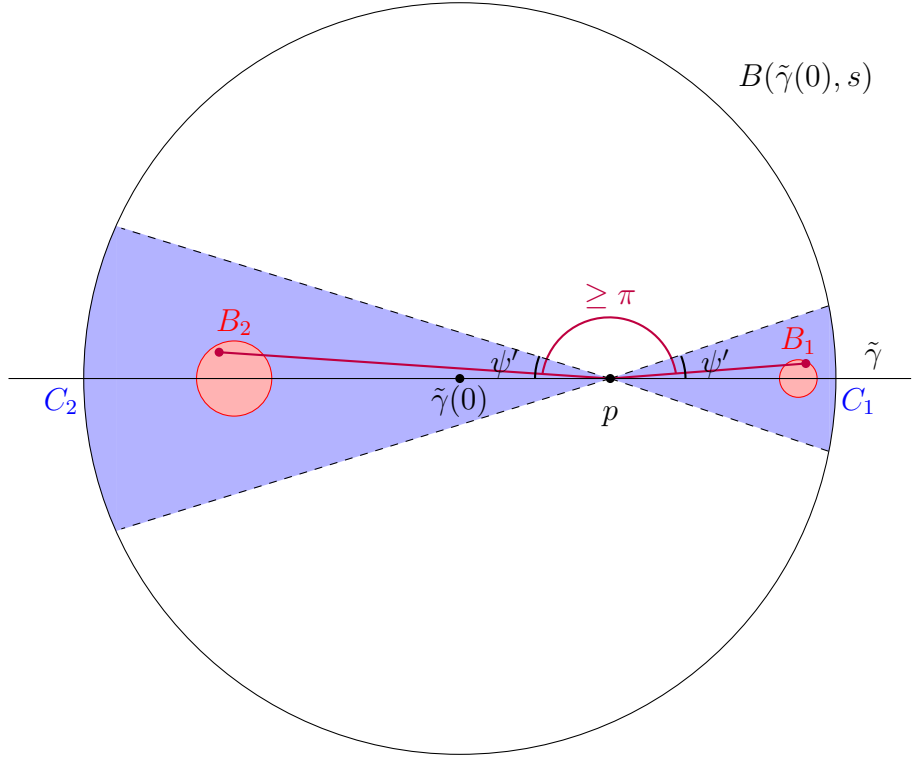
\begin{figure}[h]
\centering
\begin{tikzpicture}[scale=1] 

\fill[blue!30!white] (4.91,.97) arc (11:-11:5cm);
\fill[blue!30!white] (2,0) -- (4.91,.97) -- (4.91,-.97) -- (2,0) ; 

\fill[blue!30!white] (2,0) -- (-4.57,-2.03) -- (-4.57,2.03) -- (2,0) ; 
\fill[blue!30!white] (-4.57,-2.03) arc (204:156:5cm);

\fill[red!30!white] (4.5,0) circle(.25);
\draw[red] (4.5,0) circle(.25);

\fill[red!30!white] (-3,0) circle(.5);
\draw[red] (-3,0) circle(.5);

\draw[thick, purple] (-3.2,.35) -- (2,0) -- (4.6,.2) ;
\fill[purple] (-3.2,.35) circle(.06);
\fill[purple] (4.6,.2) circle(.06);

\draw[thick, purple] (1.1,.07) arc (170:10:.9cm);

\draw (-6,0) -- (6,0);
\fill (0,0) circle(.06);
\draw (0,0) circle(5);

\fill (2,0) circle(.06);

\draw [dashed] (2,0) -- (5,1) ;
\draw [dashed] (2,0) -- (5,-1) ;

\draw [dashed] (2,0) -- (-4.5,2) ;
\draw [dashed] (2,0) -- (-4.5,-2) ;

\draw[thick] (3,0) arc (0:22:.8cm);
\draw[thick] (1,0) arc (180:158:.8cm);

\node at (5.5,.3) {$\tilde\gamma$} ;
\node[blue] at (5.3,-.3) {$C_1$} ;
\node[blue] at (-5.3,-.3) {$C_2$} ;

\node at (4.6,4) {$B(\tilde\gamma(0),s)$} ;

\node[red] at (4.5,.5) {$B_1$} ;
\node[red] at (-3,.75) {$B_2$} ;

\node at (2,-.5) {$p$} ;
\node at (0,-.3) {$\tilde \gamma(0)$} ;

\node at (3.4,.2) {$\psi'$} ;
\node at (.6,.2) {$\psi'$} ;

\node[purple] at (2,1.1) {$\geq \pi$} ;

\end{tikzpicture}
\caption{The argument for Case 1 in Lemma~\ref{lem:lower semicts}. The geodesic segments connecting points in $B_2$ and $B_1$ meet at the cone point $p$ with angle $\geq \pi$ on both sides. Any geodesic connecting points in $B_2$ and $B_1$ must run through $p$.}\label{fig:case3}
\end{figure}

By the triangle inequality,
\begin{equation*}
d_{\tilde\SSS}(\tilde\xi(t_0+t),\tilde\gamma(c+t))\leq d_{\tilde\SSS}(\tilde\xi(t_0+t),\tilde\xi(c+t))+d_{\tilde\SSS}(\tilde\xi(c+t),\tilde\gamma(c+t))=|t_0-c|+d_{\tilde\SSS}(\tilde\xi(c+t),\tilde\gamma(c+t)).
\end{equation*} 

Let $\tilde\xi_1=g_{t_0}\tilde\xi$ and $\tilde\gamma_1=g_c\tilde\gamma$. Then, by the above inequality and Lemma~\ref{lem:Lipschitz},
\begin{equation}\label{upper_bound_xi_gamma}
d_{\GtS}(\tilde\xi_1,\tilde\gamma_1)\leq 2\delta+ e^{2|c|}\delta = (2+e^{2|c|})\delta.
\end{equation}
Moreover, for all $t\in(0,s-c]$, we obtain that
\begin{equation*}
d_{\tilde\SSS}(\tilde\xi_1(t),\tilde\gamma_1(t)) = \left\{
\begin{aligned}
&2t \qquad &\text{if} \qquad &\alpha\geq\pi,\\
& 2t\sin(\alpha/2) \qquad &\text{if} \qquad &0\leq\alpha\leq\pi,
\end{aligned}
\right.
\end{equation*}
where $\alpha$ is the (unsigned) angle between the outward trajectories of $\tilde\gamma_1$ and $\tilde\xi_1$ from the cone point $p$.

If $\alpha\geq\pi$, then $d_{\GtS}(\tilde\xi_1,\tilde\gamma_1)\geq \int_0^{s-c}2te^{-2t}dt$, which is not possible by \eqref{upper_bound_xi_gamma} and the choice of $\delta$ (see \eqref{choice delta}).  

Consider $\alpha\in[0,\pi)$. Then we have that
\begin{equation}\label{angle bound after conical}
\sin(\alpha/2)<\delta(2+e^{2|c|})\left(\int_0^{s-c}2te^{-2t}dt\right)^{-1}.
\end{equation}

Let $\beta$ be the (unsigned) angle between the inward trajectories  $\tilde\gamma_1$ and $\tilde\xi_1$ at $p$. Similarly to the argument above, we obtain that for $\delta$ as defined in \eqref{choice delta},  
\begin{equation}\label{angle bound before conical}
\sin(\beta/2)<\delta(2+e^{2|c|})\left(-\int_{-s-c}^02te^{2t}dt\right)^{-1}.
\end{equation}

Using \eqref{angle bound after conical} and \eqref{angle bound before conical}, 
\begin{equation*}
|\lambda(\gamma)-\lambda(\xi)|=\frac{1}{s}\left||\theta(\tilde\gamma,c)|-|\theta(\tilde\xi,t_0)|\right|\leq \frac{1}{s}(\alpha+\beta)\leq C\delta,
\end{equation*}
where $C=\frac{8}{s}(2+e^{2|c|})\left(\int_0^{s-|c|}2te^{-2t}dt\right)^{-1}$. Thus, for our choice of $\delta$ (see \eqref{choice delta}), we have $|\lambda(\gamma)-\lambda(\xi)|<\eps$.

\underline{Case 2}: Assume there exists $c_1\leq-s$ and $c_2\geq s$ such that $\psi_1:=|\theta(\tilde\gamma,c_1)|-\pi>0$ and $\psi_2:=|\theta(\tilde\gamma,c_2)|-\pi>0$. Denote $\tilde\gamma(c_1)=p_1$ and $\tilde\gamma(c_2)=p_2$.

Let $C_1$ be the cone around the segment $\tilde \gamma((c_1,-s])$ if $c_1\neq-s$ or $\tilde\gamma((-2s,-s))$ otherwise with vertex $p_1$ and angle $\psi'=\frac{\min\{\psi_1, \psi_2, \pi\}}{4}$. Let $C_2$ be the cone around the segment $\tilde \gamma([s,c_2])$ if $c_2\neq s$ or $\tilde\gamma((s,2s))$ otherwise, with vertex $p_2$ and angle $\psi'$. 
Set 
\begin{equation}\label{choice delta 2}
c=\min\{|c_1|,c_2\} \ \text{ and } \  \delta=\frac{1}{2}\min\left\{\frac{1}{8}e^{-2s}(c-s)\sin\psi',\quad  \min\{1,\eps c/8\}(2e^{2c}+1)^{-1}\int_{c}^{\infty}2te^{-2t}dt\right\}.
\end{equation}
Similar to Case 1, by Lemmas~\ref{lem:closeness in S and GS} and \ref{lem:Lipschitz} and the choice of $\delta$ in \eqref{choice delta 2}, if $d_{\GtS}(\tilde\gamma,\tilde\xi)<\delta$ then $\tilde\xi$ passes through $p_1$ and $p_2$. In particular, $\tilde \gamma$ and $\tilde\xi$ share a geodesic connecting $p_1$ and $p_2$. Therefore, there exists $d$ such that $g_d\tilde\xi(t)=\tilde\gamma(t)$ for $t\in[c_1,c_2]$. Let $t_1$ and $t_2$ be such that $\tilde\xi(t_1)=p_1$ and $\tilde\xi(t_2)=p_2$. Then, $|t_1-c_1|\leq 2e^{2|c_1|}\delta$ and $|t_2-c_2|\leq 2e^{2c_2}\delta$ so $|d|\leq 2e^{2c}\delta$. Moreover, by the triangle inequality, 
\begin{equation}\label{eq: shift distance}
    d_{\GtS}(g_d\tilde\xi,\tilde\gamma)\leq (2e^{2c}+1)\delta.
\end{equation}

Let $\alpha_1$ and $\alpha_2$ be the (unsigned) angles between the inward and outward trajectories of $g_d\tilde\xi$ and $\tilde\gamma$ at $p_1$ and $p_2$, respectively. Similarly to Case 1, for our choice of $\delta$, we have $0\leq \alpha_1,\alpha_2\leq\pi$,
\begin{equation*}
    \sin(\alpha_1/2)\leq\delta(2e^{2c}+1)\left(-\int_{c_2}^\infty 2te^{2t}dt\right)^{-1},
\end{equation*}
and
\begin{equation*}
    \sin(\alpha_2/2)\leq\delta(2e^{2c}+1)\left(\int_{c_1}^\infty 2te^{-2t}dt\right)^{-1}.
\end{equation*}

Therefore, 
\begin{equation*}
    |\lambda^{ss}(\gamma)-\lambda^{ss}(\xi)|\leq C\delta 
    \quad\text{and}\quad |\lambda^{uu}(\gamma)-\lambda^{uu}(\xi)|\leq C\delta,
\end{equation*}
where $C=\frac{8}{c}(2e^{2c}+1)\left(\int_{c}^\infty 2te^{-2t}dt\right)^{-1}$.

Thus, if $t_1=c_1+d\leq -s$ and $t_2=c_2+d\geq s$, then $\lambda(\xi)= \min\{\lambda^{ss}(\xi),\lambda^{uu}(\xi)\}$ and we have $|\lambda(\gamma)-\lambda(\xi)|\leq C\delta<\eps$.

Otherwise, $\lambda(\xi)\geq \min\{\lambda^{ss}(\xi),\lambda^{uu}(\xi)\}$  and we have $\lambda(\xi)\geq\lambda(\gamma)-C\delta > \lambda(\gamma)-\eps$.
\end{proof}

\begin{remark*}
Note that for this construction of $\lambda$, we do not in general have upper semicontinuity. To see this, consider a geodesic $\gamma$ which turns with angle greater than $\pi$ at times $-s$ and $c$ for some $c > 0$. Then, for all $r\in (0,s]$, $\lambda(g_{-r}\gamma) = \lambda^{ss}(g_{-r}\gamma)$, while $\lambda(\gamma) = \min\{\lambda^{ss}(\gamma),\lambda^{uu}(\gamma)\}$. Therefore, if $\lambda^{uu}(\gamma) < \lambda^{ss}(\gamma)$, we have that
$$\lambda(\gamma) = \lambda^{uu}(\gamma) < \lambda^{ss}(\gamma) = \lim_{r\downarrow 0} \lambda^{ss}(g_{-r}\gamma) = \lim_{r\downarrow 0}\lambda(g_{-r}\gamma).$$
This contradicts upper semicontinuity of $\lambda$.
\end{remark*}

Following \S3 of \cite{BCFT}, or Definition 3.4 in \cite{CT19} (and formalizing the idea presented in Section \ref{sec:defns}), we define 
\[ \GGG(\proportion) = \left\{(\gamma,t) \bigm| \int_0^\rho\lambda(g_u(\gamma))du\geq\proportion\rho \quad\text{and} \quad \int_0^\rho\lambda(g_{-u}g_t(\gamma))du\geq\proportion\rho \quad\text{for}\quad \rho\in[0,t]\right\} \]
and 
\[ \mathcal{B}(\proportion) = \left\{(\gamma,t) \bigm| \int_0^\rho\lambda(g_u(\gamma))du < \proportion\rho\right\}. \]
The decomposition we will take is $(\mathcal{P,G,S}) = (\mathcal{B}(\eta), \GGG(\eta), \mathcal{B}(\eta))$ for a sufficiently small value of $\eta$ which will be determined below. We reiterate that because of our choice of decomposition, we do not need to consider the sets of orbit segments denoted by $[\PPP],[\SSS]$, because of \cite[Lemma 3.5]{CT19}.

 While near cone points, positivity of $\lambda$ only gives us information about the closest cone point, and far from cone points, it gives us information about cone points on both sides. The following propositions help us quantify these relationships. Let $\theta_0$ be as in Lemma~\ref{lem:no big flat}(d).

%\begin{proposition}
%	There exist functions $f,M : (0,\infty)\to (0,\infty)$ such that if $\lambda(\gamma) > \eta$, then there is a cone point in $\gamma[-M(\eta),M(\eta)]$ with turning angle at least $f(\eta)$ away from $\pm\pi$.
%\end{proposition}
%
%\begin{proof}
%	Suppose $\lambda(\gamma) > \eta$. First, we find $M(\eta)$. The furthest cone point can be (in future or past) when $\gamma$ turns with the maximum possible angle at that point, which is $\theta_0/2$. Thus, we see that
%	$$\eta \leq \lambda(\gamma) \leq \frac{|\theta(\gamma,c)| - \pi}{c} \leq \frac{\theta_0/2}{c}$$
%	and so $c \leq \frac{\theta_0}{2\eta}$. The smallest angle that $\gamma$ can turn at a cone point is when the cone point is very close to $0$. In this case, we see that 
%	$$\eta \leq \frac{|\theta(\gamma,c)| - \pi}{s}$$
%	and so $s\eta \leq |\theta(\gamma,c)| - \pi$. Thus, we can take $M(\eta) = \frac{\theta_0}{2\eta}$ and we can take $f(\eta) = s\eta$.
%\end{proof}

%\begin{corollary}\label{cor: distance to Con}
%Let $\eta > 0$. If $(\gamma, t)\in \GGG(\eta)$, then there exists $t_1,t_2\in [-\frac{\theta_0}{2\eta},\frac{\theta_0}{2\eta}]$ such that $\gamma(t_1),\gamma(t+t_2)\in \operatorname{Con}$, and furthermore, the turning angles at these cone points are at least $s\eta$.
%\end{corollary}

\begin{proposition}\label{prop: distance to Con}
If $\lambda(\gamma) > \eta$, then there is a cone point in $\gamma[-\frac{\theta_0}{2\eta},\frac{\theta_0}{2\eta}]$ with turning angle at least $s\eta$ away from $\pm\pi$. In particular, if $(\gamma,t)\in \GGG(\eta)$, then there exist $t_1,t_2\in [-\frac{\theta_0}{2\eta},\frac{\theta_0}{2\eta}]$ such that $\gamma(t_1),\gamma(t+t_2)\in \operatorname{Con}$, with the turning angles at these cone points at least $s\eta$ away from $\pm\pi$.
\end{proposition}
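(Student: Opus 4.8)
The plan is to read off the claim directly from the definitions of $\lambda^{uu}$, $\lambda^{ss}$, and $\lambda$, using the bound $\theta_0$ on excess angles from Lemma~\ref{lem:no big flat}(d). First I would suppose $\lambda(\gamma)>\eta$. Inspecting Definition~\ref{defn:lambda}, in every case $\lambda(\gamma)$ equals $\lambda^{ss}(\gamma)$, $\lambda^{uu}(\gamma)$, or their minimum, so $\lambda(\gamma)>\eta$ forces at least one of $\lambda^{uu}(\gamma)>\eta$ or $\lambda^{ss}(\gamma)>\eta$ (in the first two cases only one of these is the relevant value, but it is still $>\eta$; in the third case the minimum being $>\eta$ makes both $>\eta$). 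Treat the case $\lambda^{uu}(\gamma)>\eta$; the stable case is symmetric under $t\mapsto -t$.

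Next, by Definition~\ref{defn: lambda unstable}, $\lambda^{uu}(\gamma)=\frac{|\theta(\gamma,c)|-\pi}{\max\{s,c\}}$ where $c\ge 0$ is the first time $\gamma(c)$ is a cone point with turning angle strictly greater than $\pi$. Since $\lambda^{uu}(\gamma)>\eta>0$ we must have $c<\infty$, i.e. such a cone point exists. The numerator satisfies $|\theta(\gamma,c)|-\pi\le \theta_0$ by Lemma~\ref{lem:no big flat}(d) (the excess angle $|\theta(\gamma,c)|-\pi$ at the cone point $\gamma(c)$ is at most half its excess angle $\mathcal L(\gamma(c))-2\pi\le\theta_0$ — actually directly $|\theta|-\pi \le \tfrac12\mathcal L - \pi \le \tfrac12\theta_0$, but $\le\theta_0$ suffices). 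Hence $\eta<\lambda^{uu}(\gamma)=\frac{|\theta(\gamma,c)|-\pi}{\max\{s,c\}}\le\frac{\theta_0}{\max\{s,c\}}\le \frac{\theta_0}{c}$, giving $c\le \frac{\theta_0}{\eta}$; I would just state $c \in [0,\frac{\theta_0}{2\eta}]$ if the sharper numerator bound $|\theta|-\pi\le\tfrac12\theta_0$ is used, matching the stated interval. Moreover $|\theta(\gamma,c)|-\pi = \lambda^{uu}(\gamma)\cdot\max\{s,c\} \ge \eta s$, so the turning angle at $\gamma(c)$ is at least $s\eta$ from $\pm\pi$. Thus $\gamma(c)$ is the desired cone point in $\gamma[-\frac{\theta_0}{2\eta},\frac{\theta_0}{2\eta}]$ (indeed in $\gamma[0,\frac{\theta_0}{2\eta}]$); in the stable case one gets a cone point at negative time in the same interval.

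For the ``in particular'' statement, suppose $(\gamma,t)\in\GGG(\eta)$. Taking $\rho=s$ (assuming $t\ge s$; if $t<s$ the orbit segment is short and one argues with $\rho=t$, or absorbs this into constants) in the defining inequality $\int_0^\rho \lambda(g_u\gamma)\,du\ge\eta\rho$ shows the average of $\lambda$ over $[0,s]$ along $\gamma$ is at least $\eta$, so there exists $u_0\in[0,s]$ with $\lambda(g_{u_0}\gamma)\ge\eta$. Actually it is cleaner to note $\lambda(\gamma)$ itself need not exceed $\eta$, so instead: the condition $\int_0^\rho\lambda(g_u\gamma)\,du\ge\eta\rho$ for all $\rho\le t$ forces, by continuity/averaging, some point $\gamma_1 = g_{u_0}\gamma$ with $\lambda(\gamma_1)\ge\eta$ and $u_0$ small; applying the first part of the proposition to $\gamma_1$ (with a limiting argument if the inequality is not strict, or noting $\lambda(g_u\gamma)>\eta'$ for a slightly smaller $\eta'$ on a positive-measure set) produces a cone point within bounded time of $u_0$, hence within $\gamma[-\frac{\theta_0}{2\eta},\frac{\theta_0}{2\eta}]$ after adjusting constants; symmetrically, applying it to $g_{-u}g_t\gamma$ gives the cone point near $\gamma(t)$. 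The main obstacle is bookkeeping the constants so that the cone points genuinely land in the stated interval $[-\frac{\theta_0}{2\eta},\frac{\theta_0}{2\eta}]$ rather than merely a comparable one, which likely forces the sharper bound $|\theta(\gamma,c)|-\pi\le\tfrac12(\mathcal L(\gamma(c))-2\pi)\le\tfrac12\theta_0$ and a careful choice of $\rho$; the geometry itself is immediate from the definitions.
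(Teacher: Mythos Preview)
Your argument for the main statement is correct and is exactly what the paper does: from $\lambda(\gamma)>\eta$ one of $\lambda^{uu}(\gamma),\lambda^{ss}(\gamma)$ exceeds $\eta$; then the bound $|\theta(\gamma,c)|-\pi\le\tfrac12\mathcal L(\gamma(c))-\pi\le\tfrac12\theta_0$ (coming from Definition~\ref{defn:turning angle} and Lemma~\ref{lem:no big flat}(d)) gives $c\le\theta_0/(2\eta)$, while $\max\{s,c\}\ge s$ gives $|\theta(\gamma,c)|-\pi\ge s\eta$. You are right that the sharper bound $\theta_0/2$ (not $\theta_0$) on the numerator is what produces the interval in the statement.

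For the ``in particular'' clause the paper gives no further argument, so your attempt already goes beyond what is written there. Your instinct is sound but the write-up is muddled: fixing $\rho=s$ loses the constant (the resulting cone point could sit as far out as $s+\theta_0/(2\eta)$), and the hedges about ``adjusting constants'' and ``limiting arguments'' are unnecessary. The clean version is this. The proof of the first part works verbatim with $\ge$ in place of $>$. For each $\rho>0$ the inequality $\int_0^\rho\lambda(g_u\gamma)\,du\ge\eta\rho$ forces some $u_\rho\in[0,\rho]$ with $\lambda(g_{u_\rho}\gamma)\ge\eta$, hence a cone point with turning angle at least $s\eta$ from $\pm\pi$ at some time $c_\rho\in[u_\rho-\tfrac{\theta_0}{2\eta},\,u_\rho+\tfrac{\theta_0}{2\eta}]$. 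Since the times at which $\gamma$ turns at a cone point form a discrete set, sending $\rho\to 0$ (so $u_\rho\to 0$) forces one such cone point to lie in the closed interval $[-\tfrac{\theta_0}{2\eta},\tfrac{\theta_0}{2\eta}]$; the same reasoning applied to $g_t\gamma$ with time reversed gives the cone point near $\gamma(t)$.
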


\begin{proof}
Since $\lambda(\gamma) > \eta$, either $\lambda^{uu}(\gamma)>\eta$ or $\lambda^{ss}(\gamma)>\eta$. If $\lambda^{uu}(\gamma)> \eta$, then by Definition~\ref{defn:lambda} there is a $c\geq 0$ such that $\gamma(c)\in Con$ and  $\lambda^{uu}(\gamma) = \frac{|\theta(\gamma,c)| - \pi}{\max\{ s,c\}}$. The turning angle at $\gamma(c)$ satisfies $|\theta(\gamma,c)|-\pi \leq \theta_0/2$. Thus, 
\[\eta < \lambda^{uu}(\gamma) \leq \frac{|\theta(\gamma,c)| - \pi}{c} \leq \frac{\theta_0/2}{c}\]
and $0\leq c \leq \frac{\theta_0}{2\eta}$. Furthermore,
\[\eta < \lambda^{uu}(\gamma) \leq \frac{|\theta(\gamma,c)| - \pi}{s}\]
so the turning angle of $\gamma$ at $c$ differs from $\pi$ by at least $s\eta.$

A similar argument applies if $\lambda^{ss}(\gamma)>\eta$.
\end{proof}

Finally, we collect a statement we will need in Section~\ref{sec:pressure gap}.

\begin{lemma}\label{lem:lambda small}
Given any $\eta>0$, there exists a $\delta>0$ such that $\lambda(\gamma)<\eta$ for all $\gamma\in B(\Sing,2\delta)$.
\end{lemma}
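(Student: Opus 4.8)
The plan is to argue by contradiction using the compactness of $\Sing$ and the lower semicontinuity of $\lambda$, together with the fact that $\lambda$ vanishes on $\Sing$ and the quantitative control furnished by Proposition~\ref{prop: distance to Con}. First I would observe that $\Sing\subset \lambda^{-1}(0)$: every $\gamma\in\Sing$ turns by angle exactly $\pi$ at every cone point, so in each of the three cases of Definition~\ref{defn:lambda} at least one of $\lambda^{ss}(\gamma),\lambda^{uu}(\gamma)$ equals $0$, and hence $\lambda(\gamma)=0$ (in the first two cases the defining condition $|\theta(\gamma,c)|-\pi>0$ is never met, so the ``otherwise'' branch applies and gives the minimum, which is $0$).

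The cleanest route, avoiding the fact that $\lambda$ is only lower (not upper) semicontinuous, is the contrapositive of Proposition~\ref{prop: distance to Con}: if $\lambda(\gamma)\geq\eta$ then $\gamma$ meets a cone point at some time $t_1\in[-\tfrac{\theta_0}{2\eta},\tfrac{\theta_0}{2\eta}]$ with turning angle at least $s\eta$ away from $\pm\pi$. So it suffices to show that for $\gamma$ sufficiently close to $\Sing$ in $d_{\GS}$, no such cone point can occur within that bounded time window with that large a turning angle. Fix $\eta>0$ and set $R=\tfrac{\theta_0}{2\eta}$. Suppose for contradiction that there is a sequence $\gamma_n\to\sigma\in\Sing$ (using compactness of $\Sing$ to extract a convergent subsequence of the nearest singular geodesics) with $\lambda(\gamma_n)\geq\eta$ for all $n$. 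For each $n$ there is $t_n\in[-R,R]$ with $\gamma_n(t_n)\in Con$ and $|\theta(\gamma_n,t_n)|-\pi\geq s\eta$. Passing to a further subsequence, $t_n\to t_\infty\in[-R,R]$. Using Lemma~\ref{lem:closeness in S and GS} (and working with lifts so that $d_{\GtS}(\tilde\gamma_n,\tilde\sigma)\to 0$), $d_\SSS(\gamma_n(t_n),\sigma(t_n))\to 0$, so $\sigma(t_n)$ converges to a cone point — indeed, since $Con$ is finite and the geodesics $\gamma_n$ restricted to $[-R,R]$ converge uniformly to $\sigma|_{[-R,R]}$, for large $n$ the point $\gamma_n(t_n)$ and the point $\sigma(t_\infty)$ must be the \emph{same} cone point $\zeta$, and $t_n\to t_\infty$ with $\sigma(t_\infty)=\zeta$. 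The main technical point is then to show that the turning angle is continuous at cone points under this kind of convergence, i.e.\ $\theta(\gamma_n,t_n)\to\theta(\sigma,t_\infty)$; this follows because the incoming and outgoing tangent directions of $\gamma_n$ at $\zeta$ (measured in the space of directions $S_\zeta\SSS$, which carries a well-defined angular metric by \cite[Ch.~II.3]{bh}) converge to those of $\sigma$ — the relevant geodesic subsegments $[\gamma_n(t_n-\delta),\gamma_n(t_n)]$ and $[\gamma_n(t_n),\gamma_n(t_n+\delta)]$ converge to $[\sigma(t_\infty-\delta),\sigma(t_\infty)]$ and $[\sigma(t_\infty),\sigma(t_\infty+\delta)]$ in $\tilde\SSS$ for small fixed $\delta$, and angle is continuous under uniform convergence of geodesic segments sharing an endpoint in a CAT(0) space. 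But $\sigma\in\Sing$ forces $\theta(\sigma,t_\infty)=\pi$ (in absolute value), contradicting $|\theta(\gamma_n,t_n)|-\pi\geq s\eta>0$ in the limit.

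The step I expect to be the main obstacle is the continuity of the turning angle at a cone point under uniform convergence of geodesics — one must be careful that the two subsegments emanating from $\zeta$ genuinely converge and that the cone structure at $\zeta$ does not allow the angle to jump; this is where the choice of $R$ (hence the $\eta$-dependence of $\delta$) and the finiteness of $Con$ are essential, since they confine the problematic cone point to a fixed compact time interval and prevent it from ``escaping to infinity'' where the $d_{\GS}$-metric gives no control. Once this continuity is in hand, the contradiction closes the argument and produces the required $\delta=\delta(\eta)>0$ with $\lambda|_{B(\Sing,2\delta)}<\eta$.
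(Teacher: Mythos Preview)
Your sequential/compactness argument is sound in outline and genuinely different from the paper's proof. The paper gives an explicit $\delta$ (roughly $\delta = \frac{s\theta_0}{64}e^{-4\theta_0/\eta}$) and argues directly: assuming $\tilde\gamma \in B(\Sing, 2\delta)$ with $\lambda(\tilde\gamma) > \eta/2$, it uses Proposition~\ref{prop: distance to Con} to locate a cone point $\tilde\gamma(t_0)$, then draws geodesic segments from $\tilde\xi(\pm 2\theta_0/\eta)$ to that cone point and uses CAT(0) triangle comparison to bound the angles these segments make with $\tilde\gamma$ at $\tilde\gamma(t_0)$ by $\frac{s\eta}{8}$. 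Since $\tilde\gamma$ turns with excess angle $\geq \frac{s\eta}{2}$ there, the concatenation of these two segments is itself geodesic (angle $\geq \pi$ on both sides), so by uniqueness of geodesics in $\tilde\SSS$ the singular geodesic $\tilde\xi$ must pass through the cone point with excess turning angle $\geq \frac{s\eta}{4}$, contradicting $\tilde\xi \in \Sing$. Your argument trades this explicit geometry for compactness and a continuity statement about turning angles, which is conceptually cleaner but non-constructive.

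One correction is needed: your claim that ``angle is continuous under uniform convergence of geodesic segments sharing an endpoint in a CAT(0) space'' is false as stated---in general CAT(0) spaces the Alexandrov angle is only \emph{upper} semicontinuous (consider geodesics in a tree). What rescues you is the specific local structure here: a neighborhood of $\zeta\in Con$ is isometric to a Euclidean cone over a circle of circumference $\mathcal{L}(\zeta)$, and in polar coordinates on that cone the direction at the apex of $[\zeta,p]$ is just the angular coordinate of $p$, which is continuous in $p$ for $p\neq\zeta$. Hence the incoming and outgoing directions of $\tilde\gamma_n$ at $\zeta$ do converge to those of $\tilde\sigma$, and the turning angle converges. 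With this fix your argument closes; the paper's direct route sidesteps this semicontinuity issue altogether and yields an explicit $\delta(\eta)$ as a bonus.
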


\begin{proof}
Let $\eta>0$ be given and suppose without generality it is small enough that $\frac{s\eta}{32}<1$. We argue in $\tilde \SSS$. Suppose $\tilde \gamma\in B(\Sing,2\delta)$ and, in particular, that $\tilde \xi\in \Sing$ with $d_{\GS}(\tilde \gamma,\tilde \xi)<2\delta.$ We choose $\delta<\frac{s\theta_0}{64e^{4\theta_0/\eta}}$ and towards a contradiction suppose that $\lambda(\tilde\gamma)> \frac{\eta}{2}.$ (Recall that $s$ is specified in Lemma~\ref{lem:lower semicts}, and $\eta_0$ is specified in Lemma~\ref{lem:no big flat}(d).)

Since $\lambda(\tilde\gamma)>\frac{\eta}{2}$, by Proposition~\ref{prop: distance to Con}, there exists a cone point in $\tilde\gamma[-\frac{\theta_0}{\eta},\frac{\theta_0}{\eta}]$ at which $\tilde \gamma$ turns with angle at least $\frac{s\eta}{2}$ away from $\pm\pi$. Say $\tilde \gamma$ hits that cone point at time $t_0\in [-\frac{\theta_0}{\eta},\frac{\theta_0}{\eta}].$

As $d_{G\tilde \SSS}(\tilde \gamma, \tilde \xi)<2\delta$, by Lemma~\ref{lem:Lipschitz},
\[ d_{G\tilde \SSS}\left(g_{-\frac{2\theta_0}{\eta}}\tilde \gamma, g_{-\frac{2\theta_0}{\eta}}\tilde \xi\right) < 2\delta e^{\frac{4\theta_0}{\eta}} \ \ \mbox{ and } \ \  d_{G\tilde \SSS}\left(g_{\frac{2\theta_0}{\eta}}\tilde \gamma, g_{\frac{2\theta_0}{\eta}}\tilde \xi\right) < 2\delta e^{\frac{4\theta_0}{\eta}}.\]
Then, by Lemma~\ref{lem:closeness in S and GS},
\[ d_{\tilde\SSS}\left(\tilde \gamma\left(-\frac{2\theta_0}{\eta}\right), \tilde \xi\left(-\frac{2\theta_0}{\eta}\right)\right) < 4\delta e^{\frac{4\theta_0}{\eta}}  \ \ \mbox{ and } \ \   d_{\tilde\SSS}\left(\tilde \gamma\left(\frac{2\theta_0}{\eta}\right), \tilde \xi\left(\frac{2\theta_0}{\eta}\right)\right) < 4\delta e^{\frac{4\theta_0}{\eta}}. \]

Consider the geodesic segment $c$ connecting $\tilde \xi(-\frac{2\theta_0}{\eta})$ and $\tilde \gamma(t_0)$. The segment $c$ and $\tilde\gamma[-\frac{2\theta_0}{\eta}, t_0]$ agree at $t_0$ and at time $-\frac{2\theta_0}{\eta}$, at least $\frac{\theta_0}{\eta}$ away with respect to $d_{\tilde \SSS}$, are at most $4\delta e^{\frac{4\theta_0}{\eta}}$ apart. Comparing to a Euclidean triangle and using the CAT(0) property, the angle between these segments at $\tilde \gamma(t_0)$ is at most $2\sin^{-1}[(4\delta e^{\frac{4\theta_0}{\eta}})/(2\frac{\theta_0}{\eta})]$. By our choice of $\delta$, this is less than $2\sin^{-1}[\frac{s\eta}{32}]<\frac{s\eta}{8}$. The same argument applies to the angle between $\tilde \gamma[t_0, \frac{2\theta_0}{\eta}]$ and the segment $c'$ from $\tilde \gamma(t_0)$ to $\tilde\xi(\frac{2\theta_0}{\eta})$.

At $t_0$, $\tilde \gamma$ turns with angle at least $\frac{s\eta}{2}$ away from $\pm\pi$. Therefore, the concatenation of $c$ with $c'$ turns with angle at least $\pi +\frac{s\eta}{4}$ on both sides, and hence is geodesic. By uniqueness of geodesic segments in $\tilde \SSS$, $\xi[-\frac{2\theta_0}{\eta},\frac{2\theta_0}{\eta}]$ must agree with this concatenation. But this contradicts the fact that $\xi \in \Sing$.  Therefore, $\lambda(\gamma)\leq\frac{\eta}{2}<\eta$ as desired.
\end{proof}

%
%%
%%%
%%%%
%%%%%
%%%%%%%%%%%%%%%%%%%%%%%%%%%%%%%%%%%%%%%%%%%%%%%%

\section{$\GGG(\proportion)$ has weak specification (at all scales)}\label{sec:specification}

The goal of this section is to obtain Corollary~\ref{weak specification} which shows that $\GGG(\proportion)$ has weak specification at all scales.

%\begin{lemma}\label{distance to a point in Con}
%Let $s>0$ such that $2s$ is less than the length of the shortest saddle connection of $\SSS$. Assume $t>0$ and $(\gamma,t)\in \GGG(\proportion)$. Then, there exist $t_0,t_1\in[-s,s]$ such that $\gamma(t_0), \gamma(t+t_1)\in Con$, $|\theta(\gamma,t_0)|-\pi\geq \proportion$ and $|\theta(\gamma,t+t_1)|-\pi\geq\proportion$. 
%\end{lemma}

%\begin{proof}
%If there is no $t_0\in[-s,s]$ such that $\gamma(t_0)\in Con$, then there exists $\eps>0$ such that $\gamma([-s+\eps,s+\eps])\cap Con=\emptyset$ and $\eps<t$ so $\lambda_s(g_u(\gamma))=0$ for all $u\in[0,\eps]$ contradicting the fact that $(\gamma,t)\in \GGG(\proportion)$. Moreover, since $2s$ is smaller than the length of the shortest saddle connection, $|\theta(\gamma,t_0)|-\pi\geq \proportion$.

%Similarly, we can show the existence of $t_1$ with the desired properties.
%\end{proof}

\begin{lemma}(Compare with Lemma 3.8 in \cite{Dankwart})\label{density of saddle connections}
Let $x\in\SSS$ and $\beta$ be a geodesic ray with $\beta(0)=x$. Then, for any $\eps>0$ there exist $T_0(\eps)$ and a geodesic $c$ which connects $x$ with a point $z\in Con$ so that the length of $c$ is at most $T_0(\eps)$ and $\angle_x(\beta,c)<\eps$ where $\angle_x(a,b)$ is the angle at $x$ between geodesic segments $a$ and $b$.
\end{lemma}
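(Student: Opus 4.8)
The plan is to exploit the fact, established in Lemma~\ref{lem:no big flat} and its consequences, that cone points are dense enough in $\SSS$ that geodesic rays frequently come close to them; more precisely I want to find a \emph{single} cone point $z$ that can be reached from $x$ by a geodesic $c$ whose initial direction makes small angle with $\beta(0)$. The natural route is to work in the universal cover $\tilde\SSS$, lift $\beta$ to a geodesic ray $\tilde\beta$ with $\tilde\beta(0)=\tilde x$, and consider the cone $C$ around $\tilde\beta$ with vertex $\tilde x$ and small angle $\eps$ in the sense of Definition~\ref{defn:cone}. Because $\tilde\SSS$ is CAT(0) and $\SSS$ is compact, such a cone, extended out to a large radius, contains arbitrarily large metric balls (this is the same mechanism used in the proof of Lemma~\ref{existence of a turning closed geodesic} to place a fundamental domain inside a cone). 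In particular, for $R$ large enough the intersection of $C$ with the ball $B(\tilde x, R)$ contains a ball of radius larger than the diameter of $\SSS$, hence contains a lift $\tilde z$ of some cone point (the lift of $Con$, which is nonempty by Lemma~\ref{lem:no big flat}, is $\frac{}{}$ cocompact in $\tilde\SSS$, so every ball of radius $\geq \operatorname{diam}(\SSS)$ meets it). The unique geodesic segment $[\tilde x,\tilde z]$ then lies in $C$ (since $\tilde z\in C$ and $C$ is, by definition, exactly the set of points reachable from $\tilde x$ by a geodesic making angle $\leq\eps$ with $\tilde\beta$), so it makes angle $<\eps$ with $\tilde\beta$ at $\tilde x$ and has length $\leq R=:T_0(\eps)$. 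Projecting $[\tilde x,\tilde z]$ back down to $\SSS$ gives the desired geodesic $c$ from $x$ to $z\in Con$.

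The main steps, in order, are: (1) set up the lift and the cone $C=C(\tilde x,\tilde\beta,\eps)$; (2) prove the geometric ``fat cone'' claim, that $C\cap B(\tilde x,R)$ contains a ball of radius $\geq\operatorname{diam}(\SSS)$ once $R$ is large, depending only on $\eps$ and $\SSS$ — this is where one uses the CAT(0) comparison inequality to see that a Euclidean cone of angle $\eps$ and radius $R$ contains a ball of radius $\sim R\sin(\eps/2)$, together with the fact that geodesics in $\tilde\SSS$ diverge at least as fast as in Euclidean space; (3) invoke cocompactness of the deck action (equivalently, compactness of $\SSS$) to conclude that any such ball contains a point of $\widetilde{Con}$; (4) let $\tilde z$ be such a point, note $[\tilde x,\tilde z]\subset C$ by convexity/definition of the cone, read off the angle and length bounds, and project to $\SSS$. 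I would set $T_0(\eps)=R(\eps)$ from step (2)–(3).

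The step I expect to be the main obstacle is step (2): making the ``cone of small angle contains a large ball'' claim precise in the CAT(0) setting rather than the Euclidean one. In Euclidean space it is elementary trigonometry, but in $\tilde\SSS$ one must argue that the CAT(0) inequality forces the cone to be at least as ``wide'' as its Euclidean comparison cone — concretely, that if $q_1,q_2$ are at distance $R$ from $\tilde x$ on geodesics making angle $\eps$, then $d_{\tilde\SSS}(q_1,q_2)\geq$ the Euclidean value $2R\sin(\eps/2)$, and more generally that a whole ball about the ``axis'' point $\tilde\beta(R)$ of radius comparable to $R\sin(\eps/2)$ lies inside $C$. Once that comparison estimate is in hand, everything else is bookkeeping. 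An alternative to sidestep this, if one prefers, is to cite Dankwart's Lemma~3.8 (referenced in the statement) essentially verbatim, but I would rather give the self-contained cone argument since the cone construction is already developed in Definition~\ref{defn:cone} and used elsewhere in the paper.
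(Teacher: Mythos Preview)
Your proposal is correct and follows essentially the same route as the paper: lift to $\tilde\SSS$, take the cone of small angle around $\tilde\beta$ at $\tilde x$, use the CAT(0) comparison to argue the cone-ball intersection is at least as large as its Euclidean model and hence contains a fundamental domain (so a point of $\widetilde{Con}$), then project the segment $[\tilde x,\tilde z]$. The only cosmetic difference is that the paper takes the cone of half-angle $\eps/2$ and picks $\tilde z$ in the interior to guarantee the strict inequality $\angle_x(\beta,c)<\eps$, which cleanly handles the minor boundary issue you gloss over.
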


\begin{proof}
Let $\tilde x\in\tilde\SSS$ be a lift of $x$ and $\tilde \beta $ a lift of $\beta$ with $\tilde \beta(0)=\tilde x$. Denote by $C_{\frac{\eps}{2}}(\tilde x, \tilde \beta)$ the cone around $\beta$ with vertex $\tilde x$ and angle $\frac{\eps}{2}$. Choose $T_0=T_0(\eps)$ so large that an angle-$\eps$ sector of a radius-$T_0$ Euclidean ball contains a ball of radius much larger than the diameter of $\SSS$. Then $I=B_{T_0}(\tilde x)\cap C_{\frac{\eps}{2}}(\tilde x, \tilde \beta)$ is at least as large as this Euclidean sector and so must contain a fundamental domain of $\SSS$. Then $\widetilde{Con}\cap Int(I)\neq \emptyset$, so let $\tilde z\in \widetilde{Con}\cap Int(I)$ such that $\tilde z$ is closest to $\tilde x$. The segment $\tilde c=\tilde x \tilde z$ is a geodesic of length at most $T_0$. The projection of $\tilde c$ to $\SSS$ is the desired geodesic.
\end{proof}

\begin{lemma}\label{make endpoints in Con}
For any $\delta>0$, there exists $T_1=T_1(\delta)$ such that for any $t>0$ and $(\gamma,t)\in\GGG(\proportion)$, $(\gamma,t)$ is $\delta$-shadowed by a saddle connection path $\gamma_e$ in the following sense: 
\begin{itemize}
    \item $\ell(\gamma_e)\leq t+2T_1$
    \item there exists $s_0\in[0,T_1]$ with the property that if $\gamma^c_e$ is any extension of $\gamma_e$ to a complete geodesic then $d_{\GS}(g_u(\gamma), g_u(g_{s_0}(\gamma^c_e)))\leq\delta$ for all $u\in[0,t]$.
\end{itemize}
In particular, if $t > \frac{\theta_0}{\eta}$, there exists a closed interval $I\supset [\frac{\theta_0}{2\eta},t-\frac{\theta_0}{2\eta}]$ such that $\gamma_e(s_0+u)=\gamma(u)$ for $u\in I$.
\end{lemma}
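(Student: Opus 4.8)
\textbf{Proof plan for Lemma~\ref{make endpoints in Con}.}

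The plan is to take an orbit segment $(\gamma,t)\in\GGG(\proportion)$, lift it to a geodesic segment $\tilde\gamma|_{[0,t]}$ in $\tilde\SSS$, and modify it near each of its two endpoints so that the endpoints land on cone points while leaving the middle of the segment unchanged. First I would invoke Proposition~\ref{prop: distance to Con}: since $(\gamma,t)\in\GGG(\eta)$, there are times $t_1,t_2\in[-\frac{\theta_0}{2\eta},\frac{\theta_0}{2\eta}]$ with $\gamma(t_1),\gamma(t+t_2)\in\operatorname{Con}$, and moreover the turning angles at these cone points are at least $s\eta$ away from $\pm\pi$ — this quantitative ``genuine turn'' is what will let me splice in new segments and still have a geodesic (saddle connection path). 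The backward extension: I apply Lemma~\ref{density of saddle connections} to the geodesic ray emanating from $\tilde\gamma(t_1)$ in the direction $-\tilde\gamma'(t_1)$ (i.e. backward along $\gamma$); this produces, for a suitably small angle parameter to be fixed in terms of $\delta$, a saddle connection $c^-$ from a cone point to $\tilde\gamma(t_1)$ of length at most some $T_0$, making a small angle with the reversed ray. Symmetrically I get $c^+$ from $\tilde\gamma(t+t_2)$ to a cone point. I then define $\gamma_e$ to be the concatenation $c^- * \tilde\gamma|_{[t_1,t+t_2]} * c^+$, projected to $\SSS$.

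The key steps, in order: (1) Check that $\gamma_e$ is actually a geodesic, hence a saddle connection path. The only places to check are the two splice points $\tilde\gamma(t_1)$ and $\tilde\gamma(t+t_2)$. At $\tilde\gamma(t_1)$, the original turning angle of $\gamma$ is $\geq\pi+s\eta$ (away from $\pm\pi$ by at least $s\eta$); replacing the incoming segment by $c^-$ changes the incoming direction by a small angle (controlled by the angle parameter in Lemma~\ref{density of saddle connections}), so provided that angle is chosen $< s\eta$, say $< s\eta/2$, the turning angle of $\gamma_e$ at this point is still $>\pi$ on both sides, hence $\gamma_e$ is locally geodesic there; same at the other splice point. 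This is where I use the quantitative separation from $\pm\pi$ in Proposition~\ref{prop: distance to Con} in an essential way. (2) Bound the length: $\ell(\gamma_e)\le T_0 + (t+t_2 - t_1) + T_0 \le t + 2\,(\frac{\theta_0}{2\eta}) + 2T_0 =: t + 2T_1$, so set $T_1 = \frac{\theta_0}{2\eta}+T_0$, which depends only on $\delta$ (through $T_0$) and on the fixed constants $\eta,\theta_0$. (3) Shadowing: $\gamma_e$ agrees with $\gamma$ exactly on $[t_1,t+t_2]$ (after the parametrization shift), in particular on $[\frac{\theta_0}{2\eta}, t-\frac{\theta_0}{2\eta}]$ once $t>\frac{\theta_0}{\eta}$, which gives the last sentence of the lemma with $I = [t_1,t+t_2]$ (up to the shift $s_0$). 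To get the $d_{\GS}$-closeness for $u\in[0,t]$: parametrize $\gamma_e$ (and any complete extension $\gamma_e^c$) so that $\gamma_e(s_0) = \gamma(0)$, where $s_0 = \ell(c^-) + (0 - t_1) = \ell(c^-) - t_1 \in [0, T_0 + \frac{\theta_0}{2\eta}] \subset [0,T_1]$; note $t_1\le 0$ may fail, but if $t_1>0$ one extends $\tilde\gamma$ backward a little first, or absorbs it — in any case $s_0\ge 0$ can be arranged and $s_0\le T_1$. Then for $u$ in the bulk of $[0,t]$ the two geodesics coincide in $\SSS$, and near the two ends (within distance $\sim\frac{\theta_0}{2\eta}+T_0$ of $0$ or $t$) they stay within $2(T_0+\frac{\theta_0}{2\eta})$ of each other in $d_\SSS$; applying Lemma~\ref{lem:shadow in S} (which converts $d_\SSS$-closeness on a slightly larger time interval into $d_{\GS}$-closeness) would give $d_{\GS}\le\delta$ — but this only works if the two geodesics are actually $d_\SSS$-close, which they are \emph{not} a priori near the ends.

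\textbf{The main obstacle.} The genuine difficulty is the shadowing estimate near the endpoints: $c^-$ and the original backward continuation of $\gamma$ share the point $\tilde\gamma(t_1)$ and leave it at a small angle, so by the CAT(0) comparison (thin triangles / the formula $d(\cdot)\le 2t\sin(\alpha/2)$ used in the proof of Lemma~\ref{lem:lower semicts}) they stay close only for a bounded time before the small angular discrepancy accumulates; but since $c^-$ has length at most $T_0$, we only need closeness on the bounded interval $[0, s_0]$-ish worth of parameter near each end, and there the linear-in-$t$ drift $2t\sin(\alpha/2)$ is at most $2(T_0 + \tfrac{\theta_0}{2\eta})\sin(\alpha/2)$, which can be made $<\delta/2$ by choosing the angle parameter in Lemma~\ref{density of saddle connections} small enough (in terms of $\delta$, $T_0$, $\theta_0$, $\eta$ — noting $T_0$ itself depends on the angle parameter, so one fixes the angle and then $T_0$ consistently, e.g. first pick the angle $\epsilon'$ so small that $2(\tfrac{\theta_0}{2\eta}+T_0(\epsilon'))\sin(\epsilon'/2)<\delta/2$, which is possible since $T_0(\epsilon')$ grows only like $\log(1/\epsilon')$ or a power, sub-linearly compared to the decay of $\sin(\epsilon'/2)$ — or simply iterate the choice). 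Combined with Lemma~\ref{lem:closeness in S and GS} going the other way and Lemma~\ref{lem:shadow in S} to pass from $\SSS$ to $\GS$, one upgrades ``$d_\SSS(\gamma_e^c(s_0+u),\gamma(u))<\delta/2$ for $u\in[-T(\delta),t+T(\delta)]$'' — where on the bulk this distance is zero and near the ends it is controlled as above, extending $\gamma_e$ arbitrarily past its endpoints costs at most a further bounded drift absorbed into $T_1$ — to the desired $d_{\GS}(g_u\gamma, g_u g_{s_0}\gamma_e^c)\le\delta$ for $u\in[0,t]$. Care is needed that the extension $\gamma^c_e$ is \emph{arbitrary}, so near the very endpoints $0$ and $t$ one must rely on the exponential weighting in $d_{\GS}$ to absorb the fact that $\gamma_e^c$ may diverge wildly from $\gamma$ just outside $[0,t]$; this is exactly the content of Lemma~\ref{lem:shadow in S}'s buffer $T$, so it is a matter of taking the angle small enough and $T_1$ large enough (both as functions of $\delta$ alone) and bookkeeping the constants.
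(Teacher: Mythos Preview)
Your overall approach---use Proposition~\ref{prop: distance to Con} to locate cone points with genuine excess angle near each endpoint, then Lemma~\ref{density of saddle connections} to splice in saddle connections at a small angle, and finally Lemma~\ref{lem:shadow in S} to convert $d_\SSS$-closeness on a buffered interval $[-T(\delta),t+T(\delta)]$ into $d_{\GS}$-closeness on $[0,t]$---matches the paper's. The gap is in how you build $c^-$.

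A single call to Lemma~\ref{density of saddle connections} produces a cone point at distance \emph{at most} $T_0$ from $\tilde\gamma(t_1)$; there is no lower bound, and the nearest cone point in the cone may well sit at distance $\ell_0$. In that case your $\gamma_e$ only extends leftward to about $\tilde\gamma(t_1-\ell_0)$, and the \emph{arbitrary} extension $\gamma_e^c$ already takes over inside the buffer window $[-T(\delta),t_1]$ that Lemma~\ref{lem:shadow in S} needs. No choice of the angle $\eps'$ or of $T_1$ repairs this: $T_1$ is only an upper bound on $\ell(\gamma_e)$, not a lower bound, and shrinking $\eps'$ does not push the first cone point farther away. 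Your final paragraph correctly flags that the arbitrary extension must be absorbed by the exponential tail via Lemma~\ref{lem:shadow in S}, but for that lemma to apply you must already have $d_\SSS$-closeness on the full buffer $[-T(\delta),t+T(\delta)]$, and you have not secured it on the portion where $\gamma_e^c$ is uncontrolled.

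The paper's remedy is to iterate Lemma~\ref{density of saddle connections}: from the first cone point $\tilde p_1$ one opens a new small-angle cone (centered so that the concatenation is still geodesic) and finds $\tilde p_2$, then $\tilde p_3$, and so on, stopping only once the accumulated length of the saddle connection path exceeds $T(\delta)+\tfrac{\theta_0}{2\eta}$. Each step contributes length at least $\ell_0$, so at most $(T(\delta)+\tfrac{\theta_0}{2\eta})/\ell_0$ iterations are needed; the cone angle is chosen small enough that the total angular drift over this bounded number of steps keeps $\gamma_e$ within $\delta/2$ of $\gamma$ across the entire buffer. Incidentally, your growth estimate for $T_0(\eps')$ is also off---from the proof of Lemma~\ref{density of saddle connections} one has $T_0\sim 1/\eps'$, so $T_0(\eps')\sin(\eps'/2)$ does not tend to zero---but once the construction is iterated this issue evaporates, because the drift only needs to be controlled over the fixed-length buffer rather than over the full length $T_0$.
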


\begin{proof}
As usual, we prove the result in $\tilde \SSS$. Let $T=\max\{-\log(\delta),\frac{\theta_0}{2\eta}, \frac{\ell_0}{4}\}$ where $\theta_0$ and $\ell_0$ are from Lemma~\ref{lem:no big flat}. By Lemma~\ref{lem:shadow in S}, if we construct $\tilde\gamma_e$ such that $d_{\tilde\SSS}(\tilde\gamma(u),\tilde\gamma_e(s_0+u))<\frac{\delta}{2}$ for all $u\in[-T, t+T]$, then $d_{\GS}(g_u(\gamma), g_u(g_{s_0}(\gamma^c_e)))\leq\delta$ for all $u\in[0,t]$. (See Figure~\ref{fig:gamma e} for the constructions in this proof.)

\begin{figure}[h]
\centering
\begin{tikzpicture}[scale=1]

\fill[blue!20!white] (1,0) -- (-6,1.5) -- (-6,-2)  ;
\fill[blue!40!white] (-1,-.3) -- (-6,-.5) -- (-6,-1.5)  ;
\fill[blue!60!white] (-3,-.5) -- (-6,-.7) -- (-6,-1.3)  ;

\draw[thick] (-6,0) -- (5,0);
\draw[thick, red] (-5.5,-1) -- (-3,-.5) -- (-1,-.3) -- (1,-.02) -- (5,-.02);

\node at (6,0) {$\cdots$} ;

\draw[thick, purple] (1.3,0) arc (0:158:.3cm);
\draw[thick, purple] (1.3,0) arc (0:-158:.3cm);

\fill (1,0) circle(.06);
\draw[thick, ->] (3,0) -- (3.5,0) ;
\draw[thick, red, ->] (3,-.05) -- (3.5,-.05) ;

\fill (3,0) circle(.06);
\fill (-4,0) circle(.06);
\fill (-1,-.3) circle(.06);
\fill (-3,-.5) circle(.06);
\fill (-5.5,-1) circle(.06);

\draw [dashed] (1,0) -- (-3.5,2) ;
\draw [dashed] (1,0) -- (-3.5,-2) ;

\node at (3.2,.5) {$\tilde \gamma$} ;
\node[red] at (3.2,-.5) {$\tilde \gamma_e$} ;
\node at (1,1) {$\tilde \gamma(s_1)$} ;
\node at (-4,.5) {$\tilde \gamma(-T)$} ;
\node at (-1,-.7) {$\tilde p_1$} ;
\node at (-3,-1.1) {$\tilde p_2$} ;

\node[purple] at (1.3,.4) {$\pi$} ;
\node[purple] at (1.3,-.4) {$\pi$} ;

\node[blue] at (-6,1.5) {$\mathcal{C}$} ;

\end{tikzpicture}
\caption{The construction of $\gamma_e$ in Lemma~\ref{make endpoints in Con} around the left endpoint of $\gamma$. The sequence of $\alpha$-cones featured in the proof is shaded.}\label{fig:gamma e}
\end{figure}
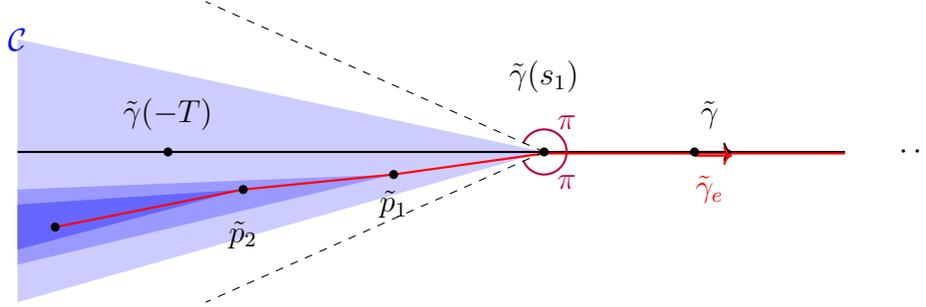

By Proposition~\ref{prop: distance to Con}, there exist $t_0,t_1\in[-\frac{\theta_0}{2\eta},\frac{\theta_0}{2\eta}]$ such that $\tilde\gamma(t_0), \tilde\gamma(t+t_1)\in \widetilde{Con}$, $|\theta(\tilde\gamma,t_0)|-\pi\geq s\eta$ and $|\theta(\tilde\gamma,t+t_1)|-\pi\geq s\eta$. Thus, there exist $s_1\in [-T,\frac{\theta_0}{2\eta}]$ and $s_2\in[-\frac{\theta_0}{2\eta},T]$ such that $\tilde\gamma(s_1), \tilde\gamma(t+s_2)\in \widetilde{Con}$ and $\left(\tilde\gamma([-T,s_1))\cup \tilde\gamma((t+s_2,t+T])\right)\cap \widetilde{Con}=\emptyset$.

If $s_1=-T$, then define $\tilde\gamma_e(u-s_1)=\tilde\gamma(u)$ for $u\in[s_1,t+s_2]$. Assume $s_1>-T$. Let $\eta_0$ be as in Lemma~\ref{lem:no big flat}(b). Choose $\alpha<\frac{\ell_0}{4(T+\frac{\theta_0}{2\eta})}\min\{\eta_0,\frac{\delta}{2(\frac{\theta_0}{2\eta}+T)}\}$.  Let $\mathcal C$ be the cone in $\tilde \SSS$ around $\tilde\gamma([-T,s_1])$ with angle $\alpha/2$. Note that $\alpha<\eta_0$, so any geodesic segment from a point in $\mathcal{C}$ to $\tilde\gamma(s_1)$ can be concatenated with $\tilde \gamma([s_1,t])$ to form a geodesic. By Lemma~\ref{density of saddle connections}, there exists $T_0=T_0(\frac{\alpha}{2})\geq T+\frac{\theta_0}{2\eta}$ and a point $\tilde p_1$ in $\widetilde{Con} \cap \mathcal{C}$ such that $d_{\tilde\SSS}(\tilde p_1,\tilde\gamma(s_1))\leq T_0$. Choose $\tilde p_1$ as in the previous sentence minimizing the distance to $\tilde\gamma([-T_0,s_1])$. If $d_{\tilde\SSS}(\tilde p_1,\tilde\gamma(s_1))\geq T+\frac{\theta_0}{2\eta}$, then let the initial segment of $\tilde \gamma_e$ be the geodesic segment $[\tilde p_1,\tilde \gamma(s_1)]$.

Otherwise, we repeat the argument above, applying Lemma~\ref{density of saddle connections} to construct an angle-$\alpha/2$ cone centered around the geodesic segment making angle $\pi+\frac{\alpha}{2}$ with $[\tilde p_1,\tilde \gamma(s_1)]$. We get a point $\tilde p_2\in \widetilde{Con}$ in this cone with $d_{\tilde S}(\tilde p_1,\tilde p_2)\leq T_0$, again chosen to minimize the distance to $\tilde \gamma([-T_0,s_1])$. If $d_{\tilde S}(\tilde p_2, \tilde \gamma(s_1))\geq T+\frac{\theta_0}{2\eta}$, then let the initial segment of $\tilde\gamma_e$ be the concatenation of geodesic segments $[\tilde p_2, \tilde p_1]$ and $[\tilde p_1, \tilde \gamma(s_1)]$. This concatenation is a geodesic by the choice of $\alpha$ and the construction of the cone. Otherwise, repeat the procedure at $\tilde p_2$ and so on. 

We will need to repeat this procedure at most $\frac{T+\frac{\theta_0}{2\eta}}{\ell_0}$ times. We extend the beginning of $\tilde \gamma_e$ constructed here with $[\tilde \gamma(s_1),\tilde \gamma(t+s_2)]$ and then extend beyond $\tilde \gamma(t+s_2)$ (if needed) similarly to the procedure at $\tilde \gamma(s_1)$. Since the turning angles at each cone point are at least $\pi$, we obtain a saddle connection path $\tilde \gamma_e$.

Let $T_1=T+\frac{\theta_0}{2\eta}+T_0$. Let $s_0$ be such that $\tilde\gamma_e(s_0+s_1) = \tilde\gamma(s_1)$. Then $s_0\in[0,T_1]$. For $u\in [s_1, s_2]$, $d_{\tilde S}(\tilde\gamma(u),\tilde\gamma_e(s_0+u))=0<\frac{\delta}{2}$, as desired. For $u\in[-T,s_1]$, note that the sequence of cones used in the proof have angles $\frac{\alpha}{2}$ and $\alpha<\frac{\ell_0}{4(T+\frac{\theta_0}{2\eta})}\frac{\delta}{2(\frac{\theta_0}{2\eta}+T)}$. There are at most $\frac{T+\frac{\theta_0}{2\eta}}{\ell_0}$ of these cones, each segment from $\tilde p_i$ to $\tilde p_{i+1}$ is at most length $T+\frac{\theta_0}{2\eta}$, and we always choose our cone points $\tilde p_i$ as close to $\tilde \gamma$ as we can. Therefore, the distance $d_{\tilde S}(\tilde \gamma(u), \tilde\gamma_s(s_0+u)$ is bounded by $\frac{\delta}{2}$ for $u\in [-T,s_1]$. For the same reason, this bound also holds for $u\in [s_2, t+T]$, finishing the proof. 
\end{proof}

\begin{lemma}(Compare with Lemma 3.9 in \cite{Dankwart})\label{connections at Con}
Let $N=[\frac{4\pi}{\eta_0}]+3$ where $\eta_0$ is from Lemma~\ref{lem:no big flat}(c). Let $q\in Con$. Then there exist $N$ saddle connections $\sigma_1,\sigma_2,\ldots, \sigma_N$ emanating from $q$ with the following property:

For any geodesic segment $\gamma$ with endpoint $q$, the concatenation of $\gamma$ with at least one $\sigma_i$ is also a local geodesic.
\end{lemma}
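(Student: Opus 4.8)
The plan is to produce the $N$ saddle connections by choosing, in a suitable combinatorial sense, enough directions at $q$ so that no matter where an incoming geodesic segment $\gamma$ arrives, at least one $\sigma_i$ makes angle $\geq \pi$ with $\gamma$ on both sides. First I would set up the picture on the space of directions $S_q\SSS$, which by the discussion in Section~\ref{sec:defns} is a circle of circumference $\mathcal{L}(q) = 2\pi + (\text{excess})$, with excess at least $\eta_0$ by Lemma~\ref{lem:no big flat}(b) (note: the statement cites Lemma~\ref{lem:no big flat}(c) for $\eta_0$, but $\eta_0$ is the excess-angle lower bound, so I will use the excess lower bound; the essential input is that the circle $S_q\SSS$ has circumference $\geq 2\pi + \eta_0$). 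An incoming geodesic segment $\gamma$ with endpoint $q$ determines a point $v \in S_q\SSS$ (the direction of arrival); a continuation direction $w$ yields a local geodesic precisely when the angular distance between $v$ and $w$ in $S_q\SSS$ is at least $\pi$ \emph{measured along both arcs}, i.e. when $w$ lies in the closed arc of $S_q\SSS$ consisting of points at angular distance $\geq \pi$ from $v$; this "admissible arc" has length $\mathcal{L}(q) - 2\pi \geq \eta_0$ and varies continuously with $v$.

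The key step is then a covering/pigeonhole argument: I would use Lemma~\ref{density of saddle connections} to produce, for any prescribed direction $u \in S_q\SSS$ and any angular tolerance $\alpha$, a saddle connection emanating from $q$ whose initial direction is within $\alpha$ of $u$ — more precisely, I apply Lemma~\ref{density of saddle connections} with $x = q$ and $\beta$ the geodesic ray in direction $u$ to get a geodesic from $q$ to a cone point $z$ making angle $< \alpha$ with direction $u$; since its first hit on $Con$ along the way can be taken (shortening if necessary) this gives a saddle connection with nearly-prescribed initial direction. Choosing $\approx \mathcal{L}(q)/\eta_0 \leq (2\pi + \theta_0)/\eta_0$ directions $u_1,\dots$ spread around $S_q\SSS$ with spacing less than $\eta_0/2$, and taking $\alpha < \eta_0/4$, I obtain saddle connections $\sigma_1,\dots,\sigma_M$ whose initial directions form an $(\eta_0/2)$-dense subset of $S_q\SSS$. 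Then for any incoming $v$, the admissible arc (length $\geq \eta_0$) must contain at least one of these initial directions, so the corresponding $\sigma_i$ concatenates with $\gamma$ to a local geodesic. Counting: spacing $< \eta_0/2$ around a circle of length $\leq 2\pi + \theta_0$ needs $\leq \lceil (2\pi+\theta_0)/(\eta_0/2)\rceil = \lceil (4\pi + 2\theta_0)/\eta_0 \rceil$ directions; absorbing the $\theta_0$ term and the rounding into the additive constant recovers $N = [\tfrac{4\pi}{\eta_0}] + 3$, provided one is a little careful (using $\theta_0$ from Lemma~\ref{lem:no big flat}(d) for the upper bound on $\mathcal{L}(q) - 2\pi$, which is harmless since it only enters the constant).

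The main obstacle I anticipate is the bookkeeping that turns the crude "$\approx \mathcal{L}(q)/\eta_0$ directions" count into exactly $N = [\tfrac{4\pi}{\eta_0}]+3$: one must handle the fact that the admissible arc has length equal to the \emph{excess} $\mathcal{L}(q)-2\pi$ (which could be as large as $\theta_0$, helping us, but the worst case is excess $= \eta_0$), choose the tolerance $\alpha$ strictly less than a quarter of the spacing so that an $(\eta_0/2-2\alpha)$-gap still cannot fit inside an arc of length $\geq \eta_0$, and verify that the circle $S_q\SSS$ of circumference $\mathcal L(q)$ — which could be large — is still covered by $N$ points with the needed density (here one uses that we only need the \emph{initial directions} dense, and an arc of any circle of length $\geq \eta_0$ always catches a point of an $(\eta_0/2)$-net; but the size of the net grows with $\mathcal L(q)$, so the finiteness of $Con$ and the upper bound $\theta_0$ on the excess are what keep $N$ uniform). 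Once the density statement is pinned down, the rest — that a direction in the admissible arc really does give a local geodesic, and that Lemma~\ref{density of saddle connections} delivers saddle connections rather than merely geodesics to cone points (shorten at the first interior cone point) — is routine.
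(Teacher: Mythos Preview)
Your overall strategy---cover $S_q\SSS$ with enough saddle-connection directions that the ``admissible arc'' of length $\mathcal{L}(q)-2\pi$ must contain one---is exactly the paper's. The gap is in the counting. With spacing $\eta_0/2$ around a circle of circumference $\mathcal{L}(q)=2\pi+\alpha$, you need $\lceil (4\pi+2\alpha)/\eta_0\rceil$ points; the term $2\alpha/\eta_0$ can be as large as $2\theta_0/\eta_0$, which is \emph{not} bounded by a universal additive constant, so you cannot ``absorb the $\theta_0$ term'' into $+3$. Your argument does give a uniform $N$ depending on both $\eta_0$ and $\theta_0$, which is all that the subsequent applications require, but it does not yield the $N$ stated in the lemma.

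The paper avoids this by adapting the spacing to the cone point: it partitions $S_q\SSS$ into intervals of size at most $\alpha/2$, where $\alpha=\mathcal{L}(q)-2\pi$ is the excess at that particular $q$, and uses Lemma~\ref{density of saddle connections} to put one saddle connection in each interval. The admissible arc has length exactly $\alpha$, hence fully contains one of these $\alpha/2$-intervals, and the $\sigma_i$ chosen there works. The count is then
\[
\left\lceil \frac{2\pi+\alpha}{\alpha/2}\right\rceil \;=\; \left\lceil \frac{4\pi}{\alpha}\right\rceil + 2 \;\leq\; \left\lceil \frac{4\pi}{\eta_0}\right\rceil + 2 \;\leq\; \left[\frac{4\pi}{\eta_0}\right] + 3 \;=\; N,
\]
independent of $\theta_0$. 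The point is that a larger excess $\alpha$ forces a longer circle to cover but also gives a longer admissible arc; scaling the interval size with $\alpha$ balances these exactly, whereas fixing the spacing at $\eta_0/2$ does not.
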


\begin{proof}
We have $\mathcal L(q)=2\pi+\alpha \geq 2\pi+\eta_0$. Divide the space of directions at $q$ into intervals of size no more than $\frac{\alpha}{2}$; at most $\lceil \frac{2\pi+\alpha}{\alpha/2}\rceil \leq N$ intervals are needed. Using Lemma~\ref{density of saddle connections}, pick a saddle connection emanating from $q$ with direction in each of these intervals. These are the $\sigma_i$.

The concatenation of $\gamma$ and some saddle connection $\sigma_i$ is a geodesic if and only if $c_i$ lies outside of the $\pi$-cone of directions at $q$ around $\gamma$. The complement of this cone in the space of directions at $q$ is an interval of size $\mathcal L(q)-2\pi=\alpha$ and must therefore fully contain one of our $\frac{\alpha}{2}$-size intervals. The $\sigma_i$ chosen in this interval geodesically continues $\gamma$ as desired. 
\end{proof}

\begin{lemma}(Compare with Corollary 3.1 in \cite{Dankwart})\label{connecting saddle connections with arbitrary length}
For any two parametrized saddle connections $\sigma,\sigma'$ on $\SSS$ there exists a geodesic segment $\gamma$ which first passes through $\sigma$ and eventually passes through $\sigma'$.    
\end{lemma}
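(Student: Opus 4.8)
The plan is to work in the universal cover $\tilde\SSS$ and connect arbitrary saddle connections using the cone-and-fundamental-domain trick that underlies Lemmas~\ref{density of saddle connections}, \ref{make endpoints in Con}, and \ref{connections at Con}. Fix parametrized saddle connections $\sigma$ and $\sigma'$ with terminal cone point $q$ of $\sigma$ and initial cone point $q'$ of $\sigma'$. Lift $\sigma$ to $\tilde\sigma$ ending at $\tilde q \in \widetilde{Con}$. The idea is to build a geodesic segment starting by running along $\tilde\sigma$, then continuing geodesically from $\tilde q$ toward a far-away lift of $q'$ whose incoming direction matches $\sigma'$, at which point we can splice in $\tilde\sigma'$.

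First I would use Lemma~\ref{connections at Con} (or directly Lemma~\ref{density of saddle connections}) to produce a saddle connection $\sigma_i$ emanating from $q$ whose concatenation with $\sigma$ is a local geodesic; this guarantees that we can leave $q$ geodesically in \emph{some} controlled direction. Then, setting $\beta$ to be the geodesic ray out of $q$ continuing $\sigma$ via $\sigma_i$, I would apply Lemma~\ref{density of saddle connections} once more, but now with the roles reversed: I want a lift $\tilde q'$ of $q'$ far out along (a small-angle perturbation of) $\tilde\beta$, and I want the geodesic $[\tilde q, \tilde q']$ to arrive at $\tilde q'$ making angle $\ge \pi$ with the lift of $\sigma'$ on the appropriate side, so that the concatenation $[\tilde q,\tilde q'] \cup \tilde\sigma'$ is a local geodesic. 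To get this I would cone around $\tilde\beta$ with a tiny angle $\psi$, find a lift $\tilde q'$ of $q'$ inside that cone and arbitrarily far from $\tilde q$ (possible since $\tilde\SSS/\pi_1(\SSS)$ is compact, so the cone contains a fundamental domain and hence lifts of $q'$ at all large distances), and then observe that the incoming direction of $[\tilde q,\tilde q']$ at $\tilde q'$ is within $\psi$ of the direction of $\tilde\beta$; since $q'\in Con$ its space of directions has circumference $> 2\pi$, so the complement of the closed $\pi$-cone around the $\sigma'$-direction is an open arc of length equal to the excess angle, and by choosing $\psi$ small and, if necessary, first rotating $\beta$ at $q$ among the finitely many geodesic continuations given by Lemma~\ref{connections at Con}, we can land that incoming direction inside this arc. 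Concatenating $\tilde\sigma$, the geodesic segment from $\tilde q$ along $\tilde\beta$ up to $\tilde q'$, and $\tilde\sigma'$ then gives a local geodesic in $\tilde\SSS$ (all turning angles at interior cone points are $\ge\pi$ by construction), and its projection to $\SSS$ is the desired geodesic segment passing first through $\sigma$ and then through $\sigma'$.

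The main obstacle I expect is the angle-matching at $\tilde q'$: I need to arrive at the lift of $q'$ from a direction that lies in the ``excess-angle'' arc complementary to the $\pi$-cone around $\sigma'$'s direction, and a priori the ray $\tilde\beta$ leaving $\tilde q$ might be aimed so that every far lift of $q'$ in its cone is approached from a forbidden direction. The fix is exactly the device of Lemma~\ref{connections at Con}: there are $N = [\tfrac{4\pi}{\eta_0}]+3$ saddle connections out of any cone point covering the space of directions finely enough that one of them continues $\beta$, and more to the point, by varying which geodesic continuation we take out of $q$ we can steer $\beta$'s direction — hence the family of available approach directions at the far lift of $q'$ — through a set that must meet the length-$(\mathcal L(q')-2\pi)$ arc of allowed directions, since that arc is wider than the $\tfrac{\alpha}{2}$-spacing we can achieve. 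So the proof is a matter of carefully choosing (i) the continuation of $\sigma$ at $q$, (ii) the cone angle $\psi$, and (iii) the far lift $\tilde q'$, in that order; once these are pinned down the concatenation is geodesic by the turning-angle bookkeeping and the result follows by projecting to $\SSS$.
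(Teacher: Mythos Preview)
Your proof has a genuine gap, and you correctly flagged it yourself: the angle-matching at $\tilde q'$. The proposed fix does not work. Varying the continuation $\sigma_i$ out of $q$ among the $N$ choices from Lemma~\ref{connections at Con} controls the \emph{outgoing} direction at $\tilde q$, but gives you no control over the \emph{incoming} direction at a lift $\tilde q'$ of $q'$. For a given choice of $\tilde q'$, the direction of $[\tilde q, \tilde q']$ at $\tilde q'$ is determined by the pair $(\tilde q, \tilde q')$ alone; it is not ``within $\psi$ of the direction of $\tilde\beta$'' in any useful sense (indeed $\tilde\beta$ is a ray from $\tilde q$, so it has no well-defined direction at $\tilde q'$ unless it passes through $\tilde q'$, and even then that direction is not a priori in the allowed arc for $\sigma'$). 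To make your approach work you would need to show that, as $\tilde q'$ ranges over lifts of $q'$ lying in your cone, the arrival directions at $q'$ are dense in its space of directions --- a nontrivial claim you have not argued, and one that is essentially as hard as the lemma itself.

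The paper sidesteps this two-sided angle-matching entirely by routing through an intermediate closed geodesic $\alpha$ that turns with angle strictly greater than $\pi$ at some cone point $p$ (supplied by Lemma~\ref{existence of a turning closed geodesic}). One shows that the geodesic from the start of $\tilde\sigma$ to $\tilde\alpha(t)$, for $t$ large, eventually \emph{merges with} $\tilde\alpha$ along a positive-length segment: the excess turning angle of $\alpha$ at $p$ forces this via a CAT(0) comparison (the Sublemma). This produces a local geodesic $g_1$ through $\sigma$ and then along all of $\alpha$. The same construction with orientations reversed produces $g_2$ through $\sigma'$ and $\alpha$. Since both $g_1$ and $g_2$ contain a full copy of $\alpha$, they can be spliced along $\alpha$ without any further angle condition to check, yielding the desired geodesic. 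The closed geodesic $\alpha$ is precisely the missing ingredient in your argument: it absorbs the angle-matching problem by acting as a universal connector.
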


\begin{proof}
Let $\alpha$ be a closed geodesic which turns with angle greater than $\pi$ at a cone point $p$ (such $\alpha$ exists by Lemma~\ref{existence of a turning closed geodesic}). Denote by $\tilde\sigma$ the lift of $\sigma$ to $\tilde\SSS$ which has the starting point $\tilde a$ and the endpoint $\tilde b$. Consider a parametrized complete lift $\tilde\alpha$ of $\alpha$ such that it is disjoint from $\tilde\sigma$ and its positive endpoint is contained in the complement of the cone around $[\tilde a, \tilde b]$ with vertex $\tilde b$ and angle $\pi$. % Let $\tilde\SSS^+$ be the component of $\tilde\SSS\setminus\tilde\alpha$ that contains $\tilde\sigma$.   
Denote by $\tilde c_t\colon [0,\ell_t]\rightarrow\tilde\SSS$ the geodesic that connects $\tilde a$ with $\tilde\alpha(t)$. By the choice of the lift $\tilde\alpha$, there is a time $t_0$ such that for all $t\geq t_0$, $\tilde c_t$ passes through $\tilde b$ and that $\tilde c_{t_0}$ only shares its endpoint with $\tilde\alpha$. 

We now need the following fact.

\begin{sublemma}\label{positive intersection}
There exists $t_1>t_0$ such that $\tilde c_{t_1}$ intersects the geodesic segment $[\tilde \alpha(t_0), \tilde \alpha(t)]$ in a positive-length segment.  
\end{sublemma}

\begin{proof}[P\textbf{roof of Sublemma.}]
Consider the geodesic triangle in $\tilde \SSS$ with vertices $\tilde a$, $\tilde \alpha(t_0)$ and $\tilde \alpha(t)$ for $t>t_0$. As $t$ increases, the length of the side $[\tilde \alpha(t_0), \tilde \alpha(t)]$ increases without bound while the length of $[\tilde a, \tilde c(t_0)]$ is fixed, so the length of $\tilde c_{t}=[\tilde a, \tilde \alpha(t)]$ must also increase without bound once $t$ is sufficiently large, by the triangle inequality. The comparison triangles in $\mathbb{R}^2$ will have one side of fixed length while the other two become very long. The angle at the vertex of the comparison triangle where the long sides meet must therefore become arbitrarily small.

At each lift of the cone point $p$ that $\alpha$ passes through, $\tilde \alpha$ has turning angle $\pi+\theta$ for some $\theta>0$. Let $T$ be so large that the angle noted above in the Euclidean comparison triangle is $<\theta$.  As $\tilde \SSS$ is CAT(0), the original triangle in $\tilde \SSS$ has angles no larger than those in the comparison triangle. Thus, the angle between $\tilde c_t$ and $[\tilde \alpha(t_0), \tilde \alpha(t)]$ will be less than $\theta$ for all $t\geq T$. Let $t'$ be any time greater than $T$ at which $\tilde \alpha$ passes through a lift of the cone point and let $t_1>t'$. Since $\tilde \alpha$ turns with excess angle $\theta$ at $\tilde \alpha(t')$, the concatenation of $\tilde c(t')$ and $\tilde \alpha([t',\infty))$ is a geodesic ray. Therefore $\tilde c_{t_1}$ and $[\tilde \alpha(t_0), \tilde \alpha(t)]$ intersect in a positive-length segment.
\end{proof}

For $t_1$ as in the Sublemma, the projection of $\tilde c_{t_1}$ to $\SSS$ is a local geodesic which first passes through $\sigma$ and eventually through a piece of $\alpha$. By extending the resulting local geodesic along $\alpha$, we can make sure that it passes through the whole curve $\alpha$. We denote the resulting local geodesic by $g_1$. 

We apply the above argument to $\sigma'$ and $\alpha$ with their orientations reversed to obtain a local geodesic $g_2$ that connects these curves.

The concatenation of $g_1$ and $g_2$ (with its orientation reversed) has the desired property.
\end{proof}

Repeating the proof of Proposition 3.2 in \cite{Dankwart}, replacing \cite[Lemma 3.9]{Dankwart} by Lemma~\ref{connections at Con} and \cite[Corollary 3.1]{Dankwart} by Lemma~\ref{connecting saddle connections with arbitrary length}, we obtain Proposition~\ref{connecting saddle connections} which strengthens Lemma~\ref{connecting saddle connections with arbitrary length}. We include the proof of the proposition for completeness.

\begin{proposition}(Compare with Proposition 3.2 in \cite{Dankwart})\label{connecting saddle connections}
There exists a constant $C(\SSS)>0$ so that the following holds:

For any two parametrized saddle connections $\sigma,\sigma'$ on $\SSS$ there exists a geodesic segment $\gamma$ which first passes through $\sigma$ and eventually passes through $\sigma'$ and which is of length at most $C(\SSS)+\ell(\sigma)+\ell(\sigma')$.
\end{proposition}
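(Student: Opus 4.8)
The goal is to upgrade Lemma~\ref{connecting saddle connections with arbitrary length}, which gives a connecting geodesic of unspecified length, to one whose length is bounded by $C(\SSS)+\ell(\sigma)+\ell(\sigma')$. Following Proposition 3.2 in \cite{Dankwart}, the plan is to observe that only the \emph{ends} of the path matter for its length: between the two saddle connections the connecting geodesic can be built out of a bounded-length ``transfer block'' that depends only on $\SSS$, not on $\sigma,\sigma'$. Concretely, I would fix once and for all a finite reference set of saddle connections with a nice connecting property, controlling lengths through that finite set.

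First I would use Lemma~\ref{connections at Con}: at each cone point $q$ there is a finite family $\{\sigma_1^{(q)},\dots,\sigma_N^{(q)}\}$ of saddle connections emanating from $q$ such that \emph{any} geodesic segment ending at $q$ can be geodesically continued by at least one of them. Since $Con$ is finite (Lemma~\ref{lem:no big flat} and genus $\geq 2$), the union $\Sigma_0$ over all $q\in Con$ of these families is a finite set of saddle connections; let $L_0$ be the maximum of their lengths. Next, for each \emph{ordered} pair $(\tau,\tau')$ of elements of $\Sigma_0$, Lemma~\ref{connecting saddle connections with arbitrary length} produces a geodesic segment passing first through $\tau$ and eventually through $\tau'$; again because $\Sigma_0$ is finite, the supremum of the lengths of these finitely many connecting segments is a finite constant $L_1=L_1(\SSS)$.

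Now, given arbitrary parametrized saddle connections $\sigma$ and $\sigma'$: $\sigma$ ends at some $q\in Con$, so by Lemma~\ref{connections at Con} there is $\tau\in\Sigma_0$ (emanating from $q$) with $\sigma$ followed by $\tau$ still geodesic; similarly, reversing orientation, $\sigma'$ \emph{starts} at some $q'\in Con$, and there is $\tau'\in\Sigma_0$ emanating from $q'$ so that the reversal of $\tau'$ followed by $\sigma'$ is geodesic, i.e. $\tau'$ geodesically continues into $\sigma'$. Then take the connecting segment $\gamma_0$ for the pair $(\tau,\tau')$ of length $\leq L_1$: concatenating $\sigma$, then $\tau$, then following $\gamma_0$ from $\tau$ to $\tau'$, then $\sigma'$ gives a local geodesic — each junction is geodesic by construction — which first passes through $\sigma$ and eventually through $\sigma'$. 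Its length is at most $\ell(\sigma)+\ell(\sigma')+L_1$ (the $\tau$ and $\tau'$ portions are already contained in $\gamma_0$, or at worst add $2L_0$), so we may take $C(\SSS)=L_1+2L_0$. The one point needing care — and the step I expect to be the main obstacle — is matching the parametrization/orientation conventions at the two ends so that the concatenations are genuinely local geodesics there (turning angle $\geq\pi$ on both sides), which is exactly what Lemma~\ref{connections at Con} is designed to supply once we apply it to $\sigma$ at its terminal cone point and to $\sigma'$ (with reversed orientation) at its initial cone point; the rest is bookkeeping of the finitely many constants.
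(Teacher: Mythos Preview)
Your proposal is correct and follows essentially the same approach as the paper: fix the finite reference family $\Sigma_0$ from Lemma~\ref{connections at Con} (ranging over the finitely many cone points), use Lemma~\ref{connecting saddle connections with arbitrary length} to connect each ordered pair from $\Sigma_0$, take $C(\SSS)$ as the maximum of those finitely many lengths, and then for arbitrary $\sigma,\sigma'$ splice in the appropriate $c_{ij}$. The paper handles the orientation bookkeeping exactly as you anticipate, applying Lemma~\ref{connections at Con} to $\sigma$ at its terminal point and to the reversal of $\sigma'$ at its terminal point; your observation that the $\tau,\tau'$ portions are already contained in the connector (so one may take $C(\SSS)=L_1$ rather than $L_1+2L_0$) is also implicit in the paper's bound.
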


\begin{proof}
Recall that $\SSS$ has only finitely many cone points. By Lemma~\ref{connections at Con}, there are $N_0=N_0(\SSS)$ parametrized saddle connections $\sigma_1,\sigma_2,\ldots,\sigma_{N_0}$ with the property that for any geodesic segment with endpoint in $Con$ (in particular, any saddle connection) the concatenation of it with at least one $\sigma_i$ is a local geodesic. By Lemma~\ref{connecting saddle connections with arbitrary length}, for each pair $(\sigma_i,\sigma_j)$ there is a local geodesic $c_{ij}$ which first passes through $\sigma_i$ and eventually through $\sigma_j$. Since there are only finitely many pairs $(\sigma_i, \sigma_j)$, there exists a constant $C(\SSS)$ such that $\ell(c_{ij})\leq C(\SSS)$. Thus, for any two parametrized saddle connections $\sigma,\sigma'$ we do the following. First, we connect the endpoint of $\sigma$ to $\sigma_i$ for some $i$ and the starting point of $\sigma'$ (the endpoint of the saddle connection with the reversed parametrization of $\sigma'$) to $\sigma_j$ for some $j$ so that the results of concatenations are local geodesics. Then, the concatenation of $\sigma$ with $c_{ij}$ followed by the concatenation with $\sigma'$ is the desired geodesic segment which first passes through $\sigma$ and eventually through $\sigma'$ of length at most $C(\SSS)+\ell(\sigma)+\ell(\sigma')$. 
\end{proof}

Using Lemma~\ref{make endpoints in Con} and Proposition~\ref{connecting saddle connections}, we obtain the weak specification property on $\GGG(\proportion)$ at all scales.

\begin{corollary} (Weak specification)\label{weak specification}
For all $\delta>0$ there exists $T=T(\eta,\delta,\SSS)>0$ such that for all $(\gamma_1,t_1), \ldots, (\gamma_k,t_k)\in\GGG(\proportion)$ there exist $0=s_1<s_2<\ldots<s_k$ and a geodesic $\gamma$ on $\SSS$ such that for all $i=1,\ldots, k$ we have $s_{i+1}-(s_i+t_i)\in[0,T]$ and $d_{\GS}(g_u(\gamma_i), g_u(g_{s_i}(\gamma))<\delta$ for all $u\in[0,t_i]$.
\end{corollary}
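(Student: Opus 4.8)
The plan is to deduce the corollary directly from Lemma~\ref{make endpoints in Con} and Proposition~\ref{connecting saddle connections}: the first replaces each good orbit segment by a $\delta$-shadowing saddle connection path whose endpoints are cone points, and the second splices consecutive saddle connection paths together into a single geodesic while adding only a uniformly bounded amount of length.

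Concretely, first fix $\delta>0$ and apply Lemma~\ref{make endpoints in Con} to each $(\gamma_i,t_i)\in\GGG(\proportion)$, producing a saddle connection path $\gamma_{e,i}$ with $\ell(\gamma_{e,i})\le t_i+2T_1$ and an offset $s_{0,i}\in[0,T_1]$ such that, for \emph{any} complete extension $\gamma_{e,i}^c$ of $\gamma_{e,i}$, one has $d_{\GS}(g_u\gamma_i,\,g_ug_{s_{0,i}}\gamma_{e,i}^c)\le\delta$ for all $u\in[0,t_i]$. Since a saddle connection path has both endpoints in $Con$, its terminal saddle connection $\sigma_i^-$ and the initial saddle connection $\sigma_{i+1}^+$ of $\gamma_{e,i+1}$ are genuine parametrized saddle connections, so Proposition~\ref{connecting saddle connections} supplies a geodesic segment $c_i$ of length at most $C(\SSS)+\ell(\sigma_i^-)+\ell(\sigma_{i+1}^+)$ that first traverses $\sigma_i^-$ and eventually traverses $\sigma_{i+1}^+$. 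Because $c_i$ contains $\sigma_i^-$ as an initial piece and $\sigma_{i+1}^+$ as a terminal piece, I would glue $\gamma_{e,i}$ to $c_i$ along $\sigma_i^-$ and $c_i$ to $\gamma_{e,i+1}$ along $\sigma_{i+1}^+$; at the two junction cone points the turning angle is $\ge\pi$, inherited from the geodesy of $c_i$ on one side and of $\gamma_{e,i}$ resp.\ $\gamma_{e,i+1}$ on the other, and at all other cone points geodesy is immediate. Hence the concatenation $\Gamma=\gamma_{e,1}\cup c_1\cup\gamma_{e,2}\cup\cdots\cup c_{k-1}\cup\gamma_{e,k}$ is a geodesic segment, which I then extend to a complete geodesic $\gamma\in\GS$.

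It remains to pick the shadowing times. Parametrize $\gamma$ so that $\gamma_{e,i}$ occupies the parameter interval $[b_i,b_i+\ell(\gamma_{e,i})]$, with $b_1=-s_{0,1}$ and $b_{i+1}-b_i=\ell(\gamma_{e,i})+g_i$, where $g_i:=\ell(c_i)-\ell(\sigma_i^-)-\ell(\sigma_{i+1}^+)\in[0,C(\SSS)]$ is the net length contributed by the $i$-th connector. Then $g_{b_i}\gamma$ is a complete extension of $\gamma_{e,i}$, so Lemma~\ref{make endpoints in Con} applied with $\gamma_{e,i}^c=g_{b_i}\gamma$ gives $d_{\GS}(g_u\gamma_i,\,g_ug_{s_i}\gamma)\le\delta$ for $u\in[0,t_i]$ with $s_i:=s_{0,i}+b_i$; in particular $s_1=s_{0,1}+b_1=0$, which is the required shadowing with $s_1=0$. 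The transition time is $s_{i+1}-(s_i+t_i)=(s_{0,i+1}-s_{0,i})+(\ell(\gamma_{e,i})+g_i-t_i)$; the upper bound $\le 3T_1+C(\SSS)$ follows at once from $\ell(\gamma_{e,i})\le t_i+2T_1$, $s_{0,i+1}\le T_1$, $g_i\le C(\SSS)$, so $T:=3T_1+C(\SSS)$ works and depends only on $\proportion,\delta,\SSS$. For the lower bound I would use that the construction in Lemma~\ref{make endpoints in Con} extends $\gamma_i$ past cone points by a definite length on each side, forcing $\ell(\gamma_{e,i})-s_{0,i}\ge t_i$; together with $s_{0,i+1},g_i\ge0$ this yields $s_{i+1}-(s_i+t_i)\ge s_{0,i+1}+g_i\ge0$, and hence $0=s_1<s_2<\cdots<s_k$ once each $t_i>0$.

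The step I expect to be the main obstacle is precisely this last one: the bookkeeping relating the offsets $s_{0,i}$ of Lemma~\ref{make endpoints in Con} to the positions $b_i$ of the pieces inside $\gamma$, and in particular extracting from the proof of Lemma~\ref{make endpoints in Con} the inequality $\ell(\gamma_{e,i})-s_{0,i}\ge t_i$ needed to keep the transition times nonnegative. Everything else — the existence of the shadowing saddle connection paths, the connectors of controlled length, and the fact that the spliced path is a genuine geodesic — is a direct invocation of the two cited results, and translating the $d_{\GS}$-shadowing statements through the concatenation is handled by the comparisons between $d_\SSS$ and $d_{\GS}$ recorded in Section~\ref{sec:basic geom}.
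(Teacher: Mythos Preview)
Your proposal is correct and follows the same approach as the paper, which simply records $T=2T_1+C(\SSS)$ and defers the details to (a simplified version of) the proof of Proposition~\ref{specification}. Your bookkeeping yields the slightly looser constant $T=3T_1+C(\SSS)$, which is harmless; and the point you flag as the obstacle---that $\ell(\gamma_{e,i})-s_{0,i}\ge t_i$---is indeed the one place where one has to look inside the proof of Lemma~\ref{make endpoints in Con}, but it does hold: in the notation of that proof one has $\ell(\gamma_e)-s_0=t+s_2+L_2$ with $s_2\ge -\tfrac{\theta_0}{2\eta}$ and, whenever the final extension is performed, $L_2\ge T+\tfrac{\theta_0}{2\eta}$ by the stopping rule (and $L_2=0$ only when $s_2=T$), so $s_2+L_2\ge 0$ in all cases.
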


\begin{proof}
We can take %$T=2\max\{s,T_1\}+C(\SSS)$
$T=2T_1+C(\SSS)$ where %$s$ is as in  Lemma~\ref{lem:lower semicts}, 
$T_1$ is as in Lemma~\ref{make endpoints in Con} and $C(\SSS)$ is as in Proposition~\ref{connecting saddle connections}. We omit the proof here as it is a simplified version of the proof of Proposition~\ref{specification}. %as it is enough to only use Lemma~\ref{make endpoints in Con} and Proposition~\ref{connecting saddle connections}.
\end{proof}

%
%%
%%%
%%%%
%%%%%
%%%%%%
%%%%%%%%%%%%%%%%%%%%%%%%%%%%%%%%%%%%%%%%%%%%%%%%%%%%%%%%%%%%%%%%%%%%

\section{$\GGG(\eta)$ has strong specification (at all scales)}\label{sec:strong}

The goal of this section is to upgrade the weak specification property of Corollary~\ref{weak specification} to strong specification (Proposition~\ref{specification}), in which we have more precise control over when our shadowing geodesic shadows each segment. 

As $\eta$ is fixed throughout, we write $\GGG := \GGG(\eta)$.

\begin{lemma}\label{lem: eventual density}
If $G\subset \mathbb{R}^{\geq 0} \not\subset c\mathbb{N}$ for all $c > 0$, then for all $\delta > 0$, there exist $x, y\in G$ and $n,m\in\mathbb{N}$ such that $0 < nx - my < \delta$.
\end{lemma}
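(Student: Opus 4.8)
The hypothesis says that $G \subset \mathbb{R}^{\geq 0}$ is not contained in any arithmetic progression $c\mathbb{N} = \{c, 2c, 3c, \dots\}$; equivalently, $G$ is not contained in the cyclic subsemigroup generated by any single $c > 0$. The plan is to find two elements $x, y \in G$ whose ratio $x/y$ is \emph{irrational}, and then invoke a density/equidistribution argument (essentially the three-distance theorem or a pigeonhole on $\mathbb{Z} + (x/y)\mathbb{Z}$) to produce $n, m \in \mathbb{N}$ with $0 < nx - my < \delta$.

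First I would argue that $G$ must contain two elements with irrational ratio. Suppose not: then all pairwise ratios of elements of $G$ are rational. Pick any $y_0 \in G$ (nonzero; if $G = \{0\}$ the hypothesis fails since $\{0\} \subset c\mathbb{N}$ vacuously... actually one must be slightly careful here — if $0 \in G$ it is harmless, and $G$ cannot be empty or $\{0\}$, since then $G \subset 1\cdot\mathbb{N}$). For every $g \in G$ write $g/y_0 = p_g/q_g$ in lowest terms. If the denominators $q_g$ were bounded, or more precisely if there were a common value $c = y_0/Q$ such that every $g$ is an integer multiple of $c$, we would contradict the hypothesis. So I would take $c$ to be $y_0$ divided by the least common multiple of the $q_g$ over a suitable finite exhausting subfamily — but since $G$ may be infinite, instead I argue directly: the subgroup of $\mathbb{Q}$ generated by $\{g/y_0 : g \in G\}$ is a subgroup of $\mathbb{Q}$; if it is cyclic, generated by some $r = a/b$, then every $g \in G$ lies in $r y_0 \mathbb{Z} \cap \mathbb{R}^{\geq 0} \subset (ry_0)\mathbb{N} \cup \{0\}$, contradicting the hypothesis (here $ry_0 > 0$ works as the constant $c$). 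If that subgroup of $\mathbb{Q}$ is \emph{not} cyclic, then it is not finitely generated in a bounded-denominator sense, and one can still extract two elements $g_1/y_0, g_2/y_0$ whose ratio has arbitrarily large reduced denominator — but in fact the cleaner route is: a subgroup of $\mathbb{Q}$ that is not cyclic contains elements $a/b_1, a/b_2$ with $\gcd$ structure forcing, upon combining, elements of the group of the form $1/n$ for arbitrarily large $n$; chasing this gives two elements of $G$ whose ratio, while rational, has huge denominator, which is already enough (see next paragraph). So in all cases either the conclusion is immediate or we get a pair $x, y \in G$ with $x/y$ either irrational or rational with denominator $q$ as large as we like.

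With such a pair in hand, the final step is elementary. If $x/y$ is irrational, then $\{nx/y \bmod 1 : n \in \mathbb{N}\}$ is dense in $[0,1)$ by Weyl/Kronecker, so there is $n$ with $nx/y \bmod 1 \in (0, \delta/y)$, i.e. $nx - my \in (0, \delta)$ for the appropriate $m = \lfloor nx/y \rfloor$. If instead $x/y = p/q$ in lowest terms with $q > y/\delta$ (which we can arrange by the above), then $qx = py$, and by B\'ezout there are $n_0, m_0 \in \mathbb{Z}$ with $n_0 p - m_0 q = 1$, whence $n_0 x - m_0 y = y/q < \delta$; adjusting by adding multiples of $(q x - p y) = 0$... that does not help directly, so instead use that the positive values of $\{n x - m y : n, m \in \mathbb{N}\} = \{(np - mq)(y/q) : np - mq > 0\}$, and $\{np - mq : n, m \geq 1\}$ contains all sufficiently large positive integers (numerical semigroup argument) hence in particular contains $1$ shifted appropriately, or more simply contains some value $v$ with $1 \le v$, giving $nx - my = v \cdot (y/q) \le$ a value we can push below $\delta$ by taking $q$ large. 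The main obstacle is this last bookkeeping in the rational-ratio case, ensuring $n$ and $m$ can both be taken in $\mathbb{N}$ rather than $\mathbb{Z}$; this is handled by the standard fact that $\{np - mq : n, m \in \mathbb{N}\}$ contains every integer $> pq$, so in particular contains a value in $(0, \delta q / y]$ once $q$ is large enough, completing the proof.
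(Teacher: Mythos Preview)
Your overall strategy matches the paper's: split into the irrational-ratio case, the rational-ratio case with unbounded denominators, and the rational-ratio case with bounded denominators (the last giving the contradiction $G\subset c\mathbb{N}$). The irrational case via Kronecker density is fine, and your reduction to ``find $x,y\in G$ with $x/y=p/q$ and $q>y/\delta$'' is exactly what the paper does.

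The gap is in your final bookkeeping for the rational case. You write that $\{np-mq:n,m\in\mathbb{N}\}$ ``contains every integer $>pq$'' and then try to extract a value in $(0,\delta q/y]$. But knowing only that the set contains integers above $pq$ is useless here: you would need $\delta q/y>pq$, i.e.\ $p<\delta/y$, which you have no control over (in fact $p=xq/y$ grows linearly with $q$). The numerical-semigroup/Frobenius intuition you are reaching for concerns $\{np+mq\}$, not differences, and gives large values, not small ones. What you actually need---and what is true---is that $\{np-mq:n,m\in\mathbb{N}\}=\mathbb{Z}$ whenever $\gcd(p,q)=1$: take any B\'ezout solution $n_0p-m_0q=1$ and replace it by $(n_0+kq,\,m_0+kp)$ for $k$ large to force both coordinates positive. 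Then $nx-my=y/q<\delta$ and you are done. The paper achieves the same thing slightly differently, by taking $m$ with $mp\equiv 1\pmod q$ and setting $n=(mp-1)/q$, which is essentially the same B\'ezout argument phrased via the modular inverse. Once you replace your too-weak ``$>pq$'' claim with this observation, your proof is complete and equivalent to the paper's.
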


\begin{proof}
Let $x$ denote the smallest non-zero element of $G$, which exists, as otherwise we are immediately done. Now, there are three cases. 
	
First, assume there exists $y\in G$ such that $\frac{y}{x}\notin\mathbb{Q}$. Now take $q\in\mathbb{N}$ large enough so that $\frac{x}{q} < \delta$, and so that there is $p\in\mathbb{N}$ with $\lvert \frac{y}{x} - \frac{p}{q}\rvert < \frac{1}{q^2}$ by Dirichlet's theorem. Then, this implies that
	$$\left\lvert qy - px\right\rvert < \frac{x}{q} < \delta.$$
In the second case, suppose that for all $y\in G$, $\frac{y}{x}$ is rational, and when written in lowest terms, the denominators can be arbitrarily large. Then, take $n$ such that $\frac{x}{n} < \delta$ and $y\in G$ with $\frac{y}{x} = \frac{p}{q}$ in lowest terms for some $q > n$. Then, as $p$ is invertible in $\mathbb{Z}/q\mathbb{Z}$, we can take $m$ to be a positive integer such that $mp = 1\pmod q$. It follows that
	$$\left\lvert \frac{mp-1}{q}x - my \right\rvert = \frac{x}{q} < \delta.$$
	
Finally, in the third case, $\frac{y}{x}$ is always rational, but with denominators bounded above by $M$. Then, $G\subset \frac{x}{M!}\mathbb{N}$, a contradiction.
\end{proof}

\begin{lemma}\label{lem: delta-dense}
Suppose $x > y > 0$ and $x - y = \delta$. Then, there exists $T > 0$ such that for all $\tau\geq T$ and all $n\in\mathbb{N}\cup\{0\}$, there exists $m_1,m_2 \in\mathbb{N}$ such that $\tau + n\delta \leq m_1x + m_2y \leq \tau + (n+1)\delta$.
\end{lemma}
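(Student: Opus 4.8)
The statement is an elementary "covering by translates" fact: once $\tau$ is large enough, the set of attainable values $\{m_1 x + m_2 y : m_1, m_2 \in \mathbb{N}\}$ meets every interval $[\tau + n\delta, \tau+(n+1)\delta]$ of length $\delta$. The key observation is that starting from a value of the form $m_1 x + m_2 y$, one can add either $x$ or $y$; since $x - y = \delta$, replacing one copy of $y$ by one copy of $x$ moves the value up by exactly $\delta$, and this kind of exchange lets us walk through a dense-enough ladder of reachable values.

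Here is how I would carry it out. First, reduce to a single interval at the bottom: it suffices to find $T$ and, for each large $\tau$, a representation $\tau' = a x + b y$ with $a \geq 0$, $b \geq 1$, and $\tau \leq \tau' \leq \tau + \delta$ — call this the "base case." Indeed, once we have $a x + b y \in [\tau, \tau+\delta]$ with $b \geq 1$, then for $n \geq 1$ we replace $n$ of the $y$'s by $x$'s (legal as long as $b \geq n$): the value $(a+n)x + (b-n)y = (ax+by) + n\delta$ lies in $[\tau + n\delta, \tau + (n+1)\delta]$. So we just need the base-case representation to have enough copies of $y$ to spare, i.e. $b \geq N$ for whatever range of $n$ we care about; but the lemma quantifies over all $n \in \mathbb{N} \cup \{0\}$, so I actually need $b$ to be arbitrarily large as $\tau \to \infty$ — which is automatic, since a representation of a number of size $\approx \tau$ using summands of bounded size $x$ must use $\gtrsim \tau/x$ terms, hence $\gtrsim \tau/x - a$ copies of $y$ once we also bound $a$. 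Concretely: pick $a \in \{0, 1, \ldots, \lceil y/\delta \rceil\}$ (a fixed finite set) so that $\tau - a x \equiv r \pmod{y}$ with the choice arranged so that the residual is handled, then set $b = \lceil (\tau - ax)/y \rceil$; for $\tau$ large this $b$ is large, $by \in [\tau - ax, \tau - ax + y)$, hence $ax + by \in [\tau, \tau + y)$. That only gives an interval of length $y$, not $\delta$, so I instead choose $a$ to fine-tune: among $a = 0, 1, \dots, k$ the values $ax + by$ (with $b$ adjusted to keep $ax+by \geq \tau$, minimally) step by roughly $\delta$ each time $a$ increments, because increasing $a$ by $1$ adds $x$ and then we may decrement $b$ by $1$ (subtracting $y$) as soon as the total exceeds $\tau + y$; so within $O(y/\delta)$ increments of $a$ we land in $[\tau, \tau+\delta]$, still with $b \to \infty$.

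So the clean writeup is: set $T := y^2/\delta + y$ (or any sufficiently large explicit bound). Given $\tau \geq T$, let $b_0 = \lceil \tau / y \rceil$ so $b_0 y \in [\tau, \tau + y)$ and $b_0 \geq \tau/y$. Now consider the finite sequence $v_a := a x + (b_0 - a) y = b_0 y + a\delta$ for $a = 0, 1, \ldots, \lceil y/\delta \rceil$; these are valid representations (nonnegative coefficients) as long as $a \leq b_0$, which holds since $b_0 \geq \tau/y \geq T/y > y/\delta \geq a$. The values $v_a$ start in $[\tau, \tau+y)$ and increase in steps of exactly $\delta$, so some $v_{a^*}$ lies in $[\tau, \tau + \delta)$; moreover its number of $y$-copies is $b_0 - a^* \geq b_0 - \lceil y/\delta\rceil$, which for $\tau \to \infty$ exceeds any fixed $n$. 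Then, as above, for each $n \in \mathbb{N} \cup \{0\}$ with $n \leq b_0 - a^*$, the coefficients $m_1 = a^* + n$, $m_2 = b_0 - a^* - n$ satisfy $m_1 x + m_2 y = v_{a^*} + n\delta \in [\tau + n\delta, \tau + (n+1)\delta)$; and for $\tau \geq T$ the bound $b_0 - a^* \geq T/y - y/\delta$ can be taken $\geq$ whatever finite range is needed — since the lemma as used downstream only needs the conclusion for $n$ in a range growing with $\tau$, or one simply notes $T$ may be enlarged once $n$ is given.

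The main obstacle is purely bookkeeping: ensuring the exchange of $y$'s for $x$'s never drives the coefficient $m_2$ negative, i.e. guaranteeing that the base representation has at least $n$ copies of $y$ to convert. This is where the hypothesis $\tau \geq T$ does its work, via the crude lower bound $b_0 \geq \tau/y$ on the number of summands in any representation of a number of size $\tau$ by bounded pieces. No hard number theory is needed — the irrationality/denominator analysis of Lemma~\ref{lem: eventual density} was already used to \emph{produce} the pair $x > y$ with $x - y = \delta$ small; here $\delta$ is simply given, and the argument is a one-dimensional sweep.
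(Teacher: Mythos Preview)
Your overall strategy---write a number near the target as a multiple of $y$ and then trade copies of $y$ for copies of $x$ in steps of $\delta$---is exactly the idea behind the paper's proof, but there are two genuine problems with the execution.

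First, a slip: with $b_0 = \lceil \tau/y\rceil$ you have $v_0 = b_0 y \in [\tau,\tau+y)$, and since the $v_a = v_0 + a\delta$ \emph{increase}, none of them need lie in $[\tau,\tau+\delta)$ unless $v_0$ already does. You want $b_0 = \lfloor \tau/y\rfloor$, so that $v_0 \in (\tau-y,\tau]$ and the increasing sequence first crosses $\tau$ within $\lceil y/\delta\rceil$ steps.

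Second, and more seriously: your scheme builds a single base representation at level $\tau$, with roughly $b_0 \approx \tau/y$ copies of $y$, and then for each $n$ swaps $n$ of those $y$'s for $x$'s. But the lemma fixes $\tau$ and then lets $n$ range over all of $\mathbb{N}\cup\{0\}$, so once $n$ exceeds $b_0 - a^*$ the swap drives $m_2$ negative. You acknowledge this at the end but do not resolve it: saying ``$T$ may be enlarged once $n$ is given'' reverses the order of quantifiers, and appealing to the downstream application is not a proof of the lemma as stated. The fix is simple and is exactly what the paper does: for each $n$ separately, set $k_1 = \lfloor (\tau + n\delta)/y\rfloor$ and then swap a bounded number $k_2 \leq \lceil y/\delta\rceil + 1$ of $y$'s for $x$'s to land in $[\tau + n\delta,\, \tau+(n+1)\delta]$. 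Now the supply of $y$'s is $k_1 \gtrsim (\tau+n\delta)/y$, which grows with $n$, while the number of swaps $k_2$ is bounded independently of $n$; hence $m_2 = k_1 - k_2 > 0$ for all $n$ once $\tau \geq T$ with $T$ depending only on $y$ and $\delta$.
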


\begin{proof}
Fix $C$ such that $C > \frac{y}{\delta} + 2$. We claim that $T = \max\{Cy,1\}$. Fix $\tau\geq T$. Now, let $n\in\mathbb{N}\cup\{0\}$. Fix $k_1$ to be the largest integer such that $k_1y \leq \tau + n\delta$ and then choose $k_2$ to be the smallest positive integer such that $k_1y + k_2\delta \geq \tau + n\delta$. Therefore, we see that $k_2x + (k_1 - k_2)y = k_1y + k_2\delta$, and so
		\[\tau+n\delta \leq k_2x + (k_1 - k_2)y \leq \tau + (n+1)\delta.\]
Observe that by construction,
		\[k_1y + (k_2 - 1)\delta < \tau + n\delta < k_1y + y,\]
and consequently, $k_2 < \frac{y}{\delta} + 1$. Therefore, by our choices of $\tau$ and $C$,
		\[k_1 > \frac{\tau + n\delta - y}{y} > \frac{Cy - y}{y} >  \frac{y}{\delta} + 1.\]
Thus, $k_1 - k_2 > 0$, and we are done.
\end{proof}

We need the following result of Ricks; we explain the necessary terminology in the course of applying it:

\begin{theorem}\cite[Theorems 4 and 5]{R17}\label{Ricks result}
Let $X$ be a proper, geodesically complete, CAT(0) space under a proper, cocompact, isometric action by a group $\Gamma$ with a rank one element, and suppose $X$ is not isometric to the real line. Then, the length spectrum is arithmetic if and only if there is some $c > 0$ such that $X$ is isometric to a tree with all edge lengths in $c\mathbb{Z}$.
\end{theorem}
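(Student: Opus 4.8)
The plan is to treat the two implications separately: the reverse direction by elementary tree geometry, and the forward direction by studying the geodesic flow $g_t$ on the space of geodesics $GX$. Recall that the length spectrum here is the set $L=\{\,|\gamma|:\gamma\in\Gamma\,\}$ of translation lengths $|\gamma|=\inf_{x}d(x,\gamma x)$; since a proper cocompact action has only semisimple elements, $L\setminus\{0\}$ is essentially the set of lengths of closed geodesics, and ``arithmetic'' means $L\subseteq c\mathbb Z$ for some $c>0$. For the reverse implication, suppose $X$ is a metric tree all of whose edges have lengths in $c\mathbb Z$. After subdividing we may take $X$ to be a simplicial tree with every edge of length exactly $c$; since $X\neq\mathbb R$ its branch set $B$ (points of valence $\ge 3$) is nonempty, $\Gamma$-invariant, discrete, and any two branch points are at distance in $c\mathbb Z$. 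If $\gamma\in\Gamma$ is axial with axis $A$, then $A$ must meet $B$: otherwise the nearest-point projection to $A$ of any branch point of $X$ would itself be a branch point on $A$, forcing $X=A=\mathbb R$. Picking $v\in A\cap B$ we get $\gamma v\in A\cap B$ and $|\gamma|=d(v,\gamma v)\in c\mathbb Z$, so $L\subseteq c\mathbb Z$ and the length spectrum is arithmetic.

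For the forward implication I would argue the contrapositive. Because $\Gamma$ has a rank one element and acts cocompactly, $g_t$ satisfies a closing lemma on the rank one part: a rank one orbit segment $(v,T)$ with $d_{GX}(g_T v,v)$ small is shadowed, with controlled error, by a periodic geodesic of period close to $T$, and periodic rank one geodesics are dense in their natural invariant set (this is Ricks's rank one theory, the same geometry that drives the specification arguments of the present paper). Assume now $L\subseteq c\mathbb Z$ with $c$ maximal. On each periodic orbit of period $p\in c\mathbb Z$, ``elapsed time modulo $c$'' is a well-defined function to $\mathbb R/c\mathbb Z$ intertwining $g_t$ with the rotation flow; a Liv\v{s}ic/Plante-type argument using the closing estimate---two nearby long rank one segments close up to nearby periodic orbits, whose phases must then agree modulo $c$ up to the shadowing error---upgrades this to a continuous, $g_t$-equivariant, $\Gamma$-invariant phase function $\beta\colon GX\to\mathbb R/c\mathbb Z$.

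The level set $\Sigma=\beta^{-1}(0)$ is then a closed global cross-section for $g_t$ with constant return time $c$: every geodesic meets $\Sigma$ precisely along an arithmetic progression of times with common difference $c$. Pushing this back to $X$, the $\Gamma$-invariant set $A=\{\gamma(0):\gamma\in\Sigma\}$ is met by every geodesic at $c$-spaced times, and using geodesic completeness, the uniquely-geodesic CAT(0) structure, and cocompactness one identifies $X$ with a Euclidean polyhedral complex with vertex set $A$ all of whose edges have length $c$; finally the rank one hypothesis forbids any $2$-cell or flat strip, since a rank one axis cannot lie in a flat, so the complex is $1$-dimensional, i.e.\ $X$ is a tree with all edge lengths in $c\mathbb Z$. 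The main obstacle is precisely this last stage: converting the soft dynamical statement ``$g_t$ has a constant-return-time section'' into the rigid geometric conclusion---one must check that the section arises from a genuinely discrete $\Gamma$-invariant subset of $X$, that the induced vertex/edge data assemble into a polyhedral complex, and that rank one kills all higher-dimensional cells. The other delicate point is the continuity of $\beta$ in the previous step, where flat strips are the enemy and it is the contracting property of rank one geodesics that makes the closing estimate strong enough.
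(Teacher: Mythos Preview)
This theorem is not proven in the paper at all: it is quoted verbatim as \cite[Theorems 4 and 5]{R17} and used as a black box in the proof of Proposition~\ref{geodesics of similar lengths}. There is therefore no ``paper's own proof'' to compare your attempt against; the authors simply invoke Ricks's result to conclude that the length spectrum of a flat surface with cone points is non-arithmetic.

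That said, a few comments on your sketch. The reverse implication is fine. For the forward implication, your outline---closing lemma on rank one geodesics, Liv\v{s}ic-type argument to get a continuous phase $\beta\colon GX\to\mathbb R/c\mathbb Z$, then a constant-return-time cross-section, then a polyhedral structure on $X$---is in the right spirit, but you have correctly identified the genuine difficulties yourself. The step from ``$\beta$ is defined and continuous on periodic rank one orbits'' to ``$\beta$ extends continuously to all of $GX$'' is where the real work lies, and in the general CAT(0) setting (with flat strips and higher-rank geodesics present) this is delicate; Ricks handles it in \cite{R17} using the structure of the Bowen--Margulis measure and the cross-ratio on the boundary rather than a direct Liv\v{s}ic argument. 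Likewise, the passage from a dynamical cross-section to an honest simplicial structure on $X$ with no $2$-cells requires more than the soft argument you give---one needs to rule out not just flat strips containing a rank one axis (which is automatic) but any flat pieces anywhere in $X$, and this uses the full force of the cocompact rank one hypothesis. If you want to fill this in, you should consult \cite{R17} directly.
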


\begin{proposition}\label{geodesics of similar lengths}
Given $\delta > 0$, there exist two closed saddle connection paths $\gamma,\xi$ such that $0<\lvert \ell(\gamma) - \ell(\xi)\rvert < \delta.$
\end{proposition}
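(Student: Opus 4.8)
The plan is to exhibit a lot of closed saddle connection paths whose lengths lie in $[0,\infty)$ and show this set cannot be contained in any arithmetic progression $c\mathbb{N}$, then feed this into Lemma~\ref{lem: eventual density}. Actually, for the cleanest argument I would invoke Theorem~\ref{Ricks result} directly. Set $X = \tilde\SSS$, which is proper (since $\SSS$ is compact), geodesically complete, and CAT(0); the deck group $\Gamma = \pi_1(\SSS)$ acts properly cocompactly and isometrically. The existence of a rank-one element follows from Lemma~\ref{existence of a turning closed geodesic}: the closed geodesic $\alpha$ through a cone point $\zeta$ turning with angle $>\pi$ cannot lie in a flat half-plane (it does not even lie in a flat strip, as noted in that lemma's proof), so the corresponding deck transformation is rank one. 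And $\tilde\SSS$ is not isometric to $\mathbb{R}$ since $\SSS$ has genus $\geq 2$ (in particular $\tilde\SSS$ contains the universal cover of a flat cone point, which is two-dimensional near $\zeta$). Finally $\tilde\SSS$ is manifestly not isometric to a metric tree: away from cone points it is locally isometric to $\mathbb{R}^2$, so it contains genuine flat disks and hence is not $0$-dimensional/not a tree. By the contrapositive of Theorem~\ref{Ricks result}, the length spectrum of $\tilde\SSS$ is \emph{not} arithmetic.

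Next I need to translate "length spectrum not arithmetic" into a statement about closed saddle connection paths. The length spectrum is the set of lengths of closed geodesics on $\SSS$ (equivalently, translation lengths of elements of $\Gamma$ acting on $\tilde\SSS$). By Lemma~\ref{lem:closed saddle}, every closed geodesic on $\SSS$ is homotopic to a closed saddle connection path of the same length; conversely every closed saddle connection path is a closed geodesic (Definition~\ref{defn:saddle connection}). Hence the set $L$ of lengths of closed saddle connection paths coincides with the length spectrum. Since the length spectrum is not arithmetic, $L \not\subset c\mathbb{Z}$ for every $c>0$; a fortiori $L\cap(0,\infty) \not\subset c\mathbb{N}$ for every $c>0$ (we may discard $0$, which is not a length of a closed geodesic anyway, and note $L\subset\mathbb{R}^{\geq 0}$).

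Now apply Lemma~\ref{lem: eventual density} with $G = L$: for the given $\delta>0$ there exist $x,y\in L$ and $n,m\in\mathbb{N}$ with $0 < nx - my < \delta$. Choose closed saddle connection paths $\gamma_0$ of length $x$ and $\xi_0$ of length $y$. Traversing $\gamma_0$ $n$ times gives a closed saddle connection path $\gamma$ of length $nx$ (concatenating a closed saddle connection path with itself keeps all turning angles, which are $\geq\pi$, so it remains a saddle connection path), and traversing $\xi_0$ $m$ times gives a closed saddle connection path $\xi$ of length $my$. Then $0 < |\ell(\gamma) - \ell(\xi)| = nx - my < \delta$, as required.

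The main obstacle is verifying the hypotheses of Theorem~\ref{Ricks result} cleanly — in particular that $\tilde\SSS$ is not a metric tree and that $\Gamma$ contains a rank-one element — and making sure the identification of the length spectrum with lengths of closed saddle connection paths is airtight in both directions. The rank-one point is handled by Lemma~\ref{existence of a turning closed geodesic} together with the flat strip rigidity already used in the proof of Lemma~\ref{lemma:NE in Sing}; the tree point is handled by the presence of flat regions. An alternative, more self-contained route that avoids Theorem~\ref{Ricks result} would be to directly produce two closed saddle connection paths whose length ratio is irrational (for instance by combining a short closed saddle connection path with a rank-one one via Proposition~\ref{connecting saddle connections} and a suitable length estimate), but invoking Ricks's theorem is shorter and I would prefer it.
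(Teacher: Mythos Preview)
Your proof is correct and follows essentially the same approach as the paper: verify the hypotheses of Theorem~\ref{Ricks result} (rank-one element via Lemma~\ref{existence of a turning closed geodesic}, $\tilde\SSS$ not a tree), conclude the length spectrum is non-arithmetic, identify the length spectrum with the set of lengths of closed saddle connection paths via Lemma~\ref{lem:closed saddle}, and then apply Lemma~\ref{lem: eventual density}. Your explicit construction of $\gamma$ and $\xi$ as $n$- and $m$-fold iterates just spells out the last step that the paper leaves implicit.
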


\begin{proof}
This follows for translation surfaces by combining Lemma~\ref{lem: eventual density} with $\S 6$ of \cite{Colognese} (see hypothesis (T3) and the discussion following \cite[Proposition 6.9]{Colognese}).

For general flat surfaces with conical points, this follows from Theorem~\ref{Ricks result}. We outline the reasoning as follows. We say that $\gamma\in \Gamma$ is rank one if there exists a geodesic $\eta$ such that $\gamma\eta = g_t\eta$ for some $t > 0$ and $\eta$ does not bound a flat half plane, i.e., a subspace isometric to $\mathbb{R}\times [0,\infty)$. The existence of this follows from the existence of a closed geodesic which turns with angle greater than $\pi$ at some cone point (see Lemma~\ref{existence of a turning closed geodesic}). Now, the universal cover of a flat surface with cone points is not isometric to a tree with edge lengths in $c\mathbb{Z}$, and so it follows that the length spectrum is not arithmetic. The length spectrum is the collection of lengths of hyperbolic isometries in $\Gamma$, which is precisely the set of lengths of closed geodesics, which by Lemma~\ref{lem:closed saddle} is the set of lengths of closed saddle connection paths. We can now apply Lemma~\ref{lem: eventual density}.
\end{proof}

\begin{proposition}\label{prop: update saddle connections}
For all $\delta > 0$, there exists $\tau=\tau(\delta) > 0$ and $\delta' < \delta$ such that for any $\tau'>\tau$, any two saddle connections $\sigma$, $\sigma'$ and any $n\in\mathbb N\cup\{0\}$, there exists a geodesic segment $\xi_n$ which begins with $\sigma$ and ends with $\sigma'$ with length in $[\ell(\sigma) + \ell(\sigma') + \tau' + n\delta', \ell(\sigma) + \ell(\sigma') + \tau' + (n+1)\delta']$.
\end{proposition}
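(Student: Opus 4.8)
The plan is to combine Proposition~\ref{connecting saddle connections} (which lets us join two given saddle connections by a geodesic of bounded length) with the ``length-adjustment'' mechanism coming from Proposition~\ref{geodesics of similar lengths} and the elementary density lemmas (Lemmas~\ref{lem: eventual density} and \ref{lem: delta-dense}). The key idea is that once we have \emph{two} closed saddle connection paths whose lengths differ by a small amount $\delta'$, we can splice in copies of these closed loops to pad out the length of a connecting geodesic by controlled increments of size $\delta'$, while keeping the segment a genuine local geodesic.

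First, I would apply Proposition~\ref{geodesics of similar lengths} to obtain two closed saddle connection paths $\alpha,\beta$ with $0<\ell(\alpha)-\ell(\beta)=:\delta'<\delta$. Next, using Lemma~\ref{connecting saddle connections with arbitrary length} (or, more efficiently, Proposition~\ref{connecting saddle connections}) together with Lemma~\ref{connections at Con}, I would build a ``template'' geodesic segment that, starting from the endpoint of an arbitrary saddle connection $\sigma$, first runs through some fixed $\sigma_i$, then passes through the closed loop $\alpha$ some number $m_1$ of times and through $\beta$ some number $m_2$ of times (each traversal of a closed loop being reinserted via the turning-angle-$>\pi$ cone point that makes $\alpha,\beta$ geodesically concatenable with anything, as in the proof of Lemma~\ref{existence of a turning closed geodesic} and Lemma~\ref{connections at Con}), and finally connects through some fixed $\sigma_j$ to $\sigma'$. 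The total length of such a segment is $\ell(\sigma)+\ell(\sigma')+(\text{bounded overhead depending only on }\SSS)+m_1\ell(\alpha)+m_2\ell(\beta)$.

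Then I would invoke Lemma~\ref{lem: delta-dense} with $x=\ell(\alpha)$, $y=\ell(\beta)$ (so $x-y=\delta'$): this produces a threshold $T$ such that for every $\tau\geq T$ and every $n\in\mathbb N\cup\{0\}$ there are $m_1,m_2\in\mathbb N$ with $\tau+n\delta'\leq m_1\ell(\alpha)+m_2\ell(\beta)\leq\tau+(n+1)\delta'$. Setting $\tau(\delta)$ to be $T$ plus the fixed $\SSS$-dependent overhead, and given any $\tau'>\tau(\delta)$, I would set $\tau = \tau' - (\text{overhead})$, read off the corresponding $m_1,m_2$ from Lemma~\ref{lem: delta-dense}, and take $\xi_n$ to be the template segment built with exactly $m_1$ copies of $\alpha$ and $m_2$ copies of $\beta$. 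Its length then lies in $[\ell(\sigma)+\ell(\sigma')+\tau'+n\delta',\ \ell(\sigma)+\ell(\sigma')+\tau'+(n+1)\delta']$, as required, and it is a local geodesic beginning with $\sigma$ and ending with $\sigma'$ by the concatenation properties established in Lemmas~\ref{connections at Con} and \ref{connecting saddle connections with arbitrary length}.

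The main obstacle I anticipate is \textbf{bookkeeping the concatenations so that every junction is genuinely geodesic}: each time we attach a copy of a closed loop $\alpha$ or $\beta$, or transition from $\sigma$ to $\sigma_i$ to a loop to $\sigma_j$ to $\sigma'$, we must be at a cone point where the turning angle is at least $\pi$ on both sides. Lemma~\ref{connections at Con} guarantees that \emph{some} $\sigma_i$ continues any given segment, but threading this through a \emph{chain} of insertions — and in particular re-entering the same closed loop repeatedly — requires that the loops $\alpha,\beta$ each pass through a cone point with strict excess angle (which they do, being the closed geodesics of Lemma~\ref{existence of a turning closed geodesic}, or can be arranged to by Lemma~\ref{lem:closed saddle}), so that a full traversal can always be re-concatenated. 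Handling this carefully, while absorbing all the length contributions of the connector segments $c_{ij}$ into the single constant ``overhead,'' is the technical heart of the argument; everything else is arithmetic already packaged in Lemmas~\ref{lem: eventual density} and \ref{lem: delta-dense}.
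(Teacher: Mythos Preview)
Your proposal is correct and follows essentially the same route as the paper: obtain two closed saddle connection paths with length gap $\delta'<\delta$ via Proposition~\ref{geodesics of similar lengths}, apply Proposition~\ref{connecting saddle connections} (three times, in sequence $\sigma\to\gamma_1\to\gamma_2\to\sigma'$) to build a template geodesic, then pad with extra loops around the closed geodesics and select the loop counts using Lemma~\ref{lem: delta-dense}. Your anticipated obstacle is milder than you fear: because Proposition~\ref{connecting saddle connections} (via Lemma~\ref{connecting saddle connections with arbitrary length}) produces a geodesic that traverses the \emph{entire} closed curve $\gamma_i$, inserting additional full loops is automatically geodesic---you simply continue along the same closed geodesic one more time---so no cone-point bookkeeping is needed for the re-entries.
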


\begin{proof}
Fix $\delta > 0$ and take $\gamma_1,\gamma_2$ to be closed geodesics such that $0<\lvert \ell(\gamma_1) - \ell(\gamma_2)\rvert = \delta' < \delta$, which exist by Proposition~\ref{geodesics of similar lengths}. Now take $\tau = 3C(\SSS) + T$, where $C(\SSS)$ is from Proposition~\ref{connecting saddle connections} and $T$ is from Lemma~\ref{lem: delta-dense} applied for $\ell(\gamma_1)$ and $\ell(\gamma_2)$.

Consider two saddle connections $\sigma$ and $\sigma'$ and apply Proposition~\ref{connecting saddle connections} three times to connect, in sequence, $\sigma$ to $\gamma_1$ to $\gamma_2$ to $\sigma'$ with the geodesic $\xi$. Furthermore, $\ell(\xi) = L + \ell(\sigma) + \ell(\gamma_1) + \ell(\gamma_2) + \ell(\sigma')$ and $L \leq 3C(\SSS)$. Because the $\gamma_i$ are closed geodesics, there is a geodesic $\xi_{k_1,k_2}$ which follows the exact path of $\xi$ except that it loops around $\gamma_i$ a total of $k_i$ times. In other words, $\ell(\xi_{k_1,k_2}) = \ell(\xi) + (k_1 - 1)\ell(\gamma_1) + (k_2 - 1)\ell(\gamma_2)$. Now let $n\in\mathbb{N}$, and, using Lemma~\ref{lem: delta-dense}, take $k_1,k_2$ such that
$$k_1\ell(\gamma_1) + k_2\ell(\gamma_2) \in [T + (3C(\SSS) - L)+(\tau'-\tau) + n\delta', T + (3C(\SSS) - L)+(\tau'-\tau) + (n+1)\delta'].$$
Then $\xi_n := \xi_{k_1,k_2}$ satisfies our desired property.
\end{proof}

%\begin{proposition}\label{prop: improved specification}
%For all $\hat\eps,\hat\delta > 0$ there exists $T > 0$ and $\delta'\in(0,\hat\delta)$ such that given any collection of orbit segments $\{(\gamma_i,t_i)\}_{i=1}^n$ in $\GGG$ and $\textbf{k} = (k_1,\cdots, k_{n-1}) \in \left(\mathbb{N}\cup\{0\}\right)^{n-1}$, there exist constants $c_j=c_j(k_j)\in [k_j\delta', (k_j + 1)\delta']$ and a geodesic $\xi_{\textbf{k}}$ such that for $1\leq i\leq n$, $d_{\GS}(g_ug_{s_i}\xi_{\textbf{k}},g_u\gamma_i) \leq \hat\eps$ for all $u\in[0,t_i]$, where $s_i = \sum_{j=1}^{i-1} (t_j + T + c_j)$.
%\end{proposition}

%\begin{proof}
%{\color{red} TO DO!}
%By adapting our proof of weak specification to use Proposition \ref{prop: update saddle connections}.

%By Lemma~\ref{make endpoints in Con}, there exists $T_1=T_1(\hat\eps,\SSS)$ for each $i=1,\ldots, n$, there exists a saddle connection path $\gamma^e_i$ such that $\ell(\gamma^e_i)\leq t_i+2T_1$ and there exists $s_{0,i}\in[0, T_1]$ such that for any extension $\hat\gamma^e_i$ of $\gamma^e_i$ to a complete geodesic we have
%\begin{equation*}
 %   d_{\GS}(g_u(\gamma),g_u(g_{s_{0,i}}(\hat\gamma^e_i)))\leq\hat\eps \text{ for }\forall u\in[0,t_i].
%\end{equation*}

%Then, by Proposition~\ref{prop: update saddle connections}, there exists $\tau>0$ and $\delta'<\hat\delta$ such that we can $\gamma^e_i$ and $\gamma^e_{i+1}$ by a geodesic

%\end{proof}

\begin{proposition}\label{specification}
The collection of orbit segments $\GGG=\GGG(\proportion)$ has strong specification at all scales. That is, for any $\eps>0$, there exists $\hat\tau(\eps) > 0$ such that for any finite collection $\{(\gamma_i,t_i)\}_{i=1}^n\subset \GGG$, there exists $\hat\xi\in \GS$ that $\eps$-shadows the collection with transition time $\hat\tau$ between orbit segments. In other words, for $1\leq i \leq n$, 
	$$d_{\GS}(g_{u + \sum_{j=1}^{i-1} (t_j + \hat\tau)} \hat\xi, g_u \gamma_i) \leq \eps \text{ for } 0\leq u \leq t_i.$$
Moreover, for $1\leq i\leq n$ such that $t_i > \frac{\theta_0}{\eta}$ where $\theta_0$ as in Lemma~\ref{lem:no big flat}(d), there exists a closed interval $I_i\supset [\frac{\theta_0}{2\eta},t_i-\frac{\theta_0}{2\eta}]$ such that $\hat\xi(u+\sum_{j=1}^{i-1} (t_j + \hat\tau))=\gamma_i(u)$ for $u\in I_i$.
\end{proposition}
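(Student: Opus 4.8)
The plan is to bootstrap the weak specification property of Corollary~\ref{weak specification} into strong specification by using the flexible-length connecting segments produced in Proposition~\ref{prop: update saddle connections}. First I would fix $\eps>0$ and set $\delta = \delta(\eps) > 0$ small enough that Lemma~\ref{make endpoints in Con} and Lemma~\ref{lem:shadow in S} both apply to produce $\eps$-shadowing from $\delta$-closeness on the surface; this also fixes the constant $T_1 = T_1(\delta)$ from Lemma~\ref{make endpoints in Con}. Given the collection $\{(\gamma_i,t_i)\}_{i=1}^n\subset\GGG(\eta)$, I apply Lemma~\ref{make endpoints in Con} to each $(\gamma_i,t_i)$ to obtain a saddle connection path $\gamma_{e,i}$ of length at most $t_i + 2T_1$, together with a shift $s_0^{(i)}\in[0,T_1]$, such that any completion of $\gamma_{e,i}$ (suitably shifted) $\delta$-shadows $(\gamma_i,t_i)$, and moreover $\gamma_{e,i}$ agrees with $\gamma_i$ on an interval containing $[\frac{\theta_0}{2\eta}, t_i - \frac{\theta_0}{2\eta}]$ when $t_i$ is large. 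The first and last saddle connections of each $\gamma_{e,i}$ will serve as the $\sigma,\sigma'$ inputs for the connecting construction.

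Next I would invoke Proposition~\ref{prop: update saddle connections} with this $\delta$ to get $\tau = \tau(\delta)$ and $\delta' < \delta$, and \emph{declare the transition time to be the constant} $\hat\tau := 2T_1 + \tau + 1$ (or some similarly inflated but fixed value — the point is it must be at least $2T_1$ plus the minimum connecting length $\tau$ plus a buffer of size at least $\delta'$ so the length window has room to land). The key idea is that between consecutive blocks $\gamma_{e,i}$ and $\gamma_{e,i+1}$ we do \emph{not} just connect the final saddle connection of $\gamma_{e,i}$ to the initial saddle connection of $\gamma_{e,i+1}$ by any geodesic; instead, we must connect them by a geodesic segment whose length is exactly tuned so that the total elapsed time from the start of block $i$ to the start of block $i+1$ equals $t_i + \hat\tau$. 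Proposition~\ref{prop: update saddle connections} is precisely the tool that lets us hit a prescribed length interval $[\ell(\sigma)+\ell(\sigma')+\tau'+n\delta', \ell(\sigma)+\ell(\sigma')+(n+1)\delta']$ with a connecting geodesic that begins with $\sigma$ and ends with $\sigma'$, and by choosing $n$ appropriately we can always land within $\delta'$ of the desired length. Since $\delta' < \delta$, this $\delta$-scale discrepancy is absorbed by the shadowing tolerance: the constructed geodesic $\hat\xi$ is still $\delta$-close on the surface to each $\gamma_i$ on the relevant time interval (the small timing error propagates, but over bounded intervals it only costs a Lipschitz factor, as in Lemma~\ref{lem:Lipschitz}, which we account for when choosing $\delta$), hence $\eps$-close in $d_{\GS}$ by Lemma~\ref{lem:shadow in S}. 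Concatenating all the blocks and all the tuned connectors in order, and extending to a complete geodesic $\hat\xi$, gives the required shadowing orbit, and the shift bookkeeping shows $d_{\GS}(g_{u+\sum_{j<i}(t_j+\hat\tau)}\hat\xi, g_u\gamma_i)\le\eps$ for $0\le u\le t_i$.

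For the "moreover" clause, I would track the exact-agreement interval from Lemma~\ref{make endpoints in Con}: when $t_i > \frac{\theta_0}{\eta}$, the block $\gamma_{e,i}$ literally coincides with $\gamma_i$ on an interval containing $[\frac{\theta_0}{2\eta}, t_i - \frac{\theta_0}{2\eta}]$, and since $\hat\xi$ contains $\gamma_{e,i}$ verbatim as a sub-segment (the tuning only modifies the connecting pieces between blocks, not the blocks themselves), the distance $d_{\GS}$ between $\hat\xi$ (appropriately shifted) and $\gamma_i$ is actually zero on that interval — giving the interval $I_i$. The main obstacle, and the reason Sections~\ref{sec:strong} exists as a separate step after Section~\ref{sec:specification}, is manufacturing connectors of \emph{controlled, adjustable} length rather than merely bounded length: this is exactly where the non-arithmeticity of the length spectrum (Proposition~\ref{geodesics of similar lengths}, via Ricks's Theorem~\ref{Ricks result}) and the elementary number-theoretic Lemmas~\ref{lem: eventual density} and~\ref{lem: delta-dense} do the real work, packaged into Proposition~\ref{prop: update saddle connections}. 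Once that proposition is in hand, the proof of Proposition~\ref{specification} is essentially the weak-specification argument of Corollary~\ref{weak specification} with the generic connecting geodesics replaced by the length-tuned ones, plus careful accounting of the accumulated timing error to ensure it stays within the shadowing scale.
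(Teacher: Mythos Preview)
Your outline matches the paper's approach closely: replace each $(\gamma_i,t_i)$ by a saddle connection path via Lemma~\ref{make endpoints in Con}, then connect consecutive blocks using the length-tuned bridges of Proposition~\ref{prop: update saddle connections}, with the non-arithmeticity input doing the real work. The ``moreover'' clause is handled the same way, via the exact-agreement interval from Lemma~\ref{make endpoints in Con}.

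There is, however, one genuine gap. You say that at each transition you can land within $\delta'$ of the desired length and that ``this $\delta$-scale discrepancy is absorbed by the shadowing tolerance,'' invoking Lemma~\ref{lem:Lipschitz}. But the number $n$ of segments is arbitrary, so these per-step errors \emph{accumulate}: after $k$ transitions the actual starting time of the $k$-th block inside $\hat\xi$ may differ from $\sum_{j<k}(t_j+\hat\tau)$ by as much as $k\delta'$, which is unbounded. No Lipschitz estimate on a bounded time interval fixes this, and your closing phrase ``careful accounting of the accumulated timing error'' does not supply a mechanism. The paper closes this gap with an explicit feedback rule: it aims for transition time $T+\tfrac{\eps}{4}$ and chooses the integer $m_k$ in Proposition~\ref{prop: update saddle connections} to be either $0$ or $\lceil \tfrac{\eps}{4\delta'}\rceil$ depending on whether the running error $\sum_{j<k}(\tfrac{\eps}{4}-c_j)$ has drifted above $\tfrac{\eps}{4}$, which keeps the cumulative error below $\tfrac{\eps}{2}$ uniformly in $k$. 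You need some such correction step (equivalently: at step $k$ target a length that cancels the drift accumulated so far), not merely a bound on the single-step error.
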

\begin{proof}
 By Lemma~\ref{make endpoints in Con}, there exists $T_1=T_1(\frac{\eps}{2},\SSS)$ for each $i=1,\ldots, n$, there exists a saddle connection path $\gamma^e_i$ such that $\ell(\gamma^e_i)\leq t_i+2T_1$ and there exists $s_{i}\in[0, T_1]$ such that for any extension $\hat\gamma^e_i$ of $\gamma^e_i$ to a complete geodesic we have
\begin{equation*}
    d_{\GS}(g_u(\gamma_i),g_u(g_{s_{i}}(\hat\gamma^e_i)))\leq\frac{\eps}{2} \quad\text{ for  all}\quad u\in[0,t_i].
\end{equation*}
Moreover, if $t_i > \frac{\theta_0}{\eta}$, there exists a closed interval $I_i\supset [\frac{\theta_0}{2\eta},t_i-\frac{\theta_0}{2\eta}]$ such that $\gamma_i^e(s_i+u)=\gamma_i(u)$ for $u\in I_i$.
We will construct our shadowing geodesic by induction. Let $\tau=\tau\left(\frac{\eps}{4}\right)$, $\delta'<\frac{\eps}{4}$ be as in Proposition~\ref{prop: update saddle connections} applied for $\delta=\frac{\eps}{4}$. Denote $T=\tau+3T_1$. 

Thus, for any $k=1,\ldots, n-1$ and $m_k\in\mathbb N\cup\{0\}$, there exists a geodesic segment $\xi_{k+1}$ which begins with $\gamma^e_k$ and ends with $\gamma^e_{k+1}$ with length $\ell(\xi_{k+1}) = \ell(\gamma^e_k) + \ell(\gamma^e_{k+1}) + T-(s_{k+1}-s_k)-(\ell(\gamma^e_k)-t_k)+c_k$ where $c_k\in[m_k\delta',(m_k+1)\delta']$.

Moreover, by Lemma~\ref{make endpoints in Con}, for any extension $\hat\xi_{k+1}$ of $\xi_{k+1}$ to a complete geodesic with $\hat\xi_{k+1}(u)=\xi_{k+1}(s_k+u)$ for all $u\in[-s_k,-s_k+\ell(\xi_{k+1})]$, we have
\begin{align}\label{ineq: segment}
&d_{\GS}(g_u\hat\xi_k, g_u\gamma_k)\leq \frac{\eps}{2} \quad \text{ for all}\quad u\in[0,t_k]\quad \text{and}\quad\nonumber\\ &d_{\GS}(g_u(g_{t_k+T+c_k}\hat\xi_{k+1}), g_u\gamma_{k+1})\leq \frac{\eps}{2} \quad \text{ for all}\quad u\in[0,t_{k+1}].
\end{align}

We define the sequence $m_k$ inductively. Let $m_1=0$. In particular, $c_1\in[0,\delta']\subset[0,\frac{\eps}{4}]$. For $k>1$, we set $$m_k = \lceil\frac{\eps}{4\delta'}\rceil \quad \text{if}\quad (k-1)\frac{\eps}{4} - \sum_{i \leq k-1} c_i > \frac{\eps}{4},\quad \text{and}\quad 0\quad \text{otherwise},$$ as this ensures $\left|\sum_{j=1}^{k-1} \frac{\eps}{4} - c_j\right| < \frac{\eps}{2}$.

Let $\xi$ be a geodesic segment that is a result of gluing $\xi_{k}$ and $\xi_{k+1}$ along $\gamma^e_{k}$ that is the end of $\xi_k$ and the beginning of $\xi_{k+1}$ for all $k=2,\ldots, n-1$. Let $\hat\xi$ be any extension of $\xi$ to a complete geodesic with the parametrization such that $\hat\xi(-s_1)=\xi(0)$. By the choice of $m_k$ and \eqref{ineq: segment}, we obtain for $1 \leq i \leq n$,
$$d_{\GS}(g_u(g_{\sum_{j=1}^{i-1}(t_{j} + T + \eps/4)}\hat\xi), g_u\gamma_i)\leq d_{\GS}(g_u(g_{\sum_{j=1}^{i-1}(t_{j} + T + c_j)}\hat\xi), g_u\gamma_i)+\frac{\eps}{2}\leq \eps  \quad \text{ for all}\quad u\in[0,t_i].$$

Thus, $\hat\xi$ is the desired shadowing geodesic. As a result, the collection of orbit segments $\GGG$ has strong specification at all scales with the specification constant $T+\frac{\eps}{4}$.
\end{proof}

We close this section by recording a simple technical modification of Proposition~\ref{specification} which we will need when we apply specification in Section~\ref{sec:pressure gap}.

\begin{definition}\label{defn: good bounded shift}
Let $M>0$ and $\eta>0$ be given. We denote by $\mathcal{G}^M(\eta)$ the set of all orbit segments $(\gamma,t)$ such that there exist $t_1, t_2$ with $|t_i|<M$ such that $(g_{t_1}\gamma, t-t_1+t_2)\in\mathcal{G}(\eta)$. That is, these are segments which lie in $\mathcal{G}(\eta)$ after making some bounded change to their endpoints.
\end{definition}

\begin{corollary}\label{cor:expanded weak specification}
Specification as in Proposition~\ref{specification} holds for $\mathcal{G}^M(\eta)$, with the constant $T$ depending on $M$ in addition to the parameters listed in Proposition~\ref{specification}.
\end{corollary}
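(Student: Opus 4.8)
The plan is to deduce the statement directly from Proposition~\ref{specification} by absorbing the bounded endpoint modifications into the transition time. Recall that an orbit segment $(\gamma,t)\in\mathcal{G}^M(\eta)$ comes with times $t_1,t_2$ with $|t_i|<M$ such that $(g_{t_1}\gamma,\,t-t_1+t_2)\in\mathcal{G}(\eta)$; write $\gamma':=g_{t_1}\gamma$ and $t':=t-t_1+t_2$ for the associated genuinely good segment. First I would fix $\eps>0$, and observe that it suffices to shadow the original segments at scale $\eps$ provided we can shadow the nearby good segments at a comparable scale: by Lemma~\ref{lem:Lipschitz}, for any $0\le u\le t$ we have $d_{\GS}(g_u\gamma,\,g_{u-t_1}\gamma')=d_{\GS}(g_{u-t_1}\gamma',\,g_u\gamma)$, and since $|u-t_1-u'|$ stays within $|t_1|<M$ of the corresponding parameter $u'\in[0,t']$ for the good segment (after trimming the at most $|t_1|+|t_2|<2M$ extra time at the two ends), the flow moves points by a controlled amount. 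So I would choose $\eps_0=\eps_0(\eps,M)>0$ small enough that shadowing the family $\{(\gamma_i',t_i')\}$ at scale $\eps_0$ forces shadowing of $\{(\gamma_i,t_i)\}$ at scale $\eps$ on the appropriate subintervals.

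Next I would apply Proposition~\ref{specification} to the collection $\{(\gamma_i',t_i')\}_{i=1}^n\subset\mathcal{G}(\eta)$ at scale $\eps_0$, obtaining a shadowing geodesic $\hat\xi$ and a transition constant $\hat\tau_0=\hat\tau(\eps_0)$. The key bookkeeping point is that, because the good segment $\gamma_i'$ starts at $g_{t_1^{(i)}}\gamma_i$ rather than at $\gamma_i$, the portion of $\hat\xi$ that genuinely shadows $\gamma_i$ is an internal subinterval of the portion shadowing $\gamma_i'$, offset by $t_1^{(i)}$ and shortened by a total of at most $2M$. I would then re-gather the gaps: define the new transition constant to be $\hat\tau:=\hat\tau_0+4M$, which is large enough to swallow the at-most-$2M$ overshoot at the end of one segment plus the at-most-$2M$ offset at the start of the next, so that after relabelling the shadowing times the geodesic $\hat\xi$ shadows each $(\gamma_i,t_i)$ with a uniform transition time $\hat\tau$ between consecutive segments. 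This uses only the fact that strong specification lets us take all transition times equal; we simply pad them.

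For the ``moreover'' clause, note that if $t_i>\tfrac{\theta_0}{\eta}+2M$ then $t_i'>\tfrac{\theta_0}{\eta}$, so Proposition~\ref{specification} gives a closed interval $I_i'\supset[\tfrac{\theta_0}{2\eta},\,t_i'-\tfrac{\theta_0}{2\eta}]$ on which $\hat\xi$ (suitably shifted) exactly equals $\gamma_i'$; translating back by $t_1^{(i)}$ and intersecting with $[0,t_i]$ yields a closed interval $I_i\supset[\tfrac{\theta_0}{2\eta}+M,\,t_i-\tfrac{\theta_0}{2\eta}-M]$ on which $\hat\xi$ equals $\gamma_i$, which is the desired conclusion after harmlessly enlarging the stated threshold and the $\tfrac{\theta_0}{2\eta}$ constants by $M$ (absorbed into the constant $T$).

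I expect the main obstacle to be purely notational rather than mathematical: carefully tracking the three independent offsets — the $t_1^{(i)}$ reparametrization shift, the $2M$ length discrepancy, and the cumulative sums $\sum_{j<i}(t_j+\hat\tau)$ versus $\sum_{j<i}(t_j'+\hat\tau_0)$ — and verifying that a single padded constant $\hat\tau$ works uniformly for all $i$. Since the paper explicitly flags this as ``a simple technical modification,'' I would keep the write-up short, stating the reduction and the choice of constants and leaving the Lipschitz estimate and interval bookkeeping to the reader.
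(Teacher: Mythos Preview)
Your proposal is correct and follows essentially the same approach as the paper: pass to the associated segments in $\mathcal{G}(\eta)$, apply Proposition~\ref{specification} at a smaller scale, and use the Lipschitz/uniform continuity of the flow over $[-M,M]$ to recover $\eps$-shadowing of the original segments, absorbing the bounded offsets into the transition constant. The paper gives only a brief sketch invoking uniform continuity of the flow, whereas you invoke Lemma~\ref{lem:Lipschitz} explicitly and also spell out the ``moreover'' clause; these are cosmetic differences, not substantive ones.
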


\begin{proof}
This is a simple exercise using Proposition~\ref{specification} and uniform continuity of the geodesic flow. We give the idea of the proof. Let $\{(\gamma_i, t_i)\}_{i=1}^n\subset \mathcal{G}^M(\eta)$ be a collection of segments which we wish to shadow at scale $\eps$. This leads to a collection $\{(g_{s_i}\gamma_i,t_i')\}_{i=1}^n\subset\mathcal{G}(\eta)$, where $|s_i| \leq M$ and $|t_i - t_i'| \leq M$ which we can shadow at any scale as in Proposition~\ref{specification}. We choose our new shadowing scale $\delta$ so that if $d_{\GS}(\gamma,\xi) < \delta$, then $d_{\GS}(g_t\gamma,g_t\xi) < \eps$ for $t\in [-M,M]$, using uniform continuity of the flow. Any geodesic which $\delta$-shadows $\{(g_{s_i}\gamma_{t_i},t_i')\}$ must then $\eps$-shadow our desired collection $\{(\gamma_i,t_i)\}$.
\end{proof}

%
%%
%%%
%%%%
%%%%%
%%%%%%
%%%%%%%
%%%%%%%%%%%%%%%%%%%%%%%%%%%%%%%%%%%%%%%%%%%%%%%%%%%%%%%%%%%%%%

\section{$\GGG(\proportion)$ has the Bowen property}\label{sec:bowen}

In this section we establish the Bowen property (see Definition~\ref{def: Bowen property}). To do so, we analyze orbits that stay close to a good orbit segment for some time. This description will allow us to effectively bound the difference of ergodic averages along these orbits.

\begin{proposition}
For all $\proportion > 0$, for all sufficiently small $\eps > 0$ (dependent on $\proportion$), and for any $(\gamma, t)\in \GGG(\proportion)$ with $t > 2\frac{\theta_0}{2\eta}$, we have
	\begin{equation*}
	B_t(\gamma,\eps)\subset C_{2\eps,\frac{\theta_0}{2\eta}}(\gamma,t),
	\end{equation*}
where 
	\begin{equation*}
	B_t(\gamma,\eps) = \{\xi\in\GS \mid d_{\GS}(g_u\gamma,g_u\xi)<\eps \text{ for all }u\in[0,t]\}
	\end{equation*}
and 
	\begin{equation*}
	C_{2\eps,\frac{\theta_0}{2\eta}}(\gamma,t)=\left\{\xi \bigm| \exists |r| \leq 2\eps \text{ such that }g_r\xi(u) = \gamma(u) \text{ for all }u\in \left[\frac{\theta_0}{2\eta},t-\frac{\theta_0}{2\eta}\right]\right\}.
	\end{equation*}
\end{proposition}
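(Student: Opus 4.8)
The strategy is to show that any geodesic $\xi$ that stays $\eps$-close to a good segment $(\gamma,t)$ must, after a small time shift, \emph{coincide} with $\gamma$ on the middle portion $[\frac{\theta_0}{2\eta},t-\frac{\theta_0}{2\eta}]$. The mechanism is the usual flat-strip rigidity phenomenon: being a good segment, $\gamma$ turns by a definite amount at cone points near both of its endpoints (Proposition~\ref{prop: distance to Con}), and a geodesic that shadows it must pass through those same cone points; once two geodesics pass through a common cone point and continue closely, their segments between consecutive such cone points must literally agree, because a flat strip between them is ruled out by the turning.

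First I would invoke Proposition~\ref{prop: distance to Con}: since $(\gamma,t)\in\GGG(\proportion)$, there are times $t_1\in[-\frac{\theta_0}{2\eta},\frac{\theta_0}{2\eta}]$ and $t_2$ with $t+t_2\in[t-\frac{\theta_0}{2\eta},t+\frac{\theta_0}{2\eta}]$ at which $\gamma$ hits cone points with turning angle at least $s\eta$ away from $\pm\pi$. Working in $\tilde\SSS$ with lifts realizing the distance, Lemma~\ref{lem:closeness in S and GS} gives $d_{\tilde\SSS}(\tilde\gamma(u),\tilde\xi(u))<2\eps$ for all $u\in[0,t]$. Now I would run the argument from Lemma~\ref{lem:lambda small} essentially verbatim in reverse: for $\eps$ small enough (depending on $\eta$ through $s\eta$ and $\theta_0$), comparing the geodesic segment of $\tilde\xi$ on an interval of length $\sim\frac{\theta_0}{\eta}$ on each side of $\tilde\gamma(t_1)$ to $\tilde\gamma(t_1)$ via Euclidean comparison triangles and the CAT(0) inequality, the angle between $\tilde\xi$'s incoming/outgoing directions and $[\,\cdot\,,\tilde\gamma(t_1)]$ is at most $\sim\eps\eta/\theta_0$, which can be made $<\tfrac{s\eta}{8}$. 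Since $\tilde\gamma$ turns by at least $s\eta$ past $\pi$ at $\tilde\gamma(t_1)$, the concatenation of these two nearby segments still turns by more than $\pi$ on both sides, hence is a geodesic, and by uniqueness of geodesics in $\tilde\SSS$ it must be $\tilde\xi$ on that interval; in particular $\tilde\xi$ passes through $\tilde\gamma(t_1)$. The same argument at $\tilde\gamma(t+t_2)$ shows $\tilde\xi$ also passes through that cone point.

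Next, let $r$ be the (unique) shift so that $g_r\tilde\xi$ passes through $\tilde\gamma(t_1)$ at time $t_1$; the bound $|r|\le 2\eps$ comes from Lemma~\ref{lem:closeness in S and GS} (the distance along $\tilde\xi$ from its crossing point of $\tilde\gamma(t_1)$ to $\tilde\xi(t_1)$ is $\le d_{\tilde\SSS}(\tilde\xi(t_1),\tilde\gamma(t_1))\le 2\eps$). Since $g_r\tilde\xi$ and $\tilde\gamma$ now share the two cone points $\tilde\gamma(t_1)$ and $\tilde\gamma(t+t_2)$, and the unique geodesic segment between two points of $\tilde\SSS$ is unique, we get $g_r\tilde\xi(u)=\tilde\gamma(u)$ for all $u\in[t_1,t+t_2]\supset[\frac{\theta_0}{2\eta},t-\frac{\theta_0}{2\eta}]$. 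Projecting back to $\SSS$, this says $\xi\in C_{2\eps,\frac{\theta_0}{2\eta}}(\gamma,t)$, as desired.

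\textbf{Main obstacle.} The delicate point is making the angle estimate quantitative and uniform: I need that for a shadowing distance $\eps$ chosen small depending only on $\proportion$ (via $s\eta$ and the constants $\theta_0,\eta_0,\ell_0$ from Lemma~\ref{lem:no big flat}), the comparison-triangle bound on the turning-angle perturbation at $\tilde\gamma(t_1)$ (and at $\tilde\gamma(t+t_2)$) is strictly smaller than $\tfrac{s\eta}{4}$, so that the concatenated path is genuinely a local geodesic and uniqueness can be applied. This is exactly the computation already carried out in the proof of Lemma~\ref{lem:lambda small}, so the real work is bookkeeping: translating $\eps$-closeness in $d_{\GS}$ into $2\eps$-closeness of the surface points (Lemma~\ref{lem:closeness in S and GS}), controlling the length of the interval over which one compares (it is $\sim\frac{\theta_0}{\eta}$, bounded because cone points occur within $\frac{\theta_0}{2\eta}$ of the endpoints), and checking that no cone point of $\tilde\gamma$ intervenes between $\tilde\gamma(t_1)$ and the comparison point so that the Euclidean-comparison argument is valid. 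Everything else is a direct appeal to CAT(0) uniqueness of geodesics and the flat-strip heuristic already used throughout the paper.
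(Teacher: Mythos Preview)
Your proposal is correct and follows essentially the same strategy as the paper: locate cone points near each end of the good segment via Proposition~\ref{prop: distance to Con}, force the shadowing geodesic $\xi$ through both of them using the definite turning angle $\geq s\eta$, and then invoke uniqueness of geodesic segments in $\tilde\SSS$ to conclude that $g_r\xi$ and $\gamma$ agree on the middle portion.

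The one noteworthy difference is the mechanism you use to push $\tilde\xi$ through the cone points. You invoke the comparison-triangle argument of Lemma~\ref{lem:lambda small}, applied separately at each cone point with comparison times $\sim \theta_0/\eta$ to either side. The paper instead uses the ball-in-cone argument (in the spirit of Case~1 of Lemma~\ref{lem:lower semicts}): it picks the single time $t_0-s$ just \emph{outside} the first cone point and $t+t_1+s$ just outside the second, bounds $d_{\tilde\SSS}(\tilde\gamma_1(\cdot),\tilde\gamma_2(\cdot))$ there via Lemmas~\ref{lem:closeness in S and GS} and~\ref{lem:Lipschitz}, and chooses $\eps$ so that the resulting balls of radius $2\eps e^{2(\theta_0/(2\eta)+s)}$ sit inside the $\frac{s\eta}{4}$-cones at the cone points. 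Any geodesic joining these two balls is then forced through both cone points simultaneously. This is slightly more economical than your two-sided argument at each cone point, but the content is the same. One small bookkeeping point: your bound $|r|\le 2\eps$ uses $d_{\tilde\SSS}(\tilde\xi(t_1),\tilde\gamma(t_1))\le 2\eps$, which only follows directly from Lemma~\ref{lem:closeness in S and GS} when $t_1\in[0,t]$; since $t_1$ may be negative, you should instead evaluate at any time in $[t_1,t+t_2]\cap[0,t]$ (which is nonempty), exactly as the paper does.
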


\begin{proof}
Fix $\eta>0$, and recall Proposition~\ref{prop: distance to Con}. Now choose $\eps>0$ small enough that $s\sin(\frac{s\eta}{4}) > 2\eps e^{2(\frac{\theta_0}{2\eta} + s)}$. (Here $s$ is the parameter involved in the definition of $\lambda$, and fixed in Lemma~\ref{lem:lower semicts}.) Consider a cone around some geodesic with angle $\frac{s\eta}{4}$. By an easy computation, the ball of radius $2\eps e^{2(\frac{\theta_0}{2\eta}+s)}$ with center at distance $s$ from the cone point along the geodesic is contained in the cone (recall that $s > 0$ was chosen so that $2s < \ell_0$).

Let $(\gamma_1,t)\in\GGG(\eta)$ with $t > \frac{\theta_0}{\eta}$ be arbitrary. By Proposition~\ref{prop: distance to Con}, there exists $t_0\in [-\frac{\theta_0}{2\eta},\frac{\theta_0}{2\eta}]$ such that $\gamma_1(t_0)\in Con$ and $|\theta(\gamma_1,t_0)|-\pi \geq s\eta$. Similarly, there exists $t_1\in [-\frac{\theta_0}{2\eta},\frac{\theta_0}{2\eta}]$ such that $\gamma_1(t + t_1)\in Con$ and $|\theta(\gamma_1,t + t_1)|-\pi \geq s\eta$.
	
Now consider $\gamma_2\in B_t(\gamma_1,\eps)$. Taking $t_0$ and $t_1$ as above, by Lemmas~\ref{lem:closeness in S and GS} and \ref{lem:Lipschitz},
	\begin{equation}\label{eq: t_0}
	d_{\SSS}(\gamma_1(t_0-s),\gamma_2(t_0-s)) \leq 2d_{\GS}(g_{t_0 - s}\gamma_1,g_{t_0 - s}\gamma_2) \leq  2d_{\GS}(\gamma_1,\gamma_2) e^{2|\frac{\theta_0}{2\eta}-s|} \leq 2\eps e^{2(\frac{\theta_0}{2\eta}+s)},
	\end{equation}
and
	\begin{equation}\label{eq: t_1}
	d_{\SSS}(\gamma_1(t+t_1+s),\gamma_2(t+t_1+s)) \leq 2d_{\GS}(g_{t_1 + s}g_t\gamma_1,g_{t_1 + s}g_t\gamma_2) \leq  2d_{\GS}(g_t\gamma_1,g_t\gamma_2) e^{2|t_1+s|} \leq 2\eps e^{2(\frac{\theta_0}{2\eta}+s)}.
	\end{equation}
	
Let $\tilde{\gamma_1}$ and $\tilde{\gamma_2}$ be lifts of $\gamma_1$ and $\gamma_2$ to $G\tilde{\SSS}$ so that $d_{G\tilde{\SSS}}(\tilde{\gamma_1},\tilde{\gamma_2}) = d_{G\SSS}(\gamma_1,\gamma_2)$. Let $B_1=B(\tilde\gamma_1(t_0-s),2\eps e^{2(\frac{\theta_0}{2\eta}+s)})$ and $B_2=B(\tilde\gamma_1(t+t_1+s),2\eps e^{2(\frac{\theta_0}{2\eta}+s)})$. Then, by \eqref{eq: t_0} and \eqref{eq: t_1} and the remarks in the first paragraph of this proof, $\gamma_2$ intersects $B_1$ and $B_2$. Since $|\theta(\gamma_1,t_0)|-\pi\geq s\eta$ and $|\theta(\gamma_1,t + t_1)|-\pi\geq s\eta$, by the choice of $\eps$ and the fact that any two points in a CAT(0)-space are connected by a unique geodesic segment, $\tilde\gamma_2$ contains $\tilde\gamma_1[t_0,t+t_1]$. Moreover, since $d_{G\tilde{\SSS}}(g_{t_0+s}\tilde{\gamma_1},g_{t_0+s}\tilde{\gamma_2}) \leq \eps$ and $0\leq t_0+s\leq 2s<t$, it follows that $d_{\SSS}(\tilde\gamma_1(t_0+s),\tilde\gamma_2(t_0+s)\leq 2\eps$. Thus, there exists $r$ such that $|r|\leq 2\eps$ and $g_r\gamma_2(u)=\gamma_1(u)$ for $u\in[t_0,t+t_1]$. Since $t_0\leq \frac{\theta_0}{2\eta}$ and $t_1 \geq -\frac{\theta_0}{2\eta}$, we have completed our proof.
\end{proof}

\begin{proposition}\label{prop: Bowen property}
For all $\eps, s > 0$ and $\alpha$-H\"older continuous functions $\phi$, there exists $K > 0$ such that for all geodesic segments $(\gamma_1,t)$ with $t > 2\frac{\theta_0}{2\eta}$, given any $\gamma_2\in C_{2\eps,s}(\gamma_1,t)$, we have
$$\left|\int_{0}^{t}\phi(g_r\gamma_1) - \phi(g_r\gamma_2)\,dr\right| \leq K.$$
\end{proposition}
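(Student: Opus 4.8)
The plan is to exploit the structure of $\gamma_2 \in C_{2\eps,s}(\gamma_1,t)$: after reparametrizing by the shift $r$ with $|r|\le 2\eps$, the geodesics $g_r\gamma_2$ and $\gamma_1$ agree exactly on the interval $[s, t-s]$. So the integral $\int_0^t \phi(g_r\gamma_1) - \phi(g_r\gamma_2)\,dr$ breaks into three pieces: a ``head'' contribution over $[0,s]$, a ``tail'' contribution over $[t-s,t]$, and the middle portion over $[s,t-s]$ where the two geodesics coincide in $\SSS$ but may still differ slightly in $\GS$ because $g_r\gamma_2$ and $\gamma_1$ are equal as geodesics only after applying the shift. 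I would first absorb the time-shift: write $\gamma_2' = g_{r_0}\gamma_2$ where $r_0$ is the shift witnessing membership in $C_{2\eps,s}$, so $\gamma_2'(u) = \gamma_1(u)$ for $u\in[s,t-s]$, and compare $\int_0^t \phi(g_r\gamma_1)\,dr$ with $\int_{-r_0}^{t-r_0}\phi(g_r\gamma_2')\,dr$; the difference in the limits of integration contributes at most $2\|\phi\|_\infty|r_0|\le 4\eps\|\phi\|_\infty$, a uniform constant.

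Next I would handle the middle interval $[s,t-s]$. Here $\gamma_1(u) = \gamma_2'(u)$ for all $u$ in this range, so by Lemma~\ref{lem:exponentially close}, $d_{\GS}(g_u\gamma_1, g_u\gamma_2') \le e^{-2\min\{|u-s|,|u-(t-s)|\}}$. Since $\phi$ is $\alpha$-Hölder continuous, $|\phi(g_u\gamma_1) - \phi(g_u\gamma_2')| \le [\phi]_\alpha\, e^{-2\alpha\min\{|u-s|,|u-(t-s)|\}}$, and integrating this over $[s,t-s]$ gives a bound independent of $t$: it is at most $2[\phi]_\alpha \int_0^\infty e^{-2\alpha v}\,dv = [\phi]_\alpha/\alpha$. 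For the head interval $[0,s]$ and tail interval $[t-s,t]$, each has length exactly $s$, and on each of these we simply bound the integrand crudely by $2\|\phi\|_\infty$, contributing at most $2s\|\phi\|_\infty$ from each. Collecting all pieces, we get
\[
\left|\int_0^t \phi(g_r\gamma_1) - \phi(g_r\gamma_2)\,dr\right| \le 4\eps\|\phi\|_\infty + 4s\|\phi\|_\infty + \frac{[\phi]_\alpha}{\alpha} =: K,
\]
which depends only on $\eps$, $s$, and the Hölder data of $\phi$, as required.

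The main subtlety — though not really an obstacle — is the bookkeeping around the time shift $r_0$: one must be careful that reparametrizing $\gamma_2$ by $r_0$ changes the interval over which we integrate $\phi\circ g_\bullet$, and to check that the ``agreement interval'' $[s,t-s]$ is genuinely nonempty and nondegenerate, which is exactly why the hypothesis $t > 2\cdot\frac{\theta_0}{2\eta}$ appears in the companion proposition (and an analogous $t > 2s$ condition is implicit here, since otherwise $C_{2\eps,s}(\gamma_1,t)$ behavior is vacuous or trivial). I would state the lemma's conclusion with this understanding. Another point worth noting is that we do not need the geodesic segment $(\gamma_1,t)$ to be in $\GGG(\eta)$ at all for this estimate — it holds for arbitrary segments — and the Bowen property for $\phi$ on $\GGG(\eta)$ then follows by combining this proposition with the preceding one, which guarantees that $\eps$-Bowen balls around good long segments land inside $C_{2\eps,\theta_0/(2\eta)}(\gamma,t)$.
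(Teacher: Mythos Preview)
Your proof is correct and follows essentially the same approach as the paper's own proof: absorb the time shift $r_0$ at a cost of $4\eps\|\phi\|$, bound the head and tail intervals $[0,s]$ and $[t-s,t]$ crudely by $4s\|\phi\|$, and on the middle interval $[s,t-s]$ use Lemma~\ref{lem:exponentially close} together with the H\"older condition to get a bound of $[\phi]_\alpha/\alpha$. The paper arrives at exactly the same constant $K = \frac{C}{\alpha} + (4s+4\eps)\|\phi\|$, only grouping the terms slightly differently.
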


\begin{proof}
Let $R$ be the time-shift in the definition of $C_{2\eps,s}(\gamma_1,t)$, so that $g_R\gamma_2(r) = \gamma_1(r)$ for $r\in [s,t-s]$. We see that
\begin{align*}
\left|\int_0^t\phi(g_r\gamma_1) - \phi(g_r\gamma_2)\,dr\right| &\leq \left|\int_0^t\phi(g_r\gamma_1)\,dr - \int_{-R}^{t-R}\phi(g_r(g_R\gamma_2))\,dr\right|\\
&\leq \left|\int_{s}^{t-s}\phi(g_r\gamma_1) - \phi(g_r(g_R\gamma_2))\,dr\right|+(4s+2|R|)\|\phi\|.
\end{align*}

Since $\gamma_1 = g_R\gamma_2$ on $[s,t-s]$, by Lemma~\ref{lem:exponentially close}, we have for all $r\in[s,t-s]$
\begin{equation*}
d_{\GS}(g_r\gamma_1,g_r(g_R)\gamma_2)\leq e^{-2\min\{|r-s|,|r-(t-s)|\}}.
\end{equation*}

Thus we obtain
\begin{align*}
\left|\int_{s}^{t-s}\phi(g_r\gamma_1) - \phi(g_r(g_R\gamma_2))\,dr\right|&\leq \int_s^{t-s} C(d_{\GS}(g_r\gamma_1,g_r(g_R\gamma_2)))^\alpha\,dr\\
&\leq \int_s^{\frac{t}{2}} Ce^{-2\alpha(r-s)}\,dr+\int_{\frac{t}{2}}^{t-s}Ce^{-2\alpha((t-s)-r)}\,dr\\
&= \frac{C}{\alpha}(1-e^{-\alpha(t-2s)})\\
&\leq \frac{C}{\alpha}.
\end{align*}

As a result, since $|R|<2\eps$, we have
\begin{equation*}
\left|\int_0^t\phi(g_r\gamma_1) - \phi(g_r\gamma_2)\,dr\right|\leq \frac{C}{\alpha}+(4s+4\eps)\|\phi\|.
\end{equation*}
\end{proof}

\begin{corollary}
For all $\eta > 0$, there exists $\eps > 0$ such that $\GGG(\eta)$ has the Bowen property at scale $\eps$.
\end{corollary}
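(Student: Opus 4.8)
The plan is to combine the two preceding propositions in a straightforward way. The last of them (Proposition~\ref{prop: Bowen property}) already shows that if $(\gamma_1,t)$ is any long enough orbit segment and $\gamma_2 \in C_{2\eps,s}(\gamma_1,t)$, then the integrals of $\phi$ along the two orbits differ by at most a uniform constant $K$; meanwhile the first of the two propositions shows that, for $(\gamma,t)\in\GGG(\eta)$ with $t$ large enough, the Bowen ball $B_t(\gamma,\eps)$ is contained in $C_{2\eps,\theta_0/(2\eta)}(\gamma,t)$. So the only work is to choose the scale and handle the finitely many ``short'' segments.

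First I would fix $\eta>0$ and invoke the first proposition to obtain a scale $\eps_0 = \eps_0(\eta)>0$ (small enough that the proof's requirement $s\sin(s\eta/4) > 2\eps e^{2(\theta_0/(2\eta)+s)}$ holds) so that for every $(\gamma,t)\in\GGG(\eta)$ with $t>2\cdot\tfrac{\theta_0}{2\eta}$ we have $B_t(\gamma,\eps_0)\subset C_{2\eps_0,\theta_0/(2\eta)}(\gamma,t)$. Then I would apply Proposition~\ref{prop: Bowen property} with this $\eps = \eps_0$ and $s = \tfrac{\theta_0}{2\eta}$ to get a constant $K_0$ such that for all $(\gamma_1,t)$ with $t>2\cdot\tfrac{\theta_0}{2\eta}$ and all $\gamma_2\in C_{2\eps_0,\theta_0/(2\eta)}(\gamma_1,t)$,
\[
\left|\int_0^t \phi(g_r\gamma_1)-\phi(g_r\gamma_2)\,dr\right|\le K_0.
\]
Chaining these two facts: any $(\gamma,t)\in\GGG(\eta)$ with $t>2\cdot\tfrac{\theta_0}{2\eta}$ and any $\xi\in B_t(\gamma,\eps_0)$ (i.e.\ $d_{\GS}(g_r\gamma,g_r\xi)<\eps_0$ for $0\le r\le t$) satisfies $\xi\in C_{2\eps_0,\theta_0/(2\eta)}(\gamma,t)$, hence the integral bound $K_0$ applies.

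It remains to absorb the orbit segments $(\gamma,t)\in\GGG(\eta)$ with $0\le t\le 2\cdot\tfrac{\theta_0}{2\eta}=\tfrac{\theta_0}{\eta}$. For these, since $\phi$ is continuous on the compact space $\GS$ it is bounded, say $|\phi|\le\|\phi\|$, and hence for any $\xi$ whatsoever,
\[
\left|\int_0^t \phi(g_r\gamma)-\phi(g_r\xi)\,dr\right|\le 2\|\phi\|\,t\le 2\|\phi\|\cdot\tfrac{\theta_0}{\eta}=:K_1.
\]
Setting $K=\max\{K_0,K_1\}$ gives the Bowen property on $\GGG(\eta)$ at scale $\eps_0$, which by the remark following Definition~\ref{def: Bowen property} also gives it at all smaller scales.

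I do not expect any real obstacle here: the two propositions were tailored precisely to feed into this corollary, so the argument is essentially bookkeeping — choosing $\eps$ compatibly with the first proposition's hypothesis, matching the parameter $s$ in Proposition~\ref{prop: Bowen property} to $\theta_0/(2\eta)$, and dispatching the short segments by the trivial bound. The one point to be slightly careful about is that the constant in Proposition~\ref{prop: Bowen property} (and thus $K_0$) depends on the Hölder data of $\phi$ and on $\eps_0$ and $s$, all of which are fixed once $\eta$ and $\phi$ are fixed, so $K$ is genuinely uniform over all of $\GGG(\eta)$ as required.
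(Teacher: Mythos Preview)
Your proposal is correct and follows essentially the same approach as the paper: choose $\eps$ small enough for the first proposition, apply Proposition~\ref{prop: Bowen property} with $s=\theta_0/(2\eta)$ to get a uniform bound $K$ for long segments, and handle the short segments $t\le\theta_0/\eta$ with the trivial bound $2\|\phi\|\,\theta_0/\eta$. The paper's proof is precisely this, taking the Bowen constant to be $\max\{K,\,2\tfrac{\theta_0}{\eta}\|\phi\|\}$.
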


\begin{proof}
Fix $\eta > 0$. Then, choose $\eps > 0$ sufficiently small to apply the previous propositions. Then, we can take the constant for the Bowen property to be $\max\{K, 2\frac{\theta_0}{\eta}\lVert \phi\rVert\}$, where $K$ is from the previous proposition. Then, the previous proposition gives the desired bound for orbit segments of length at least $\frac{\theta_0}{\eta}$, and the triangle inequality gives the desired bound for any shorter orbit segments.
\end{proof}

%
%%
%%%
%%%%
%%%%%
%%%%%%%%%%%%%%%%%%%%%%%%%%%%%%%%%%%%%%%%%%%%%%

\section{Establishing the Pressure Gap}\label{sec:pressure gap}

In this section, we prove the pressure gap condition of \cite{BCFT} for certain potentials. We then show that this pressure gap holds in the product space as well. See also the survey by Climenhaga and Thompson ~\cite[Section 14]{CT20}.

First, we prove the following theorem.
\begin{theorem}\label{thm: locally constant}
Let $\phi$ be a continuous potential that is locally constant on a neighborhood of $\Sing$. Then, $P(\Sing,\phi)<P(\phi)$.
\end{theorem}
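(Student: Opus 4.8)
The plan is to exploit the structure of the singular set together with the pressure gap machinery that is already in the literature. The key observation is that a singular geodesic either avoids $Con$ entirely or turns by exactly $\pm\pi$ at every cone point it meets; in particular, since $\phi$ is locally constant near $\Sing$, there is a neighborhood $U$ of $\Sing$ and a constant $a$ with $\phi|_U \equiv a$, and hence $\phi|_{\Sing}\equiv a$, so that $P(\Sing,\phi) = h_{top}(g_t|_{\Sing}) + a$. The strategy is therefore to produce a single invariant measure $\mu$ on $\GS$, not supported on $\Sing$, whose free energy $h_\mu(g_t) + \int\phi\,d\mu$ strictly exceeds $h_{top}(g_t|_{\Sing}) + a$; by the variational principle defining $P(\phi)$ this gives $P(\phi) > P(\Sing,\phi)$.

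First I would set up the two quantities to compare. Using Lemma~\ref{lem:lambda small}, fix $\delta>0$ small enough that the $2\delta$-neighborhood $B(\Sing,2\delta)$ is contained in the neighborhood $U$ on which $\phi$ is constant (shrinking $\delta$ further if necessary). Next I would invoke the weak specification property of $\GGG(\eta)$ (Corollary~\ref{weak specification}), together with the fact that $\Sing$ is a proper compact invariant subset and that there exist regular closed geodesics turning with angle $>\pi$ at a cone point (Lemma~\ref{existence of a turning closed geodesic}), to build, for any $\eps>0$, an invariant measure $\mu_\eps$ that spends a $(1-\eps)$-proportion of its time inside $B(\Sing,2\delta)$ but with $h_{\mu_\eps}(g_t)$ close to $h_{top}(g_t|_{\Sing})$ — more precisely, one realizes $\mu_\eps$ as a small perturbation: take a measure of near-maximal entropy on $\Sing$ and glue in, using specification on the good collection, rare regular excursions of controlled total density $\eps$. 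Along the lines of \cite{BCFT} (this is their pressure-gap argument), the entropy of the glued-in measure is at least $h_{top}(g_t|_{\Sing})$ up to an error that $\to 0$ with $\eps$, while $\int\phi\,d\mu_\eps \ge a - 2\eps\|\phi\|$ since off the set $B(\Sing,2\delta)$ the integrand changes by at most $2\|\phi\|$ on a set of measure $\le 2\eps$. The crucial gain comes from the fact that the regular excursions, however rare, add a definite positive amount of entropy that does not vanish as $\eps\to 0$: the spreading of geodesic continuations at a large-angle cone point (quantified via $\eta_0$, $\theta_0$, $\ell_0$ in Lemma~\ref{lem:no big flat} and the construction in Lemma~\ref{connecting saddle connections with arbitrary length}) yields positive topological entropy for the orbit segments that visit cone points with turning angle bounded away from $\pm\pi$. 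Hence for small $\eps$ one gets $h_{\mu_\eps}(g_t) + \int\phi\,d\mu_\eps > h_{top}(g_t|_{\Sing}) + a = P(\Sing,\phi)$, and therefore $P(\phi) > P(\Sing,\phi)$.

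The cleanest way to make the entropy-gain step rigorous is probably not to build $\mu_\eps$ by hand but to cite the abstract pressure-gap criterion: it suffices, by \cite{BCFT} (see also \cite[Section 14]{CT20}), to exhibit an orbit-segment collection $\mathcal D\subset\GGG(\eta)$ with weak specification on which $\phi$ is (nearly) constant equal to $a$ and whose topological pressure is $> P(\Sing,\phi)$; one then checks that such $\mathcal D$ exists because $\phi$ is constant on the $2\delta$-neighborhood of $\Sing$ and because geodesics that make small (but nonzero) turns near $\Sing$ stay in this neighborhood while still accruing positive entropy. So the logical skeleton is: (i) $\phi|_{\Sing} \equiv a$ and $\phi \equiv a$ on $B(\Sing,2\delta)\subset U$; (ii) find regular orbit segments that remain $2\delta$-close to $\Sing$ yet form a positive-entropy, weakly-specified family $\mathcal D$ (here Lemma~\ref{lem:lambda small}, Lemma~\ref{existence of a turning closed geodesic}, and the specification results of Sections~\ref{sec:specification}–\ref{sec:strong} do the work); (iii) conclude $P(\phi)\ge P(\mathcal D,\phi) \ge h_{top}(\mathcal D\text{-system}) + a > h_{top}(g_t|_{\Sing}) + a = P(\Sing,\phi)$.

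The main obstacle I anticipate is step (ii): quantifying that one can have regular geodesics — turning by a genuinely non-$\pi$ angle at infinitely many cone points, so as to carry positive entropy — whose entire orbit nonetheless stays within distance $2\delta$ of $\Sing$. This requires a careful geometric argument showing that a very small turning angle at a cone point moves the geodesic only a correspondingly small amount in $d_{\GS}$ over a long time window (a quantitative converse to Lemma~\ref{lem:lambda small}, using Lemma~\ref{lem:shadow in S} and the Lipschitz estimate Lemma~\ref{lem:Lipschitz}), combined with a counting argument that the number of such admissible "small-turn" saddle-connection configurations still grows exponentially — which is where the bound $\eta_0$ on excess angles and the finiteness/length bounds $\ell_0, \theta_0$ of Lemma~\ref{lem:no big flat} enter. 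Controlling the interplay between "staying $2\delta$-close to $\Sing$" and "retaining exponential growth" is the delicate heart of the proof; everything else is bookkeeping with the variational principle and the already-established specification and Bowen properties.
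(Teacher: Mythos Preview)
Your proposal contains a fundamental error at the very first step. ``Locally constant on a neighborhood of $\Sing$'' does \emph{not} mean there is a single constant $a$ with $\phi|_U\equiv a$; it means $\phi$ is constant on each connected component of $U$. The paper itself emphasizes (just after the statement of Theorem~\ref{thm: locally constant}) that on a translation surface $\Sing$ has infinitely many connected components corresponding to different cylinders, and $\phi$ may take different values on each. Hence your identity $P(\Sing,\phi)=h_{top}(g_t|_{\Sing})+a$ is false in general, and the whole strategy of comparing against $h_{top}(g_t|_{\Sing})+a$ collapses.

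There is a second structural gap in step~(ii). You propose to find regular orbit segments that stay $2\delta$-close to $\Sing$ throughout and yet belong to a weakly-specified family. But Lemma~\ref{lem:lambda small} says precisely that any geodesic in $B(\Sing,2\delta)$ has $\lambda<\eta$, so such segments lie in $\mathcal B(\eta)$, \emph{not} in $\GGG(\eta)$; specification from Corollary~\ref{weak specification} therefore does not apply to them. The paper's actual mechanism (Proposition~\ref{prop:regular shadow}) is more delicate: the map $\Pi_t$ produces geodesics that stay near $\Sing$ only on the \emph{middle} interval $[L,t-L]$ but have endpoints in $\Reg(\eta)$, placing them in $\GGG^{4d_0}(\eta)$ so that Corollary~\ref{cor:expanded weak specification} applies. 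The local constancy of $\phi$ is then used component-by-component: $\Pi_t(\gamma)$ stays in the \emph{same connected component} of $B(\Sing,\delta)$ as $\gamma$ (Proposition~\ref{prop:regular shadow}(c)), so $\phi$ has the same value along both. The entropy gain comes not from rare excursions of a single measure but from the multiplicity bound (Lemma~\ref{lem:mult bound}) combined with specification to concatenate segments from $\Pi_t(E_t)$ in exponentially many orders, as in \cite[\S8.4]{BCFT}.
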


Furthermore, we use the above theorem to note that a pressure gap also holds for functions that are nearly constant. (See Corollary~\ref{cor: nearly constant}.) For a sense of the functions covered by Theorem~\ref{thm: locally constant}, it may be helpful to think of the special case of a translation surface. There are infinitely many cylinders in such an $\SSS$, and the geodesics circling different cylinders are in different connected components of $\Sing$, so there is significant flexibility in building a function that satisfies Theorem~\ref{thm: locally constant} on $\Sing$ itself, let alone on the complement of its neighborhood.

Our argument for Theorem~\ref{thm: locally constant} closely follows that in \S8 of \cite{BCFT}. The different geometry in our situation calls for somewhat different arguments in Proposition~\ref{prop:regular shadow} and Lemma~\ref{lem:mult bound}, which we present here in full. After these are proved, the argument hews closely to \cite{BCFT}. We present the main steps of the argument, filling in the details where a modification is necessary for the present situation.

For any $\eta>0$, we let
\[ \Reg(\eta) = \{\gamma \mid \lambda(\gamma)\geq \eta\}.\]

We need a pair of Lemmas in this section.

\begin{lemma}\label{lem:extend sing}
Let $c$ be any singular geodesic segment. That is, $c$ is a geodesic segment such that the turning angle at any cone points it encounters is always $\pm\pi$. Then $c$ can be extended to a complete geodesic $\gamma \in \Sing$.
\end{lemma}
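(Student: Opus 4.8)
The statement asks: given a geodesic segment $c$ whose turning angle at every cone point it meets is exactly $\pm\pi$, extend it to a bi-infinite geodesic $\gamma\in\Sing$. The natural approach is to extend $c$ forward and backward, one cone point at a time, always choosing the continuation that turns by angle $\pm\pi$; the key point is that such a continuation always \emph{exists}, and that the resulting bi-infinite curve is still a local geodesic (so that it lies in $\Sing$). I would first handle the forward direction and then note the backward direction is symmetric.

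\medskip

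\textbf{Step 1: a $\pm\pi$ continuation always exists at a cone point.} Suppose we have extended $c$ up to a point $p\in Con$, arriving along a geodesic segment $\beta$. The space of directions $S_p\SSS$ is a circle of circumference $\mathcal L(p)>2\pi$. The incoming direction of $\beta$ determines a point $v\in S_p\SSS$; a continuation turning by angle exactly $\pi$ corresponds to choosing an outgoing direction $w$ at angular distance exactly $\pi$ from $v$ (on either side — there are two such $w$ since $\mathcal L(p)>2\pi$). Any such $w$ gives a genuine local geodesic continuation because the turning angle is $\pi\ge\pi$, which is precisely the geodesic condition (recall $|\theta|\ge\pi$ at cone points, with equality allowed). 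So the extension can always be continued, and it can be continued so the turning angle is $\pm\pi$. If the extension never hits another cone point, we simply continue along the flat region indefinitely; since $\SSS$ is compact and geodesically complete the geodesic is defined for all $t$.

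\medskip

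\textbf{Step 2: assemble the bi-infinite geodesic.} Iterating Step 1 in forward time produces a forward-infinite geodesic ray extending $c$ (either it hits infinitely many cone points, each time turning by $\pm\pi$, or it eventually enters a flat piece forever; in the latter case completeness of $\tilde\SSS$ as a CAT(0) space / compactness of $\SSS$ gives a geodesic ray). The same construction in backward time gives a backward-infinite ray. Concatenating, we obtain $\gamma\colon\mathbb R\to\SSS$ which is a local isometry everywhere: on the interior of each geodesic subsegment it is a flat geodesic, and at each cone point it meets it turns by exactly $\pm\pi$. Hence $\gamma\in\GS$, and by construction $|\theta(\gamma,t)|=\pi$ for all $t\in\mathbb R$, i.e. $\gamma\in\Sing$ by Definition~\ref{defn:Sing}. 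Finally $\gamma$ restricts to $c$ on the relevant interval by construction.

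\medskip

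\textbf{Main obstacle.} The only genuinely substantive point is verifying that the concatenated curve is a \emph{global} local isometry — i.e. that nothing goes wrong "at infinity" or in how the pieces fit together. This reduces to two routine observations: (i) a curve that is a geodesic on each piece of a locally finite subdivision and turns by angle $\ge\pi$ at each breakpoint is a local geodesic (standard fact about $\CAT(0)$/nonpositively curved spaces, e.g. \cite[Ch. II.3]{bh}); and (ii) the extension is defined for all time, which follows from geodesic completeness of $\SSS$. There is also the very minor subtlety that the breakpoints could a priori accumulate in finite time, but this cannot happen because, by Lemma~\ref{lem:no big flat}(c), consecutive cone points along a geodesic are separated by at least $\ell_0>0$. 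So the proof is essentially a matter of carefully stating these points; no hard estimate is involved.
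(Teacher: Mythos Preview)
Your proposal is correct and follows exactly the same approach as the paper: extend $c$ by continuing the geodesic and, at each cone point encountered, choose a continuation with turning angle $\pm\pi$. The paper's proof is in fact a single sentence stating precisely this; your version simply fills in the routine details (existence of a $\pm\pi$ direction since $\mathcal L(p)>2\pi$, no accumulation of cone points by Lemma~\ref{lem:no big flat}(c), and that the concatenation is a local isometry) that the paper leaves implicit.
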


\begin{proof}
The extension is accomplished by following the geodesic trajectory established by $c$ and, whenever a cone point is encountered, continuing the extension so that a turning angle of $\pi$ or $-\pi$ is made.
\end{proof}

Let $\partial_\infty\tilde \SSS$ be the boundary at infinity of $\tilde \SSS$, equipped with the usual cone topology (see, e.g., \cite[\S II.8]{bh}). Since $\SSS$ is a surface, $\partial_\infty\tilde \SSS$ is a circle. Using this identification, we can speak of a path in $\partial_\infty\tilde \SSS$ as being monotonic if it always moves in a clockwise or counterclockwise direction.

The following Lemma leverages this structure to provide a way to continuously move a geodesic in $G\tilde\SSS$.

\begin{lemma}\label{lem:cts pivot}
Let $\tilde\gamma \in G\tilde\SSS$ with $\tilde \gamma(t_0)=\xi\in\widetilde{Con}$. Let $\zeta_v$ be a continuous and monotonic path in $\partial_\infty\tilde\SSS$ with $\zeta_0=\tilde\gamma(+\infty)$ such that for all $v$, the ray connecting $\xi$ with $\zeta_v$ can be concatenated with $\tilde\gamma(-\infty,t_0)$ to form a geodesic $\tilde\gamma_v$. Then $\{\gamma_v\}$ is a continuous path of geodesics in $G\tilde\SSS$ with $d_{G\tilde\SSS}(\tilde\gamma, \tilde\gamma_v)$ non-decreasing in $|v|$.
\end{lemma}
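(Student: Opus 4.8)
The plan is to analyze the map $v \mapsto \tilde\gamma_v$ in two parts: the ``future'' rays $[\xi, \zeta_v)$ and the fixed ``past'' ray $\tilde\gamma(-\infty, t_0]$, which are glued at $\xi$. First I would establish continuity of $v \mapsto \tilde\gamma_v$ in $G\tilde\SSS$. Since the cone topology on $\tilde\SSS \cup \partial_\infty \tilde\SSS$ makes the endpoint map continuous, and since in a CAT(0) space a geodesic ray depends continuously (uniformly on compact time intervals) on its initial point and its endpoint at infinity, the rays $r_v := [\xi,\zeta_v)$ vary continuously in the compact-open topology as $v$ varies. Concatenating each $r_v$ with the \emph{fixed} segment $\tilde\gamma(-\infty,t_0]$ and using the hypothesis that each concatenation is a genuine geodesic, we get that $\tilde\gamma_v(u)$ converges uniformly on every compact $u$-interval as $v \to v_0$. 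By the definition \eqref{metric in GS} of $d_{\GS}$ (an integral against the rapidly decaying weight $e^{-2|t|}$), uniform convergence on compact time intervals implies convergence in $d_{\GtS}$; hence $v \mapsto \tilde\gamma_v$ is continuous. This also shows $\{\gamma_v\}$ is a path of geodesics in $G\tilde\SSS$.

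The substantive part is the monotonicity of $v \mapsto d_{\GtS}(\tilde\gamma,\tilde\gamma_v)$ in $|v|$. Treat $v\geq 0$ (the case $v \le 0$ is symmetric, and there is a sign/parametrization convention to fix so that ``$|v|$ increasing'' is meaningful; I would parametrize $\zeta_v$ by, say, the angle swept at $\xi$, so that monotonicity of the path $\zeta_v$ translates into monotonicity of that angle). The key geometric claim is: for $0 \le v_1 \le v_2$ and every time $u$, one has
\[
d_{\tilde\SSS}\bigl(\tilde\gamma(u),\tilde\gamma_{v_1}(u)\bigr) \;\le\; d_{\tilde\SSS}\bigl(\tilde\gamma(u),\tilde\gamma_{v_2}(u)\bigr).
\]
Granting this pointwise inequality, integrating against $e^{-2|u-t_0|}$ (after shifting so $\tilde\gamma(t_0)=\xi$ sits at parameter $t_0$; note all $\tilde\gamma_v$ agree with $\tilde\gamma$ for $u \le t_0$, so the integrand vanishes there) immediately gives $d_{\GtS}(\tilde\gamma,\tilde\gamma_{v_1}) \le d_{\GtS}(\tilde\gamma,\tilde\gamma_{v_2})$, which is exactly the assertion.

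To prove the pointwise inequality, fix $u > t_0$ and write $p = \tilde\gamma(u)$, $q_i = \tilde\gamma_{v_i}(u)$ for $i=1,2$; all three points lie at distance $u - t_0$ from $\xi$ along the respective rays. The three rays $[\xi,\zeta_0)$, $[\xi,\zeta_{v_1})$, $[\xi,\zeta_{v_2})$ emanate from $\xi$ with $\zeta_{v_1}$ ``between'' $\zeta_0$ and $\zeta_{v_2}$ in the monotone path, so the Alexandrov angles at $\xi$ satisfy $\angle_\xi(p,q_1) \le \angle_\xi(p,q_2)$ (this is where I would invoke that $\partial_\infty\tilde\SSS$ is a circle and the path $\zeta_v$ is monotonic: the angle subtended is monotone). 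Now compare each of the two geodesic triangles $\triangle(\xi,p,q_1)$ and $\triangle(\xi,p,q_2)$ — both isosceles with legs of length $u-t_0$ — with Euclidean comparison triangles. By the CAT(0) inequality, $d_{\tilde\SSS}(p,q_i)$ is at most the corresponding Euclidean side length; but I need a \emph{lower} bound on $d(p,q_2)$ and an \emph{upper} bound on $d(p,q_1)$, so a single comparison does not suffice. Instead I would argue via convexity of the function $u \mapsto d_{\tilde\SSS}(\tilde\gamma(u),\tilde\gamma_{v}(u))$ (convexity of the distance between two geodesics in a CAT(0) space), combined with the fact that this function is $0$ at $u=t_0$ and its one-sided derivative at $t_0^+$ is exactly $2\sin(\tfrac12\angle_\xi(\zeta_0,\zeta_v))$ (the rate at which two unit-speed geodesics spread out from a common point at Alexandrov angle $\beta$ is $2\sin(\beta/2)$, and in CAT(0) this is a lower bound for the spread that persists). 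Since a convex function vanishing at $t_0$ with larger right-derivative dominates one with smaller right-derivative on $[t_0,\infty)$, and since $\angle_\xi(\zeta_0,\zeta_{v_1}) \le \angle_\xi(\zeta_0,\zeta_{v_2})$, the pointwise inequality follows.

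\textbf{Main obstacle.} The delicate point is the last one: getting the \emph{two-sided} control (an upper bound on the spread of $\tilde\gamma_{v_1}$ from $\tilde\gamma$ and a lower bound on that of $\tilde\gamma_{v_2}$) out of CAT(0) comparison, which only gives one-sided inequalities per triangle. The clean way is the convexity-plus-initial-slope argument sketched above, using that in CAT(0) the distance function between two geodesics issuing from a common point is convex, piecewise-smooth, and has initial slope governed exactly by the Alexandrov angle; one must check that the relevant angle comparisons genuinely follow from monotonicity of the boundary path $\zeta_v$ — i.e. that ``$\zeta_{v_1}$ lies between $\zeta_0$ and $\zeta_{v_2}$ on the circle $\partial_\infty\tilde\SSS$'' forces $\angle_\xi(\zeta_0,\zeta_{v_1}) \le \angle_\xi(\zeta_0,\zeta_{v_2})$. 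This uses that, all these rays starting at the common point $\xi$ and all continuing the fixed ray $\tilde\gamma(-\infty,t_0]$ as geodesics, their directions at $\xi$ lie in a fixed half-circle of the space of directions $S_\xi\tilde\SSS$ on which the angular metric coincides with the circular order inherited from $\partial_\infty\tilde\SSS$; there I would cite \cite[Ch.~II.3, II.8]{bh}.
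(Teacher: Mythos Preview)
Your continuity argument is fine and matches the paper's. The gap is in the monotonicity step. Your ``convexity-plus-initial-slope'' argument rests on the claim that if $f,g$ are convex on $[t_0,\infty)$ with $f(t_0)=g(t_0)=0$ and $f'(t_0^+)\le g'(t_0^+)$, then $f\le g$ on $[t_0,\infty)$. This is false: take $f(u)=u+u^2$ and $g(u)=2u$; the hypotheses hold but $f$ eventually dominates $g$. In the CAT(0) setting the distance between two geodesic rays from $\xi$ satisfies $d(\tilde\gamma(u),\tilde\gamma_v(u))/u$ non-decreasing, so the initial slope $2\sin(\tfrac12\angle_\xi(\zeta_0,\zeta_v))$ gives a \emph{lower} bound on the distance, not an upper bound. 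Thus you get $d(\tilde\gamma(u),\tilde\gamma_{v_2}(u))\ge 2(u-t_0)\sin(\tfrac12\alpha_2)$, but you cannot force $d(\tilde\gamma(u),\tilde\gamma_{v_1}(u))\le 2(u-t_0)\sin(\tfrac12\alpha_1)$; the curvature between $\tilde\gamma$ and $\tilde\gamma_{v_1}$ could push them apart faster than between $\tilde\gamma$ and $\tilde\gamma_{v_2}$. (There is also a secondary issue: since $\xi$ is a cone point with $\mathcal{L}(\xi)>2\pi$, the angle in $S_\xi\tilde\SSS$ between $r_0$ and $r_v$ can exceed $\pi$, so the formula $2\sin(\beta/2)$ for the Alexandrov-angle slope needs care.)

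The paper takes a different route that avoids comparing the two rays at $\xi$. It fixes $t>t_0$ and studies the path $v\mapsto \tilde\gamma_v(t)$, which sweeps monotonically along the metric circle of radius $t-t_0$ about $\xi$. At an arbitrary $v^*$ one works in the space of directions at the point $\tilde\gamma_{v^*}(t)$: the tangent $W^+$ to the swept arc (in the direction of increasing $v$) makes angle $\tfrac{\pi}{2}$ with the radial direction toward $\xi$, while the CAT(0) inequality for the triangle with vertices $\xi$, $\tilde\gamma(t)$, $\tilde\gamma_{v^*}(t)$ forces the angle at $\tilde\gamma_{v^*}(t)$ to be strictly less than $\tfrac{\pi}{2}$. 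An angular-ordering argument then shows $W^+$ points outside the circle of radius $d_{\tilde\SSS}(\tilde\gamma(t),\tilde\gamma_{v^*}(t))$ centered at $\tilde\gamma(t)$, so $v\mapsto d_{\tilde\SSS}(\tilde\gamma(t),\tilde\gamma_v(t))$ is locally non-decreasing at $v^*$. Since $v^*$ was arbitrary, it is globally non-decreasing, and integrating in $t$ gives the result. The key difference is that the paper proves monotonicity in the $v$-variable directly, rather than trying to compare growth rates in the $u$-variable.
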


\begin{proof}
First, that $\xi$ and $\zeta_v$ can be connected with a unique geodesic ray is a standard fact about CAT(0) spaces (\cite[\S II.8, Prop 8.2]{bh}). For continuity of $\tilde\gamma_v$, we claim that if $v\to v_0$, $d_{\tilde S}(\tilde\gamma_v(t), \tilde\gamma_{v_0}(t))\to 0$ uniformly on any $[t_0,T]$. This together with the formula for $d_{G\tilde \SSS}$ will show that $d_{G\tilde \SSS}(\gamma_v,\gamma_{v_0})\to 0$. To verify the claim, fix $T>t_0$ and $\eps>0$ and recall that in the cone topology on $\partial_\infty\tilde \SSS$,
\[ U(\tilde\gamma_{v_0}, T, \eps) := \{ \zeta\in \partial_\infty\tilde \SSS : d_{\tilde \SSS} (c(T), \tilde\gamma_{v_0}(T))< \eps \mbox{ where } c \mbox{ is the geodesic ray from } \xi \mbox{ to } \zeta\}\]
is a basic open set around $\zeta_{v_0} = \tilde \gamma_{v_0}(+\infty)$ (\cite[\S II.8]{bh}). Therefore, for $v$ sufficiently close to $v_0$, $\zeta_v \in U(\tilde\gamma_{v_0}, T ,\eps).$ But the ray $c$ from $\xi$ to $\zeta_v$ is precisely $\tilde\gamma_v|_{[t_0,+\infty)}$. Thus $d_{\tilde \SSS}(\tilde \gamma_v(T), \tilde \gamma_{v_0}(T))<\eps$. Since the distance between two geodesics is a convex function of the parameter (\cite[\S II.2]{bh}) and $d_{\tilde S}(\tilde \gamma_v(t_0), \tilde \gamma_{v_0}(t_0))=0$, for all $t\in [t_0, T]$ we have $d_{\tilde \SSS}(\tilde \gamma_v(t), \tilde \gamma_{v_0}(t))<\eps$ and hence have the desired uniform convergence.

For all $t\leq t_0$, $d_{\tilde\SSS}(\tilde\gamma(t),\tilde\gamma_v(t))=0$. We claim that for $t>t_0$, $d_{\tilde S}(\tilde\gamma(t),\tilde\gamma_v(t))$ is non-decreasing in $|v|$. Together with the formula for $d_{G\tilde \SSS}$, this will provide the result.

Fix some $v^*\neq 0$; without loss of generality, we can assume $v^*>0$. Since $\zeta_v$ is a monotonic path on $\partial_\infty\tilde \SSS$, $v \mapsto \tilde\gamma_v(t)$ sweeps out an arc on the circle of radius $t-t_0$ centered at $\xi$ monotonically (though not necessarily strictly monotonically). We want to show that for $v>v^*$, $d_{\tilde S}(\tilde\gamma(t),\tilde\gamma_v(t)) \geq d_{\tilde S}(\tilde\gamma(t),\tilde\gamma_{v^*}(t))$. This will be trivially true if for all $v>v^*$, $\tilde \gamma_v(t) = \tilde \gamma_{v^*}(t)$, so we can assume this is not the case.

Consider the path swept out by $v \mapsto \tilde \gamma_v(t)$. Near the point $\tilde\gamma_{v^*}(t)$ this path consists of arcs of two Euclidean circles meeting at $\tilde \gamma_{v^*}(t)$. To each side of $\tilde \gamma_{v^*}(t)$, the arc belongs to a circle centered at the cone point on $[\xi, \tilde \gamma_{v^*}(t)]\setminus\{ \tilde \gamma_{v^*}(t)\}$ closest to $\tilde\gamma_{v^*}(t)$ among those cone points where $[\xi,\tilde\gamma_{v^*}(t)]$ makes angle greater than $\pi$ on the given side of $[\xi,\tilde\gamma_{v^*}(t)]$. Therefore in the space of directions at $\tilde \gamma_{v^*}(t)$ (this will be the tangent space at $\tilde \gamma_{v^*}(t)$ unless $\tilde \gamma_{v^*}(t)$ happens to be a cone point) we have well-defined vectors pointing along these arcs. Furthermore, since these are arcs of Euclidean circles, the angles between these two vectors and a vector pointing radially along $[\tilde \gamma_{v^*}(t), \xi]$ are both $\frac{\pi}{2}$. Let $W^+$ and $W^-$ be vectors in the space of directions at $\tilde \gamma_{v^*}(t)$ pointing along the arc swept out by $v \mapsto \tilde \gamma_v(t)$ with $W^+$ pointing in the direction swept out as $v$ increases past $v^*$ and $W^-$ in the direction swept out as $v$ decreases from $v^*$. (Note that $v\mapsto \tilde \gamma_v(t)$ may be constant in $v$ for $v$ near $v^*$ due to cone points $\tilde\gamma_{v^*}$ encounters at times greater than $t$. The vectors $W^\pm$ are tangent to a reparametrization of this curve by arc-length, for example.) Similarly, let $H^\pm$ be the vectors in the space of directions at $\tilde \gamma_{v^*}(t)$ pointing along the circle of radius $d_{\tilde \SSS}(\tilde \gamma(t), \tilde \gamma_{v^*}(t))$ centered at $\tilde \gamma(t)$. Let $V_1$ be the initial tangent vector for the geodesic segment from $\tilde \gamma_{v^*}(t)$ to $\xi$ and let $V_2$ be the initial tangent vector for the geodesic segment from $\tilde \gamma_{v^*}(t)$ to $\tilde \gamma(t)$. By the CAT(0) condition and using a comparison triangle for the triangle with vertices $\xi, \tilde \gamma(t)$, and $\tilde \gamma_{v^*}(t)$ it is easy to check that the angle between $V_1$ and $V_2$ is in $[0, \frac{\pi}{2})$. The angles between $W^\pm$ and $V_1$ and between $H^\pm$ and $V_2$ are all $\frac{\pi}{2}$ as these are angles between a circle and one of its radial segments. (See Figure \ref{fig:non-decreasing}.)

\begin{figure}[h]
\centering
\begin{tikzpicture}[scale=1.3]

\draw[thick] (-3, 0) --  (3,0) ;
\draw[thick] (-2,0) -- (3,3.5);
\draw[thick] (3, 0) --  (2.08,2.88) ;

\fill [black] (-2, 0) circle(.06) ;
\node[black] at (-2, -.3) {$\xi$} ;

\draw[thick, dashed, red] (3,0) arc (0:70:5cm);
\draw[thick, dashed, blue] (1,2.28) arc (130:85:3cm);

\fill [black] (3, 0) circle(.06) ;
\node[black] at (3, -.3) {$\tilde\gamma(t)$} ;

\node[black] at (2.3, 4) {$\tilde\gamma_{v^*}(t)$} ;

\draw[ultra thick, ->] (2.09, 2.86) -- (1.3,2.31) ;
\draw[ultra thick, ->] (2.09, 2.86) -- (2.36,2) ;
\draw[red, ultra thick, ->] (2.09, 2.86) -- (1.57,3.6) ;
\draw[red, ultra thick, ->] (2.09, 2.86) -- (2.61,2.12) ;
\draw[blue, ultra thick, ->] (2.09, 2.86) -- (3,3.1) ;
\draw[blue, ultra thick, ->] (2.09, 2.86) -- (1.18,2.62) ;

\fill [black] (2.09, 2.86) circle(.06) ;

\node[black] at (1.5, 2) {$V_1$} ;
\node[black] at (2.2, 1.8) {$V_2$} ;
\node[red] at (1.2, 3.4) {$W^+$} ;
\node[red] at (3, 2) {$W^-$} ;
\node[blue] at (3.3, 3.3) {$H^+$} ;
\node[blue] at (1, 2.8) {$H^-$} ;

\draw[thick, ->] (2.3, 3.86) -- (2.1,3) ;

\end{tikzpicture}
\caption{The proof that $d_{\tilde \SSS}(\tilde \gamma(t), \tilde\gamma_v(t))$ is non-decreasing in $|v|$.}\label{fig:non-decreasing}
\end{figure}
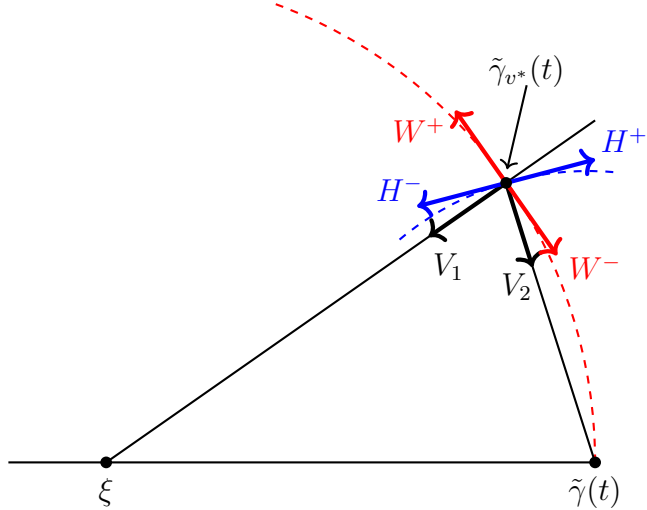

The segment $[\tilde\gamma_{v^*}(t), \tilde \gamma(t)]$ lies in the convex hull of $\tilde \gamma$ and $\tilde \gamma_{v^*}$. By the CAT(0) condition, it is within the ball of radius $t-t_0$ centered at $\xi$. So $V_2$, which points along $[\tilde\gamma_{v^*} (t), \tilde \gamma(t)]$, is between $V_1$ and $W^-$ in the space of directions at $\tilde\gamma_{v^*}(t)$. More precisely, the space of directions at $\tilde \gamma_{v^*}(t)$ is a circle with total length equal to the total angle at $\tilde \gamma_{v^*}(t)$. $V_2$ is between $V_1$ and $W^-$ in the sense that it lies within the angle-$\frac{\pi}{2}$ arc of directions connecting $V_1$ and $W^-$ in the space of directions. Thus, the angle between $V_2$ and $W^-$ is less than or equal to $\frac{\pi}{2}$ and so the angle between $V_2$ and $W^+$ is at least $\frac{\pi}{2}$.

If the angle between $V_2$ and $W^+$ is $\frac{\pi}{2}$, then the geodesic segment $[\tilde \gamma(t), \tilde \gamma_{v^*}(t)]$ must run through $\xi$ and then for $v>v^*$, $d_{\tilde \SSS}(\tilde \gamma(t), \tilde \gamma_v(t)) = 2(t-t_0) = d_{\tilde \SSS}(\tilde \gamma(t), \tilde \gamma_{v^*}(t))$. If the angle is strictly less than $\frac{\pi}{2}$ then in the space of directions, $W^+$ is separated from $V_2$ by $H^\pm$. This means that as the path $v\mapsto \tilde \gamma_v(t)$ leaves the point $\tilde \gamma_{v^*}(t)$ with $v$ increasing, it must move -- at least initially -- to the outside of the circle of radius $d_{\tilde \SSS}(\tilde \gamma(t), \tilde \gamma_{v^*}(t))$ centered at $\tilde \gamma(t)$. In particular, $d_{\tilde \SSS}(\tilde \gamma(t), \tilde \gamma_{v^*}(t))$ is locally monotonically increasing near $v^*$. As $v^*$ was arbitrary (among $v$ such that $\tilde \gamma_v$ give geodesic extensions of $\tilde \gamma(-\infty, t_0)$), and the path $v\mapsto \tilde \gamma_v(t)$ is connected, this completes our proof of the claim and the lemma.
\end{proof}

The first step in the dynamical argument for a pressure gap is the following technical Proposition, which allows us to find a regular geodesic which is close to any connected component of the $\delta$-neighborhood of the singular set.

\begin{proposition}\label{prop:regular shadow}
Let $\delta>0$  and $0<\eta<\frac{\eta_0}{2s}$ be given, where $\eta_0$ is defined in Lemma~\ref{lem:no big flat}(b). Then there exists $L>0$ and a family of maps $\Pi_t:\Sing \to \Reg(\eta)$ such that for all $t>3L$ and for all $\gamma \in \Sing$, if we write $c = \Pi_t(\gamma)$ then the following are true:
\begin{itemize}
	\item[(a)] $c, g_{t+t'}c \in \Reg(\eta)$ for some $|t'|<4d_0$;
	\item[(b)] $d_{GS}(g_r c, \Sing) < \delta$ for all $r\in [L, t-L]$;
	\item[(c)] for all $r\in [L, t-L]$, $g_r c$ and $\gamma$ lie in the same connected component of $B(\Sing, \delta)$, the $\delta$-neighborhood of $\Sing$.
\end{itemize}
Furthermore, $c(0), c(t+t')\in Con$, any $c\in \Pi_t(\Sing)$ is entirely determined (among the geodesics in $\Pi_t(\Sing)$) by the segment $c[0,t+t']$, and $d_{\tilde \SSS}(\gamma(0), c(0)), d_{\tilde \SSS}(\gamma(t),c(t+t'))<2d_0$ where $d_0$ is as in Lemma~\ref{lem:no big flat}(a).
\end{proposition}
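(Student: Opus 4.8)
The plan is to mimic \S8 of \cite{BCFT}, with the excursions into negatively curved regions used there replaced by detours through cone points, and to rely crucially on the no‑flat‑square hypothesis of Lemma~\ref{lem:no big flat}(a) to keep those detours of bounded (surface‑scale) size. Fix $d_0,\eta_0,\ell_0,\theta_0$ as in Lemma~\ref{lem:no big flat}, fix $\delta>0$ and $0<\eta<\tfrac{\eta_0}{2s}$, and leave the choice of $L=L(\delta,\eta,\SSS)$ until the end; it will be taken larger than both $d_0$ and the shadowing time $-\log\delta$ supplied by Lemmas~\ref{lem:shadow in S} and \ref{lem:exponentially close}. Given $\gamma\in\Sing$ and $t>3L$, I would first lift $\gamma$ to $\tilde\gamma$ and use that $\tilde\SSS$ contains no flat $d_0\times d_0$ square to locate cone points $\tilde q_1$ with $d_{\tilde\SSS}(\tilde q_1,\tilde\gamma(0))<2d_0$ and $\tilde q_2$ with $d_{\tilde\SSS}(\tilde q_2,\tilde\gamma(t))<2d_0$ (over any window of length comparable to $d_0$ the geodesic $\tilde\gamma$ must pass within $d_0$ of $\widetilde{Con}$, else a flat square appears). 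If $\tilde\gamma$ itself meets cone points in these windows, take $\tilde q_i$ on $\tilde\gamma$; otherwise $\tilde\gamma$ lies in a flat region which, again by the no‑square condition, is thin (transverse extent $<d_0$), and sliding $\tilde\gamma$ toward $\tilde q_i$ exactly as in the proof of Lemma~\ref{lem:closed saddle} produces a singular geodesic $\gamma'$ within $d_0$ of $\gamma$ that does meet a cone point near each endpoint window; the family of parallels interpolating between $\gamma$ and $\gamma'$ witnesses that they lie in the same connected component of $B(\Sing,\delta)$, so I may replace $\gamma$ by $\gamma'$ at the cost of only an additive $d_0$.

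Assuming now $\tilde q_i=\tilde\gamma(a_i)$ with $a_1\in[0,2d_0]$ and $a_2\in[t-2d_0,t]$, I would define $\tilde c$ to coincide with $\tilde\gamma$ on $[a_1,a_2]$ and to be bent at the two cone points. At $\tilde q_1$, apply Lemma~\ref{lem:cts pivot} to the incoming ray $\tilde\gamma(-\infty,a_1]$, rotating it through a monotone family in $\partial_\infty\tilde\SSS$ until the turning angle of the resulting geodesic at $\tilde q_1$ equals exactly $\pi+s\eta$. This is possible because the excess angle at $\tilde q_1$ is at least $\eta_0>2s\eta$, so the admissible arc of outgoing directions has length $\geq\eta_0$ and the turning angle runs continuously over $[\pi,\pi+\tfrac{1}{2}(\text{excess})]\supseteq[\pi,\pi+\tfrac{\eta_0}{2}]\ni\pi+s\eta$, while Lemma~\ref{lem:cts pivot} guarantees the rotated object is a genuine geodesic throughout. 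Perform the symmetric rotation at $\tilde q_2$, and extend the two loose ends to a complete geodesic by a fixed deterministic rule (for instance, continuing with turning angle $+\pi$ at each subsequent cone point, always legal by Lemma~\ref{lem:extend sing}); then $c$ is determined among the elements of $\Pi_t(\Sing)$ by the segment $c[0,t+t']$. Here $t+t'$ is the length along $\tilde c$ from $\tilde q_1$ to $\tilde q_2$, and since relocating each endpoint moves it by $<2d_0$ we get $|t'|<4d_0$; by construction $c(0)=\tilde q_1$ and $c(t+t')=\tilde q_2$ lie in $Con$ with $d_{\tilde\SSS}(\tilde\gamma(0),c(0)),\,d_{\tilde\SSS}(\tilde\gamma(t),c(t+t'))<2d_0$.

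It then remains to check (a)--(c). For (a): $c(0)\in Con$ is the first cone point met by $c$ for times $\geq0$ and $c$ turns there by $\pi+s\eta$, so by Definitions~\ref{defn: lambda stable}--\ref{defn:lambda} we get $\lambda(c)=\lambda^{ss}(c)=\tfrac{|\theta(c,0)|-\pi}{s}=\eta$, hence $c\in\Reg(\eta)$; the same computation applied to $g_{t+t'}c$ at its time‑$0$ cone point $\tilde q_2$ gives $g_{t+t'}c\in\Reg(\eta)$. For (b): for $r\in[L,t-L]$ the geodesic $g_rc$ agrees with a suitable time‑shift of $\gamma$ (up to the $d_0$‑modification of the first paragraph) on an interval of length $\geq 2(L-2d_0)$ about $0$, so taking $L>2d_0-\log\delta$ and invoking Lemma~\ref{lem:shadow in S} (or Lemma~\ref{lem:exponentially close} in the case of exact agreement) gives $d_{\GS}(g_rc,\Sing)<\delta$. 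For (c): $r\mapsto g_rc$ is continuous, so $\{g_rc:r\in[L,t-L]\}$ is a connected subset of $B(\Sing,\delta)$; each $g_rc$ lies within $\delta$ of a point $\zeta_r\in\Sing$ on the orbit of $\gamma$, and since $B(\zeta_r,\delta)\subseteq B(\Sing,\delta)$ is connected, $g_rc$ and $\zeta_r$ lie in one component; as $r$ varies, the $\zeta_r$ trace out a connected subset of $\Sing$ through $\gamma$, so $g_rc$ and $\gamma$ share a component, which is (c). The clauses under ``Furthermore'' were recorded in the first two paragraphs.

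I expect the main obstacle to be the construction itself: showing that the pivoted geodesic genuinely shadows $\gamma$ (or the same‑component singular geodesic $\gamma'$) on the entire central interval $[L,t-L]$ while being forced through cone points within $2d_0$ of both endpoints, with all estimates uniform in $t$. This is exactly the point at which the flat‑surface geometry must do the work that negative curvature does in \cite{BCFT}; the decisive input is that Lemma~\ref{lem:no big flat}(a) forbids fat flat regions, so a singular geodesic always has cone points within $d_0$‑scale distance and the endpoint detours can be confined to the buffer intervals $[0,L]$ and $[t-L,t]$, while the uniformity of the closeness estimates will follow from convexity of the distance‑to‑a‑geodesic functions in the CAT(0) space $\tilde\SSS$.
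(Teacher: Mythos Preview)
Your construction has a genuine gap at the ``sliding'' step.  You assert that sliding $\tilde\gamma$ perpendicular to itself produces a singular geodesic $\gamma'$ passing through cone points near \emph{both} endpoint windows, but this is not justified.  Two things go wrong: first, the nearby cone points $\tilde q_1,\tilde q_2$ you locate via the no-square condition may lie on \emph{opposite} sides of $\tilde\gamma$, so there is no single direction to slide toward; second, even if both lie on the same side, the first cone point encountered as you slide can sit anywhere along the segment (say near the middle), and the resulting singular geodesic $\gamma'$ is then parallel to $\gamma$ at that intermediate transverse distance, with no reason to meet cone points near times $0$ or $t$.  Since your entire argument for (a)--(c) rests on having $\tilde q_i=\tilde\gamma'(a_i)$ with $a_1\approx 0$, $a_2\approx t$, this is fatal to the proof as written.

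The paper resolves this with a two-stage construction that does not try to force cone points onto the shadowing geodesic near the endpoints.  First it slides $\tilde\gamma$ to one side to obtain $\tilde\gamma'$, the bottom of a maximal flat rectangle $R$; cone points $\xi_1,\xi_2$ lie on this bottom edge, but possibly far from the endpoints.  Second it drops $d_0\times d_0$ quadrilaterals $Q_1,Q_2$ on the \emph{far} side of $\tilde\gamma'$ (near $\tilde\gamma'(0)$ and $\tilde\gamma'(t)$), which must contain cone points $\zeta_1,\zeta_2$; the geodesic $\tilde c=[\zeta_1,\zeta_2]$ is the image under $\Pi_t$.  A short angle computation then shows that $\tilde c$ is forced through $\xi_1$ and $\xi_2$, so $\tilde c$ agrees with $\tilde\gamma'$ on $[\xi_1,\xi_2]$.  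Because $\xi_1,\xi_2$ may be away from the endpoints, part~(b) requires an inductive chain of auxiliary singular geodesics $\tilde\gamma'_{-1},\tilde\gamma'_{-2},\dots$ to extend the $\delta$-shadowing out to the full interval $[L,t-L]$.  Part~(c) likewise needs substantial work: the paper builds an explicit continuous path in $B(\Sing,\delta)$ from $\tilde\gamma$ to $g_r\tilde c$ using three kinds of ``moves'' (geodesic flow, a pivot at a cone point via Lemma~\ref{lem:cts pivot}, and a slide across a flat rectangle), each shown to stay inside $B(\Sing,\delta)$.  Your one-line argument for~(c) (``the $\zeta_r$ trace out a connected subset of $\Sing$ through $\gamma$'') presupposes exactly the kind of path that these moves are designed to produce.
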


\begin{remark*}
The above proposition should be compared with \cite[Theorem 8.1]{BCFT}, although we have made two slight adjustments for our situation. First, we cannot guarantee that $g_tc\in \Reg(\eta)$, but only that $g_{t+t'}c\in \Reg(\eta)$ with uniform control on $|t'|$. Second, we prove our result for all $t>3L$, instead of $2L$. These result in trivial changes to subsequent estimates in \cite{BCFT}'s argument.
\end{remark*}

\begin{proof}[P\textbf{roof of Proposition~\ref{prop:regular shadow}}]

We begin with a geometric preliminary.

\begin{itemize}
	\item[(A)] Suppose that $\tilde c_1$ and $\tilde c_2$ are geodesic rays in $\tilde \SSS$ with $\tilde c_1(0) = \tilde c_2(0)$ and $d_{\tilde \SSS}(\tilde c_1(l), \tilde c_2(l)) \leq 3d_0$. The distance between geodesic rays is a convex function in a CAT(0) space, so $d_{\tilde \SSS}(\tilde c_1(r), \tilde c_2(r)) \leq \frac{3d_0}{l}r$ for all $r\in [0,l]$. Therefore, to ensure that $d_{\tilde \SSS}(\tilde c_1(r), \tilde c_2(r)) < \frac{\delta}{2}$ for all $r\in[0, 2T]$ it is sufficient to have $\frac{3d_0}{l}2T<\frac{\delta}{2}$, or $l>\frac{12 d_0 T}{\delta}$.
\end{itemize}

We now begin the proof in earnest. Let $\delta>0$ and $0<\eta < \frac{\eta_0}{2s}$ be given. Let $T(\delta)$ be as in Lemma~\ref{lem:shadow in S}. Let
\[ L = \max\left\{ d_0, \frac{8d_0}{\eta_0}, 2T(\delta), \frac{12 d_0 T(\delta)}{\delta}\right\};\]
we will highlight the need for each condition on $L$ as we come to it in the proof. Let $t>3L$ and let $\gamma\in \Sing$.

As usual, we work in $\tilde \SSS$. Let $R$ be the maximal, isometrically embedded Euclidean rectangle with $\tilde \gamma([L, t-L])$ as one side, containing no cone points in its interior, and to the right side of $\tilde \gamma$, with respect to its orientation. (Throughout this proof, refer to Figure \ref{fig:regular shadow}. For ease of exposition we will often refer to the orientation as depicted in that figure in this proof.) Note that if $\tilde \gamma([L, t-L])$ contains any cone points with an angle $>\pi$ on the right side of $\tilde \gamma$, then $R$ has height zero. That $t>3L$ implies $R$ has positive width. By maximality of $R$, there must be cone points on the boundary of $R$, specifically on the bottom side of $R$, as oriented in Figure~\ref{fig:regular shadow}. Let $\xi_1$ be the cone point closest to $\tilde\gamma(0)$ and $\xi_2$ be the cone point closest to $\tilde \gamma(t)$ on the bottom side of $R$.

Using Lemma \ref{lem:extend sing}, extend the bottom side of $R$ to a singular geodesic $\tilde \gamma'$ which turns with angle $\pi$ on the $\tilde \gamma$ side\footnote{That is, measured from within the connected component of $\tilde\SSS\setminus \tilde\gamma'$ containing $\tilde \gamma$, the incoming and outgoing directions of $\tilde\gamma'$ make angle $\pi$ at any cone point.} any time it encounters a cone point. (If $R$ has height zero, let $\tilde\gamma'=\tilde \gamma$.) Parametrize $\tilde \gamma'$ so that $\tilde \gamma'(L)$ is the lower-left corner of $R$ (and hence $\tilde\gamma'(t-L)$ is the lower-right corner).

Construct geodesic segments of length $d_0$, starting at the points $\tilde \gamma'(-\frac{d_0}{2})$ and $\tilde \gamma'(\frac{d_0}{2})$, ending below $\tilde \gamma'$, and perpendicular to $\tilde \gamma'$ in the sense that for each segment, both angles between it and $\tilde\gamma'$ are $\geq \frac{\pi}{2}$. Connect the endpoints of these segments with a geodesic segment, forming a quadrilateral which we denote by $Q_1$. Construct a similar quadrilateral $Q_2$ based on $\tilde \gamma'$ around $\tilde \gamma'(t)$ on the same side as $Q_1$. Any point in $Q_1$ (resp. $Q_2$) can be reached from $\tilde \gamma'(0)$ (resp. $\tilde \gamma'(t)$) via a path along $\tilde \gamma'$ of length $\leq \frac{d_0}{2}$ followed by a perpendicular segment of length $\leq d_0$. Therefore, for any $\zeta\in Q_1$, $d_{\tilde \SSS}(\tilde \gamma'(0), \zeta)\leq\frac{3}{2}d_0< 2d_0$ (the analogous bound holds for $Q_2$) and the diameter of $Q_i$ is bounded by $3d_0$. Our choice of $L\geq d_0$ implies that $\tilde \gamma'(L)$ and $\tilde \gamma'(t-L)$ are not in the quadrilaterals.

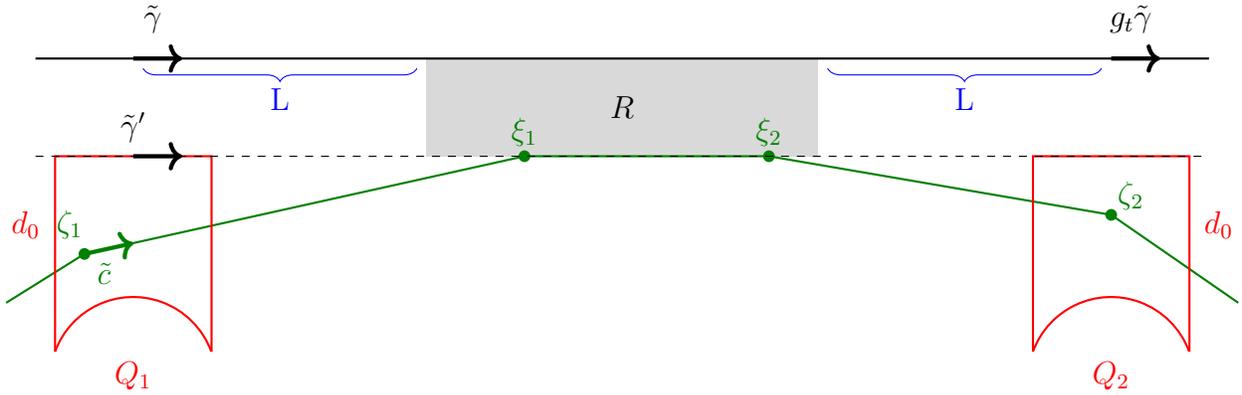
\begin{figure}[h]
\centering
\begin{tikzpicture}[scale=1.3] 

\fill[gray!30!white] (-2,3) -- (2,3) -- (2,2) -- (-2,2) -- (-2,3) ;

\draw[thick, green!50!black] (-6.3, .5) --  (-5.5,1) -- (-1,2) -- (1.5,2) -- (5,1.4) -- (6.3, .5) ;

\draw[thick] (-6,3) -- (6,3);

\draw[thick, red] (-5.8, 0) -- (-5.8, 2) -- (-4.2, 2) -- (-4.2, 0) ;
\draw[thick, red] (5.8, 0) -- (5.8, 2) -- (4.2, 2) -- (4.2, 0) ;

\draw[ultra thick, ->] (-5,3) -- (-4.5,3) ;
\draw[ultra thick, ->] (5,3) -- (5.5,3) ;

\node at (-4.8,3.4){$\tilde \gamma$};
\node at (5.2,3.4){$g_t\tilde \gamma$};

\draw[thick, red] (-5.8,0) arc (160:20:.85cm);
\draw[thick, red] (4.2,0) arc (160:20:.85cm);

\node[red] at (-6.1,1.3) {$d_0$} ; 
\node[red] at (6.1,1.3) {$d_0$} ;

\draw [ultra thick, green!50!black, ->] (-5.5,1) -- (-5,1.11) ;

\fill [green!50!black] (-5.5, 1) circle(.06) ;
\fill [green!50!black] (5, 1.4) circle(.06) ;
\fill [green!50!black] (-1, 2) circle(.06) ;
\fill [green!50!black] (1.5, 2) circle(.06) ;

\node[green!50!black] at (-5.65, 1.3) {$\zeta_1$} ;
\node[green!50!black] at (5.2, 1.6) {$\zeta_2$} ;
\node[green!50!black] at (-1, 2.25) {$\xi_1$} ;
\node[green!50!black] at (1.5, 2.25) {$\xi_2$} ;

\node [green!50!black] at (-5.3, .8) {$\tilde c$} ;

\draw [blue, decorate,decoration={brace,amplitude=5pt,mirror}]
  (-4.9,2.9) -- (-2.1,2.9) node[midway,yshift=-1em]{L};
\draw [blue, decorate,decoration={brace,amplitude=5pt,mirror}]
  (2.1,2.9) -- (4.9,2.9) node[midway,yshift=-1em]{L};

\draw [dashed] (-6,2) -- (6,2) ;

\draw [ultra thick, ->] (-5, 2) -- (-4.5,2) ;
\node at (-5,2.3) {$\tilde \gamma'$} ;

\node at (0,2.5) {$R$} ;

\node[red] at (-5,-.25) {$Q_1$} ;
\node[red] at (5,-.25) {$Q_2$} ;

\end{tikzpicture}
\caption{The construction of $c=\Pi_t(\gamma)$ in Proposition~\ref{prop:regular shadow}.}\label{fig:regular shadow}
\end{figure}

By their construction using $d_0$ from Lemma~\ref{lem:no big flat}, the quadrilaterals  $Q_1$ and $Q_2$ must contain cone points. Let $\zeta_1$ be a cone point in $Q_1$ and $\zeta_2$ a cone point in $Q_2$. Let $\hat t = d_{\tilde \SSS}(\zeta_1, \zeta_2)$. Extend the geodesic segment $[\zeta_1,\zeta_2]$ to a geodesic $\tilde c$, parameterized so that $\tilde c(0)=\zeta_1$ and $\tilde c(\hat t) = \zeta_2$, with turning angles equal to exactly half of the total angle at each cone point $\zeta_1$, $\zeta_2$, and any cone points encountered over times $(-\infty,0] \cup [\hat t,\infty)$. Note that this condition implies that $c$ is determined entirely by the segment $[\zeta_1, \zeta_2]$.  Then, $\tilde c\in \Reg(\eta)$. An alternate path from $\zeta_1$ to $\zeta_2$ is to travel $\zeta_1\to \tilde \gamma'(0) \to \tilde \gamma'(t)\to \zeta_2$ which has length $< 4d_0+t$. Thus, $\hat t < t+ 4d_0$. Reversing the roles of $\tilde c$ and $\tilde \gamma'$ also shows $t<\hat t+4d_0$, so $\hat t = t+t'$ with $|t'|<4d_0$. Then, $g_{t+t'}\tilde c \in \Reg(\eta)$ as desired.

We claim that $[\xi_1,\xi_2] \subseteq \tilde c \cap R$. Consider the geodesic segments $[\zeta_1,\xi_1]$, $[\xi_1,\xi_2]$, and $[\xi_2, \zeta_2]$. The triangle formed by $\tilde \gamma'(0)$, $\zeta_1$ and $\xi_1$ has $d_{\tilde\SSS}(\tilde\gamma'(0),\xi_1)\geq L$ and as noted above, $d_{\tilde\SSS}(\tilde\gamma'(0),\zeta_1)<2d_0$. Using the CAT(0) property and an easy Euclidean geometry calculation, the angle between $[\tilde\gamma'(0), \xi_1]$ and $[\zeta_1,\xi_1]$ at $\xi_1$ is less than $\frac{4d_0}{L}$. Our assumption that $L\geq \frac{8d_0}{\eta_0}$ ensures that this angle is less than $\frac{\eta_0}{2}$. An analogous argument bounds the angle between $[\xi_2,\tilde\gamma'(t)]$ and $[\xi_2,\zeta_2]$. By Lemma \ref{lem:no big flat} there is excess angle at least $\eta_0$ at $\xi_1$ and $\xi_2$. At $\xi_1$ (and similarly at $\xi_2$, even if $\xi_1=\xi_2$) the angle our concatenation of segments makes on the side towards $\tilde \gamma$ is at least the angle $\tilde \gamma'$ makes on that side, which by construction is $\pi$. On the side away from $\tilde \gamma$, the angle our concatenation makes is at least $\mathcal{L}(\xi_1) - \pi -\eta_0>\pi$. The concatenation of $[\zeta_1,\xi_1]$, $[\xi_1,\xi_2]$, and $[\xi_2, \zeta_2]$ is therefore a geodesic segment, and hence it must be a subsegment of $\tilde c$, proving the claim.

We now need to show, using our choice of $L$, that $d_{G\tilde \SSS}(g_{r^*}\tilde c, \Sing)<\delta$ for all $r^*\in[L, t-L]$. We do this by showing that for each such $r^*$ there is a geodesic $\tilde \gamma' \in \Sing$ such that $d_{\tilde \SSS}(\tilde \gamma'(r), \tilde c(r))<\frac{\delta}{2}$ for all $r\in [r^*-T(\delta), r^*+T(\delta)]$ and then invoking Lemma~\ref{lem:shadow in S}.

%If $\tilde c \cap R$ intersects the bottom side of $R$, we claim it (and hence $\tilde \gamma'_0$) must contain $\xi_1$ and $\xi_2$. To see this, consider the concatenation of the segments $[\zeta_1, \xi_1]$, $[\xi_1, \xi_2]$, and $[\xi_2, \zeta_2].$ If $\tilde c \cap R$ intersects the bottom side of $R$, then $\tilde \gamma'_0$ must extend the bottom side of $R$, by the construction of $R$. Then $\tilde \gamma'_0$ and $[\zeta_1,\xi_1]$ contain $\xi_1$ and after distance at least $L-\frac{d_0}{2}$ are both in $Q_1$, and hence distance at most $3d_0$ apart. Then the angle between them at $\xi_1$ must be at most $\eta_0$, using the fact that $L-\frac{d_0}{2} \geq 2 \frac{3d_0}{\eta_0}$, the CAT(0) condition, and an easy Euclidean geometry computation. There is excess angle at least $\eta_0$ at any cone point, so the concatenation of $[\zeta_1, \xi_1]$ and $[\xi_1, \xi_2]$ must make angle $\pi$ on both sides. The same calculation works for $\xi_2$ and $\zeta_2$ and since geodesics in $\tilde \SSS$ are unique, we find that $\tilde c$ must contain $\xi_1$ and $\xi_2$, as desired.

Let $\tilde\gamma'_0$ be the reparameterization of $\tilde \gamma'$ so that $\tilde c(r) = \tilde \gamma'_0(r)$ whenever $\tilde c(r)\in R$. Let $[r_1, r_2] = \{ r : \tilde c(r) = \tilde \gamma'_0(r) \in \tilde c \cap R \}$. (Figure \ref{fig:regular shadow} depicts a situation where $\tilde c(r_1)=\xi_1$ and $\tilde c(r_2)=\xi_2$.) For any $r\in [r_1, r_2]$, consider the geodesic rays $\tilde c(-\infty, r)$ and $\tilde \gamma'_0(-\infty,r)$. They share the point $\tilde c(r) = \tilde \gamma'_0(r)$ and at some distance $\geq L \geq \frac{12 d_0 T(\delta)}{\delta}$ are both in $Q_1$ and hence $\leq3d_0$ apart (with respect to $d_{\tilde \SSS}$). Therefore, by (A) at the start of this proof, for all $r\in[r_1-2T(\delta), r_2]$, $d_{\tilde \SSS}(\tilde c(r), \tilde \gamma'_0(r))<\frac{\delta}{2}$. Applying the same argument to the rays $\tilde c(r,\infty)$ and $\tilde \gamma'_0(r,\infty)$, shows $d_{\tilde \SSS}(\tilde c(r), \tilde \gamma'_0(r))<\frac{\delta}{2}$ for all $r\in [r_1, r_2+2T(\delta)]$. As $\tilde \gamma'_0\in \Sing$, by Lemma~\ref{lem:shadow in S}, $d_{G\tilde \SSS}(g_{r^*}\tilde c, \Sing)<\delta$ for all $r^*\in [r_1-T(\delta), r_2+T(\delta)].$

If this covers all times in $[L, t-L]$, we are done with this part of the proof. If not, we continue as follows. Assuming $r_1-T(\delta)>L$, consider the geodesic segment $[\zeta_1, \xi_1]$. Let $[\xi_1^-, \xi_1]$ be the maximal subsegment of $[\zeta_1, \xi_1]$ containing no cone points in its interior. Extend $[\xi_1^-, \xi_1]$ to a geodesic $\tilde \gamma'_{-1}$ in $\Sing$ lying between $\tilde c$ and $\tilde \gamma'_0$, parametrized so that $\tilde \gamma'_{-1}(r_1) = \xi_1 = \tilde c(r_1)$. First note that over the interval $[r_1-2T(\delta), r_1]$, $\tilde \gamma'_{-1}(r)$ is at least as close to $\tilde \gamma'_0(r)$ as $\tilde c(r)$ is, and by our work above, this distance is bounded above by $\frac{\delta}{2}$. By Lemma~\ref{lem:shadow in S}, $d_{G\tilde \SSS}(g_{r_1-T(\delta)}\tilde \gamma'_0, g_{r_1-T(\delta)}\tilde \gamma'_{-1} )<\delta$. Second, we can argue regarding $\tilde c$ and $\tilde \gamma'_{-1}$ exactly as we did regarding $\tilde c$ and $\tilde \gamma'_0$. They form rays with a common endpoint which after some distance $>L$ are still within $3d_0$ of each other, which as noted above allows us to show they are $\delta$ close in $d_{G\tilde S}$ for an interval of time below $r_1$. This interval will either extend to $L$ as desired, or will end at some $r_0-T(\delta)$ where $\tilde c$ and $\tilde \gamma'_{-1}$ branch apart at a cone point. We then repeat our argument at that cone point, finding $\tilde \gamma'_{-2}\in\Sing$ shadowing $\tilde c$ further, and so on, until we have reached time $L$. Exactly the same argument applies beyond $\xi_2$, constructing $\tilde \gamma'_1, \tilde \gamma'_2, \ldots \in \Sing$ as necessary to shadow $\tilde c$ in $d_{G\tilde\SSS}$ until time $t-L$.

It remains to establish that for all $r\in[L, t-L]$, $g_r\tilde c$ and $\tilde \gamma$ lie in the same connected component of $B(\Sing,\delta)$. We do this by showing that one can get from $\tilde \gamma$ to $g_r\tilde c$ by a series of `moves,' each of which can be realized by a continuous path in $B(\Sing,\delta)$.

\

\noindent\textbf{Move 1: geodesic flow} 

If $\tilde \gamma \in \Sing$ then for all $r$, $g_r\tilde \gamma\in \Sing$ with the flow itself providing a continuous path between the two, so $\tilde \gamma$ and $g_r\tilde \gamma$ are both in the same connected component of $\Sing$ itself, and hence of $B(\Sing,\delta)$. 

\

\noindent\textbf{Move 2: `pivot'} 

Suppose $\tilde \gamma_i, \tilde \gamma_{i+1} \in \Sing$ with $\tilde \gamma_i(0)=\tilde \gamma_{i+1}(0) = \xi\in \widetilde{Con}$, $d_{G\tilde \SSS}(\tilde \gamma_i, \tilde \gamma_{i+1})<\delta$, and suppose that the angle between $\tilde\gamma_i$ and $\tilde \gamma_{i+1}$ at $\xi$ is less than $\mathcal{L}(\xi)-2\pi$. Note that any geodesic ray starting from $\xi$ which lies between $\tilde \gamma_i(-\infty,0)$ and $\tilde \gamma_{i+1}(-\infty,0)$ can be concatenated with $\tilde\gamma_i(0,+\infty)$ to form a geodesic. Similarly, any ray between $\tilde \gamma_i(0,+\infty)$ and $\tilde \gamma_{i+1}(0,+\infty)$ can be concatenated with $\tilde \gamma_{i+1}(-\infty,0)$ to form a geodesic.

Let $\tilde \gamma_{i+1}\cdot \tilde \gamma_i$ be the concatenation of $\tilde \gamma_{i+1}(-\infty,0]$ with $\tilde \gamma_i [0, +\infty)$. Note that $d_{G\tilde \SSS}(\tilde \gamma_i, \tilde \gamma_{i+1}\cdot\tilde \gamma_i)$ and $d_{G\tilde \SSS}(\tilde \gamma_{i+1}\cdot\tilde \gamma_i, \tilde \gamma_{i+1})$ are both less than $d_{G\tilde \SSS}(\tilde \gamma_i, \tilde \gamma_{i+1})$ and hence less than $\delta.$ Indeed, the integrals computing $d_{G\tilde \SSS}(\tilde \gamma_i, \tilde \gamma_{i+1}\cdot\tilde \gamma_i)$ and $d_{G\tilde \SSS}(\tilde \gamma_{i+1}\cdot\tilde \gamma_i, \tilde \gamma_{i+1})$ will each match the integral to compute $d_{G\tilde \SSS}(\tilde \gamma_i, \tilde \gamma_{i+1})$ on one side of $t=0$, and will replace the integral on the other side of $t=0$ by zero, if anything decreasing the distance.

We `pivot' from $\tilde \gamma_i$ to $\tilde \gamma_{i+1}$ in two steps. First, let $\zeta_v$ be a continuous and monotonic path in $\partial_\infty\tilde\SSS$ from $\zeta_0 = \tilde \gamma_i(-\infty)$ to $\zeta_1 = \tilde \gamma_{i+1}(-\infty)$. Apply Lemma~\ref{lem:cts pivot} to get a continuous path $v\mapsto \tilde c_v$ from $\tilde \gamma_i$ to $\tilde \gamma_{i+1} \cdot \tilde \gamma_i$ such that for all $v$, $d_{G\tilde \SSS}(\tilde\gamma_i, \tilde c_v) \leq d_{G\tilde \SSS}(\tilde \gamma_i, \tilde \gamma_{i+1}\cdot \tilde \gamma_i)<\delta.$ Second, let $\zeta'_v$ be a continuous and monotonic path from $\tilde \gamma_i(+\infty)$ to $\tilde\gamma_{i+1}(+\infty)$ and apply Lemma~\ref{lem:cts pivot} to get a continuous path $v\mapsto \tilde c'_v$ from $\tilde \gamma_{i+1}\cdot\tilde \gamma_i$ to $\tilde \gamma_{i+1}$. Again, for all $v$, $d_{G\tilde \SSS}(\tilde c'_v, \tilde \gamma_{i+1})< d_{G\tilde \SSS}(\tilde \gamma_{i+1}\cdot\tilde \gamma_i, \tilde \gamma_{i+1})<\delta$; this time we apply the distance non-increasing property obtained in Lemma \ref{lem:cts pivot} to the reverse of the path $v\mapsto \tilde c'_v$, which continuously moves from $\tilde \gamma_{i+1}$ to $\tilde \gamma_{i+1}\cdot \tilde \gamma_i$. Overall, we have a path of geodesics that remains in $B(\Sing,\delta)$ throughout.

\

\noindent\textbf{Move 3: `slide'} 

Suppose that $R$ is an isometrically embedded Euclidean rectangle in $\tilde \SSS$. (Note that this implies $R$ contains no cone points in its interior.) Let $\tilde \gamma, \tilde \gamma' \in \Sing$ be geodesics which extend the top and bottom sides of $R$, respectively, with $d_{G\tilde \SSS}(\tilde\gamma, \tilde \gamma')<\delta$. Let $\{c_v\}$ be a continuous path of horizontal (i.e., parallel to $\tilde \gamma$ and $\tilde \gamma'$ within $R$) geodesic segments connecting the two sides of $R$, which move monotonically downward through $R$, with $c_0=\tilde \gamma\cap R$ and $c_1=\tilde \gamma'\cap R$.

For each $v$, let $\tilde \gamma^u_v$ be the `uppermost' geodesic extension of $c_v$, that is, the extension which turns with angle $\pi$ on the $\tilde \gamma$-side at any cone point it hits. Let $\tilde \gamma^l_v$ be the `lowermost' geodesic extension of $c_v$, that is, the extension which turns with angle $\pi$ on the $\tilde \gamma'$-side at any cone point it hits. Since the distance between geodesics is a convex function and since $c_v$ is parallel to $\tilde \gamma$ and $\tilde \gamma'$ over $R$, both $\tilde\gamma^u_v$ and $\tilde\gamma^l_v$ lie between $\tilde \gamma$ and $\tilde \gamma'$.

If $\tilde\gamma_v^u=\tilde \gamma_v^l$, set $\tilde \gamma_v=\tilde\gamma_v^u=\tilde \gamma_v^l$. This happens if and only if $\tilde \gamma_v$ hits no cone points. Since there are countably many cone points in $\tilde \SSS$, there is a countable set $\{ v_n \}\subset [0,1]$ for which $\tilde\gamma_{v_n}^u \neq \tilde \gamma_{v_n}^l$. Let $\{ I_n\}$ be a corresponding collection of closed real intervals with $\sum|I_n|=1$. Cut $[0,1]$ at each $v_n$ and glue in the interval $I_n$, resulting in an interval of length 2. Adjust the subscripts where $\tilde\gamma_v$ has already been defined accordingly. For each $n$, if $I_n = [a_n,b_n]$ set $\tilde \gamma_{a_n}=\tilde \gamma^u_{v_n}$ and $\tilde \gamma_{b_n}=\tilde \gamma^l_{v_n}$. For all $v\in I_n$, use Lemma~\ref{lem:cts pivot} to fill in a continuous path $v \mapsto \tilde \gamma_v$ from $\tilde \gamma_{a_n}$ to $\tilde \gamma_{b_n}$. 

The result is a path $v\mapsto \tilde\gamma_v$ from $\tilde\gamma$ to $\tilde\gamma'$ which we claim is continuous. Continuity at any $v_0$ which is in the interior of one of the inserted intervals $I_n$ is provided by Lemma~\ref{lem:cts pivot}. If $v_0$ is on the boundary of some $I_n$ and $v$ approaches $v_0$ from inside $I_n$, Lemma~\ref{lem:cts pivot} again applies. Otherwise, $\tilde \gamma_{v_0}$ is in $\Sing$ and as $v\to v_0$, $\tilde \gamma_v$ approaches $\tilde \gamma_{v_0}$ from a side on which $\tilde \gamma_{v_0}$ always turns with angle $\pi$. In this case, let $\eps>0$ be given. Since there are only finitely many cone points in any compact region of $\tilde \SSS$, for $v$ sufficiently close to $v_0$ there are no cone points in the convex hull of $\tilde\gamma_{v_0}[-T(\eps),T(\eps)]$ and $\tilde\gamma_v[-T(\eps),T(\eps)]$. Perhaps making $v$ even closer to $v_0$, this convex hull is a rectangle with width $<\frac{\eps}{2}$. Then, by Lemma~\ref{lem:shadow in S}, $d_{G\tilde\SSS}(\tilde \gamma_v,\tilde\gamma_{v_0})<\eps$, proving continuity at $v_0$.

Finally, we claim that $d_{G\tilde \SSS}(\tilde \gamma, \tilde \gamma_v)$ is non-decreasing. Let $a<b$ be in $[0,2]$. If $a,b\in I_n$, $d_{G\tilde \SSS}(\tilde \gamma, \tilde \gamma_a) \leq d_{G\tilde \SSS}(\tilde \gamma, \tilde \gamma_b)$ by Lemma~\ref{lem:cts pivot}. Therefore, to prove the distance is non-decreasing in general we just need to show $d_{G\tilde \SSS}(\tilde \gamma, \tilde \gamma_a) \leq d_{G\tilde \SSS}(\tilde \gamma, \tilde \gamma_b)$ when $a$ and $b$ are close and $a$ is the lower endpoint of some $I_n$ or is in the complement of the $\{I_n\}$. In either case, $\tilde \gamma_a$ is a singular geodesic which makes angle $\pi$ at any cone points it encounters on the side away from $\tilde \gamma$. For each fixed $t$ consider the geodesic segment $c_{v,t}=[\tilde \gamma(t), \tilde \gamma_v(t)]$ and how it varies with $v$. We claim the length of $c_{v,a}$ is at most the length of $c_{v,t}$ for small enough $t>a$, which together with the formula for $d_{G\tilde \SSS}$ will establish the desired result. As $v$ increases from $a$, $\tilde\gamma_v(t)$ will move along a geodesic path perpendicular to $\tilde \gamma_a$ on the side of $\tilde \gamma_a$ away from $\tilde \gamma$. Indeed, for all $b>a$ small enough that no cone points are in the convex hull of $\tilde\gamma_a[0,t]$ and $\tilde \gamma_b[0,t]$, by construction, $\tilde\gamma_b[0,t]$ will simply be the translation of $\tilde \gamma_a[0,t]$ across an embedded Euclidean rectangle. Take such a $b>a$. Then, consider the geodesic triangle with sides $c_{a,t}$, $c_{b,t}$ and $[\tilde\gamma_a(t),\tilde \gamma_b(t)]$. Since $[\tilde\gamma_a(t),\tilde \gamma_b(t)]$ is perpendicular to $\tilde \gamma_a$ on the side away from $\tilde \gamma$ and $c_{a,t}$ hits $\tilde \gamma_a(t)$ from the side towards $\tilde \gamma$, the angle between $c_{a,t}$ and $[\tilde\gamma_a(t),\tilde \gamma_b(t)]$ is at least $\frac{\pi}{2}.$ By comparison with a Euclidean triangle and the CAT(0) property, $c_{b,t}$ is longer than $c_{a,t}$ giving the desired result.

Therefore, $\tilde \gamma_v$ is in the same connected component of $B(\Sing,\delta)$ for all $v$ for this `slide' move.

\

We return now to our construction of $\tilde c$.  For any $r\in [L, t-L]$, we can reach $g_r$ via the following series of the moves noted above. First, we move $\tilde \gamma \to g_{t/2}\tilde\gamma$ by geodesic flow. Second, we slide $g_{t/2}\tilde\gamma$ down across $R$ (if $R$ has nonzero height) to a geodesic in the orbit of $\tilde \gamma_0'$ using our `slide' move. We break this move down into a sequence of small `slide' moves between geodesics $\tilde \gamma_{v_n}$ in $\Sing$. Since $t>3L$ and $L\geq 2T(\delta)$ if we choose $v_n$ so that $\tilde \gamma_{v_n}\cap R$ and $\tilde \gamma_{v_{n+1}}\cap R$ are within $\delta/2$ vertically in $R$, by Lemma~\ref{lem:shadow in S}, $d_{G\tilde\SSS}(\tilde \gamma_{v_n},\tilde \gamma_{v_{n+1}})<\delta$. Therefore, this series of moves stays in the same connected component of $B(\Sing,\delta)$. Finally, we apply a series of `pivot' moves and the geodesic flow to get to $g_rc$ via the geodesics $\tilde \gamma'_i$ introduced in our construction above. Our work in the construction showed that all the `pivot' moves involved are between geodesics within $\delta$ of one another. Therefore, in total, we have a continuous path from $\tilde \gamma$ to $g_r\tilde c$ in $B(\Sing,\delta)$, completing the proof of Proposition \ref{prop:regular shadow}.
\end{proof}

The second step in the argument for the pressure gap is to prove the following Lemma, which uniformly controls how many geodesics in $\Sing$ can have image under $\Pi_t$ near to a fixed geodesic. Recall that 
\[ d_{\GS, t}(\gamma_1, \gamma_2) = \max_{s\in[0,t]}d_{\GS}(g_s\gamma_1, g_s\gamma_2), \]
and that a subset of $\GS$ is $(t,2\eps)$-separated if its members are pairwise distance at least $2\eps$ apart with respect to $d_{\GS,t}$.

\begin{lemma}[Compare with Prop. 8.2 in \cite{BCFT}]\label{lem:mult bound}
For all $\eps>0$, there exists some $C(\eps)>0$ such that if $E_t \subset \Sing$ is a $(t, 2\eps)$-separated set for some $t>3L$, then for any $w\in GS$, 
\[ \# \{ \gamma \in E_t \mid d_{\GS,t}(w, \Pi_t(\gamma))<\eps \} \leq C. \]
\end{lemma}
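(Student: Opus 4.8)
The plan is to follow the proof of \cite[Proposition 8.2]{BCFT}, reducing the count to a packing estimate that takes place entirely inside $\Sing$. Fix $w\in\GS$ and suppose $\gamma_1,\gamma_2\in E_t$ both satisfy $d_{\GS,t}(w,\Pi_t(\gamma_i))<\eps$; write $c_i=\Pi_t(\gamma_i)$. The triangle inequality for $d_{\GS,t}$ gives $d_{\GS,t}(c_1,c_2)<2\eps$, and Lemma~\ref{lem:closeness in S and GS} then yields $d_{\SSS}(c_i(s),w(s))<2\eps$ for all $s\in[0,t]$. For $\eps$ sufficiently small (say, less than a quarter of the injectivity radius of $\SSS$) one can choose a lift $\tilde w$, compatible lifts $\tilde c_i$ and $\tilde\gamma_i$ (translating the lifts from the construction of $\Pi_t$ by deck transformations, so that the estimates of Proposition~\ref{prop:regular shadow} still apply), so that $d_{\tilde\SSS}(\tilde c_i(s),\tilde w(s))<2\eps$ for all $s\in[0,t]$; this is the lifting argument used in the proof of Lemma~\ref{lemma:NE in Sing}.

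Now I would feed in the geometric output of Proposition~\ref{prop:regular shadow}. Writing $c_i(0),c_i(t+t_i')\in Con$ with $|t_i'|<4d_0$ and using $d_{\tilde\SSS}(\tilde\gamma_i(0),\tilde c_i(0)),\,d_{\tilde\SSS}(\tilde\gamma_i(t),\tilde c_i(t+t_i'))<2d_0$ (where $d_0$ is from Lemma~\ref{lem:no big flat}(a)) together with $d_{\tilde\SSS}(\tilde c_i(t),\tilde c_i(t+t_i'))=|t_i'|<4d_0$, I obtain $d_{\tilde\SSS}(\tilde\gamma_i(0),\tilde w(0))<2d_0+2\eps$ and $d_{\tilde\SSS}(\tilde\gamma_i(t),\tilde w(t))<6d_0+2\eps$. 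Since the distance between two geodesics is convex along the common parameter in a $\CAT(0)$ space, $d_{\tilde\SSS}(\tilde\gamma_i(s),\tilde w(s))<\rho:=6d_0+2\eps$ for all $s\in[0,t]$. Thus every $\gamma\in E_t$ with $d_{\GS,t}(w,\Pi_t(\gamma))<\eps$ shadows the \emph{fixed} geodesic $\tilde w$ to within $\rho$ (in $d_{\tilde\SSS}$) over $[0,t]$; choosing one such $\gamma_1$ and estimating the defining integral of $d_{\GS}$ (splitting the $u$-integral according to whether $s+u\in[0,t]$ and bounding the tails using unit speed of the flow, as in Lemma~\ref{lem:exponentially close}), I get $d_{\GS,t}(\gamma,\gamma_1)\le 2\rho+1=:\rho''$ for all such $\gamma$. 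Crucially $\gamma_1\in E_t\subset\Sing$.

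The lemma therefore reduces to the following uniform packing statement, which I expect to be the main obstacle: for $\gamma_1\in\Sing$ there is a constant $C=C(\rho'',\eps)$, \emph{independent of $t$ and of $\gamma_1$}, bounding the cardinality of any $(t,2\eps)$-separated subset of $\{\gamma\in\Sing:d_{\GS,t}(\gamma,\gamma_1)<\rho''\}$. The point is that $\Sing$ carries no hyperbolicity. If $\gamma\in\Sing$ stays within $\rho''$ of $\gamma_1$ in $d_{\GS,t}$, then $\gamma$ and $\gamma_1$ cannot turn to opposite sides at any cone point lying more than $O(\rho'')$ (in the time parameter) from the two ends of $[0,t]$: two singular continuations at a common cone point branch with angle at least $\eta_0$ by Lemma~\ref{lem:no big flat}(b) and so diverge linearly, contradicting the $\rho''$-closeness. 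Away from boundedly many such ``decision'' cone points near the ends, $\gamma$ is therefore a parallel slide of $\gamma_1$, sweeping out flat strips whose widths are bounded by $d_0$ (Lemma~\ref{lem:no big flat}(a)); along such a strip the flow acts isometrically (as in the proof of Lemma~\ref{lem:entropy-expansive}), so $d_{\GS,t}$ is uniformly comparable there to a fixed flat metric and a $(t,2\eps)$-separated set contributes at most $O((\rho''/\eps)^2)$ elements, with the total over the boundedly many strips and endpoint choices still controlled purely in terms of $\rho''$ and $\eps$. Making this precise, by adapting the corresponding analysis in \cite{BCFT}, is the technical heart of the proof; granting it, the lemma follows with $C(\eps):=C(2(6d_0+2\eps)+1,\eps)$. (Equivalently, and closer to the bookkeeping in \cite{BCFT}, one can first bound the number of distinct images $\Pi_t(\gamma)$ meeting $B_t(w,\eps)$ --- each is determined by its two endpoint cone points, and there are boundedly many points of $\widetilde{Con}$ in $B(\tilde w(t),\rho)$ by a volume/covering count --- and then bound the fibers of $\Pi_t$ over $E_t$ by the same flat-strip packing; the obstacle is unchanged.)
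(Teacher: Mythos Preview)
Your reduction is largely sound: the endpoint estimates $d_{\tilde\SSS}(\tilde\gamma_i(0),\tilde w(0))<2d_0+2\eps$ and $d_{\tilde\SSS}(\tilde\gamma_i(t),\tilde w(t))<6d_0+2\eps$ are correct, and the passage to $d_{\GS,t}(\gamma,\gamma_1)\le\rho''$ is fine. The gap is exactly where you place it, in the packing claim for $(t,2\eps)$-separated singular geodesics in a $d_{\GS,t}$-ball of radius $\rho''$ around $\gamma_1\in\Sing$. Your sketch of this step has real problems: nearby singular geodesics need not pass through a \emph{common} cone point at all, so the dichotomy ``turn to the same side vs.\ opposite sides at a common cone point'' does not exhaust the cases, and the conclusion that $\gamma$ is a parallel slide of $\gamma_1$ away from the ends is unjustified. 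Since $h_{top}(g_t|_{\Sing})$ can be positive, there is no soft entropy argument available either; a genuine geometric argument is required, and you have not supplied one.

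The paper sidesteps this obstacle entirely. The alternative you mention parenthetically at the end is indeed the paper's route, but the fiber bound does \emph{not} require any flat-strip packing; that is where you misjudge the difficulty. To bound $\#\{\gamma\in E_t:\Pi_t(\gamma)=c\}$, the paper observes that $\tilde\gamma(-T(\eps))$ and $\tilde\gamma(t+T(\eps))$ lie in fixed balls of radius $2d_0+T(\eps)$ around $\tilde c(0)$ and $\tilde c(t+t')$, with $T(\eps)$ from Lemma~\ref{lem:shadow in S}. Take $\tfrac{\eps}{8}$-spanning sets $P,Q$ for these two balls (cardinality $\le C_2(\eps)$ each), and for each $(p,q)\in P\times Q$ extend the \emph{unique} $\CAT(0)$ segment $[p,q]$ to a geodesic $\eta_{p,q}$. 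Convexity of the distance function then forces $d_{\tilde\SSS}(\tilde\gamma(r),\eta_{p,q}(r))<\eps/2$ on $[-T(\eps),t+T(\eps)]$ for some $(p,q)$, hence $d_{\GS,t}(\gamma,\eta_{p,q})<\eps$ by Lemma~\ref{lem:shadow in S}; since $E_t$ is $(t,2\eps)$-separated, at most one $\gamma$ corresponds to each $(p,q)$, giving the fiber bound $C_2(\eps)^2$. Combined with the image bound you already noted (at most $N_{2\eps}N_{2\eps+4d_0}$ cone-point pairs determine $c$), this finishes the proof. The idea you are missing is that in a $\CAT(0)$ space a geodesic segment is determined by its endpoints, so spanning the two endpoint balls already controls all the relevant geodesics, with no need to analyze the branching structure of $\Sing$.
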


\begin{proof}
It is sufficient to prove the result in $G\tilde S$.

Let $d_0$ be as in Lemma~\ref{lem:no big flat}(a). Fix $w\in G\tilde S$, and let $\eps>0$, $t>3L$, and $E_t$ be given. 

Suppose that $d_{\GS,t}(w,c)<\eps$. Then by definition $d_{G\tilde \SSS}(g_r w, g_r c)<\eps$ for all $r\in[0,t]$. By Lemma~\ref{lem:closeness in S and GS}, that $d_{G\tilde \SSS}(w,c)<\eps$ implies $d_{\tilde \SSS}(w(0), c(0))<2\eps$ and that $d_{G\tilde \SSS}(g_t w,g_t c)<\eps$ implies $d_{\tilde \SSS}(w(t), c(t))<2\eps$.

By Proposition~\ref{prop:regular shadow}, any geodesic $c$ in $\Pi_t(\Sing)$ has $c(0)\in\widetilde{Con}$ and $c(t+t')\in\widetilde{Con}$ for some $|t'|<4d_0$. Using what we noted above, the cone point at $c(0)$ must be within $d_{\tilde \SSS}$-distance $2\eps$ of $w(0)$ and the cone point at $c(t+t')$ must be within $d_{\tilde \SSS}$-distance $2\eps+4d_0$ of $w(t)$.

As $\SSS$ is compact and $Con$ is a discrete subset, for any $R>0$, $N_R=\max_{p\in\tilde\SSS}\#\{ \widetilde{Con} \cap B_R(p) \}$ is finite. Let $C_1(\eps) = N_{2\eps}N_{2\eps+4d_0}$. As specified in Proposition~\ref{prop:regular shadow}, any element $c$ of $\Pi_t(\Sing)$ is entirely determined by the cone points $c(0)$ and $c(t+t')$. Thus, there are at most $C_1(\eps)$ elements $c\in \Pi_t(\Sing)$ with $d_{G\tilde \SSS}(w,c)<\eps$.

Now we want to bound $\#\{\gamma \in E_t \mid \Pi_t(\gamma)=c \}$ for any $c\in \Pi_t(\Sing)$. For $\gamma\in E_t$, the construction of $\Pi_t(\gamma)$ shows that $d_{\tilde \SSS}(\gamma(0),c(0))<2d_0$ and $d_{\tilde \SSS}(\gamma(t), c(t+t'))<2d_0$. Therefore, $\gamma(-T(\eps))\in B(c(0),2d_0+T(\eps))$ and $\gamma(t+T(\eps)) \in B(c(t+t'), 2d_0+T(\eps))$, where $T(\eps)$ is as in Lemma~\ref{lem:shadow in S}. Let $P$ be an $\frac{\eps}{8}$-spanning set for $B(c(0),2d_0+T(\eps))$ with respect to $d_{\tilde \SSS}$ and $Q$ an $\frac{\eps}{8}$-spanning set for $B(c(t+t'), 2d_0+T(\eps))$ with respect to $d_{\tilde \SSS}$. By the compactness of $\SSS$, there exists some $C_2(\eps)$ such that $\#P$ and $\#Q$ are bounded above by $C_2(\eps)$. For each $(p,q)\in P\times Q$, extend $[p,q]$ to a geodesic $\eta_{p,q}$ with $\eta_{p,q}(-T(\eps)) = p$.

Since $P$ and $Q$ are $\frac{\eps}{8}$-spanning, there exist $(p,q)\in P\times Q$ such that $d_{\tilde \SSS}(\gamma(-T(\eps)), p)<\frac{\eps}{8}$ and $d_{\tilde \SSS}(\gamma(t+T(\eps)), q)<\frac{\eps}{8}$. We immediately have that $d_{\tilde\SSS}(\gamma(-T(\eps)), \eta_{p,q}(-T(\eps)))<\frac{\eps}{8}$. In addition, $\gamma[-T(\eps), t+T(\eps)]$ and $[p,q]$ are geodesic segments whose endpoints are each less than $\frac{\eps}{8}$ apart. Since geodesic segments in $\tilde \SSS$ minimize length, the length of $[p,q]$ is within $\frac{\eps}{4}$ of $t+2T(\eps)$, the length of $\gamma[-T(\eps),t+T(\eps)]$. Therefore we also have
\begin{align}
	d_{\tilde \SSS}(\gamma(t+T(\eps)),\eta_{p,q}(t+T(\eps))) & \leq d_{\tilde \SSS}(\gamma(t+T(\eps)),q)+d_{\tilde \SSS}(q,\eta_{p,q}(t+T(\eps)) \nonumber \\ 
			& < \frac{\eps}{8} + \frac{\eps}{4} < \frac{\eps}{2}. \nonumber
\end{align}

Using convexity of the distance between geodesics in a CAT(0) space and our bounds on the distances between the pairs of endpoints, we have 
\[ d_{\tilde \SSS}(\gamma(r), \eta_{p,q}(r))<\frac{\eps}{2} \ \ \mbox{ for all } \ \ r\in [-T(\eps), t+T(\eps)].\]
Then by Lemma~\ref{lem:shadow in S}, $d_{\tilde G \SSS}(g_r\gamma, g_r\eta_{p,q})<\eps$ for all $r\in [0, t],$ or, equivalently, $d_{G\tilde \SSS,t}(\gamma,\eta_{p,q})<\eps$.

We can conclude that $\#\{\gamma \in E_t \mid \Pi_t(\gamma)=c \} \leq \#\{ \eta_{p,q}\} \leq C_2(\eps)^2.$ Indeed, if there are more than $C_s(\eps)^2$ elements in $E_t$ which have image $c$ under $\Pi_t$, then some two of them must both be within $d_{G\tilde \SSS,t}$-distance $\eps$ of the same $\eta_{p,q}$ and hence less than $2\eps$ apart with respect to $d_{G\tilde \SSS,t}$, contradicting the fact that $E_t$ is $(t,2\eps)$-separated.

Putting these estimates together, $\# \{ \gamma \in E_t \mid d_{\GS,t}(w, \Pi_t(\gamma))<\eps \} \leq C_1(\eps)C_2(\eps)^2,$ completing the proof.
\end{proof}

The third step of the argument closely follows \cite{BCFT}, as we now outline. First, by Lemmas 4.1 and 4.2 of \cite{Climenhaga-Thompson}, for any $\eps>0$ and $t>0$,
\begin{equation}\label{lower bound}
 \sup\left\{ \sum_{\gamma\in E} e^{\sup_{\xi\in B_t(\gamma,\eps)}\int_0^t \phi(g_r\xi) dr} \biggm| E\subset \Sing \mbox{ is } (t,\eps)\mbox{-separated} \right\} \geq e^{tP(\Sing, 2\eps, \phi)}.
\end{equation}
To apply this fact from \cite{Climenhaga-Thompson} here, we just need to recall that $\Sing$ is compact (noted in Definition~\ref{defn:Sing}).

We now use the fact that $\phi$ is locally constant on a neighborhood of $\Sing$. For sufficiently small $\eps$, the left-hand side of the inequality above is equal to
\begin{equation}\label{equality to Lambda}
 \Lambda(\Sing,\phi,\eps,t) := \sup\left\{ \sum_{\gamma\in E} e^{\int_0^t \phi(g_r\gamma) dr} \biggm| E\subset \Sing \mbox{ is } (t,\eps)\mbox{-separated} \right\}. 
\end{equation}
Combining \eqref{lower bound} and \eqref{equality to Lambda} and using the fact that $g_t$ is entropy-expansive (Lemma~\ref{lem:entropy-expansive}) exactly as in \cite{BCFT}, for sufficiently small $\eps$,
\begin{equation}\label{eqn:partition sum}
    \Lambda(\Sing,\phi,\eps,t) \geq e^{tP(\Sing,\phi)}.
\end{equation}

Fix $0<\eta<\frac{\eta_0}{2}$ where $\eta_0$ is from Lemma \ref{lem:no big flat}(b). Note that $\Reg(\eta)$ has nonempty interior. Pick $\delta>0$ small enough that $\Lambda(\Sing,\phi,2\delta,t) \geq e^{tP(\Sing,\phi)}$, $\phi$ is locally constant on $B(\Sing,\delta)$, and by Lemma~\ref{lem:lambda small}, $\lambda(\gamma)<\eta$ for all $\gamma\in B(\Sing, 2\delta)$. Then we proceed exactly as in \cite{BCFT}, invoking Proposition~\ref{prop:regular shadow} as a direct replacement of their Theorem 8.1 and Lemma~\ref{lem:mult bound} as a direct replacement for their Proposition 8.2. The argument produces the following Lemma.

\begin{lemma}[Lemma 8.4 in \cite{BCFT}]\label{lem:partition sum}
For sufficiently small $\delta>0$, there is a $(t,2\delta)$-separated set $E_t$ in $\Sing$ such that there is a $(t,\delta)$-separated set $E_t''\subset \Pi_t(E_t)$ satisfying
\[ \sum_{w\in E_t''} e^{\inf_{u\in B_t(w,\delta)} \int_0^t \phi(g_s u)ds} \geq \beta e^{tP(\Sing,\phi)}\]
where $\beta=\frac{1}{C}e^{-6L\Vert\phi\Vert}$, and $C$ is as in Lemma~\ref{lem:mult bound}.
\end{lemma}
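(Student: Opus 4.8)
The plan is to follow the argument of \cite[Lemma 8.4]{BCFT} essentially verbatim, with the two geometric inputs replaced by Proposition~\ref{prop:regular shadow} and Lemma~\ref{lem:mult bound}. Here is the outline. Fix $\eta$ and $\delta$ as specified in the paragraph preceding the Lemma, so that \eqref{eqn:partition sum} holds, $\phi$ is locally constant on $B(\Sing,\delta)$, and (via Lemma~\ref{lem:lambda small}) $\lambda<\eta$ on $B(\Sing,2\delta)$; also assume $t>3L$ so that $\Pi_t$ is defined. Start from a $(t,2\delta)$-separated set $E_t\subset\Sing$ realizing \eqref{eqn:partition sum} up to a factor, i.e. with $\Lambda$-sum at least (say) $\tfrac12 e^{tP(\Sing,\phi)}$; since $\phi$ is locally constant near $\Sing$ and we chose $\delta$ small, the sum $\sum_{\gamma\in E_t}e^{\int_0^t\phi(g_r\gamma)\,dr}$ is at least $\tfrac12 e^{tP(\Sing,\phi)}$ (or simply take $E_t$ so the inner sup/inf over $B_t(\cdot,\delta)$ agrees with the value at the center, again using local constancy).

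Next, push forward by $\Pi_t$. The key point is that $\Pi_t$ does not collapse too much: by Lemma~\ref{lem:mult bound}, for any $w\in\GS$ the set of $\gamma\in E_t$ with $d_{\GS,t}(w,\Pi_t(\gamma))<\delta$ has cardinality at most $C=C(\delta)$. Apply this with $w$ ranging over the points of a maximal $(t,\delta)$-separated subset $E_t''$ of $\Pi_t(E_t)$: every $\gamma\in E_t$ has $\Pi_t(\gamma)$ within $d_{\GS,t}$-distance $\delta$ of some $w\in E_t''$ (maximality), and each such $w$ absorbs at most $C$ of the $\gamma$'s. Therefore $\#E_t''\geq \tfrac1C\#E_t$, and more usefully, summing the weights: for each $w\in E_t''$, the $\gamma\in E_t$ it absorbs all satisfy $\Pi_t(\gamma)\in B_t(w,\delta)$, hence $\gamma$ and $\Pi_t(\gamma)$ track each other, and since $\phi$ is locally constant on $B(\Sing,\delta)$ while $\Pi_t(\gamma)$ stays in $B(\Sing,\delta)$ on $[L,t-L]$ by Proposition~\ref{prop:regular shadow}(b), the integrals $\int_0^t\phi(g_s\gamma)\,ds$ and $\inf_{u\in B_t(w,\delta)}\int_0^t\phi(g_su)\,ds$ differ by at most $2L\cdot 2\|\phi\|$ coming from the two end windows of length $L$ (this is where the $e^{-6L\|\phi\|}$ in $\beta$ comes from — a factor $L$ at each end plus the slack already present; track the constants exactly as in \cite{BCFT}). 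Combining,
\begin{equation*}
\sum_{w\in E_t''} e^{\inf_{u\in B_t(w,\delta)}\int_0^t\phi(g_su)\,ds} \;\geq\; \frac{1}{C}\,e^{-6L\|\phi\|}\sum_{\gamma\in E_t} e^{\int_0^t\phi(g_r\gamma)\,dr} \;\geq\; \beta\, e^{tP(\Sing,\phi)},
\end{equation*}
with $\beta=\tfrac1C e^{-6L\|\phi\|}$ (absorbing the harmless $\tfrac12$ into the choice of constants, or keeping it explicit — \cite{BCFT} does the bookkeeping).

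Finally, one must check that $E_t''$ can indeed be taken inside $\Pi_t(E_t)$ as asserted: take $E_t''$ to be any maximal $(t,\delta)$-separated subset of the finite set $\Pi_t(E_t)\subset\Reg(\eta)$; this is automatic. The main obstacle — and really the only nontrivial point — is the multiplicity bound, i.e. Lemma~\ref{lem:mult bound}, which is already established; given that, the present Lemma is a counting argument identical in structure to \cite[Lemma 8.4]{BCFT}, so I would simply cite that argument and indicate the two substitutions, being careful only that Proposition~\ref{prop:regular shadow} holds for $t>3L$ rather than $2L$ and that $\Pi_t$ lands in $\Reg(\eta)$ after a bounded reparametrization (items (a) and the final sentence of Proposition~\ref{prop:regular shadow}), which changes the constants but not the conclusion.
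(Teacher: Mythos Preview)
Your proposal is correct and follows essentially the same approach as the paper: both defer to the argument of \cite[Lemma 8.4]{BCFT}, substituting Proposition~\ref{prop:regular shadow} for their Theorem~8.1 and Lemma~\ref{lem:mult bound} for their Proposition~8.2, and noting that the condition $t>3L$ (rather than $2L$) is what produces the constant $\beta=\tfrac{1}{C}e^{-6L\|\phi\|}$ in place of $\tfrac{1}{C}e^{-4L\|\phi\|}$. The paper's own proof is in fact even terser than yours, simply recording this substitution and the resulting cosmetic change in constants.
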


\begin{proof}
The only minor change needed in substituting our Proposition~\ref{prop:regular shadow} for their Theorem 8.1 is to note that our condition on $t$ is that it be $>3L$, whereas theirs is that it be $>2L$. This gives us $\beta=\frac{1}{C}e^{-6L\Vert\phi\Vert}$ instead of $\beta=\frac{1}{C}e^{-4L\Vert\phi\Vert}$. This results in merely cosmetic changes to the rest of the argument in \cite{BCFT}.
\end{proof}

Note that $\{(w,t): w\in E_t''\}$ is in $\mathcal{G}^{4d_0}(\eta)$, using the notation of Definition~\ref{defn: good bounded shift}.

The final step in the argument is to use specification to string together orbit segments from $E_t''$ in many different orders so as to produce a large collection of long orbit segments which together produce more pressure than $P(\Sing,\phi)$. In \cite{BCFT} this is undertaken in Section 8.4, and at this point the argument is almost entirely dynamical. It uses the estimate of Lemma~\ref{lem:partition sum} together with strong specification for $\mathcal{G}^{4d_0}(\eta)$ as given by Corollary~\ref{cor:expanded weak specification}. The one geometric piece of information used is that $\lambda(\gamma)<\eta$ for all $\gamma\in B(\Sing, 2\delta)$. Hence, we assumed this when choosing $\delta$ above, invoking Lemma~\ref{lem:lambda small}. This completes the proof of Theorem~\ref{thm: locally constant}.\qed

Applying Theorem~\ref{thm: locally constant} with $\phi=0$ gives the following.

\begin{corollary}\label{cor:top entropy gap}
$h_{top}(g_t|_{\Sing}) < h_{top}(g_t).$
\end{corollary}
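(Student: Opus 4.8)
The statement $h_{top}(g_t|_{\Sing}) < h_{top}(g_t)$ is simply the pressure gap of Theorem~\ref{thm: locally constant} specialized to the zero potential, so the plan is to observe that $\phi \equiv 0$ trivially satisfies the hypotheses of that theorem and then translate the conclusion into the language of topological entropy. First I would note that the constant function $\phi = 0$ is (trivially) locally constant on a neighborhood of $\Sing$ — indeed on all of $\GS$ — and is continuous, so Theorem~\ref{thm: locally constant} applies and yields $P(\Sing, 0) < P(0)$.

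Next I would invoke the standard identification of pressure of the zero potential with topological entropy. By the Variational Principle (Definition~\ref{defn: pressure}), $P(0) = \sup_\mu h_\mu(g_t) = h_{top}(g_t)$, where the last equality is the variational principle for topological entropy; similarly, applying Definition~\ref{defn: Sing pressure}, $P(\Sing, 0)$ is the pressure of the zero potential on the compact flow-invariant set $\Sing$, which equals $h_{top}(g_t|_{\Sing})$. Substituting these two identities into the inequality $P(\Sing, 0) < P(0)$ gives $h_{top}(g_t|_{\Sing}) < h_{top}(g_t)$, as claimed.

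There is essentially no obstacle here: the entire content has been discharged in proving Theorem~\ref{thm: locally constant}, and this corollary is a one-line specialization. The only point worth stating cleanly is that $\Sing$ is compact and $g_t$-invariant (recorded in Definition~\ref{defn:Sing}), which is what makes $h_{top}(g_t|_{\Sing})$ and $P(\Sing, 0)$ well-defined and makes the variational principle applicable on the restricted system. Thus the proof reduces to: apply Theorem~\ref{thm: locally constant} with $\phi = 0$, then rewrite both sides via the variational principle.
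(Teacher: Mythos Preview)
Your proposal is correct and matches the paper's own proof, which is simply the one-line observation ``Applying Theorem~\ref{thm: locally constant} with $\phi=0$ gives the following.'' You have merely spelled out the standard identification $P(0)=h_{top}(g_t)$ and $P(\Sing,0)=h_{top}(g_t|_{\Sing})$ that the paper leaves implicit.
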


With the pressure gap condition for such potentials in hand we briefly note a second class of potentials for which it holds. Proposition 4.7 of \cite{Ca20} notes that if the pressure gap $P(\Sing,\phi)<P(\phi)$ holds for $\phi$, then for any function sufficiently close to $\phi$ (specifically with $2\Vert \phi - \psi\Vert < P(\phi)-P(\Sing,\phi)$) and any constant $c$, $P(\Sing,\psi+c)<P(\psi+c)$. Applying this to the locally constant functions $\phi$ discussed in this section gives us a further class of potentials with a pressure gap. Applying it with $\phi=0$ gives us one class of particular note:

\begin{corollary}\label{cor: nearly constant}
If $\psi$ is a continuous potential with $\|\psi\|<\frac{1}{2}\left(h_{top}(g_t) - h_{top}(g_t|_{\Sing})\right)$, where $h_{top}$ is the topological entropy, then $P(\Sing,\psi)<P(\psi)$.
\end{corollary}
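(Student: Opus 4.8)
The plan is to deduce Corollary~\ref{cor: nearly constant} directly from Corollary~\ref{cor:top entropy gap} together with the stability statement recorded just above (from \cite[Proposition 4.7]{Ca20}). First I would set $\phi\equiv 0$. By Corollary~\ref{cor:top entropy gap}, we have $h_{top}(g_t|_{\Sing}) < h_{top}(g_t)$, i.e., $P(\Sing,0) < P(0)$, so the pressure gap holds for the zero potential. This is exactly the hypothesis needed to invoke the perturbation result: if $2\|\phi-\psi_0\|< P(\phi)-P(\Sing,\phi)$ for some $\psi_0$, then $P(\Sing,\psi_0+c)<P(\psi_0+c)$ for every constant $c$.

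Next I would apply this with $\phi=0$ and $\psi_0=\psi$ (the given continuous potential), choosing $c=0$. The hypothesis becomes $2\|\psi\| < P(0)-P(\Sing,0) = h_{top}(g_t)-h_{top}(g_t|_{\Sing})$, which is precisely the assumption $\|\psi\|<\frac12\left(h_{top}(g_t)-h_{top}(g_t|_{\Sing})\right)$ in the statement. Hence $P(\Sing,\psi)<P(\psi)$, as desired. The only small bookkeeping points are to confirm that $\|\cdot\|$ here denotes the uniform norm on $\GS$ (consistent with its use throughout the paper, e.g.\ in Proposition~\ref{prop: Bowen property}), and that $P(0)=h_{top}(g_t)$ and $P(\Sing,0)=h_{top}(g_t|_{\Sing})$ by the definitions of pressure and of $P(\Sing,\cdot)$ in Definitions~\ref{defn: pressure} and \ref{defn: Sing pressure}.

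There is essentially no obstacle here: the corollary is a formal consequence of two already-established facts, so the "proof" is a two-line unwinding of definitions. If anything, the only thing worth stating carefully is why $P(0)-P(\Sing,0)>0$ — but that is exactly Corollary~\ref{cor:top entropy gap}, which in turn rests on Theorem~\ref{thm: locally constant} applied to the (trivially locally constant) potential $\phi=0$. I would phrase the proof as: "Apply \cite[Proposition 4.7]{Ca20} with $\phi=0$; by Corollary~\ref{cor:top entropy gap} the required gap $P(0)>P(\Sing,0)$ holds, and the condition $2\|\psi\|<P(0)-P(\Sing,0)$ is the stated hypothesis, so $P(\Sing,\psi)<P(\psi)$." That is the whole argument.
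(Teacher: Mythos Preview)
Your proposal is correct and matches the paper's own argument: apply \cite[Proposition 4.7]{Ca20} with $\phi=0$, using Corollary~\ref{cor:top entropy gap} to supply the gap $P(0)-P(\Sing,0)=h_{top}(g_t)-h_{top}(g_t|_{\Sing})>0$, so that the hypothesis $2\|\psi\|<P(0)-P(\Sing,0)$ yields $P(\Sing,\psi)<P(\psi)$.
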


%
%%
%%%
%%%%
%%%%%
%%%%%%%%%%%%%%%%%%%%%%%%%%%%%%%%%%%%%%%%%%%%%%%%%%%

\section{Equilibrium states are Limits of Weighted Periodic Orbits}\label{sec: weighted}

We can show that weighted periodic orbits equidistribute to the equilibrium states we have constructed, following a method of \cite{BCFT}. Throughout this section, we write $\GGG^M := \GGG^M(\eta)$ (see Definition~\ref{defn: good bounded shift}) as we will work with a fixed $\eta$ throughout.

Define the equivalence class of a closed geodesic $[\gamma]$ to be all geodesics $\eta\in\GS$ for which $\gamma = g_t\eta$ for some $t\in\mathbb{R}$. Then let $\operatorname{Per}_R[Q-\delta, Q]$ be the set of equivalence classes of regular closed geodesics with period in $[Q-\delta,Q]$. Now consider such a regular closed geodesic and define $\mu_\gamma$ to be the normalized Lebesgue measure supported on $\gamma$ and $\Phi(\gamma) = \int_{0}^{\ell(\gamma)}\phi(g_u\gamma)\,du$. These definitions agree for all representatives of an equivalence class, so we define $\mu_{[\gamma]} = \mu_\gamma$ and $\Phi([\gamma]) = \Phi(\gamma)$. We consider the weighted sum
$$\mu_{Q,\delta} = \frac{1}{\Lambda_R(Q,\delta,\phi)}\sum_{[\gamma]\in\operatorname{Per}_R[Q-\delta,Q]}e^{\Phi([\gamma])}\mu_{[\gamma]},$$
where $\Lambda_R(Q,\delta,\phi) = \sum\limits_{[\gamma]\in\operatorname{Per}[Q-\delta,Q]}e^{\Phi([\gamma])}$ is our normalizing constant. When $\lim\limits_{Q\rightarrow\infty}\frac{1}{Q}\log\Lambda_R(Q,\delta,\phi)$ exists, it can be thought of as the pressure of closed saddle connection paths, and we write it as $P_{R,\delta}(\phi)$.

\begin{theorem}\label{thm: equidistribute}
We use the notation above. Let $\phi$ be a H\"{o}lder potential with $P(\Sing,\phi) < P(\phi)$, and let $\mu$ be the unique equilibrium state for $\phi$. Then, for all $\delta>0$, $P_{R,\delta}(\phi) = P(\phi)$ and in the weak-* topology we have $\lim\limits_{Q\rightarrow\infty}\mu_{Q,\delta}=\mu$.
\end{theorem}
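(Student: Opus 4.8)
The strategy is to follow the now-standard argument of \cite{BCFT} (their Section~9), which in turn adapts the Climenhaga--Thompson periodic orbit counting machinery. The key point is that all of the abstract hypotheses needed for that machinery have already been verified in the preceding sections: $\GGG(\eta)$ has strong specification at all scales (Proposition~\ref{specification}), $\phi$ has the Bowen property on $\GGG(\eta)$ (the corollary in Section~\ref{sec:bowen}), the pressure gap $P([\PPP]\cup[\SSS],\phi) = P(\mathcal{B}(\eta),\phi) < P(\phi)$ holds (this is where we invoke $P(\Sing,\phi)<P(\phi)$ together with the $\lambda$-decomposition, as developed in Sections~\ref{sec:lambda} and \ref{sec:pressure gap}), and the flow is entropy-expansive with $P^\perp_{\exp}(\phi)<P(\phi)$ (Lemmas~\ref{lem:entropy-expansive} and \ref{lemma:NE in Sing}, plus the pressure gap). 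Since our decomposition is a $\lambda$-decomposition and hence satisfies the hypotheses of \cite[Theorem~C]{Climenhaga-Thompson} as well (we do not need the discretized sets $[\PPP],[\SSS]$ thanks to \cite[Lemma~3.5]{CT19}), the abstract equidistribution theorem of \cite{Climenhaga-Thompson} applies.

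\textbf{Step 1: Reduce closed geodesics to closed saddle connection paths.} By Lemma~\ref{lem:closed saddle}, every closed geodesic is homotopic to a closed saddle connection path of the same length, and conversely every closed saddle connection path is a closed geodesic. Moreover a regular closed geodesic corresponds to a regular closed saddle connection path and vice versa (a closed geodesic lies in $\Sing$ iff it never turns by angle $\neq\pi$, i.e.\ iff the associated saddle connection path is, in fact, a closed geodesic lying in a flat cylinder or avoiding cone points). So $\operatorname{Per}_R[Q-\delta,Q]$ and the weighted measures $\mu_{Q,\delta}$ may be analyzed using closed orbits that pass through cone points with excess turning.

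\textbf{Step 2: Lower bound $P_{R,\delta}(\phi)\geq P(\phi)$ via specification.} This is the heart of the argument and the main obstacle. One must produce enough distinct regular closed geodesics with period near $Q$ carrying enough weight. Following \cite{BCFT}: take a large $(T,\eps)$-separated set of orbit segments in $\GGG^M(\eta)$ realizing pressure close to $P(\phi)$ (the existence of such sets with the right weight, and the fact that pressure is not lost by restricting to $\GGG$, is exactly the content of the pressure gap combined with the Climenhaga--Thompson decomposition estimates). Use strong specification (Proposition~\ref{specification}, in the $\GGG^M$ form of Corollary~\ref{cor:expanded weak specification}) to close each such segment up into a genuine periodic orbit whose period lands in a controlled window; because specification at scale $\eps$ with a \emph{fixed} transition time $\hat\tau$ is available, one controls periods up to a bounded additive error, and then Proposition~\ref{prop: update saddle connections} (or rather the flexibility in choosing lengths of the connecting geodesics, via Proposition~\ref{geodesics of similar lengths} and Lemma~\ref{lem: delta-dense}) lets one land the period in $[Q-\delta,Q]$ exactly. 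One then checks these periodic orbits are regular — this uses the "moreover" clause of Proposition~\ref{specification}, namely that the shadowing orbit agrees with $\gamma_i$ on a long interval $I_i$, so it inherits a cone point with excess turning and hence is not in $\Sing$ — and that distinct segments give distinct closed geodesics (a separation/multiplicity count as in \cite{BCFT}). The Bowen property on $\GGG$ converts the $\sup$ of Birkhoff integrals over Bowen balls into $\Phi([\gamma])$ up to a bounded multiplicative constant, giving $\Lambda_R(Q,\delta,\phi)\gtrsim e^{QP(\phi)}$ up to subexponential factors, hence $\liminf_Q\frac1Q\log\Lambda_R(Q,\delta,\phi)\geq P(\phi)$.

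\textbf{Step 3: Upper bound and equidistribution.} For the upper bound $P_{R,\delta}(\phi)\leq P(\phi)$: each closed geodesic of period in $[Q-\delta,Q]$ contributes an invariant probability measure $\mu_{[\gamma]}$, these are boundedly many per Bowen ball (entropy-expansivity, Lemma~\ref{lem:entropy-expansive}, controls the multiplicity), and summing $e^{\Phi([\gamma])}$ over a $(Q,\eps)$-separated family is bounded by the partition sum defining $P(\phi)$; this gives $\limsup_Q\frac1Q\log\Lambda_R(Q,\delta,\phi)\leq P(\phi)$, so $P_{R,\delta}(\phi)=P(\phi)$ exists. For convergence $\mu_{Q,\delta}\to\mu$: any weak-* limit point $\nu$ of $\mu_{Q,\delta}$ is flow-invariant, and by the lower-bound construction together with the Bowen property one shows $h_\nu(g_1)+\int\phi\,d\nu \geq P(\phi)$ — the standard argument is that the weighted periodic measures are asymptotically "equidistributed along the good orbit segments," so any limit measure achieves the pressure; see \cite[Section~9]{BCFT} and \cite[Theorem~5.6]{Climenhaga-Thompson}. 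Hence $\nu$ is an equilibrium state, and by Theorem~\ref{thm: existence} (uniqueness) $\nu=\mu$. Since the space of invariant probability measures is weak-* compact and every subsequential limit equals $\mu$, we conclude $\mu_{Q,\delta}\to\mu$. The one point requiring care beyond citing \cite{BCFT} is the regularity of the constructed periodic orbits and the period-placement in $[Q-\delta,Q]$, both of which we handle using the strong specification statement and the length-flexibility results (Propositions~\ref{geodesics of similar lengths}, \ref{prop: update saddle connections}, \ref{specification}) established above; the rest of the argument is a direct transcription of \cite{BCFT}.
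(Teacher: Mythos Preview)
Your approach is essentially the same as the paper's: establish matching upper and lower exponential bounds on $\Lambda_R(Q,\delta,\phi)$ (the paper packages the closing-via-specification step as Proposition~\ref{periodic approximation} and the lower bound as Proposition~\ref{prop: lower bound}; the upper bound is Proposition~\ref{prop: upper bound}), then deduce weak-* convergence from growth-rate equality plus uniqueness via the Variational Principle argument (Proposition~\ref{weighted limit}). Two minor corrections: your Step~1 is not needed and the paper does not pass to saddle connection paths here; and in Step~3 the separation of distinct regular closed orbits is not a consequence of entropy-expansivity but of Lemma~\ref{lem:separated geodesics}, which uses the Flat Strip Theorem to show that two $(Q,\eps)$-close regular closed geodesics would have to be parallel and hence singular.
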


\begin{remark*}
Note that this provides a way to identify interesting potentials, by considering geometrically relevant ways to weight closed geodesics. For instance, one could potentially try to identify a continuous function that weights $\gamma$ by the number of conical points it turns at.
\end{remark*}

%\begin{proof}[P\textbf{roof of Theorem~\ref{thm: equidistribute}}]
We first prove a lemma that will be necessary throughout this section.

\begin{lemma}\label{lem:separated geodesics}
Let $2\eps$ be less than the injectivity radius of $S$. For all $Q \gg \delta > 0$, any set of representatives of the equivalence classes in $\operatorname{Per}_R[Q-\delta,Q]$ is $(Q,\eps)$-separated.
\end{lemma}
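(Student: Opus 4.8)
The plan is to argue by contradiction: suppose two distinct equivalence classes $[\gamma_1], [\gamma_2] \in \operatorname{Per}_R[Q-\delta, Q]$ had representatives $\gamma_1, \gamma_2$ with $d_{\GS, Q}(\gamma_1, \gamma_2) < \eps$, and derive that $[\gamma_1] = [\gamma_2]$. The key mechanism is that when two long orbit segments of nearly equal length stay $\eps$-close in $d_{\GS}$ along all of $[0,Q]$, by Lemma~\ref{lem:closeness in S and GS} their footprints stay $2\eps$-close in $\SSS$; since $2\eps$ is below the injectivity radius, we can lift to $\tilde\SSS$ and get lifts $\tilde\gamma_1, \tilde\gamma_2$ with $d_{\tilde\SSS}(\tilde\gamma_1(u), \tilde\gamma_2(u)) < 2\eps$ for all $u \in [0, Q]$. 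Because the $\gamma_i$ are closed geodesics with periods in $[Q-\delta, Q]$ and $Q \gg \delta$, this closeness essentially propagates to all of $\mathbb{R}$ up to a small reparametrization, by iterating the deck transformations realizing the two periods.

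First I would make the propagation precise. Let $g_i \in \pi_1(\SSS)$ be the deck transformation with $g_i \tilde\gamma_i(0) = \tilde\gamma_i(\ell_i)$, where $\ell_i = \ell(\gamma_i) \in [Q-\delta, Q]$. From $d_{\tilde\SSS}(\tilde\gamma_1(u), \tilde\gamma_2(u)) < 2\eps$ on $[0,Q] \supset [0, \max\{\ell_1,\ell_2\}]$, translating by $g_1$ (or $g_2$) and using that $|\ell_1 - \ell_2| \le \delta$ shows that $\tilde\gamma_1$ and a bounded-time shift of $\tilde\gamma_2$ remain uniformly $O(\eps + \delta)$-close on $[0, 2Q]$, hence (inductively) on all of $[0, \infty)$, and symmetrically on $(-\infty, 0]$. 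So up to reparametrizing $\gamma_2$ by a bounded amount we may assume $\tilde\gamma_1$ and $\tilde\gamma_2$ stay within a fixed small constant for all time. By the Flat Strip Theorem (as invoked in the proof of Lemma~\ref{lemma:NE in Sing}), either $\tilde\gamma_2$ lies in the orbit of $\tilde\gamma_1$ — in which case $\gamma_1$ and $\gamma_2$ are, after the bounded reparametrization, the same closed geodesic, so $[\gamma_1] = [\gamma_2]$ and we are done — or $\tilde\gamma_1$ and $\tilde\gamma_2$ bound a nondegenerate flat strip, forcing both $\gamma_1, \gamma_2 \in \Sing$, contradicting that they are regular.

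I expect the main obstacle to be the bookkeeping in the propagation step: one must show that the $\eps$-closeness on the finite window $[0,Q]$, combined with the period mismatch $|\ell_1 - \ell_2| \le \delta$, really does close up into all-time closeness without the error accumulating. The clean way is to observe that $d_{\tilde\SSS}(g_1^n \tilde\gamma_1(u), g_2^n \tilde\gamma_2(u))$ is controlled because $g_1, g_2$ translate $\tilde\gamma_1, \tilde\gamma_2$ by amounts differing by at most $\delta$, and the displacement between $g_1$ and $g_2$ acting near the axis is bounded (here one uses $Q \gg \delta$ to absorb the drift into a single bounded reparametrization rather than letting it grow linearly with $n$). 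Once all-time closeness is in hand the Flat Strip dichotomy is immediate, and the conclusion that a set of representatives is $(Q,\eps)$-separated follows since any two representatives of distinct classes cannot be $\eps$-close in $d_{\GS,Q}$.
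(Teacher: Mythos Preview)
Your overall strategy matches the paper's: assume two representatives are $(Q,\eps)$-close, lift to $\tilde\SSS$, and derive a flat strip, contradicting regularity. The gap is in your propagation step. You treat the deck transformations $g_1,g_2$ as potentially different and then try to control $d_{\tilde\SSS}(g_1^n\tilde\gamma_1(u),g_2^n\tilde\gamma_2(u))$ by arguing that the period mismatch $|\ell_1-\ell_2|\le\delta$ produces only bounded drift. But if $g_1\neq g_2$ there is no reason the iterates stay close at all, and even granting that, a mismatch $\delta$ in translation length gives drift $n\delta$ after $n$ iterations, which is unbounded; the condition $Q\gg\delta$ does nothing to stop this linear growth. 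So the ``absorb into a single bounded reparametrization'' step does not go through as written.

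The paper sidesteps this entirely by observing the key point you are missing: since $\gamma_1(t)$ and $\gamma_2(t)$ stay within $2\eps$ (below the injectivity radius) for $t\in[0,Q]$, and $[0,Q]$ contains a full period of each, the two closed curves are freely homotopic, hence $g_1=g_2=:g$. Then both $\tilde\gamma_1$ and $\tilde\gamma_2$ are axes of the \emph{same} hyperbolic isometry $g$, so by \cite[Theorem II.6.8]{bh} they are parallel and bound a flat strip---no propagation argument needed. (In particular this also forces $\ell_1=\ell_2$, which is why your drift was illusory.) If you want to salvage your route, the missing lemma is precisely that $g_1=g_2$: compute that $g_1^{-1}g_2$ moves $\tilde\gamma_2(0)$ by at most $4\eps+\delta$, which for $\eps$ and $\delta$ small enough is below the systole, forcing $g_1^{-1}g_2=\mathrm{id}$.
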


\begin{proof}
Consider $[\gamma_1],[\gamma_2]\in \operatorname{Per}_R[Q-\delta, Q]$, and let $\gamma_1,\gamma_2$ be representatives. Furthermore, suppose $d_{\GS}(g_t\gamma_1,g_t\gamma_2) < \eps$ for all $t\in [0,Q]$. By Lemma~\ref{lem:closeness in S and GS}, $d_{\SSS}(\gamma_1(t),\gamma_2(t)) < 2\eps$ for all $t\in [0,Q]$. By our choice of $\eps$, these geodesics are freely homotopic and represent the same element $g$ of the fundamental group. Letting $\tilde{\gamma}_i$ be lifts of $\gamma_i$, we have that both $\tilde{\gamma}_1$ and $\tilde{\gamma}_2$ are axes of $g$. By \cite[Theorem II.6.8]{bh}, $\tilde{\gamma}_1$ and $\tilde{\gamma}_2$ are parallel, and so they bound a flat strip by the Flat Strip Theorem. This contradicts the assumption that $\gamma_1$ and $\gamma_2$ are regular.
\end{proof}

We have the following proposition, which follows from the proof of Variational Principle found in \cite[Theorem 9.10]{Wa} because $\operatorname{Per}_R[Q-\delta,Q]$ is $(Q,\eps)$-separated for all sufficiently small $\eps$:

\begin{proposition}\label{weighted limit}
	If $\mu$ is the unique equilibrium state for $\phi$, then for all $\delta > 0$ such that $\lim\limits_{Q\to\infty}\frac{1}{Q}\log\Lambda_R(Q,\delta,\phi) = P(\phi)$, we have $\lim\limits_{Q\to\infty}\mu_{Q,\delta} = \mu$.
\end{proposition}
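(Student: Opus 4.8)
\textbf{Proof proposal for Proposition~\ref{weighted limit}.} The plan is to invoke the standard argument from the proof of the Variational Principle which constructs, from the counting estimates on a sequence of $(Q,\eps)$-separated sets, a sequence of measures whose weak-* limit is an equilibrium state, and then to use uniqueness of the equilibrium state to conclude. The point of the hypothesis $\lim_{Q\to\infty}\frac{1}{Q}\log\Lambda_R(Q,\delta,\phi)=P(\phi)$ is precisely that the weighted sums over $\operatorname{Per}_R[Q-\delta,Q]$ realize the full pressure, and Lemma~\ref{lem:separated geodesics} guarantees that (fixing $2\eps$ below the injectivity radius) a choice of representatives of these classes forms a $(Q,\eps)$-separated set, so the machinery of \cite[Theorem 9.10]{Wa} applies verbatim.

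First I would fix $\eps>0$ with $2\eps$ less than the injectivity radius of $\SSS$, and for each large $Q$ let $E_Q$ be a set of representatives of the classes in $\operatorname{Per}_R[Q-\delta,Q]$; by Lemma~\ref{lem:separated geodesics} this is $(Q,\eps)$-separated. Define the measures
\[
\nu_Q = \frac{1}{\Lambda_R(Q,\delta,\phi)}\sum_{\gamma\in E_Q} e^{\Phi(\gamma)}\,\delta_{\gamma},
\qquad
\mu_{Q,\delta} = \frac{1}{Q}\int_0^Q (g_t)_*\nu_Q\,dt,
\]
so that $\mu_{Q,\delta}$ is exactly the weighted sum of Lebesgue measures on the closed geodesics described before the statement (each $\mu_{[\gamma]}$ being the time-average of the point mass along the orbit, with $\ell(\gamma)$ comparable to $Q$ up to the additive error $\delta$, which is absorbed with an error $O(\delta/Q)\to 0$). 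By compactness of the space of probability measures on $\GS$ in the weak-* topology, any subsequence of $(\mu_{Q,\delta})$ has a further subsequence converging to some flow-invariant probability measure $m$; the invariance is standard since $\mu_{Q,\delta}$ is an average over the flow on $[0,Q]$. The content of the Variational Principle argument is then that for such a limit $m$ one has
\[
h_m(g_1) + \int_{\GS}\phi\,dm \ \geq\ \limsup_{Q\to\infty}\frac{1}{Q}\log\Lambda_R(Q,\delta,\phi) \ =\ P(\phi),
\]
using the $(Q,\eps)$-separation of $E_Q$ to build, via a partition of small diameter and the entropy inequality $H_m(\xi) \geq$ (counting term), a lower bound on $h_m$; the weights $e^{\Phi(\gamma)}$ contribute $\int\phi\,dm$ in the limit because $\Phi(\gamma)=\int_0^{\ell(\gamma)}\phi(g_u\gamma)\,du$ and the Birkhoff sums converge to the integral against the time-averaged measure. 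Since $P(\phi)$ is the supremum of $h_\mu+\int\phi\,d\mu$, equality holds and $m$ is an equilibrium state for $\phi$. By hypothesis $\phi$ has a unique equilibrium state $\mu$ (this is where we use $P(\Sing,\phi)<P(\phi)$, which via Theorem~\ref{thm: to get weakly mixing} and the work of Sections~\ref{sec:lambda}--\ref{sec:bowen} yields uniqueness), so every convergent subsequence of $(\mu_{Q,\delta})$ has limit $\mu$; hence $\mu_{Q,\delta}\to\mu$ in the weak-* topology.

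I do not expect a genuine obstacle here: the proposition is essentially a packaging of the classical separated-set construction from the proof of the Variational Principle, and the only thing that needs checking beyond citing \cite[Theorem 9.10]{Wa} is that the relevant orbit sets are separated — which is exactly Lemma~\ref{lem:separated geodesics} — and that the normalization converges to the full pressure — which is the standing hypothesis. The mildest technical care needed is in relating $\mu_{Q,\delta}$ as defined by weighting $\mu_{[\gamma]}$ to the time-average $\frac1Q\int_0^Q(g_t)_*\nu_Q\,dt$: one uses that periods lie in $[Q-\delta,Q]$ so that $\mu_{[\gamma]}=\frac{1}{\ell(\gamma)}\int_0^{\ell(\gamma)}(g_t)_*\delta_\gamma\,dt$ differs from the $[0,Q]$-average by a total-variation error of order $\delta/Q$, uniformly in $\gamma$, which vanishes as $Q\to\infty$ and therefore does not affect the weak-* limit. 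This is the only spot where the interval width $\delta$ enters, and it enters harmlessly.
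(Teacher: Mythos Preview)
Your proposal is correct and is precisely the approach the paper takes: the paper simply asserts that the proposition follows from the proof of the Variational Principle in \cite[Theorem 9.10]{Wa}, using that any set of representatives of $\operatorname{Per}_R[Q-\delta,Q]$ is $(Q,\eps)$-separated for small $\eps$ (Lemma~\ref{lem:separated geodesics}). Your write-up unpacks exactly this argument, including the minor bookkeeping that reconciles $\mu_{[\gamma]}$ with the $[0,Q]$-time average up to an $O(\delta/Q)$ error.
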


In order to apply this proposition, we need to establish a growth rate for $\Lambda_R(Q,\delta,\phi)$ for all sufficiently small $\delta > 0$, which is done in Propositions~\ref{prop: lower bound} and \ref{prop: upper bound} below.

First, we show that the growth rate for $\Lambda_R(Q,\delta,\phi)$ is fast enough. In order to do this, we need to be able to approximate $(\gamma,t)\in\GGG^M$ by closed geodesics of a bounded length. This is encapsulated in the following proposition.

\begin{proposition}\label{periodic approximation}
For all $\delta > 0$, there exists $T'$ such that for all $(\gamma,t)\in\GGG^M$ with $t > \frac{\theta_0}{\eta} + 2M$, there is some regular closed geodesic $\xi$ with period in $[t+T'-\delta,t+T']$ such that $d_{\GS}(g_u\gamma,g_u\xi) < \delta$ for all $u\in [0,t]$.
\end{proposition}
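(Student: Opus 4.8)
The plan is to adapt the strong specification property (Proposition~\ref{specification}, or rather Corollary~\ref{cor:expanded weak specification} for $\GGG^M$) to close up a good orbit segment into a regular closed geodesic whose length is controlled to within $\delta$. The first step is to reduce to the case $(\gamma,t)\in\GGG(\eta)$: since $(\gamma,t)\in\GGG^M$, there are $|t_1|,|t_2|<M$ with $(g_{t_1}\gamma, t-t_1+t_2)\in\GGG(\eta)$, and by uniform continuity of the flow it suffices to produce a closed geodesic shadowing this adjusted segment at a small enough scale $\delta_0$ (depending on $\delta$ and $M$), since then it automatically $\delta$-shadows $(\gamma,t)$; the length changes by at most $2M$, which we can absorb into $T'$. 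So WLOG assume $(\gamma,t)\in\GGG(\eta)$ with $t>\frac{\theta_0}{\eta}$.

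Next, apply Lemma~\ref{make endpoints in Con} to $(\gamma,t)$ at scale $\frac{\delta}{2}$: this produces a saddle connection path $\gamma_e$ of length $\le t+2T_1$ and a shift $s_0\in[0,T_1]$ so that any complete extension of $\gamma_e$ (shifted by $s_0$) $\frac{\delta}{2}$-shadows $(\gamma,t)$ on $[0,t]$; moreover since $t>\frac{\theta_0}{\eta}$ there is an interval $I\supset[\frac{\theta_0}{2\eta},t-\frac{\theta_0}{2\eta}]$ on which $\gamma_e(s_0+u)=\gamma(u)$. In particular $\gamma_e$ begins and ends at cone points, so its first and last (nontrivial) pieces are saddle connections $\sigma:=\sigma_{\text{start}}$, $\sigma':=\sigma_{\text{end}}$. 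Now I want to close $\gamma_e$ up into a \emph{closed} geodesic. Use Proposition~\ref{prop: update saddle connections} (or directly Proposition~\ref{geodesics of similar lengths} together with Proposition~\ref{connecting saddle connections}): connect the terminal saddle connection $\sigma'$ of $\gamma_e$ back to the initial saddle connection $\sigma$ of $\gamma_e$ by a geodesic segment $\beta$, in such a way that the concatenation $\gamma_e \cdot \beta$ (with the turning angle at each junction cone point being $\ge\pi$, which is guaranteed by Lemma~\ref{connections at Con}-type arguments built into Proposition~\ref{connecting saddle connections}) closes up into a closed saddle connection path $\xi$. Proposition~\ref{prop: update saddle connections} gives us freedom to choose the length of this closing segment to lie in an interval of the form $[\tau'+n\delta', \tau'+(n+1)\delta']$ with $\delta'<\delta/2$ and $\tau'$ any number exceeding a fixed $\tau(\delta/2)$; choosing $n$ appropriately we arrange $\ell(\xi)\in[t+T'-\delta, t+T']$ for a suitable constant $T'$ depending only on $\delta$ (absorbing $2T_1$, $\tau$, and the $2M$ from the first reduction). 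Since $\xi$ contains the long arc where $\gamma_e$ agrees with $\gamma$, and the closing segment $\beta$ does not interfere with the shadowing on $[0,t]$, we get $d_{\GS}(g_u\gamma, g_u\xi)<\delta$ for $u\in[0,t]$, using Lemma~\ref{lem:shadow in S} exactly as in the proof of Lemma~\ref{make endpoints in Con}.

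Finally I must check that $\xi$ is \emph{regular}, i.e. $\xi\notin\Sing$. This is where the good-segment hypothesis matters: by Proposition~\ref{prop: distance to Con}, since $(\gamma,t)\in\GGG(\eta)$, $\gamma$ turns by an angle at least $s\eta$ away from $\pm\pi$ at some cone point in $\gamma[\frac{\theta_0}{2\eta}, t-\frac{\theta_0}{2\eta}]$ — in fact one can find such a cone point well inside the interval $I$ on which $\xi$ (reparametrized) agrees with $\gamma$. Hence the closed geodesic $\xi$ turns by an angle $\ne\pi$ at a cone point, so $\xi\notin\Sing$, i.e. $\xi$ is a regular closed geodesic. (Equivalently, since $\xi$ shares a positive-length subsegment of a genuine turn with $\gamma$, it cannot bound a flat strip.) The main obstacle here is bookkeeping: making sure that the length-control interval from Proposition~\ref{prop: update saddle connections} can be slid (via the integer $n$) to land precisely in $[t+T'-\delta,t+T']$ uniformly in $(\gamma,t)$, and that the constant $T'$ truly depends only on $\delta$ (and the fixed data $\SSS,\eta,M$) and not on $t$; this is handled exactly by the density-of-length-spectrum mechanism encapsulated in Lemma~\ref{lem: delta-dense} and Proposition~\ref{geodesics of similar lengths}, which is why those were proved earlier. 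A secondary point requiring care is that the junction turning angles in forming the closed path $\xi$ are all $\ge\pi$ so that $\xi$ is genuinely a local geodesic (hence a closed geodesic) — but this is precisely what Lemma~\ref{connections at Con} and Proposition~\ref{connecting saddle connections} provide.
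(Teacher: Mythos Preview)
Your approach is correct, but takes a more hands-on route than the paper. You rebuild the closing-up mechanism explicitly: produce the saddle connection path $\gamma_e$ via Lemma~\ref{make endpoints in Con}, then close it back onto itself using Proposition~\ref{prop: update saddle connections}, absorbing the variable quantity $\ell(\gamma_e)-t$ into the free parameter $\tau'$ and using the integer $n$ to hit the target length window. The paper instead applies strong specification (Proposition~\ref{specification}) to shadow the \emph{single} segment $(\gamma,t)$ \emph{twice in succession} at scale $\delta/8$. By the ``Moreover'' clause of Proposition~\ref{specification}, the shadowing geodesic $\hat\xi$ contains two exact copies of $\gamma(I)$ for some $I\supset[\tfrac{\theta_0}{2\eta},t-\tfrac{\theta_0}{2\eta}]$, at shifts $r_1,r_2$; since these two copies are identical, $\hat\xi|_{[r_1,r_2]}$ closes up into a periodic geodesic of period $r_2-r_1$, and the exact transition time $\hat\tau$ together with Lemma~\ref{lem:closeness in S and GS} pins $r_2-r_1$ to within $\delta/2$ of $t+\hat\tau$. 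This ``shadow twice'' trick packages all of your length bookkeeping into the already-established specification constant, and regularity is immediate since the closed geodesic contains $\gamma(I)$ verbatim. Your approach gives a more explicit closing arc, at the cost of redoing length control already built into Proposition~\ref{specification}.

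One small correction to your regularity argument: the turning cone point supplied by Proposition~\ref{prop: distance to Con} lies in $\gamma[-\tfrac{\theta_0}{2\eta},\tfrac{\theta_0}{2\eta}]$, not in $\gamma[\tfrac{\theta_0}{2\eta},t-\tfrac{\theta_0}{2\eta}]$. It is nonetheless contained in the overlap interval $I$ (in the notation of the proof of Lemma~\ref{make endpoints in Con}, $I=[s_1,t+s_2]$ with $s_1\le t_0$), so your conclusion that $\xi$ turns with angle $\ne\pi$ at a cone point, hence is regular, still holds.
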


\begin{proof}
First, we explain how to obtain the statement of the proposition for $(\gamma,t)\in\GGG$. 
Let $\delta > 0$, and let $\hat\tau$ be the specification constant for $\GGG$ with shadowing scale $\frac{\delta}{8}$ (see Proposition~\ref{specification}). We will show that $T' = \hat\tau + \frac{\delta}{2}$ satisfies the requirements of the Proposition. Let $(\gamma,t)\in \GGG$, and let $\xi$ be a geodesic guaranteed by specification which shadows $(\gamma,t)$ twice in succession. Now recall from Proposition \ref{specification} that there exists a closed interval $I\supset [\frac{\theta_0}{2\eta},t-\frac{\theta_0}{2\eta}]$ such that $\xi$ contains two copies of $\gamma(I)$.
In other words, there exist $r_1,r_2 > 0$ such that $\xi(r_i+r) = \gamma(r)$ for all $r\in I$, where $i\in\{1,2\}$. Thus, we can choose $\xi$ to be a closed geodesic, and observe that its length is given by $r_2-r_1$. Now, since $d_{\GS}(g_{\frac{\theta_0}{2\eta}}\xi,g_{\frac{\theta_0}{2\eta}}\gamma)\leq \frac{\delta}{8}$ by Proposition~\ref{specification}, we can apply Lemma~\ref{lem:closeness in S and GS} to show $d_{S}(\xi(\frac{\theta_0}{2\eta}),\gamma(\frac{\theta_0}{2\eta})) \leq \frac{\delta}{4}$. Thus, $|r_1| \leq \frac{\delta}{4}$. Similarly, considering the second copy of $(\gamma, t)$ that $\xi$ shadows, we have $d_{S}(\xi(\frac{\theta_0}{2\eta}+t+\hat\tau),\gamma(\frac{\theta_0}{2\eta})) \leq \frac{\delta}{4}$, and so $r_2 \in [\hat\tau+t-\frac{\delta}{4},\hat\tau+t+\frac{\delta}{4}]$. Hence, $\xi$ is a regular closed geodesic with length in $[\hat\tau+t-\frac{\delta}{2},\hat\tau+t+\frac{\delta}{2}]$. Taking $T' = \hat\tau+\frac{\delta}{2}$, we are done.
In order to adapt this argument to $\GGG^M$ for $\tau > 0$, note that we achieve specification for $\GGG^M$ by considering the specification constant for $\GGG$ at a smaller scale (which depends on $M$). (See Corollary~\ref{cor:expanded weak specification}.)
\end{proof}

To establish the desired growth rates on $\Lambda_R(Q,\delta,\phi)$, we need two technical counting results from \cite{Climenhaga-Thompson}. These results are used implicitly in the proof of Theorem~\ref{thm: to get weakly mixing}, and we do not provide a self-contained proof in the interest of concision. However, we do discuss why they hold in our setting.

As noted in Section~\ref{sec:outline}, the conditions that we check differ slightly from those used in \cite{Climenhaga-Thompson}. The only case where they are not immediately stronger conditions is the pressure estimate. In \cite{Climenhaga-Thompson}, the authors need to define the pressure of a discretized collection of orbit segments $P([\PPP]\cup[\SSS],\phi) < P(\phi)$. Because we use $\lambda$-decompositions, we do not need to consider the pressure of collections of orbit segments (this is the content of \cite[Lemma 3.5, Theorem 3.6]{CT19} and \cite[Proposition 4.2]{Ca20}). Instead, it suffices to show that $P\left(\bigcap_{t\in\mathbb{R}}g_t\lambda^{-1}(0),\phi\right) < P(\phi)$, which is precisely the condition $P(\Sing,\phi)< P(\phi)$.

The lemmas we will use are the following.

\begin{lemma}[{\cite[Lemma 4.12]{Climenhaga-Thompson}}]\label{lem: pressure of good segments}
There exist $C, \eps, M > 0$ such that for all $t > 0$, there exists a $(t,\eps)$-separated set $E_t$ with the following properties:
\begin{itemize}
    \item $\sum_{\gamma\in E_t}\exp\left(\int_0^t\phi(g_u\gamma)\,du\right)\geq Ce^{tP(\phi)}$
    \item $E_t\subset \{\gamma \in \GS \mid (\gamma,t)\in\GGG^M\}$.\qedhere
\end{itemize}
\end{lemma}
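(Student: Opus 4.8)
\textbf{Proof proposal for Lemma~\ref{lem: pressure of good segments}.}

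The plan is to follow the strategy of \cite[Lemma 4.12]{Climenhaga-Thompson}, adapting it to our $\lambda$-decomposition $(\mathcal{P},\mathcal{G},\mathcal{S}) = (\mathcal{B}(\eta),\mathcal{G}(\eta),\mathcal{B}(\eta))$. The starting point is the standard fact (from the Variational Principle, e.g. \cite[Theorem 9.10]{Wa} together with the expansivity estimates) that for all sufficiently small $\eps > 0$ there is a constant $C_0 > 0$ and, for every $t > 0$, a $(t,\eps)$-separated set $S_t \subset \GS$ with $\sum_{\gamma \in S_t} \exp(\int_0^t \phi(g_u\gamma)\,du) \geq C_0 e^{tP(\phi)}$. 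The point is to replace $S_t$ by a comparable-weight subset whose orbit segments, after a bounded adjustment of endpoints, lie in $\mathcal{G}(\eta)$; that is, they should lie in $\mathcal{G}^M(\eta)$ for some uniform $M$.

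First I would use the decomposition to split each orbit segment $(\gamma,t)$ with $\gamma \in S_t$ as $(\gamma,t) = (\gamma,p)(g_p\gamma,n)(g_{p+n}\gamma,s)$ with the prefix and suffix pieces in $\mathcal{B}(\eta)$ and the middle piece in $\mathcal{G}(\eta)$. Next I would run the usual pigeonhole/counting argument: the pressure carried by the collection of prefixes and suffixes is controlled by $P([\mathcal{P}]\cup[\mathcal{S}],\phi)$, which for a $\lambda$-decomposition is bounded by $P(\bigcap_{t\in\mathbb{R}} g_t\lambda^{-1}(0),\phi) = P(\Sing,\phi)$ by \cite[Lemma 3.5]{CT19} (this is exactly the point flagged in the paragraph preceding the lemma, and is why we only needed the gap $P(\Sing,\phi) < P(\phi)$ rather than a gap for discretized collections). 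Since $P(\Sing,\phi) < P(\phi)$, only a bounded number of distinct prefix/suffix lengths, and a sub-exponentially-weighted family of prefix/suffix segments, contribute; hence for all large $t$ there must be a single pair of prefix/suffix lengths $(p,s)$ with $p,s$ bounded by a uniform constant $M$ (independent of $t$), such that the set $E_t$ of those $\gamma \in S_t$ whose decomposition has exactly these prefix/suffix lengths still satisfies $\sum_{\gamma\in E_t}\exp(\int_0^t\phi(g_u\gamma)\,du) \geq C e^{tP(\phi)}$ for a smaller but still positive constant $C$ (the loss being the bounded factor $e^{-2M\|\phi\|}$ times the bounded number of length choices, absorbed into $C$). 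By construction the middle segment $(g_p\gamma, t-p-s) \in \mathcal{G}(\eta)$, and $p,s < M$, so $(\gamma,t) \in \mathcal{G}^M(\eta)$ in the sense of Definition~\ref{defn: good bounded shift}; the set $E_t$ remains $(t,\eps)$-separated as a subset of $S_t$. This is literally the argument of \cite[Lemma 4.12]{Climenhaga-Thompson}, whose only external inputs are (i) the existence of the decomposition, (ii) the pressure gap $P^\perp_{\exp}(\phi) < P(\phi)$, and (iii) the gap for the prefix/suffix collection --- all of which we have established (the last via Theorem~\ref{thm: locally constant} and \cite[Lemma 3.5]{CT19}).

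The main obstacle to a fully self-contained proof is bookkeeping rather than a genuine difficulty: one must carry out the discretization and pigeonholing carefully enough to see that the prefix and suffix \emph{lengths} (not merely their weighted counts) can be taken uniformly bounded, which is where the $\lambda$-decomposition machinery of \cite{CT19} is genuinely doing work. Since this is exactly \cite[Lemma 4.12]{Climenhaga-Thompson} transported verbatim once one substitutes $P(\Sing,\phi) < P(\phi)$ for the discretized pressure gap, I would simply cite that lemma and indicate, as above, that its hypotheses hold in our setting by Lemma~\ref{lem:entropy-expansive}, Lemma~\ref{lemma:NE in Sing} (giving $P^\perp_{\exp}(\phi) \leq P(\Sing,\phi) < P(\phi)$), and the reduction of the prefix/suffix pressure to $P(\Sing,\phi)$ via \cite[Lemma 3.5]{CT19}.
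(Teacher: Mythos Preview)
Your proposal is correct and matches the paper's own treatment: the paper does not give a self-contained proof but simply cites \cite[Lemma 4.12]{Climenhaga-Thompson} and explains (in the paragraph preceding the lemma) that its hypotheses hold in this setting via the $\lambda$-decomposition machinery of \cite[Lemma 3.5, Theorem 3.6]{CT19} and \cite[Proposition 4.2]{Ca20}, which reduces the discretized pressure gap $P([\PPP]\cup[\SSS],\phi) < P(\phi)$ to the condition $P(\Sing,\phi) < P(\phi)$. One small correction: you invoke Theorem~\ref{thm: locally constant} for the pressure gap, but in the context where this lemma is used (Theorem~\ref{thm: equidistribute}) the gap $P(\Sing,\phi) < P(\phi)$ is a standing hypothesis rather than something derived from Theorem~\ref{thm: locally constant}.
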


\begin{lemma}[{\cite[Lemma 4.11]{Climenhaga-Thompson}}]\label{lem:pressure bounds}
For all $\eps > 0$ sufficiently small, there exists a constant $D > 0$ such that for any $(t,\eps)$-separated set $E_t$, we have
$$\sum_{\gamma\in E_t}\exp\left(\int_0^t\phi(g_u\gamma)\,du\right) \leq De^{tP(\phi)}.$$
\end{lemma}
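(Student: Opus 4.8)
The plan is to treat this as the standard exponential \emph{upper} bound for weighted separated sets in thermodynamic formalism, and to make it \emph{uniform} (with a genuine constant $D$, not merely a subexponential error term) by exploiting the entropy-expansivity of the geodesic flow established in Lemma~\ref{lem:entropy-expansive}. For $\eps>0$ and $t>0$ write
\[
\Lambda(\eps,t)=\sup\Big\{\sum_{\gamma\in E}\exp\Big(\int_0^t\phi(g_u\gamma)\,du\Big)\ \Big|\ E\subset\GS\text{ is }(t,\eps)\text{-separated}\Big\},
\]
so the claim is exactly that, for all small $\eps$, $\Lambda(\eps,t)\le D\,e^{tP(\phi)}$ for every $t>0$. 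Recall the classical identity $P(\phi)=\lim_{\eps\downarrow0}\overline P(\eps)$, where $\overline P(\eps):=\limsup_{t\to\infty}\tfrac1t\log\Lambda(\eps,t)$, and that $\overline P(\eps)$ is non-increasing as $\eps\downarrow0$.

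First I would fix the scale. Entropy-expansivity (Lemma~\ref{lem:entropy-expansive}) has the standard consequence that the scale-$\eps$ growth rate stabilizes below a threshold $\eps^\ast>0$: for $0<\eps'<\eps<\eps^\ast$ one has $\Lambda(\eps,t)\le\Lambda(\eps',t)\le\Lambda(\eps,t)\,e^{o(t)}$, since a $(t,\eps')$-separated set can be partitioned into $e^{o(t)}$ sub-collections each sitting inside a dynamical ball of $\Gamma_\eps$-type, and such a ball carries zero topological pressure. Hence $\overline P(\eps)=\lim_{\eps\downarrow0}\overline P(\eps)=P(\phi)$ for every $0<\eps<\eps^\ast$; fix such an $\eps$, so that $\limsup_{t\to\infty}\tfrac1t\log\Lambda(\eps,t)=P(\phi)$.

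Next I would upgrade this asymptotic statement to the uniform one. The function $a(t):=\log\Lambda(\eps,t)$ is subadditive up to an additive constant, $a(s+t)\le a(s)+a(t)+C_0$ with $C_0=C_0(\eps)$: given a $(s+t,\eps)$-separated set, pass to a maximal $(s,\eps/2)$-separated subset, group the original set by the closest element of this subset in $d_{\GS,s}$, observe that each group becomes $(t,\eps/2)$-separated after applying $g_s$, and bound the $[0,s]$-portion of each Birkhoff integral on a group by its maximum over that group; the scale comparison of the previous paragraph (together with uniform continuity of $\phi$ on time-intervals of bounded length) then absorbs the passage from $\eps/2$ back to $\eps$ into $C_0$. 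Fekete's subadditive lemma applied to $t\mapsto a(t)+C_0$ shows that $\lim_{t\to\infty}a(t)/t$ exists and $a(t)\le t\lim_{s\to\infty}\tfrac{a(s)}{s}+C_0$ for all $t$; by the previous step this limit equals $P(\phi)$, so $a(t)\le tP(\phi)+C_0$, and taking $D:=e^{C_0}$ gives $\Lambda(\eps,t)\le D\,e^{tP(\phi)}$, hence the stated bound for every $(t,\eps)$-separated set $E_t$.

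The step I expect to be the real obstacle is the uniform subadditivity in the last paragraph: for a \emph{flow} one must carefully track the error introduced in the Birkhoff integral when re-centering at the junction time $s$, and the interplay of that error with the separation scale, and see that the resulting defect is bounded independently of $s$ and $t$. This is precisely where entropy-expansivity (or, in the present setting, already the weaker hypothesis $P(\Sing,\phi)<P(\phi)$, equivalently $P^\perp_{\exp}(\phi)<P(\phi)$) is genuinely needed; without it one obtains only $a(t)\le tP(\phi)+o(t)$, which is too weak for the counting estimates on $\Lambda_R(Q,\delta,\phi)$ that this lemma feeds into. Since the statement is \cite[Lemma 4.11]{Climenhaga-Thompson}, one could of course simply invoke it, but the above is the argument I would reconstruct in our setting.
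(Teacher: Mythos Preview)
The paper does not prove this lemma at all; it simply cites \cite[Lemma 4.11]{Climenhaga-Thompson} and explains why the hypotheses of that paper are met in the present setting (namely, the $\lambda$-decomposition satisfies specification, the Bowen property, and the pressure estimate). So there is nothing to compare your argument against except the actual Climenhaga--Thompson proof, and your proposed route diverges from it in a way that does not work.

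The fatal error is the direction of Fekete's lemma. If $b(t):=a(t)+C_0$ is subadditive, Fekete gives $\lim_{t\to\infty} b(t)/t=\inf_{t>0} b(t)/t$, hence $b(t)\ge t\cdot\lim_{s\to\infty}b(s)/s$ for every $t$, i.e.\ $a(t)\ge tP(\phi)-C_0$. That is a \emph{lower} bound on $\log\Lambda(\eps,t)$, not the upper bound you want. To extract an upper bound from Fekete you would need \emph{super}additivity of $a(t)$ up to a constant, and that is not what your splitting argument is aimed at (nor is it what holds here without further input).

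There is also a secondary gap in the subadditivity step itself: after grouping a $(s+t,\eps)$-separated set by proximity in $d_{\GS,s}$, the quantity $\sum_G\sup_{x\in G}e^{\Phi_s(x)}$ is controlled by a two-scale partition function $\Lambda^*(\eps/2,\eps,s)$ (separated at $\eps/2$, Birkhoff integral maximized over an $\eps$-Bowen ball), and passing from $\Lambda^*$ back to $\Lambda$ uniformly in $s$ requires exactly a Bowen-type control on $\Phi_s$ over Bowen balls; entropy-expansivity alone bounds cardinalities subexponentially but says nothing about the variation of $\Phi_s$. Your appeal to ``uniform continuity of $\phi$ on time-intervals of bounded length'' does not help because $s$ is not bounded.

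In Climenhaga--Thompson the uniform upper bound is obtained by a different mechanism: one first proves a lower Gibbs-type bound $\mu(B_t(x,\eps))\ge C\,e^{-tP(\phi)+\Phi_t(x)}$ for a measure $\mu$ built via specification and the Bowen property on $\mathcal{G}$, together with the pressure gap controlling $\mathcal{P}\cup\mathcal{S}$; disjointness of Bowen balls for a $(t,\eps)$-separated set then forces $\sum_{\gamma\in E_t}e^{\Phi_t(\gamma)}\le C^{-1}e^{tP(\phi)}$. If you want to reconstruct the proof rather than cite it, that is the argument to follow.
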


We are now ready to prove our growth rates.

\begin{proposition}\label{prop: lower bound}
	For all $\delta > 0$ there exists a constant $\hat C$ such that 
	$$\Lambda_R(Q,\delta,\phi) \geq \frac{\hat C}{Q}e^{QP(\phi)}$$
	for all sufficiently large $Q$.
\end{proposition}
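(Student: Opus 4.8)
The plan is to combine the lower pressure bound for good orbit segments (Lemma~\ref{lem: pressure of good segments}) with the periodic approximation result (Proposition~\ref{periodic approximation}) and a multiplicity bound coming from the separation of closed geodesics (Lemma~\ref{lem:separated geodesics}). Fix $\delta > 0$. First I would apply Lemma~\ref{lem: pressure of good segments} to obtain constants $C, \eps_0, M > 0$ and, for each $t > 0$, a $(t,\eps_0)$-separated set $E_t \subset \{\gamma : (\gamma,t)\in \GGG^M\}$ with $\sum_{\gamma\in E_t}\exp(\int_0^t \phi(g_u\gamma)\,du) \geq C e^{tP(\phi)}$. Shrinking $\eps_0$ if necessary, I may assume $2\eps_0$ is less than the injectivity radius of $\SSS$ and that $\eps_0$ is small enough for the Bowen property of $\GGG$ to hold at scale $\eps_0$ (or a fixed multiple of it).

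Next, apply Proposition~\ref{periodic approximation} with shadowing scale $\eps_0/4$ (or some small fixed fraction) to get a constant $T' = T'(\eps_0)$ so that every $(\gamma,t)\in\GGG^M$ with $t$ large enough is $(\eps_0/4)$-shadowed on $[0,t]$ by a regular closed geodesic of period in $[t + T' - \eps_0/4, t+T']$. Given $Q$ large, set $t = Q - T'$, and feed each $\gamma\in E_t$ through this proposition to produce a regular closed geodesic $\xi_\gamma$ whose period lies in $[Q - \eps_0/4, Q] \subset [Q-\delta, Q]$ (assuming $\eps_0/4 < \delta$, which we may), so that $[\xi_\gamma]\in \operatorname{Per}_R[Q-\delta,Q]$. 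The key counting step is to bound, for a fixed equivalence class $[\xi]\in\operatorname{Per}_R[Q-\delta,Q]$, the number of $\gamma\in E_t$ with $\xi_\gamma$ a representative of $[\xi]$. If $\gamma,\gamma'\in E_t$ both map to the class $[\xi]$, then (after choosing the parametrization of $\xi$ appropriately for each) both $\gamma$ and $\gamma'$ are $(t,\eps_0/4)$-shadowed by a representative of $[\xi]$; since representatives of $[\xi]$ differ only by a time shift and the period of $\xi$ is within $\eps_0/4$ of $t + T'$, a short argument using Lemma~\ref{lem:closeness in S and GS} and unit-speed-ness of the flow shows the two matching representatives differ by a bounded time shift, hence $\gamma$ and $\gamma'$ are $(t,\eps_0)$-close in $d_{\GS,t}$; but $E_t$ is $(t,\eps_0)$-separated, so at most a uniformly bounded number $N_0$ of them can coincide — in fact, with care one gets $N_0$ proportional to $Q$ only through the time-shift ambiguity, or even a constant. (Alternatively, one bounds the multiplicity by the number of starting times modulo the period, which is $O(Q)$; this is the standard source of the $1/Q$ factor.)

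Finally, using the Bowen property of $\phi$ on $\GGG$ (at scale $\eps_0$) we have $|\Phi([\xi_\gamma]) - \int_0^t \phi(g_u\gamma)\,du| \leq K + (\text{error from the extra length } T')\cdot\|\phi\|$, a constant $K'$ independent of $\gamma$ and $Q$. Therefore
\[
\Lambda_R(Q,\delta,\phi) = \sum_{[\xi]\in\operatorname{Per}_R[Q-\delta,Q]} e^{\Phi([\xi])} \geq \frac{1}{N_0}\sum_{\gamma\in E_t} e^{\Phi([\xi_\gamma])} \geq \frac{e^{-K'}}{N_0}\sum_{\gamma\in E_t} e^{\int_0^t\phi(g_u\gamma)\,du} \geq \frac{e^{-K'}C}{N_0}e^{tP(\phi)},
\]
and since $t = Q - T'$ and $N_0 = O(Q)$, this is $\geq \frac{\hat C}{Q} e^{QP(\phi)}$ for a suitable $\hat C > 0$ and all large $Q$, as desired. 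The main obstacle I expect is the multiplicity bound: correctly formalizing that distinct good segments producing the same closed-geodesic equivalence class must be close in the $d_{\GS,t}$ metric, and pinning down the exact power of $Q$ in that bound (the $1/Q$ in the statement signals that one allows an $O(Q)$ multiplicity from the choice of basepoint along the closed orbit). Everything else is bookkeeping with the Bowen property and the already-established specification/periodic-approximation machinery.
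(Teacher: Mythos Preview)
Your proposal follows essentially the same route as the paper: apply Lemma~\ref{lem: pressure of good segments} to get a rich $(t,\eps)$-separated set $E_t\subset\GGG^M$, push each segment to a regular closed geodesic via Proposition~\ref{periodic approximation}, compare weights using the Bowen property, and absorb the basepoint ambiguity as an $O(Q)$ multiplicity. Two remarks.

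First, your treatment of the multiplicity is the one place that needs tightening. The claim that two representatives of the same class $[\xi]$ shadowing $\gamma,\gamma'$ must differ by a \emph{bounded} time shift (and hence that $\gamma,\gamma'$ are $(t,\eps_0)$-close, forcing a constant bound) is not correct: the shift can be anything in $[0,\ell(\xi))$. Your parenthetical ``alternative'' is the actual argument, and it is exactly what the paper does. Concretely, choose the shadowing scale $\rho<\eps/3$; then the map $E_t\to P_t$ (to \emph{parametrized} closed geodesics) is injective, and $P_t$ is $(t,\rho)$-separated by the triangle inequality. Since $d_{\GS}(\xi,g_u\xi)=|u|$, any two distinct elements of $P_t$ in the same orbit are at least $\rho$ apart along that orbit, so a single equivalence class of period $\leq t+T'$ contains at most $(t+T')/\rho$ elements of $P_t$. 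This yields the $1/Q$ cleanly.

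Second, Lemma~\ref{lem:separated geodesics} is not needed here; it is used for the upper bound in Proposition~\ref{prop: upper bound}, not for this lower bound.
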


The proof of this proposition follows almost exactly the proof of the lower bound in \cite[Proposition 6.4]{BCFT}, replacing the use of \cite[Corollary 4.8]{BCFT} with Proposition~\ref{periodic approximation}. We include it here for completeness.

\begin{proof}
%	First, observe that there exist $C,\eps,\tau > 0$ so that for all $t > 0$
%	$$\Lambda(\GGG^\tau,\eps,t) \geq Ce^{tP(\phi)},$$
%	 where $\Lambda(\GGG^\tau,\eps,t)=\sup\left\{\sum\limits_{\gamma\in E}e^{\int\limits_0^t\phi(g_u\gamma)\,du}\,\mid\, E\subset\left\{\gamma\in\GS \mid (\gamma,t)\in\GGG^\tau\right\} \text{ is }(t,\eps)\text{-separated}\right\}$.
%	This follows from \cite[Lemma 4.12]{Climenhaga-Thompson}, and simply means that for all $t$, there exists a $(t,\eps)$-separated set $E_t$ such that
%	$$\sum\limits_{\gamma\in E_t}\exp\left(\int_{0}^{t}\phi(g_u\gamma)\,du\right) \geq Ce^{tP(\phi)}.$$

Let $C,\eps,M$ and $E_t$ be as in Lemma~\ref{lem: pressure of good segments}. Now, choose $\rho < \frac{\eps}{3}$ small enough that the Bowen property at scale $\rho$ holds on $\GGG^M$ (that this is possible follows immediately from the fact that $\GGG$ has the Bowen property). Then, by Proposition~\ref{periodic approximation}, there exists $T' > 0$ so that when $t > \frac{\theta_0}{\eta} + 2M$, there is an injective mapping from $E_t$ to a set $P_t$ of regular closed geodesics with periods in $[t+T'-\delta,t+T']$, i.e. for any $\xi\in P_t$ there exists $u\in [t+T'-\delta,t+T']$ such that $g_u\xi = \xi$. 
In particular, for all $\gamma\in E_t$, there exists $\xi\in P_t$ so that $d_{\GS}(g_u\xi,g_u\gamma) \leq \rho$ for all $u\in[0,t]$. Because the mapping is injective and $\phi$ has the Bowen property at scale $\rho$ on $\GGG^M$, it follows from Lemma~\ref{lem: pressure of good segments} that
	$$\sum_{\xi\in P_t}\exp\left(\int_{0}^{t}\phi(g_u\xi)\,du\right) \geq Ce^{-K}e^{tP(\phi)}$$
	for some constant $K$ independent of $t$. Now, writing $\Phi(\xi) = \int_{0}^{\ell(\xi)}\phi(g_u\xi)\,du$, we can then write
	$$\sum_{\xi\in P_t}\exp(\Phi(\xi))\geq \sum_{\xi\in P_t}\exp\left(\int_{0}^{t}\phi(g_u\xi)\,du - T'\lVert\phi\rVert\right) \geq Ce^{-(K+T'\lVert\phi\rVert)}e^{tP(\phi)}.$$
	At this point, we can almost relate this to $\Lambda_R(Q,\delta,\phi)$. However, there is a possibility that $\xi_1,\xi_2\in P_t$ both represent the same closed geodesic path, i.e., there exists $u$ so that $g_u\xi_1 = \xi_2$. As $P_t$ is $(t,\rho)$-separated and $d_{\GS}(\eta,g_u\eta) = u$, there are at most $\frac{t+T'}{\rho}$ such repetitions. Hence, if $Q\geq T$, by setting $Q=t+T'$, we have
	$$\Lambda_R(Q,\delta,\phi) \geq \left(\frac{\rho}{Q}\right)Ce^{-K}e^{-T'(\lVert\phi\rVert + P(\phi))}e^{QP(\phi)}.$$
\end{proof}

In order to see that the growth rate is not too large, we use Lemma~\ref{lem:separated geodesics} and Lemma~\ref{lem:pressure bounds}.

\begin{proposition}\label{prop: upper bound}
For all $\delta>0$ there exists a constant $D>0$ such that $$\Lambda_{R}(Q, \delta,\phi) \leq De^{\delta\lVert\phi\rVert}e^{QP(\phi)}$$ for all sufficiently large $Q$.
\end{proposition}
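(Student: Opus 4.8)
\textbf{Proof plan for Proposition~\ref{prop: upper bound}.}

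The plan is to dominate the weighted sum over periodic orbits by a weighted sum over a suitable $(Q,\eps)$-separated set and then apply Lemma~\ref{lem:pressure bounds}. First I would fix $\eps>0$ small enough that $2\eps$ is less than the injectivity radius of $\SSS$ and small enough that Lemma~\ref{lem:pressure bounds} applies at scale $\eps$; let $D>0$ be the resulting constant. Choose, for each $[\gamma]\in\operatorname{Per}_R[Q-\delta,Q]$, a representative $\gamma$ with period $\ell(\gamma)\in[Q-\delta,Q]$. By Lemma~\ref{lem:separated geodesics}, for $Q\gg\delta$ this set of representatives $E_Q$ is $(Q,\eps)$-separated (note $\operatorname{Per}_R[Q-\delta,Q]$ is the set of \emph{equivalence classes}, so distinct classes give genuinely distinct, and hence separated, representatives).

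The only discrepancy between the quantity $\Lambda_R(Q,\delta,\phi)=\sum_{[\gamma]}e^{\Phi([\gamma])}$ and the separated-set sum $\sum_{\gamma\in E_Q}\exp\left(\int_0^Q\phi(g_u\gamma)\,du\right)$ from Lemma~\ref{lem:pressure bounds} is that the Birkhoff integral in $\Phi([\gamma])=\int_0^{\ell(\gamma)}\phi(g_u\gamma)\,du$ runs up to $\ell(\gamma)$ rather than to $Q$, and $Q-\ell(\gamma)\in[0,\delta]$. Hence for each $\gamma\in E_Q$,
\[
\left|\Phi(\gamma)-\int_0^Q\phi(g_u\gamma)\,du\right|=\left|\int_{\ell(\gamma)}^Q\phi(g_u\gamma)\,du\right|\leq (Q-\ell(\gamma))\,\lVert\phi\rVert\leq \delta\lVert\phi\rVert,
\]
so that $e^{\Phi([\gamma])}\leq e^{\delta\lVert\phi\rVert}\exp\left(\int_0^Q\phi(g_u\gamma)\,du\right)$. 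Summing over $[\gamma]\in\operatorname{Per}_R[Q-\delta,Q]$ and applying Lemma~\ref{lem:pressure bounds} to the $(Q,\eps)$-separated set $E_Q$ gives
\[
\Lambda_R(Q,\delta,\phi)\leq e^{\delta\lVert\phi\rVert}\sum_{\gamma\in E_Q}\exp\left(\int_0^Q\phi(g_u\gamma)\,du\right)\leq D\,e^{\delta\lVert\phi\rVert}\,e^{QP(\phi)},
\]
which is the claimed bound, valid for all sufficiently large $Q$.

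There is essentially no serious obstacle here; the one point requiring a little care is making sure the chosen representatives really do form a $(Q,\eps)$-separated set, which is exactly what Lemma~\ref{lem:separated geodesics} provides (two distinct regular closed geodesics that $\eps$-shadow each other over $[0,Q]$ would be parallel axes of the same group element and hence bound a flat strip, contradicting regularity). Everything else is the triangle-inequality comparison of Birkhoff integrals over intervals differing by at most $\delta$, together with a direct application of the counting bound from \cite{Climenhaga-Thompson}.
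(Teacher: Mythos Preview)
Your proof is correct and follows essentially the same approach as the paper: choose representatives of the regular periodic classes, use Lemma~\ref{lem:separated geodesics} to see they form a $(Q,\eps)$-separated set, bound $|\Phi(\gamma)-\int_0^Q\phi(g_u\gamma)\,du|\le\delta\lVert\phi\rVert$, and apply Lemma~\ref{lem:pressure bounds}. The paper's proof is slightly more terse but identical in content.
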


\begin{proof}
By Lemma~\ref{lem:separated geodesics}, any set of representatives of $\operatorname{Per}_R[Q-\delta,Q]$ is $(Q,\eps)$-separated for $\eps$ sufficiently small, and in particular, small enough to apply Lemma~\ref{lem:pressure bounds}. Now, given $[\gamma]\in\operatorname{Per}_R[Q-\delta,Q]$, observe that $\left|\Phi(\gamma) - \int_{0}^{Q}\phi(g_u\gamma)\,du\right| \leq \delta\lVert\phi\rVert$, because we know the period of $\gamma$ is at least $Q-\delta$. Consequently, it follows that for such an $\eps$, there exists $D > 0$ such that 
$$\Lambda_{R}(Q,\delta,\phi) \leq e^{\delta\lVert\phi\rVert}\sum_{[\gamma]\in\operatorname{Per}_R[Q-\delta,Q]}\exp\left(\int_0^Q\phi(g_u\gamma)\,du\right) \leq e^{\delta\lVert\phi\rVert}De^{QP(\phi)}.$$
\end{proof}

\begin{proof}[P\textbf{roof of Theorem \ref{thm: equidistribute}}]
Propositions \ref{prop: lower bound} and \ref{prop: upper bound} imply that
$$\lim\limits_{Q\to\infty}\frac{1}{Q}\log\Lambda_{R}(Q,\delta,\phi) = P(\phi).$$
By Proposition~\ref{weighted limit}, it follows that $\lim\limits_{Q\to\infty} \mu_{Q,\delta} = \mu$.
\end{proof}

%
%%
%%%
%%%%
%%%%%
%%%%%%%%%%%%%%%%%%%%%%%%%%%%%%%%%%%%%%%%%%

\section{Acknowledgements}
We would like to thank the American Institute of Mathematics for their hospitality and support during the workshop ``Equilibrium states for dynamical systems arising from geometry" where most of the work was carried out and the SQuaRE ``Thermodynamic formalism for CAT(0) spaces" where further properties were considered. The first author was partially supported by NSF grant DMS-1954463. The third author was partially supported by NSF grant DMS-1547145. The last author was supported by Girls' Angle. We would like to thank Vaughn Climenhaga for comments on an earlier version of the paper, and an anonymous referee for numerous helpful comments.

%\bibliography{bibliography}{}
%\bibliographystyle{alpha}

\bibliography{bibliography}{}

\begin{thebibliography}{BCFT18}

\bibitem[Bal95]{Ballmann}
Werner Ballmann.
\newblock {\em Lectures on spaces of nonpositive curvature}, volume~25 of {\em
  DMV Seminar}.
\newblock Birkh\"{a}user Verlag, Basel, 1995.
\newblock With an appendix by Misha Brin.

\bibitem[BCFT18]{BCFT}
Keith Burns, Vaughn Climenhaga, Todd Fisher, and Dan Thompson.
\newblock Unique equilibrium states for geodesic flows in nonpositive
  curvature.
\newblock {\em Geom. Funct. Anal.}, 28(5):1209--1259, 2018.

\bibitem[BH99]{bh}
Martin~R. Bridson and Andr\'e Haefliger.
\newblock {\em Metric spaces of non-positive curvature}, volume 319 of {\em
  Grundlehren der mathematischen Wissenschaften}.
\newblock Springer, Berlin, 1999.

\bibitem[Bow75]{bowen}
Rufus Bowen.
\newblock Some systems with unique equilibrium states.
\newblock {\em Mathematical Systems Theory}, 8(3):193--202, 1975.

\bibitem[Cal22]{Ca20}
Benjamin Call.
\newblock The {K}-property for some unique equilibrium states in flows and
  homeomorphisms.
\newblock {\em Ergodic Theory and Dynamical Systems}, 42(8):2509–2532, 2022.

\bibitem[CFS82]{CFSS}
Isaac~P. Cornfeld, Sergey~V. Fomin, and Yakov~G. Sina\u{\i}.
\newblock {\em Ergodic theory}, volume 245 of {\em Grundlehren der
  Mathematischen Wissenschaften [Fundamental Principles of Mathematical
  Sciences]}.
\newblock Springer-Verlag, New York, 1982.
\newblock Translated from the Russian by A. B. Sosinski\u{\i}.

\bibitem[CKP20]{CKP20}
Dong Chen, Lien-Yung Kao, and Kiho Park.
\newblock Unique equilibrium states for geodesic flows over surfaces without
  focal points.
\newblock {\em Nonlinearity}, 33(3):1118, 2020.

\bibitem[CKP21]{CKP}
Dong Chen, Lien-Yung Kao, and Kiho Park.
\newblock Properties of equilibrium states for geodesic flows over manifolds
  without focal points.
\newblock {\em Advances in Mathematics}, 380:107564, 2021.

\bibitem[CKW21]{CKW}
Vaughn Climenhaga, Gerhard Knieper, and Khadim War.
\newblock Uniqueness of the measure of maximal entropy for geodesic flows on
  certain manifolds without conjugate points.
\newblock {\em Advances in Mathematics}, 376:107452, 2021.

\bibitem[CLT20a]{CLT2}
David Constantine, Jean-Fran\c{c}ois Lafont, and Dan Thompson.
\newblock Strong symbolic dynamics for geodesic flow on {C}{A}{T}(-1) spaces
  and other metric anosov flows.
\newblock {\em Journal del'\'Ecole polytechnique -- Math\'ematiques},
  7:201--231, 2020.

\bibitem[CLT20b]{CLT}
David Constantine, Jean-Fran\c{c}ois Lafont, and Dan Thompson.
\newblock The weak specification property for geodesic flows on {C}{A}{T}(-1)
  spaces.
\newblock {\em Groups, Geometry, and Dynamics}, 14:297--336, 2020.

\bibitem[CP20]{Colognese}
Paul Colognese and Mark Pollicott.
\newblock Volume growth for infinite graphs and translation surfaces.
\newblock In {\em Dynamics: topology and numbers}, volume 744 of {\em Contemp.
  Math.}, pages 109--123. Amer. Math. Soc., [Providence], RI, 2020.

\bibitem[CT16]{Climenhaga-Thompson}
Vaughn Climenhaga and Daniel~J. Thompson.
\newblock Unique equilibrium states for flows and homeomorphisms with
  non-uniform structure.
\newblock {\em Adv. Math.}, 303:745--799, 2016.

\bibitem[CT21]{CT20}
Vaughn Climenhaga and Daniel~J Thompson.
\newblock Beyond bowen’s specification property.
\newblock In {\em Thermodynamic Formalism}, pages 3--82. Springer, 2021.

\bibitem[CT22]{CT19}
Benjamin Call and Daniel~J. Thompson.
\newblock Equilibrium states for self-products of flows and the mixing
  properties of rank 1 geodesic flows.
\newblock {\em Journal of the London Mathematical Society}, 105(2):794--824,
  2022.

\bibitem[Dan11]{Dankwart}
Klaus Dankwart.
\newblock Volume entropy and the {G}romov boundary of flat surfaces.
\newblock {\em arXiv: 1101.1795}, 2011.

\bibitem[Fra77]{franco}
Ernesto Franco.
\newblock Flows with unique equilibrium states.
\newblock {\em Amer. J. Math.}, 99(3):486--514, 1977.

\bibitem[Pes97]{PesinDimensionTheory}
Yakov~B. Pesin.
\newblock {\em Dimension theory in dynamical systems}.
\newblock Chicago Lectures in Mathematics. University of Chicago Press,
  Chicago, IL, 1997.
\newblock Contemporary views and applications.

\bibitem[Ric17]{R17}
Russell Ricks.
\newblock Flat strips, {B}owen-{M}argulis measures, and mixing of the geodesic
  flow for rank one {${\rm CAT}(0)$} spaces.
\newblock {\em Ergodic Theory Dynam. Systems}, 37(3):939--970, 2017.

\bibitem[Ric21]{R19}
Russell Ricks.
\newblock The unique measure of maximal entropy for a compact rank one locally
  {C}{A}{T}(0) space.
\newblock {\em Discrete Contin. Dyn. Syst.}, 41(2):507--523, 2021.

\bibitem[Wal82]{Wa}
Peter Walters.
\newblock {\em An introduction to ergodic theory}, volume~79 of {\em Graduate
  Texts in Mathematics}.
\newblock Springer-Verlag, New York-Berlin, 1982.

\bibitem[Wri15]{Wright}
Alex Wright.
\newblock Translation surfaces and their orbit closures: an introduction for a
  broad audience.
\newblock {\em EMS Surv. Math. Sci.}, 2(1):63--108, 2015.

\bibitem[Zor06]{Zorich}
Anton Zorich.
\newblock Flat surfaces.
\newblock In {\em Frontiers in number theory, physics, and geometry. {I}},
  pages 437--583. Springer, Berlin, 2006.

\end{thebibliography}
\bibliographystyle{alpha}

%\end{thebibliography}

\Addresses
\end{document}